\documentclass[a4paper,10pt]{amsart}
\usepackage[english]{babel}
\usepackage{amsmath,tikz-cd}
\usepackage{amssymb}
\usepackage[T1]{fontenc}
\usepackage[utf8]{inputenc}
\usepackage{longtable}
\usepackage{array}
\usepackage{lmodern}
\usepackage{mathrsfs}
\usepackage{enumitem}
\usepackage{mathtools}
\usetikzlibrary{arrows}
\usepackage{xcolor}
\definecolor{DarkRed}{RGB}{173,0,0}
\definecolor{LightRed}{RGB}{201,0,0}
\usepackage[
    colorlinks=true,
    linkcolor=DarkRed,
    urlcolor=LightRed,
    citecolor=LightRed
]{hyperref}

\newtheorem{thm}{Theorem}[section]
\newtheorem{Lemma}[thm]{Lemma}
\newtheorem{Proposition}[thm]{Proposition}

\theoremstyle{definition}

\newtheorem{Definition}[thm]{Definition}
\newtheorem{Remark}[thm]{Remark}

\newtheoremstyle{introthmstyle} {5pt}{5pt} {\itshape} {} {\bfseries} {.} { } {} \theoremstyle{introthmstyle} \newtheorem{introthm}{Theorem}

\definecolor{wwwwww}{rgb}{0.4,0.4,0.4}

\newcommand{\PP}{\mathbb{P}}

\newcommand{\QQ}{\mathbb{Q}}

\newcommand{\CC}{\mathbb{C}}
\newcommand{\kk}{k}
\newcommand{\OO}{\mathcal{O}}

\DeclareMathOperator{\Cr}{Cr}
\DeclareMathOperator{\Proj}{Proj}

\DeclareMathOperator{\Bir}{Bir}
\DeclareMathOperator{\rk}{rk}
\DeclareMathOperator{\codim}{codim}
\DeclareMathOperator{\Hilb}{Hilb}
\DeclareMathOperator{\Bl}{Bl}

\DeclareMathOperator{\Gr}{Gr}
\DeclareMathOperator{\PGL}{PGL}
\DeclareMathOperator{\Bs}{Bs}
\DeclareMathOperator{\Supp}{Supp}
\DeclareMathOperator{\multideg}{multideg}
\DeclareMathOperator{\length}{length}

\newcommand{\I}{\mathcal{I}}

\newcommand{\vf}{\varphi}

\setlist[enumerate]{label=(\roman*),before=\normalfont,font=\normalfont}
\hypersetup{pdfpagemode=UseNone}
\hypersetup{pdfstartview=FitH}

\begin{document}

\title{Quadratic Cremona transformations of $\PP^4$}

\author[Gianluca Grassi]{Gianluca Grassi}
\address{\sc Gianluca Grassi\\ Dipartimento di Matematica e Informatica, Universit\`a di Ferrara, Via Machiavelli 30, 44121 Ferrara, Italy}
\email{gianluca.grassi@unife.it}

\author[Alex Massarenti]{Alex Massarenti}
\address{\sc Alex Massarenti\\ Dipartimento di Matematica e Informatica, Universit\`a di Ferrara, Via Machiavelli 30, 44121 Ferrara, Italy}
\email{msslxa@unife.it}

\date{\today}
\subjclass[2020]{Primary 14E07, 14E05; Secondary 14N05, 14Q10.}
\keywords{Cremona transformation, quadratic map, multidegree, Segre class, base locus, ribbon, de Jonqui\`eres net.}

\begin{abstract}
We prove that $\Bir_2(\PP^4)$ has exactly nine irreducible components and that the locus of transformations with zero-dimensional base scheme is irreducible.
\end{abstract}

\maketitle
\tableofcontents

\section*{Introduction}

Throughout the paper we work over the field $\kk=\CC$, and write
$\PP^n=\PP^n_{\CC}$.  This convention is needed only when we invoke the
classification of Pan--Ronga--Vust in dimension three; all elementary
constructions and intersection-theoretic computations below are valid over
any algebraically closed field of characteristic zero. Recall that the
Cremona group of rank $n$ is
$
\Cr_n(\kk)=\Bir(\PP^n),
$
that is the group of birational self-maps of projective space, or equivalently the automorphism group of the purely transcendental field $\kk(x_1,\ldots,x_n)$. The group $\Cr_n(\kk)$ contains $\PGL_{n+1}(\kk)$ as the subgroup of projective automorphisms and coincides with it only when $n=1$. For $n\ge 2$ it is an infinite-dimensional object, central in birational geometry and still only partially understood.

The first non-trivial case is $n=2$, the classical one. By the Noether--Castelnuovo theorem the group $\Cr_2(\kk)$ is generated by $\PGL_3(\kk)$ and by the standard quadratic involution
$
[x_0:x_1:x_2]\dashrightarrow [x_1x_2:x_0x_2:x_0x_1].
$
Therefore, plane Cremona transformations, and in particular quadratic ones, are quite well understood: up to linear equivalence, a general quadratic transformation is the standard involution, with base locus given by three non-collinear points, while the cases with infinitely near base points appear as degenerations. We refer to \cite{Alberich02,Dolgachev12,Hudson27} for the classical theory.

On the other hand, even in the plane the algebraic geometry of the Cremona group is subtle. If $\Bir_d(\PP^2)$ denotes the variety of plane Cremona transformations of degree $d$, the number and the distribution of its irreducible components form a difficult problem. The study of the algebraic growth of $\Cr_2$ gives two-sided asymptotic bounds for the number of irreducible components of $\Bir_d(\PP^2)$ as $d$ grows; more precisely, after taking two logarithms, the cumulative number of components grows as $\sqrt{\log d}$ \cite{CCMMM25}.

The group-theoretic structure of Cremona groups has also attracted much
attention. S. Cantat and S. Lamy proved that $\Cr_2(\kk)$ is not simple when $\kk$ is
algebraically closed \cite{CantatLamy13}. J. Blanc proved connectedness results
for Cremona groups and topological simplicity for $\Cr_2$ with the Zariski
topology \cite{Blanc10}. J. Blanc and S. Zimmermann later proved topological simplicity
of Cremona groups in arbitrary dimension over infinite fields, for the Zariski
topology and also for the Euclidean topology over local fields
\cite{BlancZimmermann18}. On the other hand, J. Blanc, S. Lamy and S. Zimmermann proved
that, over subfields of $\CC$, $\Cr_n$ is not simple for $n\ge 3$ and has many
non-trivial quotients \cite{BlancLamyZimmermann21}. These results show that
Cremona groups in higher dimension are large and rigid from some viewpoints,
but at the same time far from being completely accessible.

For $n\ge 3$ there is no analogue of the Noether--Castelnuovo theorem. In
particular, the group is not generated by transformations of bounded degree,
as already shown classically by H. Hudson \cite{Hudson27}. Therefore, one cannot hope
to understand $\Cr_n$ by reducing all transformations to the quadratic ones.
Still, quadratic transformations form the first non-linear stratum, and their
classification is a natural testing ground for the geometry of the higher
Cremona groups.

Quadratic transformations of $\PP^3$ were studied by I. Pan, F. Ronga and T. Vust
\cite{PanRongaVust01}. In that case there are three possible bidegrees,
namely $(2,2)$, $(2,3)$ and $(2,4)$, and the corresponding families and their
degenerations can be described explicitly. In higher dimension the situation
becomes more complicated for two independent reasons. First, the inverse degree
is no longer enough to describe the numerical behavior of a birational map:
one has to use the whole multidegree. Secondly, the base scheme may have
several two-dimensional components.

The quadro-quadric case, that is the case in which both a Cremona
transformation and its inverse are defined by quadrics, has a particularly rich
structure. J. G. Semple and L. Roth studied the case of $\PP^4$ \cite{SempleRoth49}, and A. Bruno and A. Verra later gave a
modern treatment and described general base schemes in dimensions $4$ and $5$
\cite{BrunoVerra11}. L. Pirio and F. Russo developed the JC-correspondence, relating
quadro-quadric Cremona transformations to rank three Jordan algebras, and used
it to obtain complete explicit classifications in low dimensions
\cite{PirioRusso14}. In particular, the general quadro-quadric base schemes in
$\PP^4$ are known.

In this paper we study all quadratic Cremona transformations of $\PP^4$ from
the point of view of their multidegrees. Let
$
V=H^0(\PP^4,\OO_{\PP^4}(2)).
$
A quadratic rational map $\PP^4\dashrightarrow\PP^4$ is determined, up to a
projective change of coordinates on the target, by a point of $\Gr(5,V)$.
We say that a map represented by quadratic forms has \emph{algebraic degree
exactly two} if the forms have no non-constant common divisor. We denote by
$\Bir_2(\PP^4)\subset\Gr(5,V)$ the locally closed locus of five-dimensional
subspaces $W$ such that $\gcd(W)=1$ and the associated rational map is
birational. Thus $\Bir_2(\PP^4)$ parametrizes birational maps of minimal
algebraic degree exactly two. We work inside $\Gr(5,V)$ and we do not quotient
by $\PGL_5$; all dimensions in the sequel are dimensions of the parameter
spaces considered inside this Grassmannian.

The multidegree of a rational map $\vf:\PP^4\dashrightarrow\PP^4$ is
$
\multideg(\vf)=(d_0,d_1,d_2,d_3,d_4),
$
where $d_i$ is the degree of the inverse image of a general codimension $i$
linear subspace of the target. If $\vf$ is birational, then $d_0=d_4=1$; if its minimal algebraic
degree is exactly two, then $d_1=2$. Therefore, we write the multidegree in
the form
$
(1,2,d_2,d_3,1),
$
or, in abbreviated form, as $(2,d_2,d_3)$. Our first result is numerical.

\begin{introthm}\label{intro:numerics}
Let $\vf:\PP^4\dashrightarrow\PP^4$ be a quadratic Cremona transformation.
Then $\multideg(\vf)=(1,2,d_2,d_3,1)$ and $(d_2,d_3)$ belongs to
the following list: $
(2,2),\, (3,2),(3,3),(3,4),\,
(4,2),(4,3),(4,4),(4,5),(4,6),(4,7),(4,8)$.
\end{introthm}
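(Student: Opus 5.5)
The plan is purely numerical. We realize the multidegree as the bidegrees of the closure $\Gamma\subset\PP^4\times\PP^4$ of the graph of $\vf$, and then combine the normalizations $d_0=d_4=1$, $d_1=2$ with the log-concavity of the multidegrees of an irreducible subvariety of a product of projective spaces.

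First, $\Gamma$ is an irreducible $4$-dimensional subvariety, and we write its class as
$$[\Gamma]=\sum_{i=0}^{4} d_i\, h_1^{4-i}h_2^{i}$$
in the Chow ring of $\PP^4\times\PP^4$. Here $d_i$ equals the degree of $\overline{\vf^{-1}(L)}$ for a general linear subspace $L$ of codimension $i$, which is the definition adopted above. Since $\vf$ is birational, both projections from $\Gamma$ are birational, so $d_0=d_4=1$. Since the five quadrics have no common factor, the preimage of a general hyperplane is a quadric hypersurface, so $d_1=2$. This gives the shape $(1,2,d_2,d_3,1)$.

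The key input, which we will cite rather than reprove, is that the bidegree sequence of an irreducible subvariety of $\PP^n\times\PP^m$ is log-concave without internal zeros. This is the Khovanskii--Teissier inequality, as used by Huh, and Trung--Verma in the language of mixed multiplicities. Since $d_0$ and $d_4$ are both nonzero, every $d_i\ge 1$, and we have
$$d_1^2\ge d_0d_2,\qquad d_2^2\ge d_1d_3,\qquad d_3^2\ge d_2d_4.$$
Substituting the known values gives three constraints:
\begin{itemize}
\item $d_2\le 4$;
\item $2d_3\le d_2^2$;
\item $d_3^2\ge d_2$.
\end{itemize}
Combining the second constraint with $d_3\ge1$ forces $d_2\ge 2$. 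The case analysis then runs as follows.
\begin{itemize}
\item For $d_2=2$: the second constraint gives $d_3\le 2$, and the third gives $d_3\ge 2$, so $(d_2,d_3)=(2,2)$.
\item For $d_2=3$: we get $d_3\le 4$ and $d_3^2\ge3$, so $d_3\in\{2,3,4\}$.
\item For $d_2=4$: we get $d_3\le 8$ and $d_3^2\ge 4$, so $d_3\in\{2,\dots,8\}$.
\end{itemize}
This is exactly the list in Theorem~\ref{intro:numerics}. As a sanity check, the same bounds follow from reading the symmetric statement for $\vf^{-1}$, whose multidegree is $(1,d_3,d_2,2,1)$, and from the Bézout bound $d_i\le 2^i$.

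There is no real obstacle. The only point needing care is to justify that the $d_i$ so defined coincide with the bidegrees of an \emph{irreducible} cycle, so that log-concavity applies. This holds because $\Gamma$ is the closure of the graph over the open set where $\vf$ is defined, and the preimage of a general $L$ is computed on that open set, so no base-locus contributions enter. Note that the theorem only asserts that $(d_2,d_3)$ lies in this list. Realizing each of the eleven values, and thereby showing the list is sharp, is a separate matter that the paper presumably handles later through explicit constructions.
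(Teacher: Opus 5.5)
Your proof is correct and is essentially the paper's argument: the normalizations $d_0=d_4=1$, $d_1=2$, the three Hodge (log-concavity) inequalities, and the same case analysis on $d_2\in\{2,3,4\}$. The one difference is how you get $d_3\ge 2$: you use the third inequality, $d_3^2\ge d_2\ge 2$, whereas the paper observes that $d_3$ is the degree of $\vf^{-1}$ and that a linear inverse would make $\vf$ linear; both arguments work. (Also, with your definition of $d_i$ the displayed class should read $[\Gamma]=\sum_i d_i\, h_1^{i}h_2^{4-i}$, since $d_0$ pairs with $h_1^4$. This slip is harmless, because log-concavity is symmetric under reversing the sequence.)
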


The proof of Theorem~\ref{intro:numerics} follows from the Cremona inequalities and the Hodge inequalities for multidegrees. On the other hand, the realization of all the numerical possibilities is a geometric problem. We construct explicit families of base schemes imposing ten independent conditions on quadrics and compute their multidegrees via the Segre class formula. In all these cases the last projective degree is $d_4=1$, and hence the corresponding rational map is generically finite of degree one and therefore birational.

We summarize these constructions in the following theorem. In the table, a notation such as $p^\Lambda$ denotes a first order linear scheme supported at $p$ whose tangent space is the linear subspace $\Lambda\subset \PP^4$ through $p$. Therefore, $p^{\PP^2}$ imposes three independent conditions on quadrics, while $p^{\PP^3}$ imposes four. When such a first-order condition is supported on a positive-dimensional component, the displayed scheme is the natural integral-closure model of the generated base ideal. The five quadrics may generate a proper reduction of that ideal; reductions have the same normalized blow-up and hence the same Segre class. This distinction is made explicit in Proposition~\ref{prop:curve-families}.

\begin{introthm}\label{intro:families}
For every multidegree in Theorem~\ref{intro:numerics} there exists a quadratic
Cremona transformation of $\PP^4$ having this multidegree. More precisely,
the following table gives natural families of such transformations.

\begingroup
\normalfont
\setlength{\LTleft}{\fill}
\setlength{\LTright}{\fill}
\renewcommand{\arraystretch}{1.0}
\begin{footnotesize}
\begin{longtable}{
    >{\itshape\centering\arraybackslash}p{0.14\textwidth}|
    >{\itshape\centering\arraybackslash}p{0.67\textwidth}|
    >{\itshape\centering\arraybackslash}p{0.10\textwidth}
}
$(d_1,d_2,d_3)$
& general base configuration (or integral-closure model)
& dimension
\\
\hline
\endfirsthead

\multicolumn{3}{c}{\small\itshape }
\\[2pt]
$(d_1,d_2,d_3)$
& general base scheme of the family
& dimension
\\
\hline
\endhead

\hline
\multicolumn{3}{r}{\small\itshape }
\endfoot

\hline
\endlastfoot

$(2,4,8)$
& three reduced points, one point with Hilbert function $(1,2)$, and one point with Hilbert function $(1,3,2)$
& $27$
\\

$(2,4,7)$
& $L+p_L^{\PP^3}+q^{\PP^2}+r+s$
& $25$
\\

$(2,4,7)$
& reduced line $L+$ length-$7$ point $p+$ reduced point $q$ (de Jonqui\`eres)
& $24$
\\

$(2,4,6)$
& conic $C+p_C^{\PP^3}+q_1+q_2+q_3$
& $26$
\\

$(2,4,6)$
& genus $-2$ ribbon $R_{-2}+q_1+q_2+q_3$
& $26$
\\

$(2,4,6)$
& two skew lines $L_1,L_2+q^{\PP^2}+r$
& $24$
\\

$(2,4,5)$
& twisted cubic $C+p^{\PP^2}$
& $24$
\\

$(2,4,5)$
& conic $C+$ line $L+q_1+q_2$
& $25$
\\

$(2,4,4)$
& triple line $L^H+q_1+q_2+q_3$
& $20$
\\

$(2,4,4)$
& rational normal quartic $C+q$
& $25$
\\

$(2,4,3)$
& elliptic normal quintic $E$
& $25$
\\

$(2,4,2)$
& Semple--Roth / Bruno--Verra triple line plus conic
& $16$
\\

$(2,3,4)$
& plane $P+q_1+q_2+q_3+q_4$
& $22$
\\

$(2,3,3)$
& plane $P+$ line $L$ meeting $P$ $+q_1+q_2$
& $19$
\\

$(2,3,2)$
& plane $P+$ two skew lines meeting $P$
& $16$
\\

$(2,2,2)$
& smooth quadric surface $Q+q$
& $17$
\end{longtable}
\end{footnotesize}
\endgroup
\end{introthm}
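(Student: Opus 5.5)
The statement is existential, so I will prove it one row at a time. For each line of the table I take the described base scheme $Z\subset\PP^4$ and let $W=H^0(\PP^4,\I_Z(2))$. Each row then needs three facts.

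\begin{enumerate}
\item $\dim W=5$, i.e.\ $Z$ imposes exactly ten independent conditions on quadrics.
\item $\gcd(W)=1$, and the base scheme of $W$ is $Z$ (or, in the first-order cases, a reduction of the integral-closure model $Z$).
\item The projective degrees of $\vf_W$ are $(1,2,d_2,d_3,d_4)$ with $d_4=1$.
\end{enumerate}

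Once $d_4=1$, the map is generically finite of degree one onto $\PP^4$, hence birational, so $W\in\Bir_2(\PP^4)$ with the prescribed multidegree.

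The dimension column is then a parameter count. It is the dimension of the Hilbert-type family of configurations $Z$ (for instance lines, conics and points in their Grassmannians or Hilbert schemes, together with the choices of tangent flags $\Lambda$). Since $Z$ is recovered as the base scheme, the map $Z\mapsto W$ is injective on an open set, so this is also the dimension inside $\Gr(5,V)$.

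For step (i) I will verify the count separately for each configuration.
\begin{itemize}
\item A rational normal curve of degree $e$ imposes $2e+1$ conditions, by projective normality.
\item An elliptic normal quintic imposes $10$.
\item A plane imposes $6$.
\item A line imposes $3$, and a line meeting a plane imposes $2$ further conditions.
\item Each reduced point imposes $1$, and $p^{\PP^k}$ imposes $k+1$.
\end{itemize}
For example, the rational normal quartic gives $9+1$, the twisted cubic gives $7+3$, the plane row gives $6+4$, and the quadric surface row gives $9+1$. Independence for general choices follows by upper semicontinuity once a single explicit example is exhibited. I will also write the five quadrics explicitly in each row. This checks $\gcd=1$ directly: no reducible quadric pencil-divisor appears, since the base scheme has codimension at least two.

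The core is step (iii). I will use the Segre class formula
\[
d_i=2^i-\int (1+2H)^{i}\cap s(Z,\PP^4)\ \text{(suitable degree part)},
\]
applied to the blow-up of $\PP^4$ along $Z$. Here I use that reductions have the same normalized blow-up, which justifies computing with the integral-closure model. For disjoint unions the Segre classes add.

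\begin{itemize}
\item \textbf{Points.} A reduced point contributes only to $d_4$, removing $1$. A fat point of Hilbert function $(1,2)$, $(1,3,2)$, or $p^\Lambda$ contributes its Samuel multiplicity, which I will compute from the local ideal.
\item \textbf{Smooth curves $C$ of degree $e$ and genus $g$.} The Segre class is $[C]-c_1(N_C)$, which gives corrections to $d_3$ and $d_4$ depending on $e$ and $g$. As a check, the elliptic quintic gives $d_3=8-5=3$ and $d_4=16-\ldots=1$.
\item \textbf{Surfaces.} For the plane and the quadric surface I will compute $s(S,\PP^4)=c(N_S)^{-1}\cap[S]$. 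When curves meet the plane, I will use the residual intersection formula or a blow-up in stages (blow up the plane, then the strict transforms of the lines).
\end{itemize}

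I expect the main obstacle to be the non-reduced and non-lci rows: the triple line $L^H$, the genus $-2$ ribbon, the Semple--Roth/Bruno--Verra configuration, and $L+p_L^{\PP^3}$ with the embedded first-order point on the line. There the Segre class is not a normal-bundle computation. For these rows I plan to avoid computing the Segre class and instead compute the multidegree directly. I will restrict the explicit quadrics to a general line, plane, and $3$-space, count the residual intersection of four general members off the base locus to obtain $d_4=1$, and use degenerations of the general plane or $3$-space section (for example, the degree of a general $2$-plane section of the graph) to get $d_2$ and $d_3$. Alternatively, I can exhibit the inverse map explicitly and read off its multidegree $(1,d_3,d_2,2,1)$, which also certifies birationality.

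For the de Jonqui\`eres row I will use the standard description of the inverse, which gives $d_3=7$ immediately. For the $(2,4,2)$ row I will invoke the quadro-quadric classification of Semple--Roth and Bruno--Verra \cite{SempleRoth49,BrunoVerra11}, where the inverse is quadratic. In that case $d_3=2$ and $d_2=4$ follow by symmetry of the multidegree together with the constraint $d_2\le d_1d_3$ from Theorem~\ref{intro:numerics}.
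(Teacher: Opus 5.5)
There is a genuine gap: your uniform recipe (take the tabulated scheme $Z$, set $W=H^0(\I_Z(2))$, count ten independent conditions, read the $d_i$ off $s(Z,\PP^4)$) cannot produce the de Jonqui\`eres row, and you offer nothing in its place.

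There the actual base scheme is $B=L\sqcup Z_7\sqcup\{q\}$. Your count gives $3+7+1=11$ conditions, yet $h^0(\I_B(2))=5$. So $B$ does not impose independent conditions, and the system is not obtained by a condition count. The missing idea is the construction of Proposition~\ref{prop:dejonquieres-family}:
\begin{enumerate}
\item Choose a smooth $(2,2)$ surface $X=V(A)$ containing $L$. Then $L$ is the $(-1)$-curve $2\ell-\sum e_i$ and $2H-L=4\ell-\sum e_i$.
\item Take in $|2H-L|$ the homaloidal net of type $(4;3,1^6)$ with triple point $p$ and simple point $q$.
\item Pull it back along $H^0(\I_L(2))\to H^0(X,\OO_X(2H-L))$, whose kernel is $A$.
\end{enumerate}
The local base ideal at $p$ is then generated by three binary cubics. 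What enters the Segre class is its Samuel multiplicity $9$ (Rees's theorem, Lemma~\ref{lem:binary-cubics-dejonquieres}), not the length $7$. This gives $s(B,\PP^4)=h^3+7h^4$, hence $d_3=7$ and $d_4=1$. The dimension $24=26+4-6$ comes from forgetting the pencil $A\subset W$, not from an injective parametrization by $Z$. Appealing to ``the standard description of the inverse'' does not help: the inverse has degree $7$, and the name refers to the plane net, not to a classical de Jonqui\`eres map of $\PP^4$.

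Two further points need repair.

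First, for the non-reduced rows you propose computing on one explicit system instead of computing Segre classes. That gives one Cremona map per multidegree, but not the family statements. The topological degree can only drop under specialization, because the limit of the graphs contains the special graph as one component. So $d_4=1$ at a witness gives only $d_4\geq1$ for the general member. The $26$-dimensional ribbon family and the $25$-dimensional $L+p_L^{\PP^3}+\cdots$ family cannot be single $\PGL_5$-orbits. You therefore need a computation valid for every member of the type, which is what the paper's Segre classes supply:
\begin{itemize}
\item The ribbon is lci, so $s(R_{-2},\PP^4)=[R]-c_1(N_R)=2h^3-4h^4$ is a normal-bundle computation after all.
\item An embedded first-order point on $L$ or on the conic $C$ adds $4h^4$ (Lemma~\ref{lem:curve-embedded-point}). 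The conic row therefore belongs on your list of hard cases too. Also, $p_L^{\PP^3}$ and $p_C^{\PP^3}$ impose only two conditions beyond the curve, not four.
\item $L^H$ needs the two-step blow-up, which gives $4h^3-20h^4$.
\end{itemize}

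Second, for the $(2,4,2)$ row, symmetry together with the inequalities gives only $2\leq d_2\leq4$, and all three values occur for quadro-quadric maps in the table. The correct reason for $d_2=4$ is that the base scheme is one-dimensional.
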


Let us explain two names used in the table. The ribbon family is named after its one-dimensional base scheme, which is a non-reduced double structure $R_{-2}$ on a line $L$ whose square-zero ideal in $R_{-2}$ is an invertible sheaf; therefore, it is a ribbon in the usual sense, and the subscript records its arithmetic genus $-2$. The reduced-line de Jonqui\`eres family is obtained by lifting, from a smooth $(2,2)$ surface, the plane homaloidal net of type $(4;3,1^6)$, which is a classical de Jonqui\`eres system.

The zero-dimensional case is particularly rigid. Indeed, the first family in the table is the unique component of the locus of transformations with zero-dimensional base scheme.

\begin{introthm}\label{intro:zero-dimensional}
Let
$
\mathcal C^0_{248}\subset
\Gr\bigl(5,H^0(\PP^4,\OO_{\PP^4}(2))\bigr)
$
be the locus of quadratic Cremona transformations with zero-dimensional base
locus. Then $\mathcal C^0_{248}$ is irreducible of dimension $27$. The general
point of this unique component is the five-dimensional subspace
$H^0(\I_Z(2))$, where
$Z=p_1\cup p_2\cup p_3\cup q^\Pi\cup r^H$,
$p_1,p_2,p_3,q,r\in\PP^4$ are general pairwise distinct points,
$\Pi\subset\PP^4$ is a general plane through $q$, and
$H\subset\PP^4$ is a general hyperplane through $r$. The scheme $Z$ records the imposed first-order conditions. The actual base
scheme is supported at the same five points; its local Hilbert functions are
$(1),(1),(1),(1,2),(1,3,2)$, respectively. Its multidegree is
$(1,2,4,8,1)$.
\end{introthm}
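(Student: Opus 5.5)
Our plan has three parts: first pin down the multidegree of any such map, then prove irreducibility via a length-$10$ scheme $Z$ that imposes independent conditions, and finally identify the general member and compute dimensions.

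\emph{Multidegree.} Let $W\in\mathcal C^0_{248}$ with zero-dimensional base scheme $B$. The Segre class formula gives $d_i=2^i-(\text{contributions of }s(B,\PP^4))$, and a zero-dimensional $B$ only contributes in top degree. Hence $d_2=4$ and $d_3=8$. Then $d_4=16-e(\I_B)$, where $e$ is the Samuel multiplicity of the base ideal. Birationality forces $e(\I_B)=15$.

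\emph{Irreducibility.} The key point is that the five quadrics have a common length-$10$ subscheme $Z\subset B$ with $W=H^0(\I_Z(2))$. Since $\dim V=15$ and $\dim W=5$, $B$ imposes exactly ten conditions on quadrics. We take $Z$ to be the scheme cut by $W$ together with all quadrics vanishing on it, i.e.\ $W=H^0(\I_B(2))$, so that $h^0(\I_B(2))=5$. We then show that $\length(\OO_B)\ge 10$ and that $B$ imposes independent conditions. Granting this, the assignment $W\mapsto B$ sends $\mathcal C^0_{248}$ injectively into the locus of length-$10$ subschemes of $\PP^4$ imposing independent conditions on quadrics with $e(\I_B)=15$ and $d_4=1$.

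\emph{Strategy for irreducibility.} We classify the possible local types of $B$, i.e.\ partitions of $10$ into local lengths with local multiplicities summing to $15$. A reduced point has $e=1$ and length $1$. A curvilinear or $(1,2)$ point has $e=$ length, up to the integral-closure issue. The $(1,3,2)$ point with ideal of the form $(\mathfrak m^3, \text{linear conditions})$ has length $6$ but multiplicity $8$, since it is a reduction of $\mathfrak m_H$-type structure: $e=2^3\cdot1$. So the surplus $e-\ell=5$ must be distributed among the non-reduced points. Using $e\ge\ell$, together with the bound $\ell\le 10$ and the local structure of ideals generated by at most five quadrics (embedding dimension constraints), we list the finitely many configuration types achieving $\sum(e_i-\ell_i)=5$. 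We check that each type is a specialization of the stated general type $3p+q^\Pi+r^H$, by exhibiting one-parameter families in which points collide. The general type gives the $(1),(1),(1),(1,2),(1,3,2)$ Hilbert functions as claimed.

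\emph{Dimension and main obstacle.} The dimension count for the general type is $3\cdot4$ for the $p_i$, plus $4+\dim\Gr(2,4)=4+4$ for $(q,\Pi)$, plus $4+3$ for $(r,H)$, giving $12+8+7=27$. We must also verify that the map from configurations to $W$ is generically injective, which follows from recovering $B$ from $W$. Finally we compute $d_4=1$ via the local multiplicities $1+1+1+4+8=15$; we check $e(q^\Pi)=4$ from the Segre class of its integral closure. The main obstacle is the classification and degeneration step: showing that no other zero-dimensional type yields $e=15$ with ten conditions, in particular excluding a separate component formed by a single fat point. We would attack this by semicontinuity of the Samuel multiplicity and explicit normal forms of local ideals generated by five quadrics.
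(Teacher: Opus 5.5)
The multidegree step is fine and agrees with the paper (Lemma~\ref{lem:zero-dimensional-base}). The irreducibility argument, however, has a genuine gap, and it starts from a false premise. The base scheme $B$ of the general member is not of length $10$. By the very statement you are proving, its local Hilbert functions are $(1),(1),(1),(1,2),(1,3,2)$, so $\length(B)=1+1+1+3+6=12$. Hence $B$ imposes ten conditions on quadrics, and these are not independent. The length-$10$ scheme $Z$ is only the first-order scheme of imposed conditions, and it is strictly contained in $B$ at $r$.

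Your local bookkeeping is also off.
\begin{itemize}
\item At $q$ the ideal becomes $\mathfrak m^2$ in two variables. This has length $3$ but $e=4$, so $e\neq\length$ there, with no integral-closure subtlety involved.
\item At $r$ the ideal is a linear equation plus four general quadrics in three variables, a reduction of $\mathfrak m^2$ with $e=8$. It is not $(\mathfrak m^3,\text{linear})$.
\end{itemize}
So ``partitions of $10$ with total surplus $e-\ell=5$'' is the wrong invariant: the general member has $\ell=12$ and surplus $3$. Moreover, for a special $W$ there is no canonical length-$10$ subscheme $Z$ with $W=H^0(\I_Z(2))$, and no reason for $W=H^0(\I_B(2))$ to hold.

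More seriously, the step ``classify the local types and exhibit collisions'' carries all the content of the theorem, and it is not carried out. Local algebras of the relevant lengths already have moduli, so there is no finite list of types to run through. Even a list of scheme types would not suffice: you need families of linear systems $W_t\to W$, not just families of schemes. Neither semicontinuity of multiplicity nor a dimension count can rule out a smaller separate component, since components of $\Bir_2(\PP^4)$ occur in various dimensions.

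The missing idea is the paper's reduction to $\PP^3$.
\begin{itemize}
\item First show that some base point has first-jet rank one. If not, a general pencil in $W$ cuts out a smooth quartic del Pezzo surface (Lemma~\ref{lem:smooth-two-quadric-section}). On it the residual net is homaloidal with no fixed curve, and the Noether equalities $\sum m_i^2=15$, $\sum m_i=5$ have no solution.
\item At such a point one can write $W=\langle x_0x_1+q_0,V\rangle$, where $V$ is a quadratic Cremona system of $\PP^3$ with finite base. By Pan--Ronga--Vust, $V\in\mathscr H_{\mathrm{III}}$.
\item The space $\mathscr H_{\mathrm{III}}\times H^0(\PP^3,\OO_{\PP^3}(2))$ is irreducible, and its general point lifts to a member of $\mathcal F_{248}$. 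Hence every $W$ lies in $\overline{\mathcal F}_{248}$.
\end{itemize}
This argument is also what excludes the ``single fat point'' component you flag as the main obstacle.
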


Our main result can be stated as follows.

\begin{introthm}\label{intro:nine-components}
The space $\Bir_2(\PP^4)$ has exactly nine irreducible components, described in the following table.
\begingroup
\setlength{\LTleft}{\fill}
\setlength{\LTright}{\fill}
\renewcommand{\arraystretch}{1.0}
\begin{footnotesize}
\begin{longtable}{
    >{\itshape\centering\arraybackslash}p{0.14\textwidth}|
    >{\itshape\centering\arraybackslash}p{0.67\textwidth}|
    >{\itshape\centering\arraybackslash}p{0.10\textwidth}
}
$(d_1,d_2,d_3)$
& general base scheme
& dimension
\\
\hline
\endfirsthead

\multicolumn{3}{c}{\small\itshape }
\\[2pt]
$(d_1,d_2,d_3)$
& general base scheme
& dimension
\\
\hline
\endhead

\hline
\multicolumn{3}{r}{\small\itshape }
\endfoot

\hline
\endlastfoot

$(2,4,8)$
& three reduced points, one point with Hilbert function $(1,2)$, and one point with Hilbert function $(1,3,2)$
& $27$
\\

$(2,4,7)$
& $L+Z_7+q$
& $24$
\\

$(2,4,6)$
& $R_{-2}+q_1+q_2+q_3$
& $26$
\\

$(2,4,6)$
& $L_1\sqcup L_2+p^{\PP^2}+q$
& $24$
\\

$(2,4,5)$
& $T+p^{\PP^2}$
& $24$
\\

$(2,4,5)$
& $C\sqcup L+q_1+q_2$
& $25$
\\

$(2,4,4)$
& $R_4+q$
& $25$
\\

$(2,4,3)$
& $E_5$
& $25$
\\

$(2,4,6)$
& $C+p_C^{\PP^3}+q_1+q_2+q_3$
& $26$
\end{longtable}
\end{footnotesize}
\endgroup
Here $L,L_1,L_2$ are lines, with $L_1$ and $L_2$ skew, $Z_7$ is the length-$7$ punctual scheme of Samuel multiplicity $9$ occurring in the reduced-line de Jonqui\`eres family, and $R_{-2}$ is a locally complete-intersection ribbon of degree two and arithmetic genus $-2$. Moreover, $T$ is a twisted cubic, $C$ is a smooth conic, $R_4$ is a rational normal quartic, and $E_5$ is an elliptic normal quintic. The notation $p^{\PP^a}$ denotes the first-order linear subscheme encoding the conditions imposed at $p$ by a tangent space $\PP^a$; this subscheme need not coincide with the full local base scheme. The notation $p_C^{\PP^3}$ denotes the corresponding condition at a point $p\in C$, with tangent hyperplane containing $T_pC$. Finally, $q,q_i$ denote general reduced points, disjoint from the remaining components of the base scheme.
\end{introthm}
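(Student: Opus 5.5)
The proof has two halves: first show that the nine families in the table are irreducible loci of the stated dimensions that cover $\Bir_2(\PP^4)$, and then show that none of them lies in the closure of another.

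For the covering we stratify $\Bir_2(\PP^4)$ by multidegree. The multidegree is lower semicontinuous in families and, by Theorem~\ref{intro:numerics}, takes only finitely many values, so each stratum $\mathcal M_{(d_2,d_3)}$ is locally closed and every irreducible component of $\Bir_2(\PP^4)$ is the closure of a component of some stratum. Within each stratum we refine according to the base scheme $B=\Bs(W)$: the dimension of its one-dimensional part $B_1$, its degree and arithmetic genus, and its embedded or isolated punctual part. Since the five quadrics are exactly $H^0(\I_B(2))$ and the residual base locus imposes ten conditions, the Segre class formula $d_i=2^i-\bigl(\text{contribution of } s(B,\PP^4)\bigr)$ constrains $(\deg B_1, p_a(B_1))$ and the punctual lengths. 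This produces the finite list of configurations behind Theorem~\ref{intro:families}.

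Each configuration is irreducible, because it is parametrized by a Hilbert-scheme-type family (curves of fixed type together with points and tangent data) that is irreducible. That family is followed by the map $B\mapsto H^0(\I_B(2))$, whose image has dimension equal to that of the configuration space. Counting dimensions gives the numbers in the table.

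\emph{Zero-dimensional case.} If $B$ is finite, the Segre computation forces $(d_2,d_3)=(4,8)$. Theorem~\ref{intro:zero-dimensional} then gives a single irreducible $27$-dimensional component. Every degeneration of a finite scheme of this type with the same multidegree stays in its closure, because the punctual Hilbert scheme strata of the relevant lengths are irreducible in $\PP^4$.

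\emph{Non-maximal families.} Next we must see that the families of Theorem~\ref{intro:families} absent from the nine-component table are not components. Concretely:
\begin{itemize}
\item the $(2,4,7)$ family $L+p_L^{\PP^3}+q^{\PP^2}+r+s$ specializes into the closure of the $(2,4,8)$ family when $L$ is broken off;
\item the triple line $L^H+3q$ degenerates from $R_4+q$;
\item the plane families with $d_2=3$ and the quadric-surface family with $d_2=2$ are limits of the dimension-$\geq 24$ families with $d_2=4$.
\end{itemize}
Each of these is shown by exhibiting an explicit one-parameter family $W_t$ with $W_0$ in the smaller family and $W_t$ general in a larger one. We use that dimension drops strictly ($22,19,17,16,20<24$) together with semicontinuity of multidegree; for example, $d_2=3$ occurs as a limit of $d_2=4$ because $d_2$ can only drop under specialization. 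Given a family of dimension $\delta$, it suffices to find one larger family whose closure contains its general member.

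\emph{Non-containment of the nine.} This is the main obstacle. Multidegree is lower semicontinuous, so a component with multidegree $(d_2,d_3)$ can lie in the closure of a family only if that family has a componentwise larger-or-equal multidegree and strictly bigger dimension. This rules out most pairs immediately. The delicate cases are the three $26$-dimensional $(2,4,6)$ families and the $25$-dimensional ones, which could a priori lie in the closure of the $27$-dimensional $(2,4,8)$ component.

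For these we compare tangent spaces. At a general point $W$ of each candidate family we compute $\dim T_W\Bir_2(\PP^4)$ as an open subset of $\Gr(5,V)$ restricted by the determinantal condition $d_4=1$. Equivalently, we compute first-order deformations of $W$ preserving birationality, via the $\mathrm{Hom}(W,V/W)$ subspace of deformations along which the base scheme deforms flatly enough to keep ten conditions. We aim to show that this tangent dimension equals the family's dimension, which gives a smooth point lying on exactly that component. Where this is hard to compute directly, I would instead use an invariant that is upper semicontinuous: the dimension of the one-dimensional part of the base scheme, together with its degree, can only jump up in the limit. A general member of the $(2,4,8)$ component has finite base locus, but for containment we would need the limit to acquire a curve of the prescribed type with the correct genus. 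The argument would then show that a curve of genus $-2$, a conic with tangent point, or two skew lines cannot arise as a flat limit of a punctual base scheme while keeping exactly ten conditions and multidegree $(2,4,6)$. For this I would use the Segre class, which is constant in equisingular flat families, and compare Samuel multiplicities. Checking these tangent dimensions case by case, likely with computer algebra, is where I expect the real work to lie.
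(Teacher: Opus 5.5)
The covering half of your argument has a genuine gap. You assert that the Segre class formula applied to $B=\Bs(W)$ ``produces the finite list of configurations behind Theorem~\ref{intro:families}''. That theorem is only a construction result: the paper stresses after Proposition~\ref{prop:curve-families} that these families are not claimed to be the only ones. The Segre class is a numerical invariant that neither determines nor bounds the possible base schemes. Once the base is non-reduced it is not governed by degree and genus ($L^H$ has $s_3=4$ but degree $3$), and the equality $W=H^0(\I_B(2))$ is assumed rather than proved.

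The boundary of $\Bir_2(\PP^4)$ contains many systems of none of the listed types: the genus $-1$ ribbon with $p^{\PP^2}+q$, the ACM double conic of branch (H2), the normal forms $W_4$, $W_5(0)$, $W_c$, $W_{\mathbf r}$, and systems with no smooth pencil. Each of these must be placed in one of the nine closures. Your ``non-maximal families'' step only absorbs the specific rows of Theorem~\ref{intro:families} (and omits the $(2,4,2)$ row), so it says nothing about an arbitrary $W$.

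What is missing is a structural case analysis, which the paper organizes by the first-jet rank $r_p(W)$. A surface in the base is a plane or a quadric in a hyperplane (Propositions~\ref{prop:all-plane-systems} and~\ref{prop:all-quadric-surface-systems}). A base point with $r_p(W)=1$ allows projection to $\PP^3$ and use of the Pan--Ronga--Vust classification, whose three types lift into $R_4+q$, the conic component and the punctual component (Proposition~\ref{prop:all-rank-one-systems}). If all ranks are at least two and $W$ contains a smooth pencil, restriction to the quartic del Pezzo surface, together with the Noether equalities, nefness and effectivity, gives the seven Weyl orbits (Lemma~\ref{lem:del-pezzo-enumeration}). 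Otherwise the quintic discriminant $\det A_W(y)$ has a multiple factor, and the branches of Proposition~\ref{prop:four-residual-branches} are reduced to explicit normal forms and degenerations.

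Your appeal to irreducibility of punctual Hilbert schemes is also unfounded: Hilbert schemes of points of $\PP^4$ are reducible from length $8$ on, and the general base here has length $12$. Citing Theorem~\ref{intro:zero-dimensional} is what actually handles that case.

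The non-containment half lacks its key invariant. Semicontinuity of the projective degrees and of $\dim\Bs(W)$ cannot separate any of the other eight components from the $27$-dimensional punctual component $\mathcal P$, since all of them have $(2,4,d_3)\le(2,4,8)$ and smaller dimension. Your tangent-space plan is neither well posed nor carried out: you do not say which scheme structure on the locus $\Bir_2(\PP^4)$ you would differentiate, and ``$d_4=1$'' is not a determinantal condition.

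The paper instead uses the closed locus $\mathscr R_{\le1}$ of systems having a base point of first-jet rank at most one (Lemma~\ref{lem:closed-rank-one-locus}). Every system with finite base lies in it (Proposition~\ref{prop:geometric-rank-one-point}); otherwise a general pencil cuts out a smooth $(2,2)$ surface on which the Noether equalities $\sum m_i^2=15$, $\sum m_i=5$ would have to hold, and they have no solution. On the other hand, $\mathscr R_{\le1}$ misses the open smooth-pencil locus, and this separates the seven smooth-pencil components from $\mathcal P$. Inside that locus, dimension, $d_3$ and persistence of curve cycles do the rest.

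The conic family does have a rank-one point, so it needs a further argument. Along a degeneration from $\mathcal P$, the unique rank-one point specializes to $p$. The projected $\PP^3$ systems would then exhibit a general point of $\mathscr H_{\mathrm{II}}$ as a limit of points of $\mathscr H_{\mathrm{III}}$, which is impossible since these are distinct components (Lemma~\ref{lem:conic-not-punctual-component}). The conic family is separated from the ribbon component of the same dimension by the Hilbert--Chow cycle $2[L]$ (Lemma~\ref{lem:conic-not-ribbon-locus}). Finally, only two of the $(2,4,6)$ families are $26$-dimensional, not three.
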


The seven one-dimensional components in the smooth-pencil locus arise from birational nets on a del Pezzo surface of degree four. The Noether equalities, together with nefness and effectivity, reduce the possible cases to seven Weyl orbits. Furthermore, projecting from the unique rank-one point of the ninth family we obtain the component of quadratic Cremona transformations of $\PP^3$ whose general base scheme is a line together with three points.

In order to prove the upper bound, we begin by classifying systems having a surface in the base and systems having a rank-one base point. Then, we study the quintic discriminant of the web of quadrics. Outside the smooth-pencil locus, a multiple factor has degree at most two. A multiple hyperplane yields a line or a conic of vertices, while a multiple irreducible quadric is ruled by the planes of quadrics singular at a fixed base point; the remaining low-rank cases are excluded. Finally, explicit determinantal and Pfaffian normal forms show that every boundary system belongs to one of the nine components.

Note that Theorem~\ref{intro:families} is a construction theorem: the rows which do not occur as general base schemes in Theorem~\ref{intro:nine-components} are boundary strata rather than general points of irreducible components. The Magma implementation at the end of the paper provides independent residual-degree checks for the displayed systems.

For the quadro-quadric cases, that is when $d_3=2$, we use the classification recalled in \cite{PirioRusso14}. In dimension four the general base schemes are given by a smooth quadric surface together with a point, a plane together with two skew lines meeting the plane, and the schematic union of the degree-three scheme $L^H$, supported on a line contained in a hyperplane, with a smooth conic whose tangent line at the intersection point is contained in the same hyperplane. These three types have multidegrees $(2,2,2)$, $(2,3,2)$ and $(2,4,2)$ respectively. This is the only part of the paper in which we use the classical work of Semple--Roth and the modern treatments by Bruno--Verra and Pirio--Russo.



\subsection*{Organization of the paper}
This paper is organized as follows. In Section~\ref{sec:generalities} we recall the basic facts on multidegrees and Segre classes, and prove Theorem~\ref{intro:numerics}. In Sections~\ref{sec:punctual} and \ref{sec:zero-dimensional-geometric} we study the locus of transformations with zero-dimensional base scheme. In Section~\ref{sec:curves} we construct the families with one-dimensional base scheme, while in Section~\ref{sec:curve-components} we determine the seven components of the smooth-pencil locus and the ninth component. In Section~\ref{sec:surfaces} we consider the constructions with a surface in the base scheme. Finally, in Section~\ref{sec:upper-bound} we prove the upper bound and Theorem~\ref{thm:nine-components}, and in Section~\ref{appendix:magma} we describe the computational checks.

\subsection*{Acknowledgments}
The authors would like to thank Alberto Calabri and Francesco Russo for several useful discussions and suggestions.

\section{Quadratic maps and Segre classes}\label{sec:generalities}

We work over $\kk=\CC$. Set
$V=H^0(\PP^4,\OO_{\PP^4}(2))$. Then $\dim V=15$. A point $M\in\Gr(5,V)$ determines a rational map
$\vf_M:\PP^4\dashrightarrow\PP(M^*)\cong\PP^4$.
If $q_0,\dots,q_4$ is a basis of $M$, then $\vf_M=[q_0:\dots:q_4]$. Its base scheme is the closed subscheme $B_M\subset\PP^4$ defined by the ideal generated by $M$.

\begin{Lemma}\label{lem:bir-two-locally-closed}
The exact-degree-two Cremona locus $\Bir_2(\PP^4)\subset\Gr(5,V)$ is locally
closed.
\end{Lemma}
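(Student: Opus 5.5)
We will exhibit $\Bir_2(\PP^4)$ as the intersection of an open subset $U\subset\Gr(5,V)$ with a closed subset, or more precisely as an open subset of a closed subset of $U$. The first step is to show that the condition $\gcd(W)=1$ is open. Its complement consists of those $W$ whose elements share a common linear factor; for quadrics there is no other non-constant common divisor. This complement is the image of the incidence variety
\[
\{(\ell,W)\in\PP(H^0(\OO(1)))\times\Gr(5,V)\;:\;W\subset \ell\cdot H^0(\OO(1))\}.
\]
The incidence variety is closed, and the first factor is proper, so its image in $\Gr(5,V)$ is closed. Hence the exact-degree-two locus $U$ is open.

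Next we describe birationality in terms of projective degrees. For $W\in U$, the projective degrees $d_i(W)$ are computed as follows: take $\Gamma_W\subset\PP^4\times\PP^4$, the closure of the graph, and intersect its class with $h_1^{4-i}h_2^{i}$. Then $d_4(W)=\deg(\vf_W)\cdot\deg\overline{\vf_W(\PP^4)}$ when the map is dominant, and $d_4=0$ otherwise. Therefore $\vf_W$ is birational if and only if $d_4(W)=1$.

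To control how $d_4$ varies, we form the family of graphs over $U$. Let $\mathcal W\to U$ be the tautological rank-$5$ bundle. The evaluation map gives a rational map $\PP^4\times U\dashrightarrow \PP(\mathcal W^*)$, and we take the closure $\Gamma\subset \PP^4\times\PP(\mathcal W^*)$ of its graph over $U$. We then stratify $U$ into finitely many locally closed pieces over which $\Gamma$ is flat with fibres equal to the graph closures. One way to do this is generic flatness combined with noetherian induction. An alternative is to realize the graph closure via the Rees algebra, whose Hilbert function is upper semicontinuous, and to use that the number of Hilbert polynomials occurring is finite.

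On each stratum $S_j$, the bidegrees of the fibres are constant, so $\{d_4=1\}\cap S_j$ is either all of $S_j$ or empty. This shows at least that $\Bir_2(\PP^4)$ is constructible.

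The main obstacle is to upgrade constructibility to local closedness. For this we use a more direct characterization. For $W\in U$, $\vf_W$ is birational if and only if there exist quadrics or forms of some bounded degree $e$ giving an inverse $\psi$, with $\psi\circ\vf_W=h\cdot\mathrm{id}$ and $\vf_W\circ\psi=h'\cdot\mathrm{id}$. By Theorem~\ref{intro:numerics}-type bounds, which can also be obtained here from the classical bound $\deg\vf^{-1}\le(\deg\vf)^{n-1}=8$, the inverse has degree at most $8$. Hence we take $e=8$ after multiplying by common factors; we allow these factors and then fix $e$.

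The set of pairs $(W,\psi)$ satisfying these polynomial identities, with $\psi$ nondegenerate (meaning $\psi\circ\vf_W\neq0$), is locally closed. Its image in $U$ is therefore constructible. To see that the image is closed in $U$, we argue that it is closed under specialization: birational maps are dense in their Zariski closure, and on $U$ a limit of birational maps has $d_4\ge1$ by semicontinuity of the graph limit. The limit graph contains the limit of $\Gamma_{W_t}$, whose class has $h_2^4$-coefficient $1$. So $d_4(W_0)\le 1$, because the graph of $\vf_{W_0}$ is a component of the flat limit. This upper bound is where care is needed. Combined with dominance, which is an open condition, it yields $d_4=1$. Hence $\Bir_2(\PP^4)$ is closed inside the open subset of $U$ where $\vf_W$ is dominant, and it is therefore locally closed in $\Gr(5,V)$.
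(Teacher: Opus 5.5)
Your argument is correct in substance, but two sentences in your last paragraph are false as written. The set of birational $W$ is \emph{not} closed in $U$. Limits in $U$ of birational systems also need not satisfy $d_4\ge1$. For example, let $\langle q_1,\dots,q_4\rangle\subset\kk[x_1,\dots,x_4]_2$ be a Cremona system of $\PP^3$ and $q_0\notin\langle q_1,\dots,q_4\rangle$. Then $\langle\epsilon x_0x_1+q_0,q_1,\dots,q_4\rangle$ is birational for $\epsilon\neq0$. Its limit at $\epsilon=0$ still lies in $U$ but factors through the projection from $[1:0:0:0:0]$.

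The graph-limit argument gives only the upper bound. Over a pointed curve, the flat limit of $\Gamma_{W_t}$ restricts over $\PP^4\setminus\Bs(W_0)$ to the reduced graph of $\vf_{W_0}$, because the family of maps is a morphism there. So the limit cycle is $[\Gamma_{W_0}]$ plus effective $4$-cycles lying over $\Bs(W_0)$, each of non-negative degree over the target. Hence $d_4(W_0)\le d_4(W_t)=1$.

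The lower bound $d_4\ge1$ must come from openness of dominance, which you assert but should justify. One way is the non-vanishing of the Jacobian determinant $\det(\partial q_i/\partial x_j)$, a condition independent of the chosen basis of $W$. Constructibility plus stability under specialization then shows that $\Bir_2(\PP^4)$ is closed in the open set of dominant $W\in U$, as you conclude. For constructibility, your Chevalley argument with an inverse of degree at most $8$ suffices on its own.

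Compared with the paper: you remove the common-linear-factor locus in the same way, and both of you use the universal graph closure. The paper then applies flattening stratification to the graph family, its image, and the induced finite map. It asserts directly that the union of the strata with image $\PP^4$ and generic degree one is locally closed. A union of strata is a priori only constructible, so that step needs exactly what you supply: $\{d_4\ge1\}$ is open and $\{d_4\le1\}$ is stable under specialization.

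Your route uses stratification, or the inverse map, only for constructibility, and obtains local closedness from upper semicontinuity of $d_4$ together with openness of dominance. This makes the key step explicit. The paper's stratification yields extra information, namely constancy of Hilbert polynomials on strata, which the lemma does not need.
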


\begin{proof}
Let $G=\Gr(5,V)$ and let $\mathscr S\subset V\otimes\OO_G$ be the
tautological bundle.  On $\PP^4\times G$ the universal evaluation map
$\mathscr S\boxtimes\OO_{\PP^4}(-2)\to\OO$ defines the universal base ideal
$\mathscr I$.  The relative Proj of its Rees algebra,
$$
 \Gamma=\Proj_{\PP^4\times G}
 \bigoplus_{m\geq0}\mathscr I^m,
$$
is the universal closure of the graphs, and the universal quotient
$\mathscr S\to\mathscr I(2)$ induces a projective morphism
$\Gamma\to\PP(\mathscr S^*)$ over $G$.

Apply flattening stratification simultaneously to $\Gamma\to G$, to its
scheme-theoretic image in $\PP(\mathscr S^*)$, and to the induced finite map
on the open strata where source and image have relative dimension four.  On
each resulting locally closed stratum, the Hilbert polynomial of the image
and the generic degree of the graph over the image are constant.  The union
of the strata on which the image is $\PP^4$ and the generic degree is one is
therefore locally closed.  Finally, the locus of systems having a common
linear factor is closed: it is the image of the projective incidence
$$
 \{([h],W):W\subset hH^0(\PP^4,\OO(1))\}
 \longrightarrow G.
$$
A five-dimensional quadratic system can have no common factor of degree two,
so removing this closed locus imposes $\gcd(W)=1$.  The resulting locally
closed set is exactly $\Bir_2(\PP^4)$.
\end{proof}

Let $\vf:\PP^4\dashrightarrow\PP^4$ be a rational map. The multidegree of $\vf$ is the sequence $\multideg(\vf)=(d_0,\dots,d_4)$ where $d_i$ is the degree of the inverse image of a general codimension $i$ linear subspace of the target. Therefore, $d_0=1$. If $\vf$ is birational then $d_4=1$. If its minimal algebraic degree is
exactly two, then $d_1=2$.

The multidegrees satisfy the Cremona inequalities and the Hodge inequalities. In the present case these read
\stepcounter{thm}\begin{equation}\label{eq:cremona-hodge}
1\leq d_{i+j}\leq d_id_j,\qquad d_i^2\geq d_{i-1}d_{i+1}
\end{equation}
for all indices for which the terms are defined \cite[Chapter 7]{Dolgachev12}.

Let $\mathscr I_M\subset\OO_{\PP^4}$ be the base ideal sheaf generated by
$M\otimes\OO_{\PP^4}(-2)$.  Its saturation defines the same closed base
scheme but need not have the same Rees algebra.  We therefore use the blow-up
of the generated ideal,
$$
 \pi:X=\Proj_{\PP^4}\bigoplus_{m\geq0}\mathscr I_M^m
 \longrightarrow\PP^4,
$$
which is the closure of the graph of $\vf_M$.  If $H=\pi^*\OO_{\PP^4}(1)$
and $\mathscr I_M\OO_X=\OO_X(-E)$, the tautological quotient resolving the
five sections has divisor class $2H-E$. Hence
\stepcounter{thm}\begin{equation}\label{eq:multidegree-blow-up}
d_i=(2H-E)^i\cdot H^{4-i}.
\end{equation}
The same formula holds on any resolution dominating this graph; the notation
$E$ then denotes the total divisor of the pulled-back base ideal.

We recall the convention on Segre classes that will be used throughout the paper. Let $Z\subset \PP^4$ be a closed subscheme, and let $h=c_1(\OO_{\PP^4}(1))$. We write the push-forward of the Segre class of $Z$ in $\PP^4$ as a polynomial in $h$, $s(Z,\PP^4)=s_0(Z)+s_1(Z)h+s_2(Z)h^2+s_3(Z)h^3+s_4(Z)h^4.$ The coefficient of $h^j$ will be denoted by $s_j(Z)$, in the usual convention of \cite[Chapter 4]{Fulton84}. If $Z$ is zero-dimensional and is locally defined at $p$ by the base ideal $I_p$, then
$
s(Z,\PP^4)=\left(\sum_p e(I_p,\OO_{\PP^4,p})\right)h^4,
$
where $e$ is the Samuel multiplicity. This equals the length for reduced points and zero-dimensional complete intersections, but not for arbitrary non-reduced schemes. If $Z=C$ is a smooth curve of degree $\delta$ and genus $g$, then
$
s(C,\PP^4)=\delta h^3+(2-2g-5\delta)h^4.
$ These coefficients enter the computation of the multidegree of a rational map defined by a linear system of hypersurfaces through $Z$. We use the following form of the standard Segre class formula.

\begin{Proposition}\label{prop:segre-formula}
Let $Z\subset\PP^4$ be the base scheme of a rational map $\vf=[q_0:\dots:q_4]$ defined by quadrics and with no divisorial fixed component. Write $s(Z,\PP^4)=s_2h^2+s_3h^3+s_4h^4$ with the convention that $s_i=0$ if $\dim Z<4-i$. Then the multidegrees are
\stepcounter{thm}\begin{equation}\label{eq:segre-degrees}
d_i=2^i-\sum_{j=1}^{i}\binom{i}{j}2^{i-j}s_j(Z),
\end{equation}
where $s_j(Z)$ denotes the coefficient of $h^j$ in the Segre class polynomial.
\end{Proposition}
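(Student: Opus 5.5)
We will compute the intersection numbers $d_i=(2H-E)^i\cdot H^{4-i}$ from \eqref{eq:multidegree-blow-up} on the blow-up $\pi:X\to\PP^4$ of the generated base ideal $\mathscr I_M$. The formula \eqref{eq:segre-degrees} should then follow from the binomial theorem and the projection formula for Segre classes. Since $\vf$ has no divisorial fixed component, $Z$ has codimension at least two, and $E$ is an honest effective Cartier divisor on $X$ with $\mathscr I_M\OO_X=\OO_X(-E)$.

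First, expand
$$
(2H-E)^i\cdot H^{4-i}=\sum_{j=0}^{i}\binom{i}{j}2^{i-j}(-1)^j\,E^j\cdot H^{4-j}.
$$
The term $j=0$ gives $2^iH^4=2^i$. For $j\ge1$ we identify $E^j\cdot H^{4-j}$ with a Segre class coefficient. By Fulton's description of the Segre class of a subscheme through the blow-up \cite[Cor.~4.2.2]{Fulton84}, we have
$$
s(Z,\PP^4)=\sum_{k\ge1}(-1)^{k-1}\pi_*\bigl(E^k\bigr)
$$
as a class pushed forward from the exceptional divisor. Because $H=\pi^*h$, the projection formula gives
$$
E^j\cdot H^{4-j}=\deg\bigl(\pi_*(E^j)\cdot h^{4-j}\bigr)=(-1)^{j-1}\deg\bigl(s_{4-j}\text{-part}\bigr),
$$
where the $s_{4-j}$-part is the component of $s(Z,\PP^4)$ of dimension $4-j$. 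With the paper's convention, this component is $s_j(Z)h^j$, so $\deg(\cdot\, h^{4-j})=s_j(Z)$. Hence $(-1)^jE^j\cdot H^{4-j}=-s_j(Z)$, and summing yields $d_i=2^i-\sum_{j=1}^i\binom{i}{j}2^{i-j}s_j(Z)$.

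One point needs care. The Segre class of $Z$ depends only on the ideal up to integral closure, and Fulton's formula is stated for the blow-up of the ideal sheaf defining $Z$. The base ideal generated by $M$ may differ from the saturated ideal of $Z$. The same blow-up still computes the Segre class, since Segre classes are local and the blow-up of a sheaf of ideals does not depend on saturation in the graded sense: the saturation only changes the graded ideal, not the ideal sheaf. So the relevant $Z$ is the subscheme defined by the sheaf $\mathscr I_M$, which is exactly the base scheme. The convention $s_j=0$ for $\dim Z<4-j$ is automatic, because $\pi_*(E^j)$ has dimension $4-j$ and vanishes for dimension reasons when it exceeds $\dim Z$. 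Likewise $s_1=0$, since $Z$ has no divisorial component.

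The main obstacle I anticipate is bookkeeping rather than substance. Fulton's statement involves the normal cone and the sign convention $s(Z,X)=\sum_{k\ge1}(-1)^{k-1}\pi_*(\tilde E^k)$. We must check that indexing by codimension, as the paper does with $s_j h^j$, matches the dimension grading, and that the resulting signs agree. I would verify this on the test case of a reduced point, where $s_4=1$ and $E^4=-1$, giving $d_4=16-1=15$ for the full system of quadrics through a point, a sanity check.
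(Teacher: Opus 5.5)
Your argument is correct and follows the paper's proof: you expand $(2H-E)^iH^{4-i}$ binomially, then use Fulton's blow-up formula for Segre classes together with the projection formula to get $\pi_*(E^j)=(-1)^{j-1}s_j(Z)h^j$, so that each term $(-1)^jE^jH^{4-j}$ contributes $-s_j(Z)$. The paper leaves implicit two points you state, and both are accurate: your sign bookkeeping, and the observation that saturation does not change the ideal sheaf $\mathscr I_M$, so Fulton's formula applies directly to the blow-up $X$ of the generated ideal.
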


\begin{proof}
By \eqref{eq:multidegree-blow-up}, $d_i=(2H-E)^iH^{4-i}$. Expanding the power gives $d_i=2^iH^4+\sum_{j=1}^{i}(-1)^j\binom{i}{j}2^{i-j}H^{4-j}E^j.$ Now $H^4=1$. Furthermore, by the definition of Segre classes of a closed subscheme, the push-forward of the powers of the exceptional divisor is expressed in terms of $s(Z,\PP^4)$; this is precisely the formula in \cite[Chapter 4, Section 4.2]{Fulton84}. Substituting these push-forwards yields \eqref{eq:segre-degrees}.
\end{proof}

\begin{Remark}\label{rem:segre-birationality-criterion}
In the explicit constructions below, the Segre computation is used as a birationality criterion as follows. On the blow-up of the base ideal, the moving divisor is $L=2H-E$ and the induced morphism is defined by the resolved five-dimensional system. Formula~\eqref{eq:segre-degrees} computes $L^4$. If $L^4>0$, the image has dimension four, since the top self-intersection of the pull-back of a hyperplane is zero for a morphism with lower-dimensional image. Hence the map is dominant and generically finite. In particular, $L^4=1$ implies that its degree is one and therefore that the map is birational. Therefore, no independent dominance assumption is needed when the Segre computation gives $d_4=1$; the residual projective-degree calculations in Section~\ref{appendix:magma} provide additional checks.
\end{Remark}

\begin{Remark}\label{rem:disjoint}
If $Z=Z_1\sqcup\dots\sqcup Z_r$ is a disjoint union, then $s(Z,\PP^4)=\sum_i s(Z_i,\PP^4)$. Therefore, the contributions to the numbers $d_i$ add. This elementary observation is used repeatedly.
\end{Remark}

We will need the following elementary Segre class computations. If $C\subset\PP^4$ is a smooth curve of degree $e$ and genus $g$, then $s(C,\PP^4)=eh^3+(2-2g-5e)h^4.$ Indeed $c_1(T_{\PP^4}|_C)=5e$ and $c_1(T_C)=2-2g$, so $c_1(N_{C/\PP^4})=5e+2g-2$; then $s(C,\PP^4)=[C]-c_1(N_{C/\PP^4})[pt]$.

If $P\subset\PP^4$ is a plane, then
$N_{P/\PP^4}\cong\OO_P(1)^{\oplus2}$ and
$s(P,\PP^4)=h^2-2h^3+3h^4$. If $Q\subset\PP^4$ is a smooth quadric surface
spanning a hyperplane, then
$N_{Q/\PP^4}\cong\OO_Q(1)\oplus\OO_Q(2)$ and
$s(Q,\PP^4)=2h^2-6h^3+14h^4$.

We also use first order punctual schemes. Let $p\in\PP^4$, and let $\Lambda\subset\PP^4$ be a linear subspace through $p$ of dimension $a$. We denote by $p^\Lambda$ the first order condition that all quadrics vanish at $p$ and have tangent hyperplane containing $T_p\Lambda$. It imposes $a+1$ independent linear conditions on $V$ for general
$p,\Lambda$. Under the transversality and regular-sequence hypotheses of
Lemma~\ref{lem:punctual-contribution}, its local contribution to the
intersection of four general quadrics is $2^a$.

\begin{Lemma}\label{lem:punctual-contribution}
Let $p\in\PP^4$, and let $\Lambda\subset\PP^4$ be a linear subspace through $p$ of dimension $a$, where $0\le a\le 3$. Let $f_1,\ldots,f_4$ be four general quadrics in the given linear system. Assume that, at $p$, the linear parts of the $f_i$ span the conormal space of $\Lambda$, that is the subspace of $T_p^*\PP^4$ annihilating $T_p\Lambda$. Assume moreover that, after killing these linear parts, the remaining initial quadratic forms along $T_p\Lambda$ are general and form a regular sequence. Then the local intersection multiplicity at $p$ of $f_1,\ldots,f_4$ is $2^a$.
\end{Lemma}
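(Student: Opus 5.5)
Choose affine coordinates $x_1,\dots,x_4$ centred at $p$, adapted to $\Lambda$: $\Lambda=\{x_{a+1}=\dots=x_4=0\}$, so $T_p\Lambda$ is spanned by $\partial/\partial x_1,\dots,\partial/\partial x_a$ and its conormal space is spanned by $dx_{a+1},\dots,dx_4$, which has dimension $4-a$. Each $f_i$ vanishes at $p$ and its linear part lies in this conormal space. Since the $f_i$ are general, the hypothesis that their linear parts span the conormal space lets us change basis of the span of $f_1,\dots,f_4$ (this does not change the ideal, hence not the local intersection multiplicity). We arrange that $f_{a+1},\dots,f_4$ have linear parts $x_{a+1},\dots,x_4$, and that $f_1,\dots,f_a$ have zero linear part, by subtracting suitable combinations of the others.

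Next we eliminate variables. The elements $f_{a+1},\dots,f_4$ have independent linear parts, so by the implicit function theorem in the completion $\widehat{\OO}_{\PP^4,p}=k[[x_1,\dots,x_4]]$ the quotient by $(f_{a+1},\dots,f_4)$ is a power series ring $k[[x_1,\dots,x_a]]$. Concretely, $x_j=g_j(x_1,\dots,x_a)$ with $g_j\in\mathfrak m^2$ for $j>a$. The intersection multiplicity equals the length of $k[[x]]/(f_1,\dots,f_4)$, because four equations in a four-dimensional regular ring isolated at $p$ give a complete intersection, and Serre's Tor formula reduces to the length. This length then equals the length of $k[[x_1,\dots,x_a]]/(\bar f_1,\dots,\bar f_a)$, where $\bar f_i$ is the restriction of $f_i$.

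We now compute the restrictions. Since each $f_i$ with $i\le a$ has no linear term and $g_j\in\mathfrak m^2$, substituting $x_j=g_j$ changes $f_i$ only in order $\ge3$. Hence the initial form of $\bar f_i$ is the quadratic part of $f_i$ restricted to $T_p\Lambda$, that is, $q_i(x_1,\dots,x_a)$. These are exactly the ``remaining initial quadratic forms along $T_p\Lambda$'' in the hypothesis, and they form a regular sequence in $k[x_1,\dots,x_a]$. A standard fact says that if the initial forms of $\bar f_1,\dots,\bar f_a$ form a regular sequence of degrees $e_i$, then $\bar f_1,\dots,\bar f_a$ is a regular sequence, the associated graded ring of the quotient is $k[x]/(q_1,\dots,q_a)$, and the length is $\prod e_i$. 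In our case this gives $2^a$. For $a=0$ the statement is just transversality, giving multiplicity $1$.

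The main obstacle is the bookkeeping in the first reduction. We must check that the basis change is legitimate, which holds because general members span the same system, and that the linear-part hypothesis is used in exactly the stated form. In particular, when $a<4-a$, i.e. $a\le1$, fewer $f_i$ than the codimension would seem available; however, there are $4$ equations and the conormal space has dimension $4-a\le4$, so it suffices that $4-a$ of them carry independent linear parts and the remaining $a$ are reduced to having zero linear part. I would also state the length-via-associated-graded fact explicitly, citing the standard tangent cone argument, since it carries the real content.
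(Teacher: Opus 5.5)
Your proposal is correct and follows essentially the same route as the paper: coordinates adapted to $\Lambda$, a change of basis giving $4-a$ equations with linear parts $u_{a+1},\ldots,u_4$ and $a$ equations in $\mathfrak m_p^2$, elimination of the normal variables to reach $\kk[[u_1,\ldots,u_a]]$, and the regular-sequence property of the quadratic initial forms to get length $2^a$. The paper phrases the last step as a flat degeneration of the $\overline h_i$ to their initial forms, while you phrase it through the associated graded ring; these are the same standard fact.
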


\begin{proof}
We work in the completed local ring $\widehat{\mathcal O}_{\PP^4,p}$. Choose affine coordinates $u_1,\ldots,u_4$ centered at $p$ such that $\Lambda=\{u_{a+1}=\cdots=u_4=0\}.$ Therefore, $u_1,\ldots,u_a$ are coordinates along $T_p\Lambda$, while $u_{a+1},\ldots,u_4$ are normal coordinates. By assumption, the linear parts of the four quadrics span the conormal space of $\Lambda$. Hence the image of the first jet map $\langle f_1,\ldots,f_4\rangle \longrightarrow \mathfrak m_p/\mathfrak m_p^2$ is the vector space generated by $u_{a+1},\ldots,u_4$, and has dimension $4-a$. After replacing $f_1,\ldots,f_4$ by another basis of the same four-dimensional vector space, we may therefore write the equations in the form
$
g_{a+1}=u_{a+1}+r_{a+1},\, \ldots,\,
g_4=u_4+r_4,\, h_1,\ldots,h_a,
$
where each $r_j\in \mathfrak m_p^2$, and each $h_i$ has zero linear part. In other words, $h_i\in \mathfrak m_p^2,\, i=1,\ldots,a$. Now consider the first $4-a$ equations $g_{a+1},\ldots,g_4$. Their Jacobian matrix with respect to the normal variables $u_{a+1},\ldots,u_4$ is the identity at $p$. Hence, the quotient $
\kk[[u_1,\ldots,u_4]]/(g_{a+1},\ldots,g_4)$ is isomorphic to $\kk[[u_1,\ldots,u_a]]$. Concretely, these equations eliminate the normal variables $u_{a+1},\ldots,u_4$ as formal power series in $u_1,\ldots,u_a$.

Let $\overline h_1,\ldots,\overline h_a$ be the images of $h_1,\ldots,h_a$ in this quotient. Then $\overline h_i\in (u_1,\ldots,u_a)^2\subset \kk[[u_1,\ldots,u_a]]$. By the hypothesis of no further special local degeneracy, the initial homogeneous terms $q_i=\operatorname{in}(\overline h_i)$ are general quadrics in $u_1,\ldots,u_a$, and they form a regular sequence. Therefore $
(q_1,\ldots,q_a)\subset \kk[u_1,\ldots,u_a]$ is a complete intersection of type $(2,\ldots,2)$. Its length is $
\dim_\kk \kk[u_1,\ldots,u_a]/(q_1,\ldots,q_a)=2^a$.

Finally, the family obtained by deforming $\overline h_i$ to its initial form $q_i$ is flat, since the initial forms form a regular sequence. Hence the length of $\kk[[u_1,\ldots,u_a]]/(\overline h_1,\ldots,\overline h_a)$ is equal to the length of the homogeneous complete intersection defined by $q_1,\ldots,q_a$, namely $2^a$. Note that this is the local intersection multiplicity at $p$ of the four original quadrics. Therefore the local contribution is $2^a$.
\end{proof}


We collect here the numerical possibilities for quadratic Cremona transformations of $\PP^4$. The list is obtained from the projective degrees and the Segre classes of the base scheme. It provides the numerical framework for the geometric constructions and for the classification of the irreducible components carried out in the following sections.

\begin{Proposition}\label{prop:numerical-list}
Let $\vf:\PP^4\dashrightarrow\PP^4$ be a quadratic Cremona transformation. Then
$\multideg(\vf)=(1,2,d_2,d_3,1)$ and $(d_2,d_3)$ is one of the eleven pairs listed in Theorem~\ref{intro:numerics}.
\end{Proposition}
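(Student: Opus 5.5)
The plan is to derive the list purely from the inequalities \eqref{eq:cremona-hodge}, applied to $\vf$ and also to its inverse. The geometric part of the argument only enters to fix the endpoints of the multidegree. Since $\vf$ is birational, $d_0=d_4=1$. Since the algebraic degree is exactly two, $d_1=2$, by the remarks preceding \eqref{eq:cremona-hodge}. Hence $\multideg(\vf)=(1,2,d_2,d_3,1)$, and only the pair $(d_2,d_3)$ has to be bounded.

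Next I will use the standard fact that the multidegree of $\vf^{-1}$ is the reversed sequence $(1,d_3,d_2,2,1)$. This holds because both maps have the same graph closure $X\subset\PP^4\times\PP^4$, and $d_i=[X]\cdot h_1^{4-i}h_2^{i}$ is symmetric under exchanging the factors. Concretely, on $X$ one has $d_i=(2H-E)^iH^{4-i}$ by \eqref{eq:multidegree-blow-up}, and here $2H-E$ is the pull-back of the hyperplane class of the target. Consequently all inequalities in \eqref{eq:cremona-hodge} can also be applied to the reversed sequence; in particular $d_3\ge1$ and $d_2\le d_3^2$.

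The inequalities I will actually use are the following:
\begin{itemize}
\item $d_2\le d_1^2=4$, the Cremona inequality;
\item $d_2^2\ge d_1d_3=2d_3$, that is $d_3\le d_2^2/2$, the Hodge inequality at $i=2$;
\item $d_3^2\ge d_2d_4=d_2$, the Hodge inequality at $i=3$;
\item $d_3\le d_1d_2=2d_2$, the Cremona inequality (implied by the others, but harmless).
\end{itemize}

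Now I enumerate the cases.
\begin{itemize}
\item If $d_2=1$, then $d_3\le 1/2$, which contradicts $d_3\ge1$.
\item If $d_2=2$, then $d_3\le2$ and $d_3^2\ge2$, so $d_3=2$.
\item If $d_2=3$, then $d_3\le 4$ and $d_3^2\ge3$, so $d_3\in\{2,3,4\}$.
\item If $d_2=4$, then $d_3\le 8$ and $d_3^2\ge4$, so $d_3\in\{2,\dots,8\}$.
\end{itemize}
This yields exactly the eleven pairs of Theorem~\ref{intro:numerics}.

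The only step requiring care is the symmetry of the multidegree under inversion, together with the validity of \eqref{eq:cremona-hodge} for the reversed sequence. I would justify it through the bihomogeneous class of the graph as above, citing \cite[Chapter 7]{Dolgachev12}. No Segre class computation is needed for this proposition; realizability of each pair is a separate question, addressed by the constructions later in the paper.
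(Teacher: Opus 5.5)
Your proof is correct and follows the paper's argument: $d_2\le 4$ from $d_1^2\ge d_0d_2$, $d_2=1$ is excluded by $d_2^2\ge 2d_3$ with $d_3\ge 1$, and $d_3\le d_2^2/2$ is then checked case by case. The one difference is the lower bound $d_3\ge 2$, which you get from the Hodge inequality $d_3^2\ge d_2d_4=d_2\ge 2$, whereas the paper argues that $d_3=1$ would make $\vf^{-1}$, and hence $\vf$, linear; your detour through the inverse is not actually needed, since $d_3\ge 1$ and $d_3^2\ge d_2d_4$ already belong to the inequalities \eqref{eq:cremona-hodge} for $\vf$ itself.
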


\begin{proof}
By definition $d_0=1$. Since $\vf$ is quadratic, $d_1=2$, and since $\vf$ is birational, $d_4=1$. Therefore, $
\multideg(\vf)=(1,2,d_2,d_3,1).
$
The Hodge inequalities in \eqref{eq:cremona-hodge} give $
d_1^2\ge d_0d_2,\, d_2^2\ge d_1d_3,\, d_3^2\ge d_2d_4$. From the first inequality we get $d_2\le 4$. Moreover $d_2\ge 2$: indeed $d_2=1$ would contradict $d_2^2\ge d_1d_3$, since $d_1=2$ and $d_3\ge 1$. Hence $d_2\in\{2,3,4\}$.

We are left with the bound on $d_3$. First, note that $d_3\ge 2$. Indeed $d_3$ is the degree of the inverse map $\vf^{-1}$, since the multidegree of $\vf^{-1}$ is $(1,d_3,d_2,2,1)$. If $d_3=1$, then $\vf^{-1}$ is linear, hence $\vf$ is linear, contradicting $d_1=2$.

Now use the middle Hodge inequality $d_2^2\ge 2d_3$. If $d_2=2$, then $4\ge 2d_3$, hence $d_3\le 2$. Since $d_3\ge 2$, we get $d_3=2$. If $d_2=3$, then $9\ge 2d_3$, hence $d_3\le 4$. Together with $d_3\ge 2$, this gives $d_3\in\{2,3,4\}$.

If $d_2=4$, then $16\ge 2d_3$, hence $d_3\le 8$. Together with $d_3\ge 2$, this gives $
d_3\in\{2,3,4,5,6,7,8\}$. Therefore the possible pairs are precisely $
(2,2),\, (3,2),(3,3),(3,4),\,
(4,2),(4,3),(4,4),(4,5),(4,6),(4,7),(4,8)$, as claimed.
\end{proof}

The numerical argument proves only necessity. The existence part is addressed in Sections \ref{sec:punctual}, \ref{sec:curves} and \ref{sec:surfaces}.

\section{The punctual family of multidegree \texorpdfstring{$(2,4,8)$}{(2,4,8)}}\label{sec:punctual}

Let $p_1,p_2,p_3,p_4,p_5\in\PP^4$ be general points. Let $\Pi_4\subset\PP^4$ be a plane through $p_4$, and let $H_5\subset\PP^4$ be a hyperplane through $p_5$. We set
\stepcounter{thm}\begin{equation}\label{eq:punctual-Z}
Z=p_1+p_2+p_3+p_4^{\Pi_4}+p_5^{H_5}.
\end{equation}
Here $Z$ is the first-order scheme encoding the imposed linear conditions; it
is generally a proper subscheme of the actual base scheme of
$H^0(\I_Z(2))$. The first three points impose one condition each on quadrics,
the point $p_4^{\Pi_4}$ imposes three conditions, and the point $p_5^{H_5}$
imposes four conditions. Therefore, $Z$ imposes ten independent conditions for
general data, and $h^0(\I_Z(2))=5$.

\begin{Lemma}\label{lem:zero-dimensional-base}
Let $\vf:\PP^4\dashrightarrow\PP^4$ be a quadratic Cremona transformation. If $\Bs(\vf)$ is zero-dimensional, then $\multideg(\vf)=(1,2,4,8,1)$.
\end{Lemma}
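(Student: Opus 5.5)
We compute the multidegree with the Segre class formula of Proposition~\ref{prop:segre-formula}. If the base scheme $Z=B_M$ is zero-dimensional, then $s(Z,\PP^4)$ is concentrated in degree four: $s_1=s_2=s_3=0$ and $s_4=e$, where $e=\sum_p e(I_p,\OO_{\PP^4,p})$ is the total Samuel multiplicity of the base ideal. There is no divisorial fixed component, since a five-dimensional quadratic system has no common quadratic factor and a common linear factor would produce a three-dimensional base locus. Formula~\eqref{eq:segre-degrees} therefore gives
$$
d_1=2,\qquad d_2=4,\qquad d_3=8,\qquad d_4=16-s_4.
$$
The vanishing of $s_1,s_2,s_3$ is the only input needed: the terms in \eqref{eq:segre-degrees} for $i\le 3$ involve only $s_j$ with $j\le 3$.

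Equivalently, on the blow-up $X$ of the generated ideal, the exceptional divisor $E$ maps to finitely many points. Hence $H^{4-j}\cdot E^j=0$ for $1\le j\le 3$, because a general linear space of codimension at most three can be moved off the finite set $\pi(E)$. This is the geometric content of the vanishing. I would state it this way to avoid any ambiguity in the convention for $s_j$.

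Birationality gives $d_4=1$, so $\multideg(\vf)=(1,2,4,8,1)$. As a consistency check, $e=15$, and the Cremona and Hodge inequalities \eqref{eq:cremona-hodge} hold: $8^2\ge 4\cdot 1$ and $4^2\ge 2\cdot 8$.

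The only step needing care is justifying the application of \eqref{eq:multidegree-blow-up} to the generated ideal rather than its saturation. Both have the same support, so $\pi(E)$ is finite in either case, and the vanishing of the intersection numbers uses only this support. I expect no real obstacle.
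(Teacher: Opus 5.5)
Your proof is correct, but it takes a different route from the paper's. The paper stays in $\PP^4$: since the base locus is finite, $i\le 3$ general quadrics of the system cut out a complete intersection of dimension $4-i\ge 1$, and no component of it lies in the base. So the inverse image of a general codimension-$i$ linear space is that complete intersection, and $d_i=2^i$.

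You instead work on the graph via \eqref{eq:multidegree-blow-up}. You observe that $H|_E$ is trivial because $E$ lies over finitely many points, so every mixed term $H^{4-j}E^j$ with $1\le j\le 3$ vanishes.

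What each approach buys:
\begin{itemize}
\item Your version avoids the Bertini-type check that general members meet properly and reducedly away from the base, but it relies on the blow-up formula \eqref{eq:multidegree-blow-up}, which the paper states without detailed proof. The paper's argument is more elementary and self-contained.
\item Your computation also gives $d_4=16-s_4$. Combined with birationality, this yields $\sum_p e(I_p,\OO_{\PP^4,p})=15$ for every zero-dimensional quadratic Cremona base scheme. That is exactly the count the paper checks by hand in Proposition~\ref{prop:248-family} and Remark~\ref{rem:first-order-punctual}.
\end{itemize}
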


\begin{proof}
Let $Z=\Bs(\vf)$. Since $Z$ is zero-dimensional, two general quadrics in the system have no fixed surface component, and three general quadrics have no fixed curve component. Hence the first three projective degrees are the degrees of complete intersections of respectively one, two, and three quadrics in $\PP^4$. Therefore
$
d_1=2,\ d_2=4,\ d_3=8.
$
Since $\vf$ is birational, $d_4=1$, and the claim follows.
\end{proof}

\begin{Proposition}\label{prop:248-family}
For general data as in \eqref{eq:punctual-Z}, the linear system $H^0(\I_Z(2))$ defines a quadratic Cremona transformation of $\PP^4$ of multidegree $(1,2,4,8,1)$. The parameter space of these data is irreducible of dimension $27$.
\end{Proposition}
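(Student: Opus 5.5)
The proof has three parts: the dimension count, the claim that the base locus is zero-dimensional, and birationality.

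\emph{Dimension and irreducibility.} The data $(p_1,\dots,p_5,\Pi_4,H_5)$ are parametrized by $(\PP^4)^5$ times the bundle of planes through $p_4$ (fibre $\Gr(2,4)$, dimension $4$) times the hyperplanes through $p_5$ (fibre $\PP^3$). This is an irreducible variety of dimension $20+4+3=27$. The conditions imposed by $Z$ are independent for general data, so $h^0(\I_Z(2))=5$ on a dense open set, and the map to $\Gr(5,V)$ is a morphism there.

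To see that the image also has dimension $27$, I would show the map is generically finite. From the subspace $W$ one recovers the reduced points of the base scheme and the local structure at each of them. The point with Hilbert function $(1,3,2)$ has tangent space $H_5$. The point with Hilbert function $(1,2)$ has tangent space $\Pi_4$; here the tangent space of the actual base scheme is cut by the linear parts, which span the conormal of $\Pi_4$. So the fibres are finite, up to permuting $p_1,p_2,p_3$.

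\emph{Zero-dimensional base locus.} By semicontinuity it suffices to exhibit one explicit example, checked in coordinates (or by the Magma computation). The cleaner route is a degeneration argument: quadrics through five general points with these tangency conditions contain products of pairs of hyperplanes, chosen so that their common zero set is exactly the five points. Once $\Bs$ is zero-dimensional, Lemma~\ref{lem:zero-dimensional-base}'s computation gives $d_1=2$, $d_2=4$, $d_3=8$ without assuming birationality. That lemma's argument only uses that the base locus is zero-dimensional.

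\emph{Birationality: $d_4=1$.} This is the main step. By Remark~\ref{rem:segre-birationality-criterion}, it suffices to show
\[
d_4=16-\textstyle\sum_p e(I_p)=1,
\]
that is, that the total Samuel multiplicity of the base ideal equals $15$. Equivalently, four general quadrics of the system meet with local intersection multiplicity $15$ in total at the base points, since their remaining intersection is then the single free point.

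The reduced points contribute $1$ each. At $p_4$ the linear parts span the $2$-dimensional conormal of $\Pi_4$. Lemma~\ref{lem:punctual-contribution} with $a=2$ gives contribution $4$, provided the two residual quadratic forms on $T_{p_4}\Pi_4$ are general, which holds because the system restricted to second-order data at $p_4$ is unconstrained. At $p_5$, with $a=3$, the lemma gives $8$. The total is $3+4+8=15$, so $d_4=16-15=1$.

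The obstacle is verifying the lemma's hypotheses. At $p_5$ all five quadrics are tangent to $H_5$, so four general members have a single common linear part. The lemma then requires that, after eliminating the normal variable, three residual quadratic forms in three variables form a general regular sequence. I would justify this by a dimension count: the quadrics satisfying the ten conditions induce a $4$-dimensional (projectively, a $\PP^3$) family of quadratic forms on $T_{p_5}H_5$ modulo the linear part. The forms coming from $W$ are a generic such subspace for general data. I would confirm genericity on one explicit example, then spread out by semicontinuity of local multiplicities. Intersection multiplicity is upper semicontinuous, and it cannot drop below $1+1+1+4+8$, so equality at one point forces equality generically.
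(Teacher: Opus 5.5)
Your proposal is correct and follows the paper's proof essentially step by step. The common structure is:
\begin{enumerate}
\item the irreducible $27$-dimensional incidence, whose map to $\Gr(5,V)$ is generically finite because the data are recovered from the tangent spaces of the actual base scheme;
\item openness plus one explicit member, for which the paper uses Proposition~\ref{prop:explicit-248} (that example should also be cited for the independence of the ten conditions, which you assert without proof);
\item complete-intersection degrees for $d_1,d_2,d_3$;
\item $d_4=16-(1+1+1+4+8)=1$ from Lemma~\ref{lem:punctual-contribution}.
\end{enumerate}
The only variation is in transferring the local count from the example to general data. You use upper semicontinuity of local intersection multiplicity together with the order bound $\geq 2^a$, whereas the paper invokes openness of the initial-form rank conditions; both work. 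Your aside about products of hyperplanes is unnecessary and too vague to serve as an argument.
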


\begin{proof}
Write $Z=p_1\cup p_2\cup p_3\cup q^\Pi\cup r^H$, where $p_1,p_2,p_3$ are simple points, $q^\Pi$ is a first order point with tangent plane $\Pi$, and $r^H$ is a first order point with tangent hyperplane $H$. The parameter space is a non-empty open subset of
$$
(\PP^4)^3\times\{(q,\Pi) \: | \: q\in\Pi\subset\PP^4,\ \dim\Pi=2\}\times\{(r,H) \: | \: r\in H\subset\PP^4,\ \dim H=3\}.
$$
It is irreducible, since each factor is irreducible. Its dimension is
$
3\cdot 4+(4+4)+(4+\dim(\PP^3)^*)=12+8+7=27.
$
For a general member, the actual base scheme recovers the unique support
point with local Hilbert function $(1,3,2)$ and its tangent hyperplane, the
unique support point with local Hilbert function $(1,2)$ and its tangent
plane, and the three reduced points up to their finite permutation. Hence the
map from this parameter space to $\Gr(5,V)$ is generically finite, so its
image also has dimension $27$.

The scheme $Z$ imposes $10$ linear conditions on quadrics: each simple point imposes $1$ condition, $q^\Pi$ imposes $1+\dim\Pi=3$ conditions, and $r^H$ imposes $1+\dim H=4$ conditions. Independence of these conditions is an open condition on the above parameter space. The explicit example in Proposition~\ref{prop:explicit-248} has precisely this type and satisfies $h^0(\I_Z(2))=5$; hence the same holds for general data. Moreover, the absence of further support points or positive-dimensional base components is open, and it holds for the same explicit example. At the point with prescribed tangent plane, elimination of the two independent linear equations leaves three quadratic initial forms spanning the square of the maximal ideal in two variables, so the local Hilbert function is $(1,2)$. At the point with prescribed tangent hyperplane, elimination of the single linear equation leaves four general quadratic initial forms in three variables, whose quotient has Hilbert function $(1,3,2)$. These are open rank conditions on the corresponding initial forms. Therefore, for general data, the actual base scheme is supported at the five prescribed points and has local Hilbert functions $(1),(1),(1),(1,2),(1,3,2)$.

Let $M=H^0(\I_Z(2))$. Since the actual base scheme is zero-dimensional, a general member of $M$ has degree $2$, two general members have no fixed surface component, and three general members have no fixed curve component. Therefore the first projective degrees are the complete intersection degrees $d_1=2$, $d_2=4$, and $d_3=8$.

To conclude, it is enough to compute the topological degree. Four general quadrics in $M$ form a complete intersection of degree $2^4=16$. The part supported at the base scheme has total local intersection multiplicity $15$. The initial-form rank conditions verified above are precisely the hypotheses of Lemma~\ref{lem:punctual-contribution}. Hence the three simple points contribute $1+1+1$, the point $q^\Pi$ contributes $2^{\dim\Pi}=4$, and the point $r^H$ contributes $2^{\dim H}=8$. Hence the residual intersection has degree $16-15=1$. This residual degree is the fourth projective degree $d_4$. Therefore, $d_4=1$, so the associated rational map is generically finite of degree one. Therefore it is birational, and its multidegree is $(1,2,4,8,1)$.
\end{proof}

\begin{Proposition}\label{prop:explicit-248}
The map $\vf:\PP^4\dashrightarrow\PP^4$ given by
\stepcounter{thm}\begin{equation}\label{eq:explicit-248}
\vf=[x_2x_3:x_2x_4:x_3x_4:x_1(x_3+x_4-x_2):x_0(x_1+x_2+x_3+x_4)-x_1x_2]
\end{equation}
is a Cremona transformation of multidegree $(1,2,4,8,1)$. Its inverse is given by forms of degree eight.
\end{Proposition}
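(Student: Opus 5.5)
The plan is to verify birationality by solving the equations $y=\vf(x)$ explicitly. This produces the inverse map and its degree directly. We then get the multidegree either from Lemma~\ref{lem:zero-dimensional-base}, after checking that the base locus is finite, or directly from the identity $d_3=\deg\vf^{-1}$.

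\emph{Step 1: inversion.} Put $y_0=x_2x_3$, $y_1=x_2x_4$, $y_2=x_3x_4$. On the open set where $x_2x_3x_4\neq0$, the monomial ratios give
$$[x_2:x_3:x_4]=[y_0y_1:y_0y_2:y_1y_2].$$
So we may write $x_2=\lambda y_0y_1$, $x_3=\lambda y_0y_2$, $x_4=\lambda y_1y_2$ for a scalar $\lambda$, with $\lambda^2y_0y_1y_2$ fixed by the first three equations. Since we work projectively, we absorb this factor by rescaling $x$.

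The fourth coordinate is linear in $x_1$, namely $y_3=x_1(x_3+x_4-x_2)$, so
$$x_1=\frac{y_3}{\lambda\,(y_0y_2+y_1y_2-y_0y_1)}.$$
The fifth coordinate is linear in $x_0$, with coefficient $x_1+x_2+x_3+x_4$, so it determines $x_0$ rationally in terms of $y$ and the already known $x_1,\dots,x_4$.

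Clearing denominators homogeneously then gives an explicit rational inverse $\psi$. We check $\psi\circ\vf=\mathrm{id}$ formally, and also check that $\vf$ is generically injective. This second check is immediate, because at each stage the solution is unique once the projective scaling is fixed. Hence $\vf$ is birational.

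\emph{Step 2: degree of the inverse.} We track the degrees in $y$. The coordinates $x_2,x_3,x_4$ have degree $2$ in $y$. The coordinate $x_1$ is a quotient of degree $1-2$, and $x_0$ is another quotient. After bringing everything to a common denominator, we obtain forms of degree $8$. We then check that these forms have no common factor. To do this, we exhibit a point of $\PP^4$ where they do not all vanish, and show that no irreducible factor (such as $y_2$ or $y_0y_2+y_1y_2-y_0y_1$) divides all of them. This gives $\deg\vf^{-1}=8$, so $d_3=8$.

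\emph{Step 3: multidegree.} We compute the base locus from $x_2x_3=x_2x_4=x_3x_4=0$. This forces at least two of $x_2,x_3,x_4$ to vanish. In each of the resulting cases, we solve the remaining equations $x_1(x_3+x_4-x_2)=0$ and $x_0(x_1+x_2+x_3+x_4)=x_1x_2$ by a finite case analysis. We expect five support points, among them $[1:0:0:0:0]$ and the coordinate points $e_1,\dots,e_4$, possibly adjusted by the case analysis. In particular, the base locus is zero-dimensional. Then Lemma~\ref{lem:zero-dimensional-base} gives $d_2=4$ and $d_3=8$, consistent with Step 2, and birationality gives $d_4=1$. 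So the multidegree is $(1,2,4,8,1)$.

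The main obstacle is the bookkeeping in Step 2, namely proving that the common denominator really does not cancel. If this becomes messy, we rely on Step 3 instead: Lemma~\ref{lem:zero-dimensional-base} forces $d_3=8$, and $d_3$ equals the degree of the inverse, so no separate gcd computation is needed.
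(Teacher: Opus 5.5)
Your proposal is correct and follows the paper's proof. Solving $y=\vf(x)$ as you describe means normalizing $x_2=y_0y_1$, $x_3=y_0y_2$, $x_4=y_1y_2$. With $P=y_0y_1y_2$, $A=y_0y_2+y_1y_2-y_0y_1$ and $B=y_0y_1+y_0y_2+y_1y_2$, this gives $x_1=Py_3/A$ and $x_0=P(Ay_4+y_0y_1y_3)/(AB+Py_3)$, which after clearing denominators is exactly the paper's degree-eight inverse. Your case analysis gives the five coordinate points as the base locus, so Lemma~\ref{lem:zero-dimensional-base} yields $(1,2,4,8,1)$.

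Two small slips to fix. After your rescaling, $x_1$ has degree $2$ in $y$, not $1-2$. Also, coprimality of the degree-eight forms is shown by the irreducible-factor check, or bypassed via $\deg\vf^{-1}=d_3$; exhibiting a point where they do not all vanish does not prove it.
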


\begin{proof}
Set $F_0=x_2x_3$, $F_1=x_2x_4$, $F_2=x_3x_4$, $F_3=x_1(x_3+x_4-x_2)$ and $F_4=x_0(x_1+x_2+x_3+x_4)-x_1x_2$. We begin by proving birationality. Let $[y_0:\cdots:y_4]$ be coordinates on the target and set $A=y_0y_2+y_1y_2-y_0y_1,\, B=y_0y_1+y_0y_2+y_1y_2,\, P=y_0y_1y_2 .$ Consider the five degree eight forms
\stepcounter{thm}\begin{equation}\label{eq:inverse-248}
\begin{split}
G_0&=PA(Ay_4+y_3y_0y_1),\qquad G_1=Py_3(AB+Py_3),\\
G_2&=y_0y_1A(AB+Py_3),\qquad G_3=y_0y_2A(AB+Py_3),\\
G_4&=y_1y_2A(AB+Py_3).
\end{split}
\end{equation}
They have no common factor. A direct expansion gives proportionalities $[G_0(F):G_1(F):G_2(F):G_3(F):G_4(F)]=[x_0:x_1:x_2:x_3:x_4]$ on the open set where both sides are defined, and similarly $\vf([G_0:G_1:G_2:G_3:G_4])=[y_0:y_1:y_2:y_3:y_4].$ Therefore, \eqref{eq:inverse-248} defines the inverse of \eqref{eq:explicit-248}. In particular $\vf$ is birational, and the inverse has degree $8$.

Now, let us identify the base locus. Let $Z=V(F_0,\ldots,F_4)$. From $F_0=F_1=F_2=0$ at most one among $x_2,x_3,x_4$ is non-zero. If $x_2=x_3=x_4=0$, then $F_4=x_0x_1$, so we get the two points $e_0=[1:0:0:0:0]$ and $e_1=[0:1:0:0:0]$. If only $x_2$ is non-zero, then $F_3=-x_1x_2$ and $F_4=x_0x_1+x_0x_2-x_1x_2$. Hence $x_1=0$, and then $F_4=x_0x_2$, so $x_0=0$ and we get $e_2$. The cases where only $x_3$ or only $x_4$ is non-zero give respectively $e_3$ and $e_4$. Therefore $Z$ is supported on the five coordinate points. The local first order data are read from the linear parts of the $F_i$ in affine charts centered at these points:
\begin{footnotesize}
$$
\begin{array}{c|c|c}
\text{point} & \text{linear parts} & \text{type}\\ \hline
e_0 & x_1+x_2+x_3+x_4 & p^{\PP^3}\\
e_1 & x_3+x_4-x_2,\ x_0-x_2 & p^{\PP^2}\\
e_2 & x_3,\ x_4,\ -x_1,\ x_0-x_1 & \text{simple}\\
e_3 & x_2,\ x_4,\ x_1,\ x_0 & \text{simple}\\
e_4 & x_2,\ x_3,\ x_1,\ x_0 & \text{simple}
\end{array}
$$
\end{footnotesize}
Therefore, the base scheme is supported at the five points occurring in \eqref{eq:punctual-Z}: one point with tangent hyperplane, one point with tangent plane, and three simple points. Direct reduction of the local ideals gives Hilbert functions $(1,3,2)$ at $e_0$, $(1,2)$ at $e_1$, and $(1)$ at $e_2,e_3,e_4$. In particular it is zero-dimensional; the first-order scheme in \eqref{eq:punctual-Z} is strictly smaller at $e_0$. Hence the first three projective degrees are not affected by positive-dimensional fixed components, so $d_1=2$, $d_2=4$ and $d_3=8$. Since $\vf$ is birational, $d_4=1$. Therefore $\multideg(\vf)=(1,2,4,8,1)$.
\end{proof}

\begin{Remark}\label{rem:first-order-punctual}
Let $Z$ be a zero-dimensional first order scheme supported at distinct points of $\PP^4$. Write $n_a$ for the number of support points of type $p^{\PP^a}$. Such a point imposes $a+1$ linear conditions on quadrics and, under the transversality assumptions of Lemma~\ref{lem:punctual-contribution}, contributes $2^a$ to the local intersection multiplicity of four general quadrics.

Therefore, the numerical conditions for imposing ten conditions and contributing $15$ to the fourth projective degree are $n_0+2n_1+3n_2+4n_3=10$ and $n_0+2n_1+4n_2+8n_3=15$. Subtracting gives $n_2+4n_3=5$. If $n_3=0$, then $n_2=5$, and the first equation gives $n_0+2n_1=-5$, which is impossible. Hence $n_3=1$ and $n_2=1$. Therefore $n_0+2n_1=3$, and the only numerical possibilities are $(n_0,n_1,n_2,n_3)=(3,0,1,1)$ and $(n_0,n_1,n_2,n_3)=(1,1,1,1)$.

The first type is realized by the general zero-dimensional base scheme of the family of multidegree $(2,4,8)$. The second type is only a numerical first order possibility; we do not use it as a Cremona family. The type $(1,1,1,1)$ occurs as a flat limit of punctual schemes of type $(3,0,1,1)$, obtained by letting two simple points coalesce along a prescribed tangent direction. This is only a statement about the limiting base schemes; it does not by itself imply that the limiting linear system defines a Cremona transformation.
\end{Remark}

\begin{Definition}\label{def:first-order-linear}
Let $p\in\PP^4$ and let $\Lambda\subset\PP^4$ be a linear subspace through $p$ of dimension $a$, with $0\le a\le 3$. The first order linear scheme supported at $p$ with tangent space $\Lambda$ is the subscheme denoted by $p^\Lambda$ and defined locally as follows. Choose affine coordinates $u_1,\ldots,u_4$ centered at $p$ such that $\Lambda=\{u_{a+1}=\cdots=u_4=0\}$. Then
$
p^\Lambda
$
is defined by the ideal
$
(u_{a+1},\ldots,u_4)+(u_1,\ldots,u_4)^2.
$
That is, $\mathcal O_{p^\Lambda}$ has basis $1,u_1,\ldots,u_a$, and hence $\length(p^\Lambda)=a+1$.

A zero-dimensional scheme $Z\subset\PP^4$ is called first order linear if it is a disjoint union of schemes of the form $p_i^{\Lambda_i}$, with distinct support points $p_i$ and linear subspaces $\Lambda_i\subset\PP^4$ through $p_i$.
\end{Definition}

\begin{Remark}\label{rem:first-order-linear-conditions}
A quadric $Q$ contains $p^\Lambda$ if and only if $Q(p)=0$ and the differential $dQ_p$ vanishes on $T_p\Lambda$. Therefore, $p^\Lambda$ imposes $a+1$ linear conditions on quadrics. In the notation used below, a point of type $p^{\PP^a}$ means a scheme $p^\Lambda$ with $\dim\Lambda=a$.
\end{Remark}

\section{Zero-dimensional base locus}
\label{sec:zero-dimensional-geometric}

In this section we prove that every quadratic Cremona transformation of
$\PP^4$ with zero-dimensional base locus is a specialization of the punctual
family constructed in Proposition~\ref{prop:248-family}. 

Let
$
 W\subset H^0(\PP^4,\OO_{\PP^4}(2))
$
be a five-dimensional vector space defining a quadratic rational map $\vf_W\colon \PP^4\dashrightarrow \PP(W^*)\simeq\PP^4.$ Let $p\in B_W$ be a base point. After trivializing $\mathcal O_{\PP^4}(2)$ at $p$, the first jets of the quadrics define a linear map
$
j^1_p(W)\colon W\longrightarrow \mathfrak m_p/\mathfrak m_p^2
\simeq T_p^*\PP^4.
$
We set
$
r_p(W):=\operatorname{rk} j^1_p(W).
$

\begin{Lemma}\label{lem:first-jet-normal-form}
Let $p\in B_W$, and set $r=r_p(W)$. After a projective change of coordinates sending
$
p=[1:0:0:0:0]
$
and a change of basis of $W$, one may write
$
W=
\left\langle
x_0x_1+q_1,\ldots,x_0x_r+q_r,
q_{r+1},\ldots,q_5
\right\rangle,
$
where
$
q_i\in k[x_1,x_2,x_3,x_4]_2.
$
In particular,
$
0\le r_p(W)\le 4.
$
If the rational map defined by $W$ is generically finite, then
$
1\le r_p(W)\le 4.
$
\end{Lemma}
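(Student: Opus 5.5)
The plan is to write out the quadrics in coordinates adapted to $p$ and then perform two successive changes of basis of $W$.

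\emph{Setup.} Move $p$ to $e_0=[1:0:0:0:0]$. Every quadric then decomposes as
\[
Q = a\,x_0^2 + x_0\,\ell(x_1,\dots,x_4) + q(x_1,\dots,x_4),
\]
with $\ell$ linear and $q$ a quadratic form in $x_1,\dots,x_4$. Since $p\in B_W$, every element of $W$ has $a=0$. In the affine chart $x_0=1$, the first jet of $Q$ at $p$ is exactly $\ell$. Hence $j^1_p(W)$ is the linear map $Q\mapsto \ell$, taking values in the space of linear forms in $x_1,\dots,x_4$, which is identified with $\mathfrak m_p/\mathfrak m_p^2$.

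\emph{Change of basis of $W$.} Choose a basis $Q_1,\dots,Q_5$ of $W$ such that $Q_{r+1},\dots,Q_5$ span $\ker j^1_p(W)$. These kernel elements have $\ell=0$, so they are forms $q_i\in k[x_1,\dots,x_4]_2$. The remaining elements $Q_1,\dots,Q_r$ have linearly independent linear parts $\ell_1,\dots,\ell_r$. Next apply an element of $\PGL_5$ that fixes $x_0$ and acts linearly on $x_1,\dots,x_4$. Such a change fixes $p$ and preserves the decomposition above, so we may take $\ell_i=x_i$ for $i\le r$. This yields the stated normal form. Since $r$ is the rank of a map into a $4$-dimensional space, $0\le r\le 4$.

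\emph{Excluding $r=0$.} Suppose $r=0$. Then every generator lies in $k[x_1,\dots,x_4]$, so $\vf_W$ factors through the linear projection $\PP^4\dashrightarrow\PP^3$ from $p$. Its image therefore has dimension at most $3$, so $\vf_W$ is not dominant onto $\PP^4$. In particular it is not generically finite, which contradicts the hypothesis. Hence $1\le r_p(W)\le 4$.

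\emph{Main difficulty.} There is little real difficulty here. The one point to state carefully is the identification of $j^1_p$ with the coefficient of $x_0$. This identification is invariant under the chosen coordinate changes, because those changes fix $x_0$ and act only on $x_1,\dots,x_4$.
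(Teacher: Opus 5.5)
Your proposal is correct and follows the paper's proof essentially step for step. Both arguments use the decomposition $Q=x_0\ell+q$ with first-jet map $Q\mapsto\ell$, a basis of $W$ adapted to the kernel, a linear change of $x_1,\dots,x_4$ normalizing the independent linear parts, and the exclusion of $r=0$ because the map then factors through the projection from $p$ (your remark that the image then has dimension at most three is just a slightly more explicit version of the paper's one-line argument).
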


\begin{proof}
Every quadric passing through $p=[1:0:0:0:0]$ can be written uniquely as
$
Q=x_0\ell(x_1,x_2,x_3,x_4)+q(x_1,x_2,x_3,x_4),
$
where $\ell$ is linear and $q$ is quadratic. Under this decomposition, the first-jet map is
$
Q\longmapsto \ell.
$
After choosing coordinates on $T_p\PP^4$ and a basis of $W$ adapted to the image and kernel of this map, we obtain the asserted normal form.

If $r_p(W)=0$, all the quadrics are independent of $x_0$. Hence the associated rational map factors through the projection
$
\PP^4\dashrightarrow\PP^3
$
from $p$, and therefore cannot be generically finite.
\end{proof}

The Zariski tangent space of the base scheme at $p$ is the annihilator in
$T_p\PP^4$ of the image of the first-jet map. Consequently,
$
\dim T_pB_W=4-r_p(W).
$
Therefore, the four possible ranks for a quadratic Cremona transformation have the following normal forms:
\begin{footnotesize}
$$
\begin{array}{c|c|c}
r_p(W) & \text{normal form} & \dim T_pB_W\\
\hline
4 &
\left\langle
x_0x_1+q_1,
x_0x_2+q_2,
x_0x_3+q_3,
x_0x_4+q_4,
q_5
\right\rangle
&0
\\[2mm]
3 &
\left\langle
x_0x_1+q_1,
x_0x_2+q_2,
x_0x_3+q_3,
q_4,
q_5
\right\rangle
&1
\\[2mm]
2 &
\left\langle
x_0x_1+q_1,
x_0x_2+q_2,
q_3,
q_4,
q_5
\right\rangle
&2
\\[2mm]
1 &
\left\langle
x_0x_1+q_1,
q_2,
q_3,
q_4,
q_5
\right\rangle
&3
\end{array}
$$
\end{footnotesize}
If $r_p(W)=4$, then $p$ is a reduced point of the base scheme. Indeed, the linear parts of the local equations generate
$
\mathfrak m_p/\mathfrak m_p^2,
$
and Nakayama's lemma gives
$
I_{B_W,p}=\mathfrak m_p.
$

If $r_p(W)=3$ and $p$ is isolated in $B_W$, then the local algebra has embedding dimension one and is therefore curvilinear.

For $r_p(W)=2$ or $r_p(W)=1$, the rank does not determine the local algebra. In particular, it does not determine its length, Hilbert function, or Samuel multiplicity. For instance, the general punctual transformation considered in this paper has a rank-two block with Hilbert function
$
(1,2)
$
and a rank-one block with Hilbert function
$
(1,3,2),
$
but other local structures may have the same first-jet ranks.

The rank also admits a useful geometric interpretation. Let
$
\pi\colon \widetilde{\PP^4}=\operatorname{Bl}_p(\PP^4)\longrightarrow\PP^4
$
be the blow-up of $p$, with exceptional divisor
$
E\simeq \PP(T_p\PP^4)\simeq\PP^3.
$
The strict transforms of the quadrics belong to
$
|2H-E|,
$
whose restriction to $E$ is $\mathcal O_E(1)$. The restricted linear system is generated by the first jets of the quadrics. In the normal form of Lemma~\ref{lem:first-jet-normal-form}, it induces the linear projection
$
E\simeq\PP^3
\dashrightarrow
\PP^{r-1},
\,
[v_1:v_2:v_3:v_4]
\longmapsto
[v_1:\dots : v_r].
$
Its base locus is
$
\PP^{3-r}=\{v_1=\cdots=v_r=0\}\subset E,
$
with the convention that it is empty when $r=4$. After resolving this linear base locus, the image of $E$ is a linear space
$
\PP^{r-1}\subset\PP^4,
$
and the general fiber has dimension
$
4-r.
$

Therefore, $r_p(W)$ measures both the tangent dimension of the base scheme at $p$ and the dimension of the image of the exceptional divisor over $p$. It determines the linear part of the local normal form, but not the higher-order terms $q_i$ and hence not the complete local structure of $B_W$.

\begin{Lemma}\label{lem:strict-transform-general-plane}
Let
$
 \vf\colon \PP^4\dashrightarrow\PP^4
$
be a birational map, and let $\Pi\subset\PP^4$ be a general plane in the
target.  Then the strict transform $S_\Pi$ of $\Pi$ is an irreducible surface,
and
$
 \vf|_{S_\Pi}\colon S_\Pi\dashrightarrow\Pi
$
is birational.

Assume moreover that $\vf$ is defined by a linear system $W$, that $A\subset W$ is the two-dimensional subspace corresponding
to the two hyperplanes cutting out $\Pi$, and that
$
 X_A=\bigcap_{Q\in A}V(Q)
$
is an integral surface.  Then
$
 X_A=S_\Pi
$
scheme-theoretically.
In particular, the rational map induced on $X_A$ by any complement of $A$ in
$W$ is birational onto $\Pi$.
\end{Lemma}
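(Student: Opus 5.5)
The plan is to prove the two parts of Lemma~\ref{lem:strict-transform-general-plane} separately: first the birational statement for an arbitrary Cremona map, then the scheme-theoretic identification $X_A=S_\Pi$.

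For the first part, I would pass to the closure of the graph $\Gamma\subset\PP^4\times\PP^4$ with its two projections $p_1,p_2$. Both are birational, so there are proper closed subsets $B\subset\PP^4$ (source) and $B'\subset\PP^4$ (target) outside which $\vf$ restricts to an isomorphism $\PP^4\setminus B\cong\PP^4\setminus B'$; here $\dim B'\le 3$. A general plane $\Pi$ is not contained in $B'$, so $\Pi\setminus B'$ is a dense open subset of $\Pi$, and it is irreducible. I would define $S_\Pi$ as the closure of $\vf^{-1}(\Pi\setminus B')$. It is irreducible of dimension two, and $\vf$ restricts to an isomorphism between dense opens of $S_\Pi$ and $\Pi$. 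Hence $\vf|_{S_\Pi}$ is birational. Equivalently, $S_\Pi=p_1(\widetilde\Pi)$, where $\widetilde\Pi$ is the component of $p_2^{-1}(\Pi)$ dominating $\Pi$. By Bertini applied to the base-point-free system $p_2^*|\OO(1)|$, for general $\Pi$ the preimage $p_2^{-1}(\Pi)$ is irreducible away from the exceptional locus, which confirms the construction.

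For the second part, write $A=\langle Q_1,Q_2\rangle$, so that $\Pi=\{y_1=y_2=0\}$ after a choice of basis. On $\PP^4\setminus\Bs(W)$ we have $\vf^{-1}(\Pi)=V(Q_1,Q_2)\setminus \Bs(W)$. It follows that $X_A\setminus\Bs(W)$ contains $S_\Pi\cap(\PP^4\setminus\Bs(W))$ as a dense open subset, so $S_\Pi\subseteq X_A$ set-theoretically. Now $X_A$ is integral of dimension two. Since $S_\Pi$ is an irreducible two-dimensional closed subset of $X_A$, it must equal $X_A$ as sets. Because $X_A$ is reduced, and $S_\Pi$ is given its reduced structure, the equality holds scheme-theoretically.

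Finally, fix a complement $A'$ of $A$ in $W$. The map $X_A\dashrightarrow\PP(A'^*)\cong\Pi$ given by $A'$ is exactly $\vf|_{S_\Pi}$ composed with the identification of $\Pi$ with the projective space whose coordinates are the remaining $y_j$. It is therefore birational onto $\Pi$ by the first part.

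The main obstacle is the genericity bookkeeping. A plane general among those cut out by pairs in $A$ is the same as a general plane in the target, so "general $\Pi$" in the first part is compatible with the fixed $A$ in the second. I also have to check that $\Bs(W)\cap X_A$ contains no two-dimensional component, so that the dense open is genuinely dense. This follows from the integrality hypothesis on $X_A$: if a two-dimensional component sat inside $\Bs(W)$, integrality would force $X_A\subseteq\Bs(W)$. That would make $\vf$ map nothing onto $\Pi$, contradicting dominance of $\vf$, since a general point of $\Pi$ has a preimage off $\Bs(W)$ lying in $V(Q_1,Q_2)$.
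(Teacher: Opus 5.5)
Your proposal is correct and follows the paper's proof essentially step by step. The isomorphism on dense open sets makes $S_\Pi$ irreducible and birational onto $\Pi$, and then $S_\Pi\subseteq X_A$ together with the dimension count, integrality and reducedness of $X_A$ gives the scheme-theoretic equality. The Bertini aside and the check that $\Bs(W)\cap X_A$ has no two-dimensional component are unnecessary, since the inclusion of the irreducible closed surface $S_\Pi$ in the integral surface $X_A$ already forces equality.
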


\begin{proof}
Choose dense open subsets $U$ in the source and $U'$ in the target such that
$
 \vf|_U\colon U\xrightarrow{\sim}U'
$
is an isomorphism.  Since $\Pi$ is general, $\Pi\cap U'$ is a dense open
subset of $\Pi$.  Its inverse image in $U$ is therefore irreducible of
dimension two and isomorphic to $\Pi\cap U'$.  By definition,
$
 S_\Pi=\overline{\vf^{-1}(\Pi\cap U')},
$
so $S_\Pi$ is irreducible and the restriction of $\vf$ to $S_\Pi$ is
birational onto $\Pi$.

Now suppose that $\Pi$ is cut out by the two linear forms on $\PP(W^*)$
corresponding to a basis of $A$.  On the open set where $\vf$ is defined, the
inverse image of $\Pi$ is cut out by the two corresponding members of $A$.
Hence
$
 S_\Pi\subset X_A.
$
Both schemes have dimension two. Since $X_A$ is irreducible, the inclusion
identifies the two supports. The defining ideal of $S_\Pi$ inside the reduced
scheme $X_A$ therefore has zero radical and is nilpotent; reducedness of
$X_A$ forces it to be zero. Thus the equality is scheme-theoretic. The last
assertion follows since, on $X_A$, the two coordinates belonging to $A$
vanish and the remaining coordinates give precisely the restriction of
$\vf$ to $X_A$.
\end{proof}

\begin{Lemma}\label{lem:smooth-two-quadric-section}
Let $W\subset H^0(\PP^4,\OO_{\PP^4}(2))$ have zero-dimensional base locus.
Assume that
$
 r_p(W)\geq 2
$
for every $p\in\Supp(\Bs(W))$.  Then, for a general two-dimensional subspace
$A\subset W$, the complete intersection
$
 X_A=\bigcap_{Q\in A}V(Q)
$
is a smooth irreducible surface of type $(2,2)$ in $\PP^4$.
\end{Lemma}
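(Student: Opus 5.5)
The proof has two parts. Away from the base locus we use Bertini. At each base point we show that a general pair of quadrics in $W$ has independent linear parts there, which is exactly where the hypothesis $r_p(W)\ge 2$ is used.

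\emph{Away from the base locus.} The linear system $W$ is base-point free on $U=\PP^4\setminus\Bs(W)$. By Bertini's theorem in characteristic zero, a general member $Q_1\in W$ is smooth on $U$. The restriction of $W$ to $Q_1\cap U$ is again base-point free, so a general second member $Q_2$ cuts $Q_1\cap U$ in a smooth scheme of pure dimension two. Thus $X_A\cap U$ is smooth of dimension two. Since $\Bs(W)$ is finite, $X_A$ has pure dimension two everywhere and is a complete intersection of type $(2,2)$.

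\emph{At the base points.} Fix $p\in\Supp(\Bs(W))$. The first-jet map $j^1_p(W)\colon W\to\mathfrak m_p/\mathfrak m_p^2$ has rank $r_p(W)\ge2$. For a general two-dimensional subspace $A\subset W$, the restriction $j^1_p|_A$ is therefore injective, because the subspaces $A$ meeting a kernel of codimension $\ge 2$ form a proper closed subset of $\Gr(2,W)$. The two local equations of $X_A$ at $p$ then have linearly independent differentials, so $X_A$ is smooth of dimension two at $p$. There are only finitely many base points, so finitely many open conditions on $\Gr(2,W)$ suffice. This shows that $X_A$ is smooth everywhere.

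\emph{Irreducibility.} $X_A$ is a smooth complete intersection of positive dimension, so it is connected: by Hartshorne's connectedness, or because $H^0(\OO_{X_A})=k$ from the Koszul resolution. A smooth connected scheme is irreducible, which completes the proof.

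The step I expect to need the most care is making Bertini rigorous for the second quadric. Here I would apply Bertini to the restricted system on the smooth quasi-projective variety $Q_1\cap U$, and check that this restriction is still base-point free, which is immediate since $W$ has no base points on $U$.
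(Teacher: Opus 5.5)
Your proposal is correct and takes essentially the same route as the paper. You use $r_p(W)\ge 2$ to avoid the finitely many proper closed loci $\{A : A\cap\ker j^1_p(W)\neq 0\}\subset\Gr(2,W)$, apply Bertini on the base-point-free complement, and get irreducibility from connectedness of a smooth complete intersection of two ample divisors; the only difference is that you spell out the two-step Bertini and the pure-dimensionality of $X_A$, which the paper leaves implicit.
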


\begin{proof}
For a fixed base point $p$, consider the closed determinantal locus
$$
 \Sigma_p=
 \left\{
 A\in\Gr(2,W)\mid
 \rk\bigl(A\longrightarrow\mathfrak m_p/\mathfrak m_p^2\bigr)<2
 \right\}.
$$
Since $r_p(W)\geq2$, the locus $\Sigma_p$ is a proper closed subset of
$\Gr(2,W)$.  The support of the base scheme is finite, hence a general
$A\subset W$ lies outside every $\Sigma_p$.  For such an $A$, the
differentials of two generators of $A$ are linearly independent at every base
point.  Therefore $X_A$ is smooth at the base locus.

Outside the base locus the linear system $\PP(W)$ is base-point-free.  By
Bertini, two general members meet transversely there.  Therefore, $X_A$ is smooth
everywhere.  It is a complete intersection of two ample divisors in $\PP^4$,
so it is connected.  Since a smooth connected scheme has only one irreducible
component, $X_A$ is irreducible.
\end{proof}

\begin{Proposition}\label{prop:geometric-rank-one-point}
Let
$
 \vf_W\colon\PP^4\dashrightarrow\PP^4
$
be a quadratic Cremona transformation with zero-dimensional base locus. Then
there exists a point $p\in\Bs(W)$ such that
$
 r_p(W)=1.
$
\end{Proposition}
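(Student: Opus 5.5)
Argue by contradiction. Assume that $r_p(W)\ge 2$ for every $p\in\Supp(\Bs(W))$; by Lemma~\ref{lem:first-jet-normal-form} we already have $r_p(W)\ge1$, so the aim is to exclude this case. By Lemma~\ref{lem:smooth-two-quadric-section}, a general two-dimensional $A\subset W$ cuts out a smooth irreducible surface $X_A$ of type $(2,2)$, that is, a del Pezzo surface of degree four. It is integral, so Lemma~\ref{lem:strict-transform-general-plane} applies. The rational map induced on $X_A$ by a complement of $A$ in $W$ is therefore birational onto a plane $\Pi\cong\PP^2$.

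This map is given by a net of hyperplane sections $|-K_{X_A}|$ with base scheme $B=X_A\cap\Bs(W)$. Since $X_A$ is smooth, $B$ is a zero-dimensional subscheme of a smooth surface. We compare two computations of the degree of the net.

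\emph{First computation.} Resolve the net by a sequence of point blow-ups $Y\to X_A$, so that the moving class is $D=-K_{X_A}-\sum m_iE_i$, with the $E_i$ the total transforms of the exceptional curves. Birationality onto $\PP^2$ gives $D^2=1$. The restriction of $\vf$ to $X_A$ is the restriction of a birational map, and a general line of $\Pi$ pulls back to the strict transform of a general curve section. Since $d_3=8$ by Lemma~\ref{lem:zero-dimensional-base}, these curves $X_A\cap Q_3$ have arithmetic genus $1+\frac12(8-4\cdot 0)$; more simply, they are canonical curves of genus $5$ cut by a third quadric, so $D^2=4-\sum m_i^2=1$. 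For the curves of the net to be rational we also need $p_a(D)=0$. Using $K_Y=K_{X_A}+\sum E_i$, this reads $D\cdot(D+K_Y)=-2$, which gives $\sum m_i=3+\dots$. Concretely, the Noether equations on a degree-four del Pezzo are
\[
4-\sum m_i^2=1,\qquad 4-\sum m_i=3,
\]
so $\sum m_i^2=3$ and $\sum m_i=1$. This is impossible for nonnegative integers unless infinitely near multiplicities are allowed to be negative. That cannot happen for the effective resolution of a net, so the equations themselves already give a contradiction once we check that the $m_i$ are the multiplicities of $B$.

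\emph{Second computation.} The rank hypothesis is what ties the $m_i$ to the first-order data. At a base point $p$ with $r_p(W)\ge 2$, the two quadrics in $A$ have independent differentials, so $T_pX_A$ is the annihilator of their linear parts. The remaining three quadrics restricted to $X_A$ have, at $p$, linear parts in $T_p^*X_A$ of rank $r_p(W)-2\in\{0,1,2\}$. When this rank is $2$, $p$ is a simple point of the net with $m=1$. When it is $1$, the point is curvilinear, with a chain of multiplicity-one infinitely near points. When it is $0$, all three restricted forms are singular at $p$, giving $m\ge2$.

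We then redo the Noether count with $\sum m_i^2$ and $\sum m_i$ equal to $3$. The solutions are $(1,1,1)$, possibly infinitely near; the value $m=2$ is excluded because $4\cdot1=4>3$. This forces the net on $X_A$ to be $|-K-E_1-E_2-E_3|$, whose base scheme has length three. Its length on $X_A$ is the sum of local lengths $\ell_p(B\cap X_A)$, and by genericity of $A$ this equals the sum over $p$ of the Samuel multiplicities of $\Bs(W)$ at $p$ computed on a generic surface slice. On the other side, the total intersection of four general quadrics gives $16-1=15$ at the base points. Since $X_A$ is smooth and each $p$ has $r_p\ge2$, Lemma~\ref{lem:punctual-contribution}-type reasoning on the surface makes the contribution at $p$ to $X_A\cdot Q_3\cdot Q_4$ exactly $m_p^2$ plus its infinitely near squares. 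The total is therefore at most $3$, not $15$. This contradiction shows that some base point has $r_p(W)=1$.

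The main obstacle is making this identification rigorous. One must show that the local intersection multiplicity of $Q_3|_{X_A},Q_4|_{X_A}$ at $p$ equals $\sum m_{i}^2$ over the points infinitely near $p$; this is the Noether/Segre formula for a net on a smooth surface whose base ideal is an arbitrary, not necessarily complete, ideal. I would handle it with the blow-up formula $D^2=(-K)^2-e(I_B)$, where $e$ is the Samuel multiplicity of the base ideal on $X_A$. Then $4-e=1$ forces $e=3$. Meanwhile $e$ equals the local contribution of the full four-quadric intersection, which totals $15$. The crux is checking that the total on $X_A$ equals the total in $\PP^4$; both are the excess intersection of $Q_1,\dots,Q_4$ along $\Bs(W)$.
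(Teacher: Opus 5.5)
\textbf{Gap: the residual net has the wrong class.} Your overall strategy is sound: Lemma~\ref{lem:smooth-two-quadric-section} gives a smooth $(2,2)$ surface, Lemma~\ref{lem:strict-transform-general-plane} gives a birational residual net, and then you apply the Noether equalities. The central computation, however, uses the wrong linear system. The net is cut on $X_A$ by the quadrics of a complement of $A$, so it lies in $|\mathcal O_{X_A}(2)|=|2H|=|-2K_{X_A}|$, not in $|-K_{X_A}|$. You note yourself that its members are genus-$5$ canonical curves of degree $8$, which is incompatible with the class $H$ (degree $4$, genus $1$).

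With the moving class $M=\pi^*(2H)-\sum m_iE_i$, $H^2=4$ and $K_Y=\pi^*(-H)+\sum E_i$, the conditions $M^2=1$ and $K_Y\cdot M=-3$ become
\[
16-\sum m_i^2=1,\qquad -8+\sum m_i=-3,
\]
that is, $\sum m_i^2=15$ and $\sum m_i=5$. This system has no solution in positive integers: over the partitions of $5$, the sums of squares are $25,17,13,11,9,7,5$. That is the entire contradiction. The rank hypothesis enters only to make $X_A$ smooth, so that these equalities are available.

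\textbf{The rest of the proposal does not repair this.} As written, your first computation derives a contradiction from a false premise. Your two computations are also mutually inconsistent: the first gives $\sum m_i=1$, the second $\sum m_i=3$. The final comparison ``at most $3$, not $15$'' is an artifact of the wrong class. With the correct class, the self-intersection equation gives exactly $\sum m_i^2=15=16-1$, which agrees with the excess intersection of four general quadrics, so that comparison produces no contradiction. The obstruction comes from the adjunction equality $\sum m_i=5$, which cannot hold together with $\sum m_i^2=15$. Consequently, the local analysis relating $r_p(W)-2$ to the multiplicities $m_p$ is unnecessary. So is the identification of local intersection numbers on $X_A$ with those in $\PP^4$, which you flagged as the main obstacle.
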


\begin{proof}
First, note that a base point of rank zero is impossible.  Indeed, send
such a point to $p=[1:0:0:0:0]$.  Since every quadric in $W$ vanishes at $p$
and has zero differential there, no member of $W$ contains a monomial
$x_0x_i$.  Hence every member depends only on $x_1,\ldots,x_4$.  Geometrically,
all the quadrics are cones with vertex $p$, and the map is constant along the
lines through $p$.  It is therefore not generically finite.

Suppose now, by contradiction, that
$
 r_p(W)\geq2
$
for every base point. Choose a general two-dimensional subspace
$A\subset W$.  By Lemma~\ref{lem:smooth-two-quadric-section},
$
 X=X_A
$
is a smooth irreducible complete intersection of two quadrics.  The subspace
$A$ corresponds to a plane $\Pi\subset\PP(W^*)$ in the target.  We may choose
$A$ generally enough that $\Pi$ is general with respect to the isomorphism
locus of $\vf_W$.  Lemma~\ref{lem:strict-transform-general-plane} then gives
$
 \vf_W|_X\colon X\dashrightarrow\Pi\simeq\PP^2
$
birational.

Choose a direct sum decomposition
$
 W=A\oplus B,
 \, \dim B=3.
$
On $X$ the two quadrics in $A$ vanish, and the net $B|_X\subset
|\OO_X(2)|$ defines the above birational map to $\Pi$.  This net has no fixed
curve: a fixed curve would be contained in the zero locus of all five
quadrics in $W$, contradicting the zero-dimensionality of $\Bs(W)$.

Let
$
 \pi\colon Y\longrightarrow X
$
be a sequence of blow-ups resolving all proper and infinitely near base points
of the net, and $m_1,\ldots,m_s$ be their multiplicities and let
$E_1,\ldots,E_s$ denote the corresponding total exceptional classes.  If
$H=\OO_X(1)$, the movable transform of the net is
$
 M=\pi^*(2H)-\sum_{i=1}^s m_iE_i.
$
The induced morphism
$
 g\colon Y\longrightarrow\PP^2
$
is birational and $M=g^*\OO_{\PP^2}(1)$.  Consequently,
$
 M^2=1,
 \,
 K_Y\cdot M=-3.
$

We have
$
 H^2=4,
 \,
 K_X=-H,
$ and $
 K_Y=\pi^*K_X+\sum_{i=1}^sE_i.
$ The two preceding intersection equalities therefore become
$
 16-\sum_{i=1}^s m_i^2=1
$
and
$
 -8+\sum_{i=1}^s m_i=-3.
$
Therefore, \stepcounter{thm}\begin{equation}\label{eq:noether-del-pezzo-four}
 \sum_{i=1}^s m_i^2=15,
 \qquad
 \sum_{i=1}^s m_i=5.
\end{equation}
These equations have no solution in positive integers.  Indeed, no $m_i$ can
be at least four, while if one multiplicity is three, the remaining
multiplicities have sum two and their squares sum at most four, giving a total
at most thirteen.  If every multiplicity is at most two, then
$
 \sum_i m_i^2\leq2\sum_i m_i=10.
$
Both alternatives contradict \eqref{eq:noether-del-pezzo-four}.

Hence some base point has first-jet rank at most one.  Rank zero has already
been excluded, so its rank is exactly one.
\end{proof}

\begin{Proposition}\label{prop:geometric-reduction-p3}
Let $W$ define a quadratic Cremona transformation of $\PP^4$, and let
$p\in\Bs(W)$ satisfy $r_p(W)=1$. After a projective change of coordinates in
the source and a change of basis of $W$, one can write
\stepcounter{thm}\begin{equation}\label{eq:rank-one-normal-form-geometric}
 W=
 \left\langle
 x_0x_1+q_0,
 q_1,q_2,q_3,q_4
 \right\rangle,
 \qquad
 q_i\in\kk[x_1,x_2,x_3,x_4]_2.
\end{equation}
Moreover, $V=\langle q_1,q_2,q_3,q_4\rangle\subset
H^0(\PP^3,\OO_{\PP^3}(2))$ defines a quadratic Cremona transformation
$\psi_V\colon\PP^3\dashrightarrow\PP^3$. If $\Bs(W)$ is zero-dimensional,
then $\Bs(V)$ is zero-dimensional.
\end{Proposition}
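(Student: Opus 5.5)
The normal form \eqref{eq:rank-one-normal-form-geometric} is just the case $r=1$ of Lemma~\ref{lem:first-jet-normal-form}: after moving $p$ to $[1:0:0:0:0]$, the first-jet map $W\to\mathfrak m_p/\mathfrak m_p^2$ has rank one. Choosing coordinates with image $\langle x_1\rangle$ and a basis adapted to the kernel gives one generator $x_0x_1+q_0$ and four generators $q_1,\dots,q_4$ independent of $x_0$. So the content of the proposition is the statement about $\psi_V$.

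\textbf{Step 1: factorization.} Let $\pi_p\colon\PP^4\dashrightarrow\PP^3$, $[x_0:\dots:x_4]\mapsto[x_1:\dots:x_4]$, be the projection from $p$. Let $\mathrm{pr}\colon\PP^4\dashrightarrow\PP^3$ be the projection in the target from $e=[1:0:0:0:0]$, the point dual to the first generator. Then
\[
\mathrm{pr}\circ\vf_W=\psi_V\circ\pi_p,
\]
both sides being $[q_1:\dots:q_4]$. Since $\vf_W$ is birational and $\mathrm{pr}$ is dominant, the composite is dominant, so $\psi_V$ is dominant. In particular $q_1,\dots,q_4$ are linearly independent, since otherwise the image would lie in a plane.

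\textbf{Step 2: birationality of $\psi_V$.} Take a general $y\in\PP^3$ and let $\ell_y\subset\PP^4$ be the line through $e$ over $y$. The fibre of $\mathrm{pr}\circ\vf_W$ over $y$, restricted to the isomorphism locus $U$ of $\vf_W$, is $\vf_W^{-1}(\ell_y\cap U)$. This is irreducible (a general line meets the isomorphism locus of the target in a dense open set) and one-dimensional. On the other hand, the fibre of $\psi_V\circ\pi_p$ over $y$ is the union of the lines $\langle p,z\rangle$ for $z\in\psi_V^{-1}(y)$, intersected with the same open set. Irreducibility forces $\psi_V^{-1}(y)$ to be a single point for general $y$, so $\psi_V$ is birational.

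\textbf{Step 3: degree exactly two.} It remains to see that the $q_i$ have no common linear factor. If $q_i=h\,l_i$ for all $i\ge1$, then $\Bs(W)$ contains the surface $\{h=0,\ x_0x_1+q_0=0\}$, and more generally a surface. This is excluded whenever $\Bs(W)$ is zero-dimensional. In the general statement I would argue as in Lemma~\ref{lem:bir-two-locally-closed} and treat this as a degenerate case. I expect this to be the main obstacle: one has to check that a common factor of the $q_i$ is incompatible with $\gcd(W)=1$ together with birationality, or else note that the paper's convention for ``quadratic'' in $\PP^3$ absorbs it. My first attempt would be the following. A common factor $h$ makes $\psi_V$ linear, hence $\pi_p$-fibres correspond to points. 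Then $\vf_W$ restricted to each line through $p$ is given by $x_0x_1+q_0$ against a constant vector of linear forms. I would check whether this is still birational; I would expect the answer to be yes only in special cases, which the surface in the base locus then places outside the zero-dimensional locus.

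\textbf{Step 4: zero-dimensionality.} If $\Bs(V)\subset\PP^3$ contained a curve $C$, then the cone $\overline{\pi_p^{-1}(C)}$ over $C$ with vertex $p$ is a surface on which $q_1,\dots,q_4$ all vanish. Intersecting this surface with the quadric $x_0x_1+q_0=0$ leaves at least a curve inside $\Bs(W)$. This contradicts the zero-dimensionality of $\Bs(W)$, so $\Bs(V)$ is zero-dimensional.
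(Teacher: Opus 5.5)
Apart from Step 3 (see below), your argument is correct and covers everything the paper's proof establishes. Two of your steps take a different route from the paper.

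\textbf{Birationality.} The paper restricts $\vf_W$ to a general line $\ell_\xi=\{[\lambda:\mu\xi]\}$ through $p$. After removing a factor $\mu$, the map there is $[\lambda\xi_1+\mu q_0(\xi):\mu q_1(\xi):\dots:\mu q_4(\xi)]$. This is a linear isomorphism onto the line through $o$ and $\psi_V(\xi)$ because $\xi_1\neq0$, and equality of generic degrees follows.

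You compare fibres instead. The general fibre of $\mathrm{pr}\circ\vf_W$ is birational to a line through $e$, hence irreducible. That of $\psi_V\circ\pi_p$ is a disjoint union of $\deg\psi_V$ punctured lines through $p$. This uses only the factorization, not the shape of the first generator. You should add that, for general $y$, every line $\langle p,z\rangle$ with $z\in\psi_V^{-1}(y)$ meets the isomorphism locus; this holds because the lines through $p$ contained in its complement form a family of dimension at most two. The paper's computation gives more: $\vf_W$ is an isomorphism on the general line through $p$, and this is reused in Proposition~\ref{prop:geometric-lift-prv}.

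\textbf{Zero-dimensionality.} The paper splits into cases: $C\not\subset\{x_1=0\}$, or $C\subset\{x_1=0\}$ with $q_0|_C\equiv0$ or not. These produce respectively a section curve, the whole cone, or a line $\overline{pc}$ in $\Bs(W)$. Your intersection of the cone with $V(x_0x_1+q_0)$, via the projective dimension theorem, handles all cases at once.

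\textbf{Step 3.} The common factor cannot be excluded using $\gcd(W)=1$ and birationality. Your expectation that birationality would then usually fail is the opposite of what happens.

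If $q_i=hl_i$, the $l_i$ span all linear forms in $x_1,\dots,x_4$, so $W=\langle x_0x_1+q_0,hx_1,\dots,hx_4\rangle$. For a general target point $y$, put $y'=(y_1,\dots,y_4)$ and write $(x_1,\dots,x_4)=ty'$. The first coordinate then gives $x_0=t\bigl(y_0h(y')-q_0(y')\bigr)/y_1$, so every such system is birational.

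A concrete member of $\Bir_2(\PP^4)$ is $W=\langle x_0x_1,x_1x_2,x_2^2,x_2x_3,x_2x_4\rangle$. It has $\gcd(W)=1$, $r_p(W)=1$ at $p=[1:0:0:0:0]$, and inverse $[y_0y_2:y_1^2:y_1y_2:y_1y_3:y_1y_4]$. Yet $V=x_2\langle x_1,\dots,x_4\rangle$, and $\Bs(W)$ is the union of the planes $V(x_0,x_2)$ and $V(x_1,x_2)$.

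The paper's own proof only shows that $\psi_V$ is birational, so ``quadratic'' in the statement must be read as ``given by the quadrics $q_1,\dots,q_4$''. Your surface argument upgrades this to exact degree two whenever $\Bs(W)$ has no two-dimensional component. That covers the zero-dimensional case and the one-dimensional bases to which the paper later applies the proposition (Proposition~\ref{prop:all-rank-one-systems}). So state Step 3 under that hypothesis rather than trying to prove it in general.
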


\begin{proof}
The normal form \eqref{eq:rank-one-normal-form-geometric} follows from Lemma~\ref{lem:first-jet-normal-form}. Let
$
 \pi_p\colon\PP^4\dashrightarrow\PP^3
$
be the projection from $p$, and let
$
 o=[1:0:0:0:0]
$
in the target.  Projection from $o$ gives a commutative diagram
$$
 \begin{tikzcd}[column sep=1.5cm, row sep=0.7cm]
 \PP^4 \arrow[dashed]{r}{\vf_W} \arrow[dashed]{d}[swap]{\pi_p}
 & \PP^4 \arrow[dashed]{d}{\pi_o} \\
 \PP^3 \arrow[dashed]{r}{\psi_V}
 & \PP^3.
 \end{tikzcd}
$$
Indeed, the last four coordinates of $\vf_W$ depend only on the direction of
the line through $p$.

For a general direction
$
 \xi=[x_1:x_2:x_3:x_4]\in\PP^3
$
with $x_1\neq0$, the line $\ell_\xi$ through $p$ is mapped linearly and
birationally onto the line through $o$ corresponding to $\psi_V(\xi)$.  In
coordinates, on $\ell_\xi$ the first coordinate is linear in the parameter
along the line, with non-zero coefficient $x_1$, whereas the last four
coordinates have the common quadratic factor coming from that parameter.
Therefore, the restriction
$
 \vf_W|_{\ell_\xi}\colon\ell_\xi\dashrightarrow
 \ell_{\psi_V(\xi)}
$
has degree one.

The map $\psi_V$ is dominant, since otherwise the image of $\vf_W$ would be
contained in the cone with vertex $o$ over a subvariety of dimension at most
two, and would therefore have dimension at most three.  Since the maps on the
general fibers of $\pi_p$ and $\pi_o$ have degree one, the generic degrees of
$\vf_W$ and $\psi_V$ are equal.  The former is one, hence $\psi_V$ is
birational.

Assume now that $\Bs(W)$ is zero-dimensional. To conclude, it is enough to prove that
$\Bs(V)$ is zero-dimensional. Suppose that it
contains an irreducible curve $C$.  If $C$ is not contained in
$\{x_1=0\}$, then, over the dense open subset where $x_1\neq0$, the equation
$
 x_0x_1+q_0=0
$
selects one point on each line through $p$ with direction in $C$.  The closure
of these points is a curve contained in $\Bs(W)$, a contradiction.

Assume instead that $C\subset\{x_1=0\}$.  If $q_0$ vanishes identically on
$C$, the cone over $C$ with vertex $p$ is contained in $\Bs(W)$.  Otherwise,
$q_0|_C$ is a non-zero section of the positive-degree line bundle
$\OO_C(2)$, and therefore it vanishes at some point $c\in C$.  The whole line
$\overline{pc}$ is then contained in $\Bs(W)$.  This is again impossible.
Hence $\Bs(V)$ is finite.
\end{proof}

Let
$
 \mathscr H_{\mathrm{III}}
 \subset
 \Gr\bigl(4,H^0(\PP^3,\OO_{\PP^3}(2))\bigr)
$
denote the third irreducible component in the classification of quadratic Cremona
transformations of $\PP^3$ by Pan--Ronga--Vust \cite{PanRongaVust01}.  Its general member is the
complete system of quadrics through three reduced points and through a fourth
point with a prescribed tangent plane.

\begin{Lemma}\label{lem:prv-finite-component}
Let
$
 V\subset H^0(\PP^3,\OO_{\PP^3}(2))
$
define a quadratic Cremona transformation with zero-dimensional base locus.
Then
$
 V\in\mathscr H_{\mathrm{III}}.
$
Moreover, $\mathscr H_{\mathrm{III}}$ is irreducible, and its general member
has base data
$
 a_1+a_2+a_3+b^{\Pi_b},
$
where $a_1,a_2,a_3,b$ are distinct points and $\Pi_b\subset\PP^3$ is a plane
through $b$.
\end{Lemma}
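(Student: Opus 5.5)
The plan is to rerun, one dimension lower, the argument that proved Proposition~\ref{prop:geometric-rank-one-point}, and then to finish with the Pan--Ronga--Vust classification \cite{PanRongaVust01}.

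\textbf{Step 1: the multidegree.} Let $V$ have zero-dimensional base locus. Three general quadrics of $V$ meet in a complete intersection of degree $8$. This gives the multidegree $(1,2,4,1)$, so $\psi_V$ has bidegree $(2,4)$. By \cite{PanRongaVust01}, the closure of the bidegree $(2,4)$ transformations is the component $\mathscr H_{\mathrm{III}}$. So membership of $V$ reduces to membership in the right bidegree stratum. If one wants a self-contained argument instead, the next step produces the degeneration directly.

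\textbf{Step 2: a rank-one point of $V$.} Take $p\in\Bs(V)$ with first-jet rank $r_p(V)$; the analogue of Lemma~\ref{lem:first-jet-normal-form} shows $1\le r_p(V)\le3$. Suppose every base point had rank at least $2$. Then a general quadric $X\in V$ is smooth, by the analogue of Lemma~\ref{lem:smooth-two-quadric-section}. So $X\cong\PP^1\times\PP^1$, with $H^2=2$ and $K_X=-H$. By Lemma~\ref{lem:strict-transform-general-plane}, $V$ restricts to a birational net on $X$ with no fixed curve. The Noether equalities for $M=\pi^*(2H)-\sum m_iE_i$ give
\[
\sum m_i^2=8-1=7,\qquad \sum m_i=-4+3=-1 .
\]
The negative sum is absurd. (I should double-check the sign convention: $K_Y\cdot M=-2H\cdot H\cdot 2/2$ etc. Whatever the convention, the two equations must turn out inconsistent with positivity, exactly as in \eqref{eq:noether-del-pezzo-four}.) Hence some base point $b$ has $r_b(V)=1$.

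\textbf{Step 3: the normal form and counting conditions.} At $b$, Proposition~\ref{prop:geometric-reduction-p3} in dimension three writes
\[
V=\langle x_0x_1+q_0,\,q_1,q_2,q_3\rangle
\]
with $\langle q_1,q_2,q_3\rangle$ a birational net on $\PP^2$. A net of conics is birational exactly when its base locus is three points, possibly infinitely near. Under this normal form, $V$ is the system of quadrics through $b^{\Pi_b}$, with $\Pi_b=\{x_1=0\}$, and through three further base points. Here $q_0$ determines where the lines of the net's base directions meet the quadrics. These conditions number $3+3=6$, consistent with $\dim V=4$ inside the $10$-dimensional space of quadrics. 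Each such configuration is a limit of the general configuration $a_1+a_2+a_3+b^{\Pi_b}$ with distinct points, since the configuration space is irreducible.

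\textbf{Step 4: irreducibility.} The family of data $(a_1,a_2,a_3,b,\Pi_b)$ is irreducible. The map to $\Gr(4,10)$ is generically finite, so its closure is irreducible, and it equals $\mathscr H_{\mathrm{III}}$. The main obstacle is making sure that degenerate $V$, where base points collide or the net's base points are infinitely near, still lie in this closure and not in another component. I would try to handle this as in \cite{PanRongaVust01}, by exhibiting explicit one-parameter families of normal forms, or simply by citing their classification of the boundary of $\mathscr H_{\mathrm{III}}$.
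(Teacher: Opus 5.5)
Your Step~1 is, in substance, the paper's proof. The paper proves this lemma purely by citing \cite[Propositions~2.1.1 and~2.4.1]{PanRongaVust01}. Your route uses the same classification in an equivalent way: finite base is an open condition equivalent to bidegree $(2,4)$, any component containing $V$ therefore has generically finite base, and $\mathscr H_{\mathrm{III}}$ is the only Pan--Ronga--Vust component with that property. Correct the justification of $d_2=4$, though. It comes from two general quadrics of $V$ meeting in a degree-four curve none of whose components lies in the finite base. The fact that three quadrics meet in $8$ points is the relation $8=d_3+7$ and says nothing about $d_2$.

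The self-contained alternative in Steps~2--4 does not work as written. In Step~2 a smooth quadric $X\subset\PP^3$ has $K_X=-2H$, not $-H$, so the Noether equalities read $\sum m_i^2=7$ and $\sum m_i=5$. These have the positive solution $(2,1,1,1)$, which is exactly the general type $\mathrm{III}$ configuration: the double point is $b$, because every member is tangent to $\Pi_b=T_bX$ there. So there is no arithmetic contradiction analogous to \eqref{eq:noether-del-pezzo-four}. Also, smoothness of a general $X$ only needs $r_p(V)\geq1$, so your hypothesis $r_p(V)\geq2$ has not actually been used. The repair is to use it a second time. If $r_p(V)\geq2$, some member restricts to $X$ smoothly at $p$, so the restricted net has multiplicity one at every proper base point. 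By proximity it then has multiplicity at most one at every infinitely near base point. Hence $\sum m_i^2=\sum m_i$, contradicting $7\neq5$.

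In Step~3, identifying $V$ with the complete system through $b^{\Pi_b}$ and three lifted points is valid only for general data. If a base direction of the net lies on $\{x_1=0\}$, the lifted point collapses onto $b$, and infinitely near base points of the net change the base scheme further. Irreducibility of a configuration space does not show that $V$ is a limit of the complete systems, because $h^0$ of the ideal can jump in the limit.

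The clean fix is the mechanism of Proposition~\ref{prop:geometric-lift-prv} one dimension down, and it also removes the appeal to the boundary analysis of \cite{PanRongaVust01} in Step~4. Consider pairs $(N,q_0)$ with $N\subset\kk[x_1,x_2,x_3]_2$ a homaloidal net of conics and $q_0$ arbitrary; these form an irreducible space. The assignment $(N,q_0)\mapsto\langle x_0x_1+q_0,N\rangle$ is a morphism to $\Gr(4,10)$, since the $x_0x_1$ term keeps the dimension four. A general pair gives base $a_1+a_2+a_3+b^{\Pi_b}$, so $V$ lies in the closure of that family. Since the finite-base locus is open, its closure equals the irreducible closure of the type $\mathrm{III}$ family, which is therefore a component.
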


\begin{proof}
This is precisely the finite-base case of
\cite[Propositions~2.1.1 and~2.4.1]{PanRongaVust01}.  Proposition~2.1.1
places every quadratic Cremona transformation of $\PP^3$ with finite base
locus in the third family, and Proposition~2.4.1 proves that this family is an
irreducible component.  The description of its general member is the type
$\mathrm{III}$ in their classification.
\end{proof}

\begin{Proposition}\label{prop:geometric-lift-prv}
Let $W$ define a quadratic Cremona transformation of $\PP^4$ with
zero-dimensional base locus.  Then
$
 W\in\overline{\mathcal F}_{248},
$
where $\mathcal F_{248}$ is the punctual family of
Proposition~\ref{prop:248-family}.
\end{Proposition}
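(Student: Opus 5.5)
By Proposition~\ref{prop:geometric-rank-one-point} there is a base point $p$ with $r_p(W)=1$. After the coordinate change of Proposition~\ref{prop:geometric-reduction-p3} we may write
$$
W=\langle x_0x_1+q_0,q_1,q_2,q_3,q_4\rangle ,
$$
where $V=\langle q_1,\dots,q_4\rangle$ defines a quadratic Cremona transformation of $\PP^3$ with zero-dimensional base locus. Lemma~\ref{lem:prv-finite-component} then gives $V\in\mathscr H_{\mathrm{III}}$. The plan is to degenerate within this normal form. Consider the data $(q_0,V)$ together with the rank-one structure, with the hyperplane $x_1=0$ and the point $p$ fixed. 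These data are parametrized by the irreducible space
$$
\mathcal P=H^0(\PP^3,\OO(2))\times\mathscr H_{\mathrm{III}}.
$$
We use the lifting map $\Phi\colon(q_0,V)\mapsto\langle x_0x_1+q_0,V\rangle$, and we let $(p,x_1)$ vary under $\PGL_5$. We will show that a general point of the irreducible variety $\PGL_5\cdot\Phi(\mathcal P)$ lies in $\mathcal F_{248}$. Then $\overline{\mathcal F_{248}}$ contains the whole image, and in particular contains $W$.

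The key step is to identify the general lift. Take $V$ general in $\mathscr H_{\mathrm{III}}$, so that $V=H^0(\I_{a_1+a_2+a_3+b^{\Pi_b}}(2))$ on $\PP^3$, and take $q_0$ general. The base scheme of $W$ then decomposes as follows. Over each direction $a_i$ with $x_1(a_i)\neq0$, the equation $x_0x_1+q_0=0$ picks a single reduced point $p_i$ on the line $\overline{pa_i}$, and the three general directions give the three simple points. Over $b$ the same equation picks a point $q$ on $\overline{pb}$. Its first-order data are the cone over $\Pi_b$ intersected with the linear condition coming from $x_0x_1+q_0$, which is a plane $\Pi$ through $q$. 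So the point is of type $q^{\Pi}$. Finally, $p$ itself is a rank-one point whose tangent hyperplane is $\{x_1=0\}$, i.e.\ a point of type $r^H$ with $r=p$.

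The base locus of $W$ is zero-dimensional, so Lemma~\ref{lem:zero-dimensional-base} gives multidegree $(1,2,4,8,1)$. Moreover $W$ has dimension $5$ and is contained in $H^0(\I_Z(2))$ with $Z=p_1+p_2+p_3+q^\Pi+p^{H}$. For general data $Z$ imposes ten independent conditions, hence $h^0(\I_Z(2))=5$. This forces $W=H^0(\I_Z(2))$, so the lift lies in $\mathcal F_{248}$.

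It remains to check that the points $(p_1,p_2,p_3,q,\Pi,p,H)$ obtained this way are general in the parameter space of Proposition~\ref{prop:248-family}. For this it suffices to count dimensions. Choosing $p$ and $H$ gives $7$ parameters. The points $a_i,b$ together with the plane $\Pi_b$ give $4\cdot3+2=14$. Scaling and choosing $q_0$ modulo $x_1$ lets the chosen point move freely along each of the four lines, which gives $4$ more, and the first-order datum $\Pi$ gets its remaining $2$ degrees of freedom from $q_0$ as well. The total is $27$, so the image is dense in the $27$-dimensional family of Proposition~\ref{prop:248-family}.

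The main obstacle is making this density claim rigorous. One has to show that the lifting map from $\mathcal P\times\PGL_5$ dominates the parameter space of $(p_1,p_2,p_3,q^\Pi,r^H)$. The cleanest route is probably the reverse direction: start from a general member of $\mathcal F_{248}$ and project from its rank-one point $r$, as in Proposition~\ref{prop:geometric-reduction-p3}. This exhibits it as a lift $\Phi(q_0,V)$ with $V$ general in $\mathscr H_{\mathrm{III}}$, which gives dominance directly. Irreducibility of $\mathcal P$ then does the rest. I would also verify the identification $\Pi$ versus $\Pi_b$ on the explicit example of Proposition~\ref{prop:explicit-248}.
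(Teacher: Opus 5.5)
Your proposal is correct and follows essentially the paper's route. You take a rank-one base point, pass to the normal form $\langle x_0x_1+q_0,V\rangle$ with $V\in\mathscr H_{\mathrm{III}}$, and show that a general lift over the irreducible space $H^0(\PP^3,\OO_{\PP^3}(2))\times\mathscr H_{\mathrm{III}}$ is the complete system $H^0(\I_Z(2))$ with $Z=p_1+p_2+p_3+q^\Pi+r^H$; hence every lift, in particular $W$, lies in $\overline{\mathcal F}_{248}$.

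The only difference is how you show that the ten-conditions requirement holds for the general lift. The paper checks this open condition on the explicit member of Proposition~\ref{prop:explicit-248} and then uses curve selection. You instead project a general member of $\mathcal F_{248}$ from its rank-one point, which makes the lift-to-data map dominant. That argument works, and it makes your bare count of $27$ parameters unnecessary; that count would not suffice on its own.
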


\begin{proof}
By Proposition~\ref{prop:geometric-rank-one-point}, $W$ has a base point of
first-jet rank one.  By Proposition~\ref{prop:geometric-reduction-p3}, after
changing coordinates we may write
$
 W=\langle x_0x_1+q_0,V\rangle
$
as in \eqref{eq:rank-one-normal-form-geometric}, where $V$ is a quadratic
Cremona system of $\PP^3$ with finite base locus.  By
Lemma~\ref{lem:prv-finite-component},
$
 V\in\mathscr H_{\mathrm{III}}.
$

Consider the irreducible parameter space
$
 \mathscr P=
 \mathscr H_{\mathrm{III}}
 \times H^0(\PP^3,\OO_{\PP^3}(2)),
$
whose points will be written $(V',q'_0)$.  Fix the hyperplane
$
 H_1=\{x_1=0\}\subset\PP^3.
$
There is a non-empty open subset $\mathscr P^\circ\subset\mathscr P$ on which
the following conditions hold:
\begin{enumerate}
\item $V'$ is a general type $\mathrm{III}$ system, with base data
$
 a_1+a_2+a_3+b^{\Pi_b};
$
\item the four support points are distinct and lie outside $H_1$;
\item after lifting the four points along the lines through
$p=[1:0:0:0:0]$ by the equation
$
 x_0x_1+q'_0=0,
$
the resulting three reduced points, the resulting point with tangent plane,
and the point $p$ with tangent hyperplane $\{x_1=0\}\subset\PP^4$ impose ten
independent conditions on quadrics.
\end{enumerate}
All these conditions are open.  Their simultaneous non-emptiness follows, for
instance, from the explicit member in Proposition~\ref{prop:explicit-248},
after choosing its point of first-jet rank one as $p$.

Since $\mathscr P$ is irreducible, the point $(V,q_0)$ belongs to the closure
of $\mathscr P^\circ$.  By the algebraic curve-selection lemma, after an
eventual finite base change there exist a smooth pointed curve $(T,0)$ and a
family
$
 (V_t,q_0(t))\in\mathscr P,
 \,
 (V_0,q_0(0))=(V,q_0),
$
whose general member belongs to $\mathscr P^\circ$.  After shrinking $T$, the
pullback of the tautological rank-four bundle is trivial, so we may choose a
basis
$
 V_t=\langle q_1(t),q_2(t),q_3(t),q_4(t)\rangle.
$
Set
$
 W_t=
 \left\langle
 x_0x_1+q_0(t),
 q_1(t),q_2(t),q_3(t),q_4(t)
 \right\rangle.
$
The coefficient of $x_0x_1$ shows that these subspaces have dimension five,
and $W_0=W$.

For general $t$, the map defined by $V_t$ is a Cremona transformation of
$\PP^3$.  The same projection diagram used in
Proposition~\ref{prop:geometric-reduction-p3} shows that the map defined by
$W_t$ is birational: $V_t$ recovers the direction of the line through $p$,
and the first coordinate recovers the unique point on that line.

We will now describe the base data of $W_t$.  Since the support of $\Bs(V_t)$
avoids $H_1$, the equation $x_0x_1+q_0(t)=0$ cuts each of the four lines over
these base points in exactly one point.  At each of the three reduced base
points of $V_t$, the three independent horizontal first jets, together with
the transverse first jet in the direction of the line through $p$, give a
reduced base point of $W_t$.  At the point $b^{\Pi_b}$, the first jets of
$V_t$ span a one-dimensional conormal space in $T_b^*\PP^3$; adding the
transverse first jet of $x_0x_1+q_0(t)$ gives a two-dimensional conormal space
in $T^*\PP^4$, hence a point with a prescribed tangent plane.  Finally, at
$p$ the only non-zero first jet is $x_1$, so $p$ is a point with prescribed
tangent hyperplane $\{x_1=0\}$.

There are no further base points.  Away from the lines over $\Bs(V_t)$, one
of $q_1(t),\ldots,q_4(t)$ is non-zero.  On each line over a base point of
$V_t$, the first equation has a unique zero since $x_1\neq0$.  Since
$\Bs(V_t)$ does not meet $H_1$, no additional line through $p$ is contained in
the base locus.

Let $Z_t$ be the first-order punctual scheme consisting of these three reduced
points, the point with tangent plane, and the point with tangent hyperplane.
By construction,
$
 W_t\subset H^0(\I_{Z_t}(2)).
$
Condition (iii) gives $h^0(\I_{Z_t}(2))=5$, hence equality holds.  Therefore
$W_t$ belongs to the family $\mathcal F_{248}$ for general $t$.  Since
$W_0=W$, we conclude that
$
 W\in\overline{\mathcal F}_{248}.
$
\end{proof}

\begin{thm}\label{thm:zero-dimensional-unique-component-geometric}
Let
$
 \mathcal C^0_{248}
 \subset
 \Gr\bigl(5,H^0(\PP^4,\OO_{\PP^4}(2))\bigr)
$
be the locus of quadratic Cremona transformations with zero-dimensional base
locus.  Then
$
 \overline{\mathcal C^0_{248}}
 =
 \overline{\mathcal F}_{248}.
$
In particular, $\mathcal C^0_{248}$ is irreducible, and its general member is
the punctual transformation of Proposition~\ref{prop:248-family}. 
\end{thm}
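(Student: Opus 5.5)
The plan is to prove the two inclusions $\overline{\mathcal C^0_{248}}\subset\overline{\mathcal F}_{248}$ and $\overline{\mathcal F}_{248}\subset\overline{\mathcal C^0_{248}}$. Irreducibility, the dimension count and the description of the general member will then follow from Proposition~\ref{prop:248-family}.

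For the first inclusion there is essentially nothing new to do. Every point $W\in\mathcal C^0_{248}$ is a quadratic Cremona transformation with zero-dimensional base locus, so Proposition~\ref{prop:geometric-lift-prv} gives $W\in\overline{\mathcal F}_{248}$. Hence $\mathcal C^0_{248}\subset\overline{\mathcal F}_{248}$, and taking closures in $\Gr(5,V)$ yields $\overline{\mathcal C^0_{248}}\subset\overline{\mathcal F}_{248}$. The real content of this step is already contained in Propositions~\ref{prop:geometric-rank-one-point}, \ref{prop:geometric-reduction-p3} and~\ref{prop:geometric-lift-prv}: the Noether equalities on the del Pezzo surface $X_A$ produce a rank-one point, projection from it reduces to $\PP^3$, and the Pan--Ronga--Vust classification puts the projected system in $\mathscr H_{\mathrm{III}}$.

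For the reverse inclusion I would use the open subset of the parameter space of Proposition~\ref{prop:248-family} on which the conclusions of that proof hold. There the system $H^0(\I_Z(2))$ is five-dimensional, it defines a birational map (so $d_4=1$), and its actual base scheme is supported exactly at the five prescribed points, hence is zero-dimensional. Such systems have $\gcd=1$, since a common linear factor would give a three-dimensional base locus. So the image of this open subset in $\Gr(5,V)$ lies in $\mathcal C^0_{248}$. This image is dense in $\mathcal F_{248}$, so $\overline{\mathcal F}_{248}\subset\overline{\mathcal C^0_{248}}$.

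Combining the two inclusions gives equality of closures. Since $\mathcal F_{248}$ is the image of an irreducible 27-dimensional parameter space under a generically finite map, $\overline{\mathcal C^0_{248}}$ is irreducible of dimension $27$. Therefore $\mathcal C^0_{248}$, being dense in an irreducible closure, is irreducible, and its general member is a general member of $\mathcal F_{248}$, namely the punctual transformation with base data $p_1+p_2+p_3+q^\Pi+r^H$. The multidegree $(1,2,4,8,1)$ comes from Lemma~\ref{lem:zero-dimensional-base}. The only point requiring care is that $\mathcal C^0_{248}$ is locally closed, so that "general member" is meaningful. Zero-dimensionality of the base scheme is an open condition by upper semicontinuity of fibre dimension for the universal base scheme over $\Gr(5,V)$, and intersecting with the locally closed set $\Bir_2(\PP^4)$ of Lemma~\ref{lem:bir-two-locally-closed} gives a locally closed set. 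I expect no serious obstacle here; the substantive difficulty was already resolved in Proposition~\ref{prop:geometric-lift-prv}.
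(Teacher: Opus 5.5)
Your proposal is correct and follows the paper's proof essentially step for step. The inclusion $\mathcal C^0_{248}\subset\overline{\mathcal F}_{248}$ comes from Proposition~\ref{prop:geometric-lift-prv}, and the reverse inclusion from the general member of $\mathcal F_{248}$ in Proposition~\ref{prop:248-family}; irreducibility follows from that of $\mathcal F_{248}$, together with openness of the zero-dimensional locus inside $\Bir_2(\PP^4)$ by upper semicontinuity of base dimension, and your extra remarks (that $\gcd=1$ is automatic, and that a set with irreducible closure is irreducible) only make explicit points the paper leaves implicit.
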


\begin{proof}
Proposition~\ref{prop:geometric-lift-prv} gives
$
 \mathcal C^0_{248}\subset\overline{\mathcal F}_{248}.
$
Conversely, the general member of $\mathcal F_{248}$ is a quadratic Cremona
transformation with zero-dimensional base locus by
Proposition~\ref{prop:248-family}.  Hence
$
 \overline{\mathcal C^0_{248}}
 =
 \overline{\mathcal F}_{248}.
$
The family $\mathcal F_{248}$ is irreducible, hence
$\overline{\mathcal C^0_{248}}=\overline{\mathcal F}_{248}$ is
irreducible. Inside the locally closed space $\Bir_2(\PP^4)$, the locus
where the base dimension is at least one is closed by upper semicontinuity.
Hence $\mathcal C^0_{248}$ is open in its closure and is therefore
irreducible.
\end{proof}

\section{One-dimensional base schemes}\label{sec:curves}

In this section we construct the families with $d_2=4$ and one-dimensional base scheme. We begin with the Segre class computations that will be used throughout the section.

\begin{Lemma}\label{lem:curve-segre}
Let $C\subset\PP^4$ be a smooth curve of degree $e$ and genus $g$. Then
\stepcounter{thm}\begin{equation}\label{eq:curve-segre}
s(C,\PP^4)=e h^3+(2-2g-5e)h^4.
\end{equation}
In particular, if $C$ is the only positive-dimensional component of the base scheme of a quadratic map of $\PP^4$, then
\stepcounter{thm}\begin{equation}\label{eq:curve-degrees}
d_3=8-e,\qquad d_4=14-3e+2g-\delta,
\end{equation}
where $\delta$ is the contribution of the zero-dimensional part and of the imposed first order conditions away from the curve contribution.
\end{Lemma}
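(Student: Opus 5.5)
The plan has two parts. First we establish the Segre class \eqref{eq:curve-segre}. Then we substitute it into Proposition~\ref{prop:segre-formula}.

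For the Segre class, $C$ is smooth, hence a regular embedding of codimension three. By \cite[Proposition 4.1]{Fulton84} we have $s(C,\PP^4)=c(N_{C/\PP^4})^{-1}\cap[C]$. Since $C$ is a curve, only the terms of dimension one and zero survive, so $s(C,\PP^4)=[C]-c_1(N_{C/\PP^4})\cap[C]$.

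We push forward to $\PP^4$.
\begin{itemize}
\item The class $[C]$ is $e$ times the class of a line, so it gives $e h^3$.
\item From the normal bundle sequence $0\to T_C\to T_{\PP^4}|_C\to N_{C/\PP^4}\to 0$ we get $\deg c_1(N)=5e-(2-2g)$.
\end{itemize}
Hence $s(C,\PP^4)=eh^3+(2-2g-5e)h^4$, which is \eqref{eq:curve-segre}. This part is a direct computation, as already sketched before the statement.

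Next we use \eqref{eq:segre-degrees}. Under the hypothesis that $C$ is the only positive-dimensional component, the base scheme has no surface part, so $s_2=0$. Its curve part contributes $s_3=e$. The $h^4$ coefficient is $s_4=(2-2g-5e)+\delta'$, where $\delta'$ is the Samuel-multiplicity contribution of the zero-dimensional part and of embedded first-order structure.

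For $i=3$ we get $d_3=8-\binom33 s_3=8-e$. For $i=4$ we get
$$
d_4=16-\binom43\cdot 2\, s_3-\binom44 s_4=16-8e-(2-2g-5e)-\delta'=14-3e+2g-\delta'.
$$
We then take $\delta:=\delta'$.

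The step I expect to be the main obstacle is justifying additivity when the punctual contribution is not disjoint from $C$, for instance a condition $p_C^{\PP^3}$ sitting on the curve. Remark~\ref{rem:disjoint} does not apply there. So the honest statement is that $\delta$ is \emph{defined} as $(2-2g-5e)-s_4(B)$, where $B$ is the actual base scheme, possibly replaced by a reduction with the same integral closure. With this convention the formula is a tautological rewriting. The content for embedded points is then deferred to the later local computations, namely Lemma~\ref{lem:punctual-contribution} and the integral-closure discussion.

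For disjoint points, Remark~\ref{rem:disjoint} makes $\delta$ the sum of the local Samuel multiplicities. Lemma~\ref{lem:punctual-contribution} then gives $2^a$ for each point $p^{\PP^a}$.
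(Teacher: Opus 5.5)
Your proposal is correct and follows the paper's proof essentially step for step. The normal-bundle sequence gives $\deg c_1(N_{C/\PP^4})=5e+2g-2$, so $s(C,\PP^4)=[C]-c_1(N_{C/\PP^4})\cap[C]$; substituting $s_3=e$ and $s_4=2-2g-5e+\delta$ into $d_3=8-s_3$, $d_4=16-8s_3-s_4$ then gives the stated formulas. Your treatment of $\delta$ as the residual $h^4$-coefficient matches the paper's ``before adding the zero-dimensional contributions''. One small correction: for first-order conditions embedded on the curve, the later computation you want is Lemma~\ref{lem:curve-embedded-point}, not Lemma~\ref{lem:punctual-contribution}.
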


\begin{proof}
The normal sequence of $C\subset\PP^4$ gives
$
c(N_{C/\PP^4})=c(T_{\PP^4}|_C)c(T_C)^{-1}.
$
Since $c_1(T_{\PP^4}|_C)=5h|_C$ and $\deg c_1(T_C)=2-2g$, we get
$
\deg c_1(N_{C/\PP^4})=5e+2g-2.
$
Therefore, $
s(C,\PP^4)=c(N_{C/\PP^4})^{-1}\cap [C]
=[C]-c_1(N_{C/\PP^4})\cap [C],
$
which gives \eqref{eq:curve-segre}. Formula \eqref{eq:curve-degrees} follows directly from
\eqref{eq:segre-degrees}. For a base scheme with no surface component, only
the coefficients $s_3$ and $s_4$ enter in the last two degrees, and
\stepcounter{thm}\begin{equation}\label{eq:segre-degrees-curves}
d_3=8-s_3,\qquad d_4=16-8s_3-s_4.
\end{equation}
Therefore, $d_3=8-e$ and $d_4=14-3e+2g$ before adding the zero-dimensional contributions.
\end{proof}

We will also use the following first order contribution. If $p\in\PP^4$ and $\Lambda\simeq\PP^a\subset\PP^4$ is a linear subspace through $p$, then the first order linear scheme $p^\Lambda$ imposes $a+1$ conditions on quadrics and contributes $2^a$ to the fourth projective degree, as in Lemma~\ref{lem:punctual-contribution}. If $p$ lies on a smooth curve $C$ and $T_pC\subset\Lambda$, then the condition of containing $C$ already imposes the tangent direction $T_pC$, hence $p^\Lambda$ imposes only $a-1$ further conditions on quadrics containing $C$.

\begin{Lemma}\label{lem:curve-embedded-point}
Let $C\subset\PP^4$ be a smooth curve, let $p\in C$, and let $H\subset\PP^4$ be a hyperplane such that $T_pC\subset H$. Let $Z=C\cup p^H$, where $p^H$ is the first order linear scheme supported at $p$ with tangent space $H$. Then
$
 s(Z,\PP^4)=s(C,\PP^4)+4h^4.
$
Therefore, the embedded first order condition contributes $4$ to the coefficient of $h^4$.
\end{Lemma}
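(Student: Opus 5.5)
The plan is to compare $s(Z,\PP^4)$ with $s(C,\PP^4)$ after blowing up the curve. Since $Z$ and $C$ agree away from $p$, the Segre classes agree in dimension one, so the coefficient of $h^3$ is still $e$. The difference is therefore a multiple of $h^4$ concentrated at $p$, and the task is to show that this multiple is $4$.

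\emph{Local ideal.} In the completed local ring at $p$ we choose formal coordinates $u_1,\dots,u_4$ with $C=\{u_2=u_3=u_4=0\}$. This is possible because $C$ is smooth. The hypothesis $T_pC\subset H$ means $H$ has tangent space $\{u_4=0\}$ at $p$, up to terms in $\mathfrak m_p^2$. Then
$$
I_Z=I_C\cap\bigl((u_4)+\mathfrak m_p^2\bigr)=(u_4,\;u_2^2,\;u_2u_3,\;u_3^2,\;u_1u_2,\;u_1u_3),
$$
possibly after absorbing higher-order terms into the choice of coordinates.

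\emph{Factor the pulled-back ideal on the blow-up.} Let $\sigma\colon X=\Bl_C\PP^4\to\PP^4$ be the blow-up of $C$, with exceptional divisor $E$, a $\PP^2$-bundle over $C$. In the chart $u_3=u_2v_3$, $u_4=u_2v_4$ one computes
$$
I_Z\OO_X=u_2\cdot(v_4,u_1,u_2)=\OO_X(-E)\cdot I_\ell .
$$
Here $\ell\subset E_p\cong\PP(N_{C,p})$ is the line of normal directions lying in $T_pH$; the other charts give the same description. So the pulled-back ideal is $\OO_X(-E)\cdot I_\ell$, with $\ell$ a smooth curve.

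\emph{Apply the residual formula.} Since $\sigma$ is birational, the Segre class is a birational invariant under push-forward, and
$$
s(Z,\PP^4)=\sigma_*\,s(\sigma^{-1}Z,X).
$$
Fulton's residual formula (Proposition 9.2 of \cite{Fulton84}) for the divisor $E$ and the residual scheme $\ell$ gives
$$
s(\sigma^{-1}Z,X)=s(E,X)+c(\OO_X(-E))\cap s(\ell,X)
$$
(or the variant with $c(\OO(E))^{-1}$, depending on the convention used). The term $s(E,X)$ pushes forward to $s(C,\PP^4)$. Since $\ell$ is contracted to the point $p$, only the zero-dimensional part of the second term survives push-forward, and we need
$$
\deg\Bigl(c(\OO(-E))\cap\bigl([\ell]-c_1(N_{\ell/X})\cap[\ell]\bigr)\Bigr)_0 .
$$
For the normal bundle, $N_{\ell/E_p}=\OO(1)$ and $N_{E_p/E}$ is trivial, while $\OO_X(E)|_{E_p}=\OO_{E_p}(-1)$. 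Hence $c_1(N_{\ell/X})=1+0-1=0$ and $E\cdot\ell=-1$. This yields a contribution of absolute value $1$ with these signs, which does not match $4$.

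So I expect this sign and normalization bookkeeping to be the main obstacle. In particular, the translation between the residual formula and the coefficient convention $s_4$ (under which $d_4=16-8s_3-s_4$) must be handled carefully, and the chart computation must be redone to confirm that $I_Z\OO_X$ really is $I_E\cdot I_\ell$ and not $I_E\cdot I_W$ for a fatter scheme $W$.

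To pin down the number $4$ independently, I would calibrate with \eqref{eq:segre-degrees-curves} and Lemma~\ref{lem:punctual-contribution}. Take four general local generators of $I_Z$ and compute the length of the residual to $C$ at $p$ in their intersection, compared with the corresponding length for four general generators of $I_C$. Eliminating $u_4$ reduces this to three equations in $u_1,u_2,u_3$ from the ideal $(u_2,u_3)\cdot(u_1,u_2,u_3)$, and one computes the extra residual length there; the claim is that it equals $4$. Equivalently, the difference can be checked against an explicit quadratic system, such as the conic family of Theorem~\ref{intro:families}, whose numbers force $s_4$ to shift by exactly $4$.
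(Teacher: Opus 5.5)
\textbf{There is a genuine gap: the residual formula is misquoted, and that is the only reason you get $1$ instead of $4$.} Your setup agrees with the paper's, and your intermediate data are correct: the local ideal, the factorization $I_Z\OO_X=\OO_X(-E)\cdot I_\ell$ on $X=\Bl_C\PP^4$, $E\cdot\ell=-1$, and $\deg c_1(N_{\ell/X})=0$. Fulton's Proposition~9.2 is neither $s(E,X)+c(\OO_X(-E))\cap s(\ell,X)$ nor the variant with $c(\OO_X(E))^{-1}$. The residual term is twisted by $\OO_X(E)$, and in dimension $k$ the formula reads
\[
s(\sigma^{-1}Z,X)_k=s(E,X)_k+\sum_{j\geq0}\binom{4-k}{j}(-E)^j\cdot s(\ell,X)_{k+j}.
\]
You can check this on any principalization, using $\frac{E+F}{1+E+F}-\frac{E}{1+E}=\sum_{i\geq1}(-1)^{i-1}F^i(1+E)^{-i-1}$. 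For $k=0$ the correction is $-c_1(N_{\ell/X})\cap[\ell]+\binom{4}{1}(-E)\cdot[\ell]$, which has degree $0+4=4$. For $k=1$ the correction $[\ell]$ is killed by $\sigma_*$, since $\ell$ is contracted to $p$. So your own numbers give exactly $4h^4$; what you lost is the binomial factor $4$, not a sign. This is precisely the paper's computation. It blows up $\ell$ (its $\lambda$) and expands $\frac{A+F}{1+A+F}-\frac{A}{1+A}$ using $\pi_{2*}F^3=[\lambda]$ and $\pi_{2*}F^4=i_*c_1(N_{\lambda/X_1})$. This gives $[\lambda]-4A\cdot[\lambda]-i_*c_1(N_{\lambda/X_1})$, where the $4$ is the $\binom{4}{3}$ in $(A+F)^4$.

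As written, your argument stops at the value $1$, and neither fallback closes the gap. The local-length calibration is not a finite computation. The point $p$ lies on the curve $C$, along which four general members of $I_Z$ (or three, after eliminating $u_4$) meet with excess. So $p$ is not an isolated point of their intersection, and there is no local intersection multiplicity as in Lemma~\ref{lem:punctual-contribution}. The contribution of $p$ exists only as an excess, Segre-class term, which is exactly the quantity to be computed. Calibrating against the conic family is circular, because Proposition~\ref{prop:curve-families}(ii) obtains its multidegree from this very lemma. It would work only if you proved $d_4=1$ for an explicit example independently, for instance by exhibiting an inverse. You would also have to show that the correction depends only on the formal type of $I_Z$ at $p$.
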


\begin{proof}
Choose completed local coordinates $(t,x,y,z)$ at $p$ such that
$
 I_C=(x,y,z),\qquad H=(z),
$
and $t$ is a parameter on $C$. Since $I_{p^H}=(z,(t,x,y,z)^2)$, one has
\stepcounter{thm}\begin{equation}\label{eq:embedded-curve-local-ideal}
 I_Z=I_C\cap I_{p^H}=(z,tx,ty,x^2,xy,y^2).
\end{equation}
Let $\pi_1:X_1=\Bl_C\PP^4\to\PP^4$ and denote its exceptional divisor by $E$. On the $x$-chart, where $y=xY$ and $z=xZ$, the pull-back of
\eqref{eq:embedded-curve-local-ideal} is
$
 x(Z,t,x).
$
On the $z$-chart the residual ideal is the unit ideal. Thus
$
 I_Z\mathcal O_{X_1}=\mathcal O_{X_1}(-E)I_\lambda,
$
where $\lambda\subset E_p\simeq\PP^2$ is the line determined by $H$.

Blow up $\lambda$ and write
$
 \pi_2:X_2=\Bl_\lambda X_1\longrightarrow X_1,
 \qquad F=\operatorname{Exc}(\pi_2),\qquad A=\pi_2^*E.
$
Then the divisor resolving $I_Z$ is $A+F$, whereas the divisor resolving
$I_C$ is $A$. For a blow-up of a smooth codimension-three center,
\stepcounter{thm}\begin{equation}\label{eq:codim-three-blowup-push}
 \pi_{2*}F=\pi_{2*}F^2=0,\qquad
 \pi_{2*}F^3=[\lambda],\qquad
 \pi_{2*}F^4=i_*c_1(N_{\lambda/X_1}).
\end{equation}
Expanding through codimension four and using \eqref{eq:codim-three-blowup-push} gives
\begin{align*}
\pi_{2*}\left(
 \frac{A+F}{1+A+F}-\frac{A}{1+A}
\right)
&=\pi_{2*}(F^3-4AF^3-F^4)\\
&=[\lambda]-4A\cdot[\lambda]-i_*c_1(N_{\lambda/X_1}).
\end{align*}
The one-cycle $[\lambda]$ pushes forward to zero in $\PP^4$. Moreover,
$E|_\lambda\simeq\mathcal O_\lambda(-1)$, so
$\deg(A|_\lambda)=-1$. Finally, the normal sequences give
$
0\longrightarrow\mathcal O_\lambda(1)\oplus\mathcal O_\lambda
\longrightarrow N_{\lambda/X_1}
\longrightarrow\mathcal O_\lambda(-1)\longrightarrow0,
$
whence $\deg c_1(N_{\lambda/X_1})=0$. After pushing to $\PP^4$ the only
remaining term is therefore $4[p]$. The formula for Segre classes through a
principalization, $s(Z,\PP^4)=\pi_*\bigl(D/(1+D)\bigr)$, now yields
$
 s(C\cup p^H,\PP^4)=s(C,\PP^4)+4h^4.
$
\end{proof}

We need one non-reduced curve. Let $L\subset H\subset\PP^4$ be a line contained in a hyperplane. We denote by $L^H$ the degree three scheme supported on $L$ with fixed tangent hyperplane $H$, defined by
$
\I_{L^H}=\I_H+\I_L^2.
$
For instance, if $L=\{x_0=x_1=x_2=0\}$ and $H=\{x_0=0\}$, then
$
\I_{L^H}=(x_0,x_1^2,x_1x_2,x_2^2).
$

\begin{Lemma}\label{lem:triple-line-segre}
For $L^H\subset\PP^4$ as above,
\stepcounter{thm}\begin{equation}\label{eq:triple-line-segre}
s(L^H,\PP^4)=4h^3-20h^4.
\end{equation}
Moreover $L^H$ imposes $7$ independent conditions on quadrics.
\end{Lemma}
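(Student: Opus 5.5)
The second assertion is a direct count, which I will do first. In the coordinates of the example, $L=\{x_0=x_1=x_2=0\}$ and $H=\{x_0=0\}$, a quadric lies in $\I_{L^H}=(x_0,x_1^2,x_1x_2,x_2^2)$ exactly when it has the form $x_0\ell+q(x_1,x_2)$. Here $\ell$ is an arbitrary linear form, giving $5$ parameters, and $q$ is a binary quadric in $x_1,x_2$, giving $3$ parameters. Hence $h^0(\I_{L^H}(2))=8$, so $L^H$ imposes $15-8=7$ independent conditions.

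For the Segre class I will follow the two-step principalization scheme of Lemma~\ref{lem:curve-embedded-point}.

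\emph{The coefficient $s_3$.} The scheme $L^H$ is generically non-reduced along $L$. A transverse slice at a general point of $L$ gives the local ideal $(z,x^2,xy,y^2)$ in $\kk[[x,y,z]]$, whose Samuel multiplicity is $e((x,y)^2)=4$. Hence $s_3=4$, and the remaining work is to show $s_4=-20$.

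\emph{First blow-up.} Let $\pi_1:X_1=\Bl_L\PP^4\to\PP^4$, with exceptional divisor $E\simeq\PP(N_{L/\PP^4})=L\times\PP^2$, where $N_{L/\PP^4}=\OO_L(1)^{\oplus 3}$.
\begin{itemize}
\item On the $x_0$-chart, with $x_1=x_0a$ and $x_2=x_0b$, the ideal pulls back to $x_0\cdot(1)$.
\item On the $x_1$-chart, with $x_0=x_1u$ and $x_2=x_1v$, it pulls back to $x_1\cdot(u,x_1)$.
\end{itemize}
Thus $\I_{L^H}\OO_{X_1}=\OO_{X_1}(-E)\,\I_S$. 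Here $S=E\cap\widetilde H=\PP(N_{L/H})\simeq L\times\PP^1$ is a smooth surface of codimension two in $X_1$, cut out in each fibre $\PP^2$ of $E$ by the line determined by $H$.

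\emph{Second blow-up.} Let $\pi_2:X_2=\Bl_SX_1\to X_1$, with exceptional divisor $F$, and set $A=\pi_2^*E$. The divisor resolving $\I_{L^H}$ is $A+F$. Then
$$s(L^H,\PP^4)=s(L,\PP^4)+\pi_*\Bigl(\tfrac{A+F}{1+A+F}-\tfrac{A}{1+A}\Bigr),$$
with $s(L,\PP^4)=h^3-5h^4$. I will expand the bracket through codimension four. For a codimension-two center one has $\pi_{2*}F=0$, $\pi_{2*}F^2=-[S]$, and the higher powers $\pi_{2*}F^3$ and $\pi_{2*}F^4$ are expressed through $c_1(N_{S/X_1})$ and $c_2(N_{S/X_1})$ via the Segre classes of $N_{S/X_1}$. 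Pushing forward, the codimension-three part must give $3[L]$, adding $3$ to $s_3$ and yielding $4$ as predicted. The codimension-four part must give $-15[pt]$, so that $-5-15=-20$.

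\emph{Normal bundle of $S$.} The inputs come from $0\to N_{S/E}\to N_{S/X_1}\to N_{E/X_1}|_S\to0$, where:
\begin{itemize}
\item $N_{E/X_1}|_S=\OO_E(-1)|_S$, the tautological bundle restricted to $S$;
\item $N_{S/E}$ is the relative $\OO(1)$ of the line inside $\PP^2$, twisted by the quotient $N_{L/\PP^4}/N_{L/H}=\OO_L(1)$.
\end{itemize}
On $S=\PP(\OO_L(1)^{\oplus2})\simeq\PP^1\times\PP^1$ these are explicit line bundles, so $c_1$, $c_2$ and the needed intersection numbers with $A|_S$ reduce to a short computation on a quadric surface.

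The main obstacle is this bookkeeping step: getting the signs and twists of the tautological classes right, and correctly expanding the cross terms $A^iF^j$ in codimension four. As an independent check on the final value, I will take the residual degree of the general system in the introduction of type $(2,4,2)$, which contains $L^H$ together with a conic. Alternatively, I will use \eqref{eq:segre-degrees-curves} on a quadratic system through $L^H$ and a few general points whose fourth projective degree can be computed directly. Either check must be consistent with $s_4=-20$.
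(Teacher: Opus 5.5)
Your strategy is the same as the paper's: blow up $L$, observe $\I_{L^H}\OO_{X_1}=\OO_{X_1}(-E)\I_S$ with $S=E\cap\widetilde H\simeq L\times\PP^1$, blow up $S$, and push forward powers of $A+F$ (the paper's $\overline E_1+E_2$). Your count of seven conditions and your Samuel-multiplicity argument for $s_3=4$ are fine.

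The gap is that the decisive intersection computation is never done. You state that the codimension-three and codimension-four parts of the correction ``must give'' $3[L]$ and $-15[pt]$; this is read off from the desired answer, not derived. Moreover, the codimension-four target is wrong, because $s(L,\PP^4)=h^3-3h^4$, not $h^3-5h^4$. The curve formula with $e=1$, $g=0$ gives $2-2g-5e=-3$. Directly, $\zeta^3=-c_1(N_{L/\PP^4})\zeta^2=-3$, so $E^4=(-\zeta)^3=3$. Hence the second blow-up must contribute $3h^3-17h^4$. Had you carried out the computation, you would have found $17$ where you expected $15$, and with your value of $s(L)$ you would have concluded $s_4=-22$.

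The missing computation is short. On $S=\PP(N_{L/H})$, let $f$ be the fibre class and $\eta$ the restriction of $\zeta=c_1(\OO_{\PP(N_{L/\PP^4})}(1))$. Then $\eta f=1$, $\eta^2=-2$, $E|_S=-\eta$ and $\widetilde H|_S=\eta+f$. So $N_{S/X_1}=\OO_S(E)\oplus\OO_S(\widetilde H)$ has $c_1=f$ and $c_2=1$. Use $\pi_{2*}F^3=-c_1(N_{S/X_1})\cap[S]$, $\pi_{2*}F^4=-(c_1^2-c_2)(N_{S/X_1})$ and the projection formula. The degree against $h$ of $\pi_*\bigl((A+F)^3-A^3\bigr)$ is $-3\,(E|_S\cdot f)-f^2=3$. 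The degree of $\pi_*\bigl((A+F)^4-A^4\bigr)$ is $-6\eta^2+4\,\eta f+1=12+4+1=17$. This gives $s(L^H,\PP^4)=(h^3-3h^4)+(3h^3-17h^4)=4h^3-20h^4$.

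Your proposed consistency checks are also not independent within this paper's logic. The $(2,4,2)$ configuration carries a non-additive local correction where the conic meets $L$ (Lemma~\ref{lem:triple-line-conic-segre}). The value $d_4=1$ for $L^H+q_1+q_2+q_3$ is itself deduced from the present lemma in Proposition~\ref{prop:curve-families}. Using it to recover $s_4$ would therefore be circular unless $d_4$ is computed independently.
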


\begin{proof}
In the coordinates above, the degree two part of $\I_{L^H}$ is generated by
$
x_0x_i,\ i=0,\ldots,4,\, x_1^2,\, x_1x_2,\, x_2^2.
$
Therefore, $h^0(\I_{L^H}(2))=8$, and $L^H$ imposes $15-8=7$ conditions on quadrics.

For the Segre class, blow up first the line $L$ and let $E_1$ be the exceptional divisor. Then $E_1\simeq L\times\PP^2$. The hyperplane $H$ determines inside $E_1$ the divisor
$
Q=E_1\cap \widetilde H\simeq L\times\PP^1.
$
Blow up $Q$ and let $E_2$ be the second exceptional divisor. If $E_1'$ denotes the strict transform of the first exceptional divisor, then the total transform of the first exceptional divisor is $\overline E_1=E_1'+E_2$, and the divisor resolving the scheme $L^H$ is $E_1'+2E_2=\overline E_1+E_2$. Let $i:Q\hookrightarrow X_1=\Bl_L\PP^4$ and let
$\pi_2:X_2\to X_1$ be the second blow-up. The standard blow-up formulas
give
$
\pi_{2*}E_2^2=-[Q],\quad
\pi_{2*}E_2^3=-i_*c_1(N_{Q/X_1}),\quad
\pi_{2*}E_2^4=-i_*\bigl(c_1(N_{Q/X_1})^2-c_2(N_{Q/X_1})\bigr).
$
Here
$N_{Q/X_1}=\OO_Q(\widetilde H)\oplus\OO_Q(E_1)$, while for the first
blow-up $N_{L/\PP^4}=\OO_L(1)^{\oplus3}$. Expanding
$(\overline E_1+E_2)^j$, applying these identities, and then pushing forward
through $\Bl_L\PP^4\to\PP^4$ gives
$
\pi_*((\overline E_1+E_2)^3)=4h^3,\, \pi_*((\overline E_1+E_2)^4)=20h^4.
$
With the usual Segre sign convention this gives \eqref{eq:triple-line-segre}.
\end{proof}

\begin{Remark}\label{rem:triple-line-samuel}
For a smooth reduced curve the coefficient of $h^3$ in its Segre class is
its degree. This need not hold for a non-reduced curve. At the generic point
of $L$, the ideal of $L^H$ is $(x_0,x_1^2,x_1x_2,x_2^2)$ in a
three-dimensional regular local ring, and three general elements have
colength, equivalently Samuel multiplicity, $4$. This explains why the
coefficient of $h^3$ in \eqref{eq:triple-line-segre} is $4$, although the
fundamental cycle of $L^H$ has degree $3$.
\end{Remark}

\begin{Lemma}\label{lem:triple-line-conic-segre}
Let $p\in L\subset H\subset\PP^4$, and let $C$ be a smooth conic meeting
$L$ only at $p$, with $T_pC\subset H$ and $T_pC\neq T_pL$. Then
$
 s(L^H\cup C,\PP^4)=6h^3-33h^4.
$
Moreover, for general such data the union imposes ten independent conditions
on quadrics.
\end{Lemma}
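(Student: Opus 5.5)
Let $Z=L^H\cup C$. The plan is to combine Lemma~\ref{lem:triple-line-segre} for $L^H$, formula~\eqref{eq:curve-segre} for the conic $C$, and a local correction at the point $p$ where the two components meet. By~\eqref{eq:curve-segre} with $e=2$, $g=0$,
$$
s(C,\PP^4)=2h^3-8h^4 .
$$
Together with $s(L^H,\PP^4)=4h^3-20h^4$ this gives $6h^3-28h^4$. So the statement reduces to showing that the junction at $p$ contributes exactly $-5h^4$. The $h^3$ coefficient needs no work: it is determined by the generic structure along each one-dimensional component, and there it is $4+2$ (cf.\ Remark~\ref{rem:triple-line-samuel}).

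For the correction at $p$, I would work in completed local coordinates $(t,x,y,z)$ at $p$ with
$$
L=\{x=y=z=0\},\qquad H=\{z=0\},\qquad C=\{t=0,\ z=x^2,\ y=0\}\ \text{(to second order)}.
$$
Here $T_pC$ is the $x$-direction, which lies in $H$ and differs from $T_pL$. The ideal of the union is
$$
I=(z,x^2,xy,y^2)\cap(t,y,z-x^2).
$$
I would compute it explicitly; I expect something like $(z-x^2,\ ty,\ tx^2,\ y^2,\ xy,\dots)$, adjusted so that $z$ is absorbed. I would then principalize $I$ following the method of Lemmas~\ref{lem:curve-embedded-point} and~\ref{lem:triple-line-segre}:
\begin{enumerate}
\item blow up $L$;
\item blow up the ruled surface $Q=E_1\cap\widetilde H$;
\item blow up the strict transform of $C$, which after these blow-ups should meet the exceptional divisors transversally at one point over $p$;
\item if the transform of $I$ still has a residual base locus over $p$, handle it with one further blow-up of a point or line, exactly as $\lambda$ was treated in Lemma~\ref{lem:curve-embedded-point}.
\end{enumerate}
I would then push forward $D/(1+D)$ for the total divisor $D$ and subtract the pushforwards computed separately for $L^H$ and for $C$. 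What remains is a zero-cycle supported at $p$, computed from the normal bundle degrees of the centres in the same way as~\eqref{eq:codim-three-blowup-push}.

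As a cross-check on the numerics, I would use a Fulton--Laksov style residual formula: the $h^4$ coefficient of the union equals the sum of the individual coefficients minus a contribution from the intersection scheme $L^H\cap C$. Here $L^H\cap C$ is curvilinear of length $2$ at $p$, because $T_pC\subset H$ makes the conic second-order tangent to the thickening. A correction of $-5=-(2+3)$ is plausible. Independently, I would sanity-check the final value through Proposition~\ref{prop:segre-formula}. With $s_3=6$ and $s_4=33$, equation~\eqref{eq:segre-degrees-curves} gives
$$
d_3=2,\qquad d_4=16-48+33=1,
$$
which is consistent with the expected Semple--Roth multidegree $(2,4,2)$ and birationality. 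That agreement fixes the target value.

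For the count of conditions, I would check that the union imposes ten conditions:
\begin{itemize}
\item $L^H$ imposes $7$ conditions by Lemma~\ref{lem:triple-line-segre};
\item $C$ imposes $5$ conditions on quadrics, and $2$ of these are already imposed by $L^H$. These are vanishing at $p$ and the condition on the tangent direction at $p$, which is automatic because $T_pC\subset H$ and every quadric of $I_{L^H}$ is singular-in-$H$-direction along $L$.
\end{itemize}
That gives $7+5-2=10$. Equivalently, $h^0(\I_{L^H\cup C}(2))=5$, which I would verify in the explicit coordinates $L=\{x_0=x_1=x_2=0\}$, $H=\{x_0=0\}$ and a concrete conic, say $x_0=x_2=0,\ x_1x_4=x_3^2$ suitably positioned. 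Independence is then open in the data.

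The main obstacle is the local principalization at $p$: getting the junction term to be exactly $-5$ rather than some nearby value.
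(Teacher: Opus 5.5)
You have the same route as the paper: blow up $L$, then $Q=E_1\cap\widetilde H$, then the strict transform of $C$, and compare push-forwards. But the proposal stops exactly where the proof lives. The junction term $-5$ is never derived, and neither substitute you offer derives it.

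\begin{itemize}
\item The residual-formula count is only a guess. Naive inclusion--exclusion with the length-two complete intersection $L^H\cap C$ would give $-28-2=-30$, not $-33$. The split $-(2+3)$ does not correspond to anything in the actual computation.
\item The multidegree check is circular inside the paper. The multidegree $(1,2,4,2,1)$ of family (ix) in Proposition~\ref{prop:curve-families} is \emph{deduced} from this lemma. Importing it from Semple--Roth/Pirio--Russo would be a different argument, and would also require identifying the saturated base ideal with $I_{L^H}\cap I_C$.
\item There is a sign slip: with the paper's convention $s_4=-33$. With $s_4=33$ one gets $16-8s_3-s_4=-65$.
\item For the ten conditions, the count $7+5-2$ needs surjectivity of $H^0(\I_{L^H}(2))\to H^0(\OO_C(2)(-2p))$, which an explicit witness supplies. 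Your suggested conic $x_0=x_2=0$, $x_1x_4=x_3^2$ does not work: it lies in a plane containing $L$ and is tangent to $L$ at $e_4$, violating $T_pC\neq T_pL$. The paper uses $[s:t]\mapsto[t^2:st:0:s^2:0]$, for which $H^0(\I_{L^H\cup C}(2))=\langle x_0x_2,x_0x_4,x_1x_2,x_2^2,x_0x_3-x_1^2\rangle$.
\end{itemize}

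The missing computation is as follows. In your coordinates $I=(z-x^2,\,xy,\,y^2,\,x^2t)$; note $ty\notin I_{L^H}$.
\begin{itemize}
\item On the $x$-chart of $\Bl_L$, with $z=xZ$ and $y=xY$, one gets $I\OO=x\,(Z-x,\,xY,\,xt)$ and $I_{L^H}\OO=x\,(Z,x)$. So $Q=(Z,x)$.
\item On the $Z$-chart of $\Bl_Q$, with $x=Z\beta$, one gets $I\OO=Z^2\beta\,(1-\beta,\,\beta Y,\,\beta t)$. Near $\widetilde C=\{\beta=1,\ Y=t=0\}$ this is $Z^2(\beta-1,Y,t)$.
\item Hence $I\OO_{X_2}=\OO_{X_2}(-D_T)\,\I_{\widetilde C}$ with $D_T=E_1'+2E_2$ and no further residual, so your step (iv) is unnecessary.
\item $\widetilde C$ misses $E_1'$ and crosses $E_2$ transversally once, so $D_T\cdot\widetilde C=2$.
\item The normal degree drops from $8$ to $8-2-1=5$. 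The first drop comes from the codimension-three center $L$. The second comes from the codimension-two center $Q$, which the strict transform of $C$ meets because $T_pC\subset H$.
\item Blowing up $\widetilde C$ and applying \eqref{eq:codim-three-blowup-push} to $\pi_*\bigl(\frac{D_T+F}{1+D_T+F}-\frac{D_T}{1+D_T}\bigr)$ gives $s(L^H\cup C)-s(L^H)=[C]-(5+4\cdot 2)h^4=2h^3-13h^4$.
\end{itemize}
The junction term is therefore $-13-(-8)=-8+3=-5$. The factor $2$ in $D_T\cdot\widetilde C$ is the delicate point. If you let $\widetilde C$ meet the exceptional locus with multiplicity one, the same bookkeeping gives $-1$.
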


\begin{proof}
The only non-additive contribution is local at $p$. In completed local
coordinates $(a,b,c,z)$ one may take
$
 I_{L^H}=(a,b^2,bc,c^2),\qquad I_C=(c,z,a-b^2).
$
A direct intersection gives
\stepcounter{thm}\begin{equation}\label{eq:triple-line-conic-local}
 I_{L^H\cup C}=(a-b^2,bc,c^2,b^2z).
\end{equation}
Blow up $L=(a,b,c)$ and use the $b$-chart $a=bA$, $c=bC$. The residual
center resolving $L^H$ is $Q=(A,b)$, namely
$E_L\cap\widetilde H$. Blow up $Q$ and, on the $A$-chart, write
$b=A\beta$ and $\eta=\beta-1$. Near the strict transform $\widetilde C$,
where $\beta$ is a unit, \eqref{eq:triple-line-conic-local} becomes
$
 A^2(\eta,C,z).
$
Consequently, on the twofold blow-up $X_2$ one has
\stepcounter{thm}\begin{equation}\label{eq:triple-line-conic-residual}
 I_{L^H\cup C}\mathcal O_{X_2}
 =\mathcal O_{X_2}(-D_T)I_{\widetilde C},
 \qquad D_T\cdot\widetilde C=2,
\end{equation}
where $D_T$ is the divisor resolving $L^H$.

Blow up the smooth codimension-three curve $\widetilde C$ and let $F$ be the
exceptional divisor. Comparing the divisors $D_T+F$ and $D_T$, exactly as in
Lemma~\ref{lem:curve-embedded-point}, gives
\stepcounter{thm}\begin{equation}\label{eq:triple-line-conic-difference}
 s(L^H\cup C,\PP^4)-s(L^H,\PP^4)
 =[C]-\bigl(\deg c_1(N_{\widetilde C/X_2})
        +4D_T\cdot\widetilde C\bigr)h^4.
\end{equation}
For a smooth curve meeting a smooth blow-up center transversely at one point,
the normal bundle of the strict transform is the elementary transform of the
old normal bundle and its degree drops by $\codim(Z)-1$. The blow-up of $L$
has codimension three and the blow-up of $Q$ has codimension two; hence
$
 \deg c_1(N_{\widetilde C/X_2})
 =\deg c_1(N_{C/\PP^4})-2-1=8-3=5.
$
Together with \eqref{eq:triple-line-conic-residual}, formula
\eqref{eq:triple-line-conic-difference} becomes
$
 s(L^H\cup C)-s(L^H)=[C]-13h^4.
$
Since $[C]=2h^3$ and $s(L^H)=4h^3-20h^4$, this is
$6h^3-33h^4$. Relative to the disjoint sum
$s(L^H)+s(C)$, the local correction is indeed $-5h^4$.

For independence, take coordinates $[x_0:\dots:x_4]$ and set
$
 L=V(x_0,x_1,x_2),\quad H=V(x_0),\quad
 C:[s:t]\longmapsto[t^2:st:0:s^2:0].
$
Then
\begin{align*}
H^0(\I_{L^H\cup C}(2))
=\langle x_0x_2,x_0x_4,x_1x_2,x_2^2,
          x_0x_3-x_1^2\rangle,
\end{align*}
and one checks directly that
\begin{align*}
&\langle x_0x_2,x_0x_4,x_1x_2,x_2^2,x_0x_3-x_1^2\rangle^{\rm sat}\\
&\hspace{15mm}=(x_0,x_1^2,x_1x_2,x_2^2)
 \cap(x_2,x_4,x_0x_3-x_1^2).
\end{align*}
Thus the union imposes exactly ten conditions in this rational coordinate
model, and the same holds for general data by openness.
\end{proof}

\begin{Proposition}\label{prop:curve-families}
The following irreducible families consist generically of quadratic Cremona transformations of $\PP^4$ with the stated multidegree:
\begin{enumerate}
\item \textit{$L+p_L^{\PP^3}+q^{\PP^2}+r+s$, of multidegree $(2,4,7)$ and dimension $25$;}
\item \textit{$C+p_C^{\PP^3}+q_1+q_2+q_3$, where $C$ is a smooth conic, of multidegree $(2,4,6)$ and dimension $26$;}
\item \textit{$L_1\sqcup L_2+q^{\PP^2}+r$, where $L_1,L_2$ are skew lines, of multidegree $(2,4,6)$ and dimension $24$;}
\item \textit{$C+p^{\PP^2}$, where $C$ is a twisted cubic and $p$ is general, of multidegree $(2,4,5)$ and dimension $24$;}
\item \textit{$C\sqcup L+q_1+q_2$, where $C$ is a smooth conic and $L$ is a line skew to $\langle C\rangle$, of multidegree $(2,4,5)$ and dimension $25$;}
\item \textit{$L^H+q_1+q_2+q_3$, of multidegree $(2,4,4)$ and dimension $20$;}
\item \textit{$C+q$, where $C\subset\PP^4$ is a rational normal quartic and $q$ is general, of multidegree $(2,4,4)$ and dimension $25$;}
\item \textit{$E$, where $E\subset\PP^4$ is an elliptic normal quintic, of multidegree $(2,4,3)$ and dimension $25$.}
\item \textit{$L^H+C_{L_p}$, where $C_{L_p}$ is a conic through $p\in L$ whose tangent line at $p$ is contained in $H$, of multidegree $(2,4,2)$ and dimension $16$.}
\end{enumerate}
\end{Proposition}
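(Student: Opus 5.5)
For each of the nine families we establish three things: the base data impose exactly ten independent conditions on quadrics, so that $h^0(\I_Z(2))=5$ for general data; the Segre class of the (integral-closure model of the) base scheme is as claimed; and \eqref{eq:segre-degrees-curves} then gives $d_3$ and $d_4=1$. Birationality then follows from Remark~\ref{rem:segre-birationality-criterion}. Since $s_2=0$ in all nine cases, $d_2=4$ is automatic. Moreover, $d_3=8-s_3$ equals $8$ minus the degree of the curve part: $1,2,2,3,3,4$ for reduced curves, $4$ for $L^H$ by Lemma~\ref{lem:triple-line-segre}, and $6$ for $L^H\cup C$ by Lemma~\ref{lem:triple-line-conic-segre}. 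This gives the stated $d_3$ values at once.

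For $d_4$ we use \eqref{eq:curve-degrees}, adding the punctual contributions by Remark~\ref{rem:disjoint}, Lemma~\ref{lem:punctual-contribution} and Lemma~\ref{lem:curve-embedded-point}. The bookkeeping is as follows.
\begin{enumerate}
\item For $L+p_L^{\PP^3}+q^{\PP^2}+r+s$: $14-3-4-4-1-1=1$.
\item For the conic: $14-6-4-3=1$.
\item For the two skew lines: $14-6-4-1=1$, using additivity of $e$ and of $g$ contributions, with $2g=-2$ for the disconnected pair counted as $s_4=2\cdot(2-5)$.
\item For the twisted cubic: $14-9-4=1$.
\item For $C\sqcup L$: $16-8\cdot3-(-8-3)-2=1$.
\item For $L^H+3$ points: $16-32+20-3=1$.
\item For the quartic: $14-12-1=1$.
\item For the elliptic quintic: $14-15+2=1$.
\item For $L^H\cup C$: $16-48+33=1$.
\end{enumerate}
Condition counting goes the same way. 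A line imposes $3$, a conic $5$, a twisted cubic $7$, a rational normal quartic $9$ and an elliptic normal quintic $10$ (by projective normality), $L^H$ imposes $7$, an embedded $p^{\PP^3}_C$ imposes $2$ more, and points or first-order points impose $a+1$. Independence for general data is open, so it suffices to exhibit one member, as was done in Lemma~\ref{lem:triple-line-conic-segre}, and to note the standard cases (curves of degree $\le 5$ that are projectively normal, together with general points).

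Dimensions are parameter counts, with the map to $\Gr(5,V)$ generically finite because the base scheme is recovered from $W$.
\begin{enumerate}
\item $6+(1+2)+(4+4)+8=25$.
\item $11+(1+2)+12=26$.
\item $12+8+4=24$.
\item $\dim$ of twisted cubics in $\PP^4$ is $16$, plus $8=24$.
\item Conics give $11$, lines $6$, points $8$, for $25$.
\item $L^H$ gives $6+2=8$, plus $12=20$.
\item Quartics give $21$, plus $4=25$.
\item Elliptic quintics give $25$.
\item $L^H$ gives $8$, a conic through $p$ tangent to $H$ gives $8$, total $16$.
\end{enumerate}
Irreducibility follows because each parameter space is an open subset of an irreducible incidence variety.

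The main obstacle is justifying that the Segre class of the actual base ideal equals that of the displayed model, since the generated ideal may differ from the saturated ideal of $Z$, or the base scheme may acquire extra embedded structure. For each family we plan to check that the five quadrics generate a reduction of the integral closure of $\I_Z$ locally: they agree generically along the curve, and at the special points the first-order data give the same normalized blow-up. Then we invoke invariance of Segre classes under reductions. We also need to verify that no extra base points or curves appear, which is open and is confirmed on the explicit examples. The embedded-point cases (i) and (ii) and the non-reduced cases (vi) and (ix) need the explicit local ideal computations already done in Lemmas~\ref{lem:curve-embedded-point} and \ref{lem:triple-line-conic-segre}. For (ix) we cross-check against the Semple--Roth/Pirio--Russo description.
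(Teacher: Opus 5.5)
Your route is the paper's: parameter counts, the Segre-class table assembled from Lemmas~\ref{lem:curve-segre}, \ref{lem:curve-embedded-point}, \ref{lem:triple-line-segre}, \ref{lem:triple-line-conic-segre}, \ref{lem:punctual-contribution} and additivity, then $d_3=8-s_3$, $d_4=16-8s_3-s_4=1$ and Remark~\ref{rem:segre-birationality-criterion}. Your numerics agree with the paper's table up to two slips. In (iii) the displayed sum $14-6-4-1$ equals $3$; it must include the $2g=-2$ term, i.e.\ $14-6-2-4-1=1$. Your list of curve degrees for $d_3$ also omits $5$ for $E$. In addition, $L_1\sqcup L_2$ and $C\sqcup L$ are not projectively normal, only $2$-normal, but that is all you need.

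The real gap is the step you yourself call the main obstacle and then only promise (``confirmed on the explicit examples'', which you never give). This is where the paper's proof does its work.

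\emph{Exactness of the base scheme.} Two-normality of the curve plus independence of the point conditions gives only $h^0(\I_Z(2))=5$. It does not show that the resulting five-dimensional system has no further base points or extra scheme structure, and the Segre computation presupposes this. ``Standard'' covers (viii) and, with some argument, (vii), but not (iii)--(vi).

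\emph{The reduction in (i) and (ii).} The generated ideal at the marked point can never equal the ideal of $L\cup p^H$ (resp.\ $C\cup p^H$). After eliminating the one quadric with non-zero linear part, four generators remain, while $(tx,ty,x^2,xy,y^2)$ needs five. So the reduction property has to be verified, not planned.

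The paper supplies these witnesses in two ways.
\begin{enumerate}
\item For (iii), (iv), (v), (vii), (viii) it takes a smooth $(2,2)$ surface $X\simeq\Bl_{z_1,\ldots,z_5}\PP^2$. It uses that $H^0(\PP^4,\OO(2))\to H^0(X,\OO_X(2H))$ is surjective with kernel $H^0(\I_X(2))$. It then pulls back a net consisting of a fixed divisor (two skew lines, twisted cubic, conic plus line, rational quartic, elliptic quintic) plus a homaloidal net of type $(3;2,1^4)$, $(2;1^3)$ or $(1;0)$. Hence the base scheme lies on $X$ and is exactly that divisor plus the residual cluster.
\item For (i), (ii), (vi) it writes explicit rational systems and computes their bases. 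The reductions are certified by $I^2=JI$ in (i) and $(x_3x_4)^2\in J^2$ in (ii).
\end{enumerate}
Without these or equivalent witnesses, your multidegree computation is only conditional on the base scheme being what you display.
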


\begin{proof}
We begin with the parameter spaces. Each family is an open subset of an
irreducible incidence variety. For (i), the choices give
$
6+1+2+(4+4)+4+4=25.
$
For (ii), a smooth conic depends on $6+5=11$ parameters, and the marked point,
the hyperplane containing its tangent, and the three further points give
$1+2+12$, for a total of $26$. For (iii) the count is
$12+8+4=24$; for (iv), $16+8=24$; for (v), $11+6+8=25$; for (vi),
$6+2+12=20$; for (vii), $21+4=25$; and elliptic normal quintics form an
irreducible family of dimension $25$. In (ix), the flag
$p\in L\subset H$ contributes $9$ parameters and the conic contributes
$7$, for a total of $16$. The support and the distinguished tangent
conditions recover the general datum, up to the finite permutation of the
unlabelled reduced points. Hence these are also the dimensions of the images
in the Grassmannian.

We next prove that the required open conditions are non-empty over $\QQ$.
This also explains precisely the meaning of the two marked embedded
conditions in (i) and (ii).

For (i), set
$
L=V(x_0,x_1,x_2),\quad p=[0:0:0:1:0],\quad H=V(x_0),
$
$
q=[1:0:0:0:0],\quad
\Lambda=\PP\langle e_0,e_1,e_2\rangle,
$
and take
$
r=[2:1:2:-2:1],\quad s=[1:3:1:-1:-1].
$
The five quadrics
\begin{align*}
&19x_0x_3+35x_0x_4+6x_1^2,
&&4x_0x_3+5x_0x_4+3x_1x_2,\\
&-5x_0x_3-13x_0x_4+6x_1x_4,
&&x_0x_3+x_2^2,
&&-x_0x_4+x_2x_4
\end{align*}
vanish on the marked configuration. If $x_0=0$, their reduced common zero
set is $L$. On the chart $x_0=1$, elimination gives
$
 x_4(x_4+1)(2x_4-1)=0
$
and the three corresponding points are $q,s,r$, respectively. The points
$r,s$ are reduced, while at $q$ the completed ideal is
$
(x_3,x_4,x_1^2,x_1x_2,x_2^2),
$
so the marked tangent plane is exact. At $p$, put $t=x_4/x_3$ and eliminate
$x_0$ by the fourth quadric. The ideal
$
I=(x_1^2,x_1x_2,x_2^2,tx_1,tx_2)
$
of $L\cup p^H$ contains the ideal generated by the four remaining initial
quadrics
\begin{align*}
J=(6x_1^2-19x_2^2,
   3x_1x_2-4x_2^2,
   6tx_1+5x_2^2,
   tx_2).
\end{align*}
A direct multiplication gives $I^2=JI$. Thus $J$ is a reduction of $I$ and
the two ideals have the same normalized blow-up and the same local Segre
class. Along $L\setminus\{p\}$ the first-jet matrix has rank three. This
proves non-emptiness of (i) and identifies its integral-closure model.

For (ii), take
$
C=V(x_3,x_4,x_0x_2-x_1^2)
$
and
\begin{align*}
W=\langle &x_0x_2-x_1^2,
 x_3(x_1-x_2),x_4(x_1-x_2),\\
 &x_3(x_3-x_2),x_4(x_4-x_2)\rangle.
\end{align*}
The last four quadrics define on the projection $\PP^3$ the line
$V(x_3,x_4)$ and the three reduced points
$
[1:1:1:0],\ [1:1:0:1],\ [1:1:1:1].
$
Over the line the first quadric cuts out $C$, and over these three points it
selects respectively
$
[1:1:1:1:0],\ [1:1:1:0:1],\ [1:1:1:1:1].
$
Thus the support is exactly the conic and the three points. At
$p=[1:0:0:0:0]\in C$, after eliminating $x_2-x_1^2$, the marked ideal is
$
I=(x_1x_3,x_1x_4,x_3^2,x_3x_4,x_4^2),
$
whereas the four remaining quadrics have initial ideal
$
J=(x_1x_3,x_1x_4,x_3^2,x_4^2).
$
The missing generator is integral over $J$, since
$(x_3x_4)^2=x_3^2x_4^2\in J^2$. Hence $J$ is a reduction of $I$. This is the
precise local meaning of $C+p_C^{\PP^3}$ in the table.

Cases (iii), (iv), (v), (vii), and (viii) have a uniform geometric witness.
Let
$
X\simeq\Bl_{z_1,\ldots,z_5}\PP^2\subset\PP^4,
\quad H=3\ell-e_1-\cdots-e_5,
$
be a general smooth complete intersection of two quadrics and let
$A=H^0(\I_X(2))$. Use, respectively, the following movable classes, fixed
divisors, and residual multiplicities:
\begin{center}
\begin{footnotesize}
\begin{tabular}{c|c|c}
case & $D$ & $2H-D$ and residual multiplicities \\
\hline
(iii) & $3\ell-e_1-e_2-e_3$
 & $L_0+(\ell-e_4-e_5)$, $(2,1)$ \\
(iv) & $3\ell-e_1-e_2-e_3-e_4$
 & $H-e_5$, $(2)$ \\
(v) & $2\ell-e_1$
 & $L_0+(2\ell-e_2-e_3-e_4-e_5)$, $(1,1)$ \\
(vii) & $2\ell-e_1-e_2$
 & $4\ell-e_1-e_2-2e_3-2e_4-2e_5$, $(1)$ \\
(viii) & $\ell$
 & $5\ell-2e_1-\cdots-2e_5$, $-$
\end{tabular}
\end{footnotesize}
\end{center}
Here $L_0=2\ell-e_1-\cdots-e_5$. For each row let
$\mathcal N\subset H^0(X,\mathcal O_X(2H))$ be the complete net obtained by
adding the displayed fixed divisor to the indicated proper homaloidal net.
After the marked blow-downs the movable parts have types
$(3;2,1^4)$, $(3;2,1^4)$, $(2;1^3)$,
$(2;1^3)$, and $(1;0)$; the Hudson reductions show that they are
birational. Since $X$ is a complete intersection of two quadrics, the
restriction map
$$
H^0(\PP^4,\mathcal O_{\PP^4}(2))
\longrightarrow H^0(X,\mathcal O_X(2H))
$$
is surjective with kernel $A$. Its inverse image $W$ of $\mathcal N$ has
vector dimension five and contains $A$. Consequently every base point of
$W$ lies on $X$, and the restriction of the base scheme to $X$ is exactly
the fixed divisor and residual cluster displayed in the corresponding row.
The fixed divisors are, respectively, two disjoint lines, a twisted cubic,
a line together with a conic, a rational quartic, and an elliptic quintic;
the residual multiplicities give precisely cases (iii), (iv), (v), (vii),
and (viii). This provides exact characteristic-zero witnesses without a
randomized computation.

For (vi), with $L=V(x_0,x_1,x_2)$ and $H=V(x_0)$, take
\begin{align*}
q_1&=[1:2:2:0:1],&q_2&=[1:0:2:2:1],&q_3&=[1:-1:0:-1:2].
\end{align*}
The complete quadratic system is generated by
\begin{align*}
&x_0(2x_0+x_1-2x_2+x_3),\quad
 x_0(4x_0-x_2-2x_4),\\
&6x_0^2+4x_0x_1-3x_0x_2-2x_1^2,\quad
 (x_0+x_1)(2x_0-x_2),\quad x_2(2x_0-x_2).
\end{align*}
Successive elimination gives exactly
$
(I_{L^H}\cap I_{q_1}\cap I_{q_2}\cap I_{q_3})
$
after saturation. For (ix), the exact coordinate model and saturation are
those in Lemma~\ref{lem:triple-line-conic-segre}. These witnesses prove that
all ten conditions are independent and that no additional support occurs.

We now compute the multidegrees. In (i) and (ii), replacing the generated
base ideal by the displayed reduction does not change the Segre class; in the
other cases the displayed scheme is the actual complete base scheme. Since
there is no surface component, $d_1=2$ and $d_2=4$. The relevant classes are
\begin{footnotesize}
$$
\begin{array}{c|c|c}
\text{case} & s(Z,\PP^4) & \multideg \\ \hline
\textup{(i)} & h^3+7h^4 & (1,2,4,7,1)\\
\textup{(ii)} & 2h^3-h^4 & (1,2,4,6,1)\\
\textup{(iii)} & 2h^3-h^4 & (1,2,4,6,1)\\
\textup{(iv)} & 3h^3-9h^4 & (1,2,4,5,1)\\
\textup{(v)} & 3h^3-9h^4 & (1,2,4,5,1)\\
\textup{(vi)} & 4h^3-17h^4 & (1,2,4,4,1)\\
\textup{(vii)} & 4h^3-17h^4 & (1,2,4,4,1)\\
\textup{(viii)} & 5h^3-25h^4 & (1,2,4,3,1)\\
\textup{(ix)} & 6h^3-33h^4 & (1,2,4,2,1)
\end{array}
$$
\end{footnotesize}
Cases (i)--(viii) follow from Lemmas~\ref{lem:curve-segre},
\ref{lem:curve-embedded-point}, \ref{lem:triple-line-segre}, and
\ref{lem:punctual-contribution}, together with additivity for disjoint
components; case (ix) is Lemma~\ref{lem:triple-line-conic-segre}. Substitution
in
$
 d_3=8-s_3,\qquad d_4=16-8s_3-s_4
$
gives $d_4=1$ in every row. Remark~\ref{rem:segre-birationality-criterion}
then proves dominance and birationality.
\end{proof}

Proposition~\ref{prop:curve-families} constructs families. It does not assert that these are the only families with the corresponding multidegrees. In particular, when two families have the same multidegree, we do not assert here that they are distinct irreducible components of the whole Cremona locus.

\begin{Remark}
We recall the standard description of family \textup{(viii)} in Proposition~\ref{prop:curve-families}. Let $\mathsf A$ be a $5\times5$ skew-symmetric matrix of linear forms on $\PP^4$. The five $4\times4$ Pfaffians of $\mathsf A$ are quadrics. For a general $\mathsf A$ they define a smooth elliptic normal quintic $E\subset\PP^4$; conversely, every elliptic normal quintic arises in this way. This is the Buchsbaum--Eisenbud structure theorem for codimension three Gorenstein ideals \cite{BuchEis77}.

Since $E$ has degree $5$ and genus $1$, Lemma~\ref{lem:curve-segre} gives
$
s(E,\PP^4)=5h^3-25h^4.
$
Therefore, \eqref{eq:segre-degrees-curves} gives $d_3=8-5=3$ and $d_4=16-40+25=1$. Therefore the five Pfaffians define a quadratic Cremona transformation of multidegree $(1,2,4,3,1)$.
\end{Remark}

\begin{Lemma}\label{lem:delpezzo-line-ribbon}
Let $X\subset\PP^4$ be a smooth complete intersection of two quadrics, let
$H=\mathcal O_X(1)=-K_X$, and let $L\subset X$ be a line. Then $H-L$ is
base-point-free, has square $1$, and defines a birational morphism $\beta_L:X\longrightarrow\PP^2.$ Consequently $|2H-2L|=\beta_L^*|\mathcal O_{\PP^2}(2)|$.
\end{Lemma}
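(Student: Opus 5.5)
The approach is to work entirely on the del Pezzo model. Write $X\simeq\Bl_{z_1,\dots,z_5}\PP^2$ with $H=3\ell-e_1-\cdots-e_5=-K_X$. The Weyl group $W(D_5)$ acts transitively on the sixteen lines, so it preserves $H$ and the intersection form. We may therefore assume $L=e_5$ (or $L=\ell-e_1-e_2$; any line works). Then
$$
H-L=3\ell-e_1-e_2-e_3-e_4-2e_5 .
$$
The numerics are immediate from $H^2=4$, $H\cdot L=1$ and $L^2=-1$:
$$
(H-L)^2=4-2+(-1)=1,\qquad (H-L)\cdot K_X=-(4-1)=-3 .
$$
In particular $h^0(H-L)=\chi=1+\tfrac12\bigl((H-L)^2-(H-L)\cdot K_X\bigr)=3$ by Riemann--Roch, with $h^1=h^2=0$ by Kawamata--Viehweg, since $H-L-K_X=2H-L$ is ample.

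A cleaner choice, which I will actually use, is to apply a Weyl element so that $H-L$ becomes $\ell$ itself. Indeed, the class $\ell$ satisfies $\ell^2=1$ and $\ell\cdot K_X=-3$. Moreover, $H-\ell=2\ell-e_1-\cdots-e_5$ is the conic class through the five points, which is a line on $X$ since $(2\ell-\sum e_i)^2=-1$ and $(2\ell-\sum e_i)\cdot H=1$. So the line $L_0=2\ell-\sum e_i$ has $H-L_0=\ell$. By transitivity of $W(D_5)$ on lines, every line $L$ is of this form for a suitable choice of the blow-down structure $X\to\PP^2$. Equivalently, contracting the five lines meeting $L_0$ gives a different blow-down, and the five exceptional curves of this blow-down are exactly the lines meeting $L$.

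With this choice, $|H-L|=|\ell|$ is the pullback of $|\OO_{\PP^2}(1)|$ under the blow-down $\beta_L:X\to\PP^2$. Hence it is base-point-free, its square is $1$, and it defines the birational morphism $\beta_L$. For the last assertion, note that
$$
\beta_L^*\colon H^0(\PP^2,\OO_{\PP^2}(2))\longrightarrow H^0(X,\OO_X(2\ell))
$$
is an isomorphism, since $\beta_{L*}\OO_X=\OO_{\PP^2}$ and the projection formula applies. Therefore $|2H-2L|=|2\ell|=\beta_L^*|\OO_{\PP^2}(2)|$.

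The only step needing care is the claim that every line arises as $2\ell-\sum e_i$ for some blow-down structure. I would justify it directly. Given a line $L$, the five lines meeting it are pairwise disjoint: two of them meeting would, together with $L$, give three coplanar lines in $\PP^4$ lying on $X$, and these would force a plane cubic inside the intersection of two quadrics, which is impossible. Contracting these five disjoint $(-1)$-curves yields a smooth surface with $K^2=9$, hence $\PP^2$. In this blow-down structure one computes $L\cdot e_i=1$ for all $i$, and $L=2\ell-\sum e_i$ follows.
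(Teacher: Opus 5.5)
Your argument is correct and is essentially the paper's own: choose a blow-down marking in which $L=2\ell-e_1-\cdots-e_5$, so that $H-L=\ell$ and $|H-L|$, $|2H-2L|$ are the pullbacks of $|\OO_{\PP^2}(1)|$ and $|\OO_{\PP^2}(2)|$. You also justify two points the paper's proof takes for granted: the existence of that marking (by contracting the five pairwise disjoint lines meeting $L$ to obtain $\PP^2$) and the projection-formula identification $H^0(X,\beta_L^*\OO_{\PP^2}(2))=H^0(\PP^2,\OO_{\PP^2}(2))$. Two minor remarks: disjointness of those five lines is most simply checked by intersection numbers in any marking, which avoids the case of three concurrent lines that your coplanarity remark passes over, and the preliminary Riemann--Roch computation with $L=e_5$ is not needed.
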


\begin{proof}
After choosing a suitable blow-down marking, we may write
$
X\simeq\Bl_{z_1,\ldots,z_5}\PP^2,\,
H=3\ell-e_1-\cdots-e_5,
\, L=2\ell-e_1-\cdots-e_5.
$
Then $H-L=\ell$, and the complete linear system $|H-L|$ is the pull-back of
lines in $\PP^2$. This proves the statement.
\end{proof}

\begin{Proposition}\label{prop:ribbon-family}
There exists an irreducible family of dimension $26$ whose general base
scheme is $B=R_{-2}\sqcup\{q_1,q_2,q_3\},$ where $R_{-2}$ is a locally
complete-intersection ribbon of degree two and arithmetic genus $-2$
supported on a line. The corresponding complete system of quadrics defines
a quadratic Cremona transformation with
$P_{R_{-2}}(t)=2t+3,\, P_B(t)=2t+6,\,
\multideg=(1,2,4,6,1)$.
\end{Proposition}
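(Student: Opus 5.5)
The plan is to construct the family on a smooth complete intersection of two quadrics, as in the uniform witness for cases (iii), (iv), (v), (vii), (viii) of Proposition~\ref{prop:curve-families}, now using Lemma~\ref{lem:delpezzo-line-ribbon}.

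\emph{Construction.} Fix a smooth $X\subset\PP^4$ of type $(2,2)$, a line $L\subset X$ and the marking $H=3\ell-e_1-\dots-e_5$, $L=2\ell-e_1-\dots-e_5$. Take the net $\mathcal N=2L+\beta_L^*|\OO_{\PP^2}(1)(-z_1-z_2-z_3)|$ with three general points removed, i.e. the fixed divisor $2L$ plus the movable class $2H-2L-(\text{conditions})$. Choose $\mathcal N\subset|2H|$ as the net of divisors $2L+\beta_L^*(\text{conic through } \beta_L(q_1),\beta_L(q_2),\beta_L(q_3))$. Since $|2H-2L|=\beta_L^*|\OO_{\PP^2}(2)|$, this net is the pullback of the standard quadratic plane net, hence homaloidal. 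Let $W$ be the preimage of $\mathcal N$ under the surjection $H^0(\OO_{\PP^4}(2))\to H^0(\OO_X(2H))$ with kernel $A=H^0(\I_X(2))$. Then $\dim W=5$, and by Lemma~\ref{lem:strict-transform-general-plane} applied in reverse (restricting to $X=X_A$ gives a birational map onto a plane, with fibres of the pencil of quadrics providing the remaining coordinate), $W$ is birational. A cleaner route is to compute the Segre class and use Remark~\ref{rem:segre-birationality-criterion}.

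\emph{Base scheme.} The base scheme of $W$ lies in $X$ and restricts to the divisor $2L$ on $X$ together with the three reduced points $q_i$. I will then check that the one-dimensional part $R_{-2}$ is a ribbon. In $\PP^4$, its ideal is $\I_X+\I_{2L\subset X}$. Locally, $X$ is smooth along $L$ with $N_{L/X}=\OO_L(-1)$, so $2L\subset X$ is a Cartier divisor on $X$, hence an lci ribbon in $\PP^4$ with conormal line bundle $\I_L/\I_{2L}=\OO_L(1)$. Then $\chi(\OO_{2L})=\chi(\OO_L)+\chi(\OO_L(1))=1+2$, which gives $P_{R_{-2}}(t)=(t+1)+(t+2)=2t+3$, arithmetic genus $-2$, and $P_B(t)=2t+6$. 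Conversely, $R_{-2}\cup\{q_i\}$ should impose exactly ten conditions. I will verify this on an explicit rational model: take $X$ in coordinates, write the five quadrics, and saturate, as done for (vi). Openness then gives the general case.

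\emph{Segre class and multidegree.} The main obstacle is $s(R_{-2},\PP^4)$. I expect $s_3=2$ (the Samuel multiplicity of an lci double structure is $2$) and $s_4=-7$, so that with the three points, $s_4=-4$. Then $d_3=8-2=6$ and $d_4=16-16+4-3=1$. To get $s_4$, I will use that $R_{-2}=X\cap D$ locally, where $D$ is the divisor $2L$ on $X$, and apply residual intersection or the formula for a Cartier divisor on $X$: $s(2L,\PP^4)=s(X\text{-relative})$. Concretely, $s(D,\PP^4)=\frac{[D]}{(1+D)c(N_{X/\PP^4})|_D}$ pushed forward. This gives $[D]-(D\cdot D+4H\cdot D)$ with $D=2L$: $D^2=-4$ and $H\cdot D=2$. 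The result is $2h^3-(-4+8)h^4$, which is wrong by sign bookkeeping. I will therefore recompute carefully using $c(N_{X})=(1+2H)^2$: $s=[D](1-D-4H)$, so $s_4=4-8=-4$ for the ribbon alone. That would give $d_4=16-16+4-3$ only if the points contribute $-3$. I will reconcile via \eqref{eq:segre-degrees-curves}: $d_4=16-8s_3-s_4$, and with points adding $+3$ to $s_4$ I need $s_4(R)=-4$. Indeed $16-16-(-4+3)=1$, which is consistent.

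\emph{Dimension.} Lines in $\PP^4$ give $6$ parameters. A ribbon structure on $L$ with conormal $\OO_L(1)$ is a surjection $N^*_{L}=\OO_L(-1)^3\to\OO_L(-2)$, giving $3\cdot2-1=5$ parameters. The three points give $12$. The total is $23$, which falls short of $26$, so I will instead count via $X$. There are $\dim\Gr(2,15)$ minus conditions: pairs $(X,L)$ with $L\subset X$ have dimension $6+(26-6)=$ the $\Gr(2,12)$ of pencils containing $L$, i.e. $20+6=26$. Each $W$ contains a $2$-dimensional family of such $A$, so the family of $(L,\text{ribbon},q_i)$ has dimension $26-... $. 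I will settle the count by noting that the ribbon depends on $X$ only through $T X|_L$, and that the three points are free, so the ribbon data should be $14$, i.e. surjections $\OO(-1)^3\to\OO_L(-2)$... I flag this count as the step needing the most care, and I will verify it by computing the tangent space to the image in $\Gr(5,V)$ at the explicit example.
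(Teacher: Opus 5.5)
Your construction (fix a smooth $(2,2)$ surface $X\supset L$, take the fixed part $2L$ plus $\beta_L^*$ of the conics through three non-collinear points, and lift to $W\supset H^0(\I_X(2))$) is sound. It differs from the paper only in the order of choices. The paper starts from an abstract ribbon, i.e.\ a surjection $N^*_{L/\PP^4}=\OO_L(-1)^{\oplus3}\twoheadrightarrow\OO_L(1)$. It shows $h^0(\I_R(2))=8$, and that two lifts of $H^0(\mathcal K(2))$ cut out a smooth $X$ on which $R=2L$. It uses your $X$-first picture only to prove that an open condition is non-empty. The paper's order makes the parameter count immediate; yours makes the base scheme and birationality immediate.

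Your base-scheme, lci and $P_R(t)=2t+3$ arguments are fine. Completeness needs no explicit model: restriction to $X$ gives $h^0(\I_B(2))\le 2+h^0(X,\OO_X(2H-2L)\otimes\I_{q_1+q_2+q_3})=5$.

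Your Segre class is correct in its final form. $R$ is Cartier on $X$, hence regularly embedded with $c(N_{R/\PP^4})=(1+D)(1+2H)^2$, so $s(R,\PP^4)=2h^3-(D^2+4H\cdot D)h^4=2h^3-4h^4$. This even justifies a step the paper leaves implicit, since its curve formula is proved only for smooth curves. However, your first evaluation was already right, not ``wrong by sign bookkeeping''. The opening guess $s_4=-7$ with $d_4=16-16+4-3$ is inconsistent and should be deleted: each reduced point adds $+1$ to $s_4$, so $s_4(B)=-1$ and $d_4=1$.

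For birationality, use the Segre route via Remark~\ref{rem:segre-birationality-criterion}. The ``reverse'' use of Lemma~\ref{lem:strict-transform-general-plane} would need $A$ to be a general pencil of $W$, which you have not arranged.

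The genuine gap is the dimension $26$, which is part of the statement and which you leave unproved. Your ribbon count uses the wrong bundle. You correctly identify $\I_L/\I_R\simeq\OO_L(1)$, but then count surjections onto $\OO_L(-2)$. There are no non-zero maps $\OO_L(-1)^{\oplus3}\to\OO_L(-2)$ at all, and $3\cdot2-1$ is not that count anyway. The correct parametrization is by surjections $\OO_L(-1)^{\oplus3}\to\OO_L(1)$ up to scalar, an open subset of $\PP(H^0(\OO_L(2))^{\oplus3})\simeq\PP^8$. So lines and ribbons give $6+8=14$ parameters and three ordered points give $12$ more. The map to $\Gr(5,V)$ is generically finite because $\Bs(W)$ recovers $R$ and the points up to permutation.

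Your alternative count via $X$ can also be completed, but not as written. The pairs $(X,L)$ form a $26$-dimensional family, and the three points on $X$ add $6$ parameters you omitted. For fixed $W$, the admissible pencils $A\subset W$ form a non-empty open subset of $\Gr(2,W)$, of dimension $6$, not $2$. Indeed $V(A)\supset B$ for every pencil $A\subset W$, and on any smooth such $V(A)$ one has $R=2L$, because $N^*_{L/V(A)}\simeq\OO_L(1)$ forces the two rank-two kernels in $N^*_{L/\PP^4}$ to coincide. This gives $26+6-6=26$. Either argument closes the gap, but one of them has to be supplied.
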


\begin{proof}
Fix a line $L\subset\PP^4$. A ribbon $R$ supported on $L$ with square-zero
ideal $\mathcal O_L(1)$ is determined by a surjection
\stepcounter{thm}\begin{equation}\label{eq:ribbon-surjection}
N^*_{L/\PP^4}=\mathcal O_L(-1)^{\oplus3}
\twoheadrightarrow\mathcal O_L(1),
\end{equation}
up to scalar. For a general surjection its kernel has degree $-4$ and the
balanced splitting
$
\mathcal K\simeq\mathcal O_L(-2)^{\oplus2}.
$
The defining ideal fits into
$$
0\longrightarrow\I_L^2(2)\longrightarrow\I_R(2)
\longrightarrow\mathcal K(2)\longrightarrow0.
$$
The Koszul resolution of $\I_L^2$ gives
$H^1(\I_L^2(2))=0$ and $h^0(\I_L^2(2))=6$. Hence
$$
H^0(\I_R(2))\twoheadrightarrow H^0(\mathcal K(2))\simeq\kk^2,
\qquad h^0(\I_R(2))=8.
$$
Choose two lifts $Q_0,Q_1$ of a basis of $H^0(\mathcal K(2))$. Their normal
differentials are pointwise independent along $L$. Thus
$X=V(Q_0,Q_1)$ is smooth along $L$; general choices of the lifts and Bertini
make $X$ a smooth complete intersection away from $L$ as well. The conormal
sequence identifies
$$
N^*_{L/X}\simeq
N^*_{L/\PP^4}/\mathcal K\simeq\mathcal O_L(1),
$$
and the inverse image of $\mathcal K$ in $\I_L$ is therefore the Cartier
double divisor $2L$ on $X$. In particular, the general ribbon determined by
\eqref{eq:ribbon-surjection} occurs as $R=2L$ on a smooth $(2,2)$ surface.
Adjunction on $X$, where $K_X=-H$, gives
$$
p_a(R)=1+\frac{(2L)^2+K_X\cdot2L}{2}=-2,
$$
so
$$
0\longrightarrow\mathcal O_L(1)\longrightarrow\mathcal O_R
\longrightarrow\mathcal O_L\longrightarrow0,
\qquad P_R(t)=2t+3.
$$

Now choose three general ordered points $q_1,q_2,q_3$ disjoint from $R$ and
put
$$
W=H^0\bigl(\I_{R\cup\{q_1,q_2,q_3\}}(2)\bigr).
$$
They impose three independent conditions on the eight-dimensional space
$H^0(\I_R(2))$, so $\dim W=5$. The condition that the composite
$$
W\longrightarrow H^0(\I_R(2))
\longrightarrow H^0(\mathcal K(2))
$$
be surjective is open. It is non-empty: choose first a smooth surface $X$
as above and then three points of $X\setminus L$ whose images under the
birational morphism $\beta_L:X\to\PP^2$ of
Lemma~\ref{lem:delpezzo-line-ribbon} are non-collinear. Hence it holds for
general $(R,q_1,q_2,q_3)$. Choose $Q_0,Q_1\in W$ mapping to a basis. Then
$X=V(Q_0,Q_1)$ is again a smooth $(2,2)$ surface containing the three points,
and restriction gives
$$
0\longrightarrow H^0(\I_X(2))\longrightarrow W
\longrightarrow
H^0\bigl(X,\mathcal O_X(2H-2L)\otimes\I_{q_1+q_2+q_3}\bigr)
\longrightarrow0.
$$
The last vector space has dimension three. Under
$|H-L|=\beta_L^*|\mathcal O_{\PP^2}(1)|$ it is the complete net of conics
through the three non-collinear points $\beta_L(q_i)$, hence the standard
quadratic Cremona net. It follows that the base of $W$ is exactly
$R\sqcup\{q_1,q_2,q_3\}$ and that the induced map of $\PP^4$ is birational.

The curve Segre formula gives
$
 s(R,\PP^4)=2h^3-4h^4.
$
After adding the three reduced points,
$s(B,\PP^4)=2h^3-h^4$, so
\eqref{eq:segre-degrees-curves} gives $d_3=6$ and $d_4=1$.

Finally, the surjections in \eqref{eq:ribbon-surjection} form a non-empty
open subset of a $\PP^8$-bundle over the six-dimensional Grassmannian of
lines. The three ordered disjoint points contribute $12$ parameters. Thus
the parameter space is irreducible of dimension $6+8+12=26$. The general
complete base scheme recovers its ribbon and the three reduced points up to
finite permutation, so the map to the Grassmannian is generically finite and
its image has dimension $26$.
\end{proof}

\begin{Lemma}\label{lem:binary-cubics-dejonquieres}
Let $K=(f_0,f_1,f_2)\subset\kk[[s,t]]$, where every $f_i$ has order three
and the initial cubics span a three-dimensional space without a common
projective zero. Then
$
H_{\kk[[s,t]]/K}=(1,2,3,1),\quad
\length(\kk[[s,t]]/K)=7,\quad e(K)=9,
$
and the integral closure of $K$ is $(s,t)^3$.
\end{Lemma}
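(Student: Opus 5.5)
The plan is to compare $K$ with its initial ideal and with $\mathfrak m^3$, where $\mathfrak m=(s,t)$. Write $c_0,c_1,c_2$ for the initial cubics and $U=\langle c_0,c_1,c_2\rangle\subset\kk[s,t]_3$, a codimension-one subspace of the four-dimensional space of binary cubics. We have $K\subset\mathfrak m^3$, and the argument proceeds in four steps.

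\emph{Step 1: the homogeneous ideal $(U)$.} Since $U$ has no common projective zero, its elements have no common linear factor, hence no nonconstant common factor at all. We first show that $\mathfrak m\cdot U=\kk[s,t]_4$. The multiplication map $\kk[s,t]_1\otimes U\to\kk[s,t]_4$ goes from a six-dimensional space to a five-dimensional one. A syzygy $\ell_0c_0+\ell_1c_1+\ell_2c_2=0$ with linear $\ell_i$ lives in the degree-one part of the syzygy module of three binary forms without common factor. By Hilbert–Burch this module is free of rank two, with generator degrees $a,b$ satisfying $a+b=3$ (counting from degree three) and $a,b\ge1$. Hence there is exactly one linear syzygy: the degrees are $\{1,2\}$, because $\{0,3\}$ would make two of the cubics proportional. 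So the image has dimension $6-1=5$ and $\mathfrak m U=\kk[s,t]_4$. It follows that $(U)$ has Hilbert function $(1,2,3,1,0,\dots)$.

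\emph{Step 2: lifting to $K$.} By Step 1 and Nakayama, $\mathfrak m^4\subset K+\mathfrak m^5$, and iterating gives $\mathfrak m^4\subset K$; this also uses that $K\cap\mathfrak m^4$ surjects onto each graded piece. Concretely, $\mathfrak m\cdot(f_0,f_1,f_2)$ has initial forms spanning degree four, so $\mathfrak m^4\subseteq \mathfrak m K+\mathfrak m^5$, and then $\mathfrak m^4\subseteq K$ by Nakayama applied to $\mathfrak m^4$. Since $K/\mathfrak m^4$ then has image $U$ in $\mathfrak m^3/\mathfrak m^4$, the quotient is $\kk[[s,t]]/K$ with graded pieces of dimensions $1,2,3,1$. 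This gives the Hilbert function $(1,2,3,1)$ and length $7$.

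\emph{Step 3: Samuel multiplicity and integral closure.} Since $\mathfrak m^4\subset K\subset\mathfrak m^3$, $K$ is $\mathfrak m$-primary. To see that $K$ is a reduction of $\mathfrak m^3$, take two general elements $f,g\in K$. Their initial cubics are general in $U$, and since $U$ is basepoint-free, two general members of $U$ share no linear factor. Then $(f,g)$ is a complete intersection of length $3\cdot3=9$ whose initial forms generate $\mathfrak m^3$ up to degree considerations; equivalently, the tangent cones meet only at the origin. Hence $e(f,g)=9=e(\mathfrak m^3)$. By Rees' theorem, as $(f,g)\subset K\subset\mathfrak m^3$ with equal multiplicities and $\mathfrak m^3$ integrally closed (being a power of the maximal ideal of a regular local ring of dimension two), we get $e(K)=9$ and $\overline K=\overline{(f,g)}=\mathfrak m^3$.

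The main obstacle is Step 1, namely excluding a second linear syzygy. This is where the no-common-zero hypothesis enters, through the Hilbert–Burch degree count. Rees' theorem in Step 3 is the other non-elementary input, but it is standard.
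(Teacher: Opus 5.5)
Your proof is correct and is essentially the paper's argument: Hilbert--Burch for the span $U$ of the initial cubics, transfer to $K$, two general elements with coprime tangent cones giving $e=9=e(\mathfrak m^3)$, and Rees's theorem. The differences are cosmetic. You obtain $\mathfrak m^4\subseteq\mathfrak mK$ by Nakayama and read the Hilbert function off $\mathfrak m^4\subseteq K\subseteq\mathfrak m^3$, where the paper proves $\operatorname{in}(K)=J$ by a cancellation argument, and you state explicitly that $\mathfrak m^3$ is integrally closed, which the paper uses tacitly. Two small wording fixes: a degree-zero syzygy is a linear dependence among the $c_i$, not proportionality of two of them, and the phrase about initial forms ``generating $\mathfrak m^3$'' should be dropped in favour of the no-common-tangent criterion.
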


\begin{proof}
Let $S=\kk[s,t]$, let $J$ be the ideal generated by the three initial
cubics, and let $\mathfrak m=(s,t)$. Hilbert--Burch gives
$$
0\longrightarrow S(-5)\oplus S(-4)
\longrightarrow S(-3)^3\longrightarrow J\longrightarrow0.
$$
Thus $S/J$ has Hilbert function $(1,2,3,1)$, length $7$, and
$\mathfrak m^4\subset J$. We claim that the full initial ideal of $K$ equals $J$. Let
$f=\sum_i a_i f_i$ be an arbitrary element of $K$, with
$a_i\in\kk[[s,t]]$, and let $m=\min_i\operatorname{ord}(a_i)$. If the terms
of degree $m+3$ do not cancel, the initial form of $f$ is a homogeneous
combination of the three initial cubics and belongs to $J$. If they cancel,
then $\operatorname{ord}(f)\geq m+4\geq4$; since
$\mathfrak m^4\subset J$, its initial form again belongs to $J$. Thus every
initial form of an element of $K$ lies in $J$, while the reverse inclusion is
clear. Hence $\operatorname{in}(K)=J$, proving the Hilbert function and the
length.

Two general elements of $K$ form a minimal reduction. Their initial cubics
have no common point on $\PP^1$, so their intersection multiplicity is
$3\cdot3=9$. Therefore $e(K)=9=e(\mathfrak m^3)$. Since
$K\subset\mathfrak m^3$ are $\mathfrak m$-primary ideals in the
formally equidimensional regular local ring $\kk[[s,t]]$, Rees's multiplicity
theorem \cite{Rees61} implies that $\mathfrak m^3$ is integral over $K$.
Thus $K$ and $\mathfrak m^3$ have the same integral closure.
\end{proof}

\begin{Proposition}\label{prop:dejonquieres-family}
There exists an irreducible family of dimension $24$ whose general base
scheme has positive-dimensional part a reduced line $L$ and isolated
residual scheme supported at two points $p,q\notin L$. At $p$ the local
algebra has Hilbert function $(1,2,3,1)$, length $7$, and Hilbert--Samuel
multiplicity $9$, while $q$ is reduced. The Hilbert polynomial and
multidegree are $P_B(t)=t+9,\, \multideg=(1,2,4,7,1).$ \end{Proposition}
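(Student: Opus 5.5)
The plan is to run the de Pezzo-surface construction already used for cases (iii), (iv), (v), (vii), (viii) of Proposition~\ref{prop:curve-families} and in Proposition~\ref{prop:ribbon-family}. Lemma~\ref{lem:binary-cubics-dejonquieres} will supply the local analysis at $p$.

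\textbf{Construction.} Let $X\simeq\Bl_{z_1,\ldots,z_5}\PP^2\subset\PP^4$ be a general smooth complete intersection of two quadrics, with $H=3\ell-e_1-\cdots-e_5$. Let $L\subset X$ be a line; after choosing the marking we may take $L=e_5$. Then
$$2H-L=6\ell-2e_1-\cdots-2e_4-3e_5.$$
Choose six further general points $y_1,\ldots,y_6\in X$, which we lift from $\PP^2$. Take the plane de Jonqui\`eres net of quartics with a triple point at $z_5$ and simple points $y_1,\ldots,y_6$, that is, type $(4;3,1^6)$. Add the fixed part $2\ell-e_1-\cdots-e_4-e_5$; this is a conic class through $z_1,\ldots,z_5$, namely the line $L_0$ of $X$. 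This does not yet produce a reduced line plus a single isolated point $p$, so the bookkeeping must be adjusted. The natural reading of the table is that the net lives on $X$ with fixed curve $L$, and that $Z_7$ is a point where the strict transforms of the members have an ordinary triple point.

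So I would instead take the movable class $M=2H-L$ restricted to the net of curves that have a triple point at some $p\in X\setminus L$ and pass through one further point $q$. In the marking where $L=e_5$, the class $2H-L=6\ell-2e_1-\cdots-2e_4-3e_5$ is then blown down to a sextic with multiplicity $3$ at $p$. I would pick the marking so that the net becomes $(4;3,1^6)$ after a Cremona (Hudson) reduction: one checks that $M^2-9-1=1$ and $K\cdot M+3+1=-3$. Indeed $M^2=(2H-L)^2=16-4-1=11$ and $K_X\cdot M=-8+1=-7$, so imposing multiplicity $3$ at $p$ and $1$ at $q$ gives $11-9-1=1$ and $-7+3+1=-3$. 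This is exactly the Noether equalities, as in the proof of Proposition~\ref{prop:geometric-rank-one-point}. The dimension of the complete system $|M-3p-q|$ on $X$ is $h^0(M)-6-1=\chi(M)-7=(1+(11+7)/2)-7=3$, which gives a net. A Hudson reduction shows that it is homaloidal.

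\textbf{Lifting and base scheme.} Let $W$ be the inverse image of this net under the surjection $H^0(\OO_{\PP^4}(2))\to H^0(X,\OO_X(2H))$, as in Proposition~\ref{prop:curve-families}. Then $\dim W=5$, $W\supset H^0(\I_X(2))$, and Lemma~\ref{lem:strict-transform-general-plane} together with the restriction argument makes the map birational. The base scheme lies on $X$ and equals $L$, plus $q$ reduced, plus the local scheme at $p$. At $p$ the two quadrics of $A$ cut $X$ transversally, so the local ideal in $\widehat{\OO}_{X,p}\simeq\kk[[s,t]]$ is generated by three elements of order three. For general data their initial cubics span a $3$-dimensional space without common zero, which is an open condition that I would check on one explicit witness. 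Lemma~\ref{lem:binary-cubics-dejonquieres} then gives Hilbert function $(1,2,3,1)$, length $7$ and $e=9$.

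\textbf{Numerics.} Since $s(L,\PP^4)=h^3-3h^4$, the local contribution at $p$ is $9$, and $q$ contributes $1$, Remark~\ref{rem:disjoint} gives $s_3=1$ and $s_4=-3+9+1=7$. Hence $d_3=7$ and $d_4=16-8-7=1$, consistent with birationality. The Hilbert polynomial is $(t+1)+7+1=t+9$.

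\textbf{Dimension count.} The line contributes $6$ parameters, the surface $X\supset L$ among pencils of quadrics through $L$ contributes $\dim\Gr(2,9)=14$, and $p,q\in X$ contribute $4$, for a total of $24$. These $14$ parameters are the whole family of $X$ only up to the identification with $W$; the image in the Grassmannian should be checked by recovering $(L,p,q)$ from the base scheme, since $X$ itself is not determined by $W$. A cleaner count is therefore $6+4+4=14$ for the choice of $(L,p,q)$, plus the conditions: $W$ consists of quadrics through $L$, singular at $p$ to order with the correct cubic behaviour, and through $q$. I expect the main obstacle to be exactly this, namely making the parameter count rigorous and matching it to $24$, together with verifying that $e=9$ rather than merely the length. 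For the count I would first try to compute the fibre dimension of $\{(X,\text{data})\}\to\Gr(5,V)$, which should be $\dim\Gr(2,W)$-type and equal to $4$.
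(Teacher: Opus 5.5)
Most of your proposal matches the paper; the dimension count does not. Your second construction, the local analysis at $p$ via Lemma~\ref{lem:binary-cubics-dejonquieres}, and the numerics $s(B,\PP^4)=h^3+7h^4$, $d_3=7$, $d_4=1$, $P_B(t)=t+9$ are exactly the paper's. In the paper's marking $L=2\ell-e_1-\cdots-e_5$, one has $2H-L=4\ell-e_1-\cdots-e_5$. So $|2H-L-3p-q|$ is literally the $(4;3,1^6)$ net with simple points $z_1,\dots,z_5,q$, and your Noether and Riemann--Roch check is a marking-free version of the same step.

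Two small corrections. Lemma~\ref{lem:strict-transform-general-plane} goes the wrong way: it assumes $\vf$ is birational. Birationality should instead come from $d_4=1$ together with Remark~\ref{rem:segre-birationality-criterion}, as in the paper, and you already have $d_4=1$. Your first attempt, with the triple point at $z_5$, should simply be dropped.

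The genuine gap is the dimension count, which is part of the statement.
\begin{enumerate}
\item A line imposes three conditions on quadrics, so $h^0(\I_L(2))=15-3=12$, not $9$.
\item Hence pencils of quadrics through $L$ form $\Gr(2,12)$, of dimension $20$.
\item The incidence of data $(L,A,p,q)$, with $X=V(A)$ smooth and $p,q\in X\setminus L$, is therefore irreducible of dimension $6+20+4=30$. Your total of $24$ came from two compensating errors: the wrong Grassmannian, and no fibre subtracted.
\item The map $(L,A,p,q)\mapsto W$ is not generically finite. The data $L,p,q$ are recovered from $\Bs(W)$, but $A$ is not.
\item For a general pencil $A'\subset W$, the first-jet ranks of $W$ are $3$ along $L$, $2$ at $p$, and $4$ at $q$. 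So $V(A')$ is again a smooth $(2,2)$ surface containing $\Bs(W)$. On it, $W/A'$ is the net $|2H-L-3p-q|$.
\item Thus the fibre is an open subset of $\Gr(2,W)\simeq\Gr(2,5)$, of dimension $6$, and the image has dimension $30-6=24$. Your guess of a $4$-dimensional fibre would give $26$ once the count is corrected.
\end{enumerate}
Your alternative count, starting from $6+4+4=14$ parameters for $(L,p,q)$, is not carried out and leaves ten parameters of $W$ unaccounted for.
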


\begin{proof}
Let $X=\Bl_{z_1,\ldots,z_5}\PP^2$, $H=-K_X=3\ell-e_1-\cdots-e_5,$ where the points $z_i$ are general. The anticanonical system embeds $X$ as a
smooth complete intersection of two quadrics in $\PP^4$. The class $L=2\ell-e_1-\cdots-e_5$ has $H\cdot L=1$ and $L^2=-1$, hence its image is a line. Set $D=2H-L=4\ell-e_1-\cdots-e_5.$ For general points $p,q\in X\setminus L$, the net $\PP H^0(\OO_X(D)\otimes\I_p^3\otimes\I_q)$ is the plane homaloidal system of type $(4;3,1^6)$. Its Hudson reductions are $(4;3,1^6)\longmapsto(3;2,1^4) \longmapsto(2;1^3)\longmapsto(1;0),$ so it defines a birational map \cite{Alberich02,Hudson27}.

Let $A=H^0(\I_X(2))$. Restriction gives
$$
0\longrightarrow A\longrightarrow H^0(\I_L(2))
\longrightarrow H^0(X,2H-L)\longrightarrow0.
$$
The inverse image of the preceding net is a five-dimensional system $W$ of
quadrics. Since $A\subset W$, the base scheme is contained in $X$. On $X$
its positive-dimensional part is the reduced line $L$, and its residual base
scheme is supported at $p$ and $q$. At $p$, after eliminating the two smooth
equations of $X$, the local ideal is generated by three general binary
cubics, so Lemma~\ref{lem:binary-cubics-dejonquieres} applies. At $q$ the
base point is simple.

The Segre class of the reduced line is $s(L,\PP^4)=h^3-3h^4.$ For an isolated base ideal, the coefficient of its local zero-dimensional
Segre class is its Hilbert--Samuel multiplicity. Hence the points $p$ and $q$
contribute respectively $9h^4$ and $h^4$, and $s(B,\PP^4)=h^3+7h^4.$ Formula~\eqref{eq:segre-degrees-curves} yields $d_3=7$ and $d_4=1$.
Remark~\ref{rem:segre-birationality-criterion} therefore shows that the
associated map is birational, with multidegree
$(1,2,4,7,1)$. The Hilbert polynomial is $P_B(t)=(t+1)+7+1=t+9.$ Finally, choose first the line $L$ and then a pencil of quadrics through it
whose intersection is smooth. Since $h^0(\I_L(2))=12$, pairs $(X,L)$ form an
irreducible family of dimension $6+\dim\Gr(2,12)=6+20=26.$ The points $p,q\in X$ contribute four further parameters. For a fixed
resulting system $W$, the forgotten pencil $A=H^0(\I_X(2))\subset W$ varies
in an open subset of $\Gr(2,5)$, of dimension $6$. Therefore, the image in the
Grassmannian has dimension $26+4-6=24$.
\end{proof}

\section{Smooth pencils and irreducible components}\label{sec:curve-components}

In this section, starting from the explicit constructions in Section~\ref{sec:curves}, we study the irreducible components of the corresponding loci. First, we consider the open locus of quadratic Cremona systems containing a pencil whose complete intersection is a smooth del Pezzo surface $X$ of degree four. Restricting to $X$, the residual web induces a birational net, and the Noether equalities together with nefness and effectivity reduce the possible divisor classes to seven $W(D_5)$-orbits, yielding the seven components of the smooth-pencil locus. Then, projecting from a rank-one base point and using the classification of Pan--Ronga--Vust in $\PP^3$, we prove that the conic family gives a further component, which is distinct both from the punctual component and from the smooth-pencil components.

Set $G=\Gr(5,V)$, and let $\mathscr S$ be the tautological bundle on $G$.
Consider the relative Grassmannian $\rho:\Gr_G(2,\mathscr S)\to G$. A point
of $\Gr_G(2,\mathscr S)$ is a pair $(W,A)$ with $A\subset W$ and $\dim A=2$.
Let $\Omega$ be the locus of pairs for which $V(A)\subset\PP^4$ is a smooth
complete intersection of two quadrics, and set
$\mathscr U=\rho(\Omega)\cap\Bir_2(\PP^4)$. Finally, set
$\mathscr U_1=\{W\in\mathscr U\mid\dim\Bs(W)=1\}$.

\begin{Lemma}\label{lem:smooth-pencil-open}
The locus $\mathscr U$ is open in $\Bir_2(\PP^4)$. If $W\in\mathscr U_1$ and
$A\subset W$ is general among the smooth pencils, then $X=V(A)$ is a smooth
del Pezzo surface of degree four and the net induced by $W/A$ on $X$ is
birational to $\PP^2$.
\end{Lemma}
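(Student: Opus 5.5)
Openness comes first. The condition that $V(A)$ is a smooth complete intersection of two quadrics is open in $\Gr(2,V)$: the locus of pencils with singular base surface is the image of a proper incidence (point, pencil, singular there), hence closed. So $\Omega$ is open in the relative Grassmannian $\Gr_G(2,\mathscr S)$. The projection $\rho$ is a Grassmann bundle, hence flat and of finite type, and therefore open. Thus $\rho(\Omega)$ is open in $G$, and intersecting it with the locally closed $\Bir_2(\PP^4)$ of Lemma~\ref{lem:bir-two-locally-closed} gives an open subset of $\Bir_2(\PP^4)$.

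For the second assertion, take $W\in\mathscr U_1$. The set of $A\in\Gr(2,W)$ with $V(A)$ smooth is a non-empty open subset, and a general such $A$ is also general in $\Gr(2,W)$. A smooth complete intersection of two quadrics in $\PP^4$ is a surface with $K_X=-H$ by adjunction, $H^2=4$, and connected since it is a complete intersection of ample divisors. It is therefore a smooth del Pezzo surface of degree four.

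To see that the residual net is birational, note that $A$ corresponds to a plane $\Pi\subset\PP(W^*)$ cut out by the two linear forms indexed by $A$. General $A$ gives a general plane $\Pi$, because $\Gr(2,W)\cong\Gr(2,W)$ dually parametrizes codimension-two linear subspaces of the target. So Lemma~\ref{lem:strict-transform-general-plane} applies as soon as $X_A=V(A)$ is integral, and smooth connected implies integral. Hence $X=S_\Pi$ scheme-theoretically, and the map induced by any complement $B$ of $A$ in $W$ is birational from $X$ onto $\Pi\cong\PP^2$.

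The restriction of $B$ to $X$ is a net: $B\to H^0(X,\OO_X(2))$ is injective because its kernel is $B\cap H^0(\I_X(2))=B\cap A=0$, using that $X$ is a complete intersection so that $H^0(\I_X(2))=A$. The net it spans is $W/A$, so the net induced by $W/A$ on $X$ defines a birational map $X\dashrightarrow\PP^2$. It may have a fixed divisor, namely the one-dimensional part of $\Bs(W)$, which lies in $X$; this is harmless, since the statement only concerns the induced rational map.

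The main point needing care is the genericity transfer. I need a single $A$ that is simultaneously in the open locus of smooth pencils and general enough for Lemma~\ref{lem:strict-transform-general-plane}, meaning $\Pi$ meets the isomorphism locus of $\vf_W$ densely. Both conditions are non-empty open in the irreducible $\Gr(2,W)$, so they intersect. I expect no further obstacle.
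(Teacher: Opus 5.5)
Your proposal is correct and follows the paper's proof. Both arguments get openness of $\Omega$ from the openness of smoothness, and openness of $\mathscr U$ from the flat (smooth) projection $\rho$. Both then choose $A$ in the intersection of the non-empty open set of smooth pencils with the open set of pencils whose plane meets the isomorphism locus, and apply Lemma~\ref{lem:strict-transform-general-plane}.

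Your extra checks are fine additions: the injectivity of $B\to H^0(X,\OO_X(2))$ via $H^0(\I_X(2))=A$, and the adjunction identification $K_X=-H$. One small slip: the duality should read $\Gr(2,W)\cong\Gr(3,W^*)$.
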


\begin{proof}
Smoothness of $V(A)$ is open, hence $\Omega$ is open. The morphism $\rho$ is
smooth and therefore open, proving the first assertion. Smooth pencils form a
non-empty open subset of $\Gr(2,W)$. We may thus choose $A$ so that the plane
in $\PP(W^*)$ corresponding to $A$ meets the isomorphism locus of the
Cremona transformation. The surface $X$ is smooth, connected, and hence
irreducible. Lemma~\ref{lem:strict-transform-general-plane} identifies it
with the strict transform of that plane and proves the last assertion.
\end{proof}

We classify $\mathscr U_1$. Let $W\in\mathscr U_1$, choose $A$ as in
Lemma~\ref{lem:smooth-pencil-open}, and set $H=\OO_X(1)=-K_X$. If $C$ is the
fixed divisorial part of the induced birational net, its movable part has
class $D=2H-C$. Resolve its proper and infinitely near base points, with
multiplicities $m_1,\ldots,m_s$. Since the resolved net is the pull-back of
lines in $\PP^2$, one has
\stepcounter{thm}\begin{equation}\label{eq:del-pezzo-net}
D^2-\sum_jm_j^2=1,\qquad H\cdot D-\sum_jm_j=3.
\end{equation}
Choose a marking
$X\simeq\Bl_{z_1,\ldots,z_5}\PP^2$, with
$H=3\ell-e_1-\cdots-e_5$, and write $D=a\ell-\sum_{i=1}^5b_i e_i$.
Then \eqref{eq:del-pezzo-net} are the Noether equalities for the proper plane
homaloidal type $(a;b_1,\ldots,b_5,m_1,\ldots,m_s)$. Moreover, $D$ is nef,
$2H-D$ is effective. Since $\ell$ is nef and $2H-D$ is a non-zero effective divisor, one has $(2H-D)\cdot\ell=6-a\geq0$; hence $a\leq6$.

For the finite check below we use the notation
$[a;b_1,\ldots,b_5\mid m_1,\ldots,m_s]$, and write $-$ when the right-hand
side is empty.

\begin{Lemma}\label{lem:del-pezzo-enumeration}
Up to the action of $W(D_5)$, the possible movable classes and residual
multiplicities are exactly the seven rows in the following table. Here
$L_0=2\ell-e_1-\cdots-e_5$.

\begin{center}
\footnotesize
\begin{tabular}{c|c|c|c}
$D$ & $C=2H-D$ & $(m_j)$ & general base scheme \\
\hline
$4\ell-\sum e_i$ & $L_0$ & $(3,1)$ & $L+Z_7+q$ \\
$2\ell$ & $2L_0$ & $(1,1,1)$ & $R_{-2}+q_1+q_2+q_3$ \\
$3\ell-e_1-e_2-e_3$ & $L_0+(\ell-e_4-e_5)$ & $(2,1)$ & $L_1\sqcup L_2+p^{\PP^2}+q$ \\
$3\ell-e_1-e_2-e_3-e_4$ & $H-e_5$ & $(2)$ & $T+p^{\PP^2}$ \\
$2\ell-e_1$ & $L_0+(2\ell-e_2-e_3-e_4-e_5)$ & $(1,1)$ & $C\sqcup L+q_1+q_2$ \\
$2\ell-e_1-e_2$ & $4\ell-e_1-e_2-2e_3-2e_4-2e_5$ & $(1)$ & $R_4+q$ \\
$\ell$ & $5\ell-2\sum e_i$ & $-$ & $E_5$
\end{tabular}
\end{center}
\end{Lemma}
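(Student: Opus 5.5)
The plan is to turn the statement into a finite combinatorial problem on the marked del Pezzo surface $X\simeq\Bl_{z_1,\ldots,z_5}\PP^2$, solve it, and then translate each solution back into a base scheme in $\PP^4$.

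\textbf{Setup.} Write $D=a\ell-\sum b_ie_i$. Then
$$C=2H-D=(6-a)\ell-\sum_i(2-b_i)e_i.$$
The equalities \eqref{eq:del-pezzo-net} read
$$a^2-\sum_i b_i^2-\sum_j m_j^2=1,\qquad 3a-\sum_i b_i-\sum_j m_j=3.$$
So $(a;b_1,\ldots,b_5,m_1,\ldots,m_s)$ satisfies the Noether equalities of a plane homaloidal type. The constraints I will use are:
\begin{enumerate}
\item $D$ is nef. On a quartic del Pezzo surface this is equivalent to $D\cdot\Gamma\ge0$ for the sixteen lines $e_i$, $\ell-e_i-e_j$ and $L_0$. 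Concretely, $b_i\ge0$, $b_i+b_j\le a$ and $\sum_i b_i\le 2a$.
\item $C$ is effective and non-zero. Non-zero holds because $\dim\Bs(W)=1$ and the base locus of $W$ lies on $X$.
\item $a\le6$, as already shown.
\item $m_j\ge1$.
\end{enumerate}

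\textbf{Normalising by $W(D_5)$.} I use the group $W(D_5)$, generated by permutations of the $e_i$ and the quadratic transformation based at $z_1,z_2,z_3$. Since $H$ is fixed, the group preserves all of the constraints above. Order $b_1\ge\cdots\ge b_5$. If $b_1+b_2+b_3>a$, the quadratic transformation lowers $a$ to $2a-b_1-b_2-b_3$. Iterating, I may assume
$$b_1\ge\cdots\ge b_5\ge0,\qquad b_1+b_2+b_3\le a.$$

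\textbf{Finite enumeration.} For each $a\le6$ I combine the two Noether equalities with Cauchy--Schwarz-type bounds. For instance $\sum_j m_j^2\ge\sum_j m_j$, and $b_1$ is at most the largest multiplicity in a homaloidal type of degree $a$, namely $a-1$. This leaves finitely many tuples, which I list by degree.
\begin{itemize}
\item For $a=1$: $D=\ell$ with no $m_j$.
\item For $a=2$: the conic types $(2;1^3)$ distributed between the $b_i$ and the $m_j$, giving $2\ell$, $2\ell-e_1$ and $2\ell-e_1-e_2$. The case $2\ell-e_1-e_2-e_3$ is removed by the reduction.
\item For $a=3$: $(3;2,1^4)$, with the double point either at some $e_i$ or at a free point. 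A double point at $e_1$ violates the reduction normalisation once three $b_i$ are nonzero, so up to $W(D_5)$ the double point is an $m_j$. Hence $D=3\ell-\sum_{i\le k}e_i$, and $k=3,4$ survive.
\item For $a=4,5,6$: the homaloidal types $(4;3,1^6)$, $(4;2^3,1^3)$, $(5;\ldots)$, $(6;\ldots)$. I show that all of them except $D=4\ell-\sum e_i$ with $(m)=(3,1)$ are excluded, either by the normalisation $b_1+b_2+b_3\le a$ (they reduce to lower degree) or by non-effectivity of $C$. The effectivity test is $C\cdot N<0$ for some nef class $N$, for example $\ell$, $\ell-e_i$, or $H$.
\end{itemize}
Some small cases fail outright. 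The case $D=H$ would give $\sum m_j^2=3$ and $\sum m_j=1$, which is impossible. The case $k=5$ for $a=3$ gives $C=H$, and the Noether equalities then fail in the same way.

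\textbf{Translating to base schemes.} For each surviving row I identify $C$ as a curve in $\PP^4$ through its degree $H\cdot C$ and genus: $L_0$ is a line, $2L_0$ the ribbon of Lemma~\ref{lem:delpezzo-line-ribbon} and Proposition~\ref{prop:ribbon-family}, $H-e_5$ a twisted cubic, and so on. The residual multiplicities give the punctual part, as in Proposition~\ref{prop:curve-families} and Proposition~\ref{prop:dejonquieres-family}. A multiplicity-$2$ point becomes $p^{\PP^2}$ after adding the transverse jets of $A$, and a multiplicity-$3$ point becomes $Z_7$. I then check that the seven rows lie in distinct orbits, using the invariants $H\cdot C$, $C^2$ and the multiset $(m_j)$.

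\textbf{Main obstacle.} I expect the hardest step to be the complete, rigorous enumeration for $a=4,5,6$. Two issues arise there. First, infinitely near $m_j$ must satisfy proximity inequalities. Second, apparent homaloidal types may have $C$ only numerically effective. I would handle effectivity via intersection with the ten conic classes $\ell-e_i$ and $2\ell-\sum_{j\ne i}e_j$: a class on $X$ is effective iff it is a non-negative combination of lines. If the case analysis becomes unwieldy, I would fall back on a short finite computer check, in the style of Section~\ref{appendix:magma}, over all integer tuples with $a\le6$.
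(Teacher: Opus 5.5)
Your plan is correct, but it runs in the opposite order from the paper. The paper uses the Hudson test to list the twelve proper homaloidal types of degree at most six. It distributes their multiplicities over the five marked points in all possible ways and keeps the nef assignments with $2H-D$ effective. Only then does it sort the surviving assignments into seven orbits by quadratic reflections. You instead first move $D$ into the closed fundamental chamber $b_1\ge\cdots\ge b_5$, $b_1+b_2+b_3\le a$, whose walls are the simple roots $e_i-e_{i+1}$ and $\ell-e_1-e_2-e_3$ of $D_5$. You then enumerate only chamber elements. The seven rows are then automatically pairwise inequivalent, simply because $W(D_5)$ does not change the multiset $(m_j)$.

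The paper's route records every assignment in each orbit. Yours is shorter, and shorter than you expect. For $a\le4$, summing over all multiplicities $\mu$, the Noether equalities read $\sum\mu=3(a-1)$ and $\sum\mu(\mu-1)=(a-1)(a-2)$. These already force the types $(1;0)$, $(2;1^3)$, $(3;2,1^4)$, $(4;3,1^6)$, $(4;2^3,1^3)$ with no Hudson test. For $a=5$, the tests $C\cdot(\ell-e_i)=b_i-1\ge0$ and the chamber condition give $b_3=b_4=b_5=1$ and $b_2\le2$, whence $C\cdot(2\ell-e_2-\cdots-e_5)=b_2-3<0$. For $a=6$, the same tests give all $b_i\ge2$, and $C\neq0$ forces $b_1\ge3$, so $b_1+b_2+b_3\ge7$. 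So no chamber element of degree five or six survives. You therefore need neither the degree five and six homaloidal lists nor the ``main obstacle'' you anticipate. Proximity inequalities could only remove cases, and none of the seven survivors is affected.

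One step needs correcting. At $a=3$ you argue that a double point at $e_1$ violates the normalization once three $b_i$ are non-zero, ``so up to $W(D_5)$ the double point is an $m_j$''. This does not follow. The assignments $[3;2,1,0^3\mid1^3]$ and $[3;2,0^4\mid1^4]$ lie in the chamber. Since $W(D_5)$ preserves their residual multisets $(1^3)$ and $(1^4)$, they are not equivalent to any assignment with a residual double point. They must be excluded by effectivity instead. For them $C=3\ell-e_2-2e_3-2e_4-2e_5$, respectively $C=3\ell-2e_2-2e_3-2e_4-2e_5$, which meets the nef class $2\ell-e_2-e_3-e_4-e_5$ in $-1$, respectively $-2$. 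The same test is what removes $D=3\ell-\sum_{i\le k}e_i$ for $k\le2$. Effectivity is already among your constraints, so this is a local slip rather than a gap.
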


\begin{proof}
The Hudson test gives the following proper homaloidal types of degree at most
six:
\begin{footnotesize}
$$
\begin{aligned}
&(1;0),\ (2;1^3),\ (3;2,1^4),\ (4;3,1^6),\ (4;2^3,1^3),\\
&(5;4,1^8),\ (5;3,2^3,1^3),\ (5;2^6),\\
&(6;5,1^{10}),\ (6;4,2^4,1^3),\ (6;3^3,2,1^4),\ (6;3^2,2^4,1).
\end{aligned}
$$
\end{footnotesize}
Choose five multiplicities, allowing zeros, for the marked points
$z_1,\ldots,z_5$; the remaining ones are the $m_j$. Nefness is equivalent to
$b_i\geq0$, $a-b_i-b_j\geq0$, and $2a-\sum b_i\geq0$, since the $16$
$(-1)$-curves have classes $e_i$, $\ell-e_i-e_j$, and
$2\ell-e_1-\cdots-e_5$. The effective cone is generated by the same curves.
Checking the twelve types gives the following complete list of surviving
assignments; each row is one Weyl orbit.

\begin{center}
\begin{footnotesize}
\begin{tabular}{c|l}
standard representative & all surviving assignments \\
\hline
$[1;0^5\mid-]$ &
$[1;0^5\mid-],\ [2;1^3,0^2\mid-],\ [3;2,1^4\mid-]$ \\[1mm]
$[2;0^5\mid1^3]$ &
$[2;0^5\mid1^3],\ [4;2^3,0^2\mid1^3],\ [6;4,2^4\mid1^3]$ \\[1mm]
$[2;1,0^4\mid1^2]$ &
$[2;1,0^4\mid1^2],\ [3;2,1^2,0^2\mid1^2],\ [4;3,1^4\mid1^2],\ [4;2^3,1,0\mid1^2],\ [5;3,2^3,1\mid1^2]$ \\[1mm]
$[2;1^2,0^3\mid1]$ &
$[2;1^2,0^3\mid1],\ [3;2,1^3,0\mid1],\ [4;2^3,1^2\mid1]$ \\[1mm]
$[3;1^3,0^2\mid2,1]$ &
$[3;1^3,0^2\mid2,1],\ [4;2^2,1^2,0\mid2,1],\ [5;3,2^2,1^2\mid2,1],\ [6;3^2,2^3\mid2,1]$ \\[1mm]
$[3;1^4,0\mid2]$ &
$[3;1^4,0\mid2],\ [4;2^2,1^3\mid2],\ [5;2^5\mid2]$ \\[1mm]
$[4;1^5\mid3,1]$ &
$[4;1^5\mid3,1],\ [5;2^3,1^2\mid3,1],\ [6;3,2^4\mid3,1]$
\end{tabular}
\end{footnotesize}
\end{center}

Quadratic reflection in the first three marked points sends
$(a;b_1,b_2,b_3,b_4,b_5)$ to
$
(2a-b_1-b_2-b_3; a-b_2-b_3, a-b_1-b_3,a-b_1-b_2, b_4, b_5).
$
Together with permutations, these reflections reduce every row to its
standard representative. The residual multiplicity multisets distinguish the
seven representatives. Translating the standard representatives into
$C=2H-D$ gives the table in the statement. The geometric identifications
follow from the intersection numbers on $X$ and from
Lemma~\ref{lem:binary-cubics-dejonquieres}; the corresponding general members
are precisely the families constructed in
Propositions~\ref{prop:curve-families}, \ref{prop:ribbon-family}, and
\ref{prop:dejonquieres-family}.
\end{proof}

We will need the following standard persistence statement.

\begin{Lemma}\label{lem:persistence-components}
Let $\mathcal H$ be a projective family in the Hilbert scheme of $\PP^4$.
The locus of $W\in G$ such that $\Bs(W)$ contains a member of $\mathcal H$
is closed. The same holds for a projective family of cycles in the Chow
variety.
\end{Lemma}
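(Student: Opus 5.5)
The plan is to realize the locus as the image of a closed incidence under a proper projection. Let $\mathcal H$ be the given projective family, i.e. a closed subscheme of some component $\Hilb^{P}(\PP^4)$, which is projective. We use the universal family $\mathcal Z\subset\mathcal H\times\PP^4$, flat over $\mathcal H$ with Hilbert polynomial $P$. On $G$ we use the universal base ideal $\mathscr I\subset\OO_{G\times\PP^4}$ of Lemma~\ref{lem:bir-two-locally-closed}, generated by the image of $\mathscr S\boxtimes\OO_{\PP^4}(-2)$. The condition "$\Bs(W)\supset Z$" means $\mathscr I_W\subset\I_Z$. Equivalently, every quadric in $W$ lies in $H^0(\I_Z(2))$, i.e. the composite
\[
W\otimes\OO_Z\longrightarrow H^0(\OO_Z(2))
\]
vanishes. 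We will show that the incidence $\mathcal J=\{(W,Z)\in G\times\mathcal H: W\subset H^0(\I_Z(2))\}$ is closed. The locus in the statement is then $\mathrm{pr}_G(\mathcal J)$, which is closed because $\mathcal H$ is projective, so $\mathrm{pr}_G$ is proper.

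To see that $\mathcal J$ is closed, fix $m\gg0$ uniformly for the bounded family $\mathcal H$ (Castelnuovo--Mumford regularity). Then $\pi_*\OO_{\mathcal Z}(m)$ is locally free of rank $P(m)$ and commutes with base change. Containing $Z$ in $V(W)$ is equivalent to $W\cdot H^0(\OO_{\PP^4}(m-2))\subset H^0(\I_Z(m))$. This is the vanishing of a morphism of vector bundles
\[
\mathscr S\otimes H^0(\OO(m-2))\to\pi_*\OO_{\mathcal Z}(m)
\]
on $G\times\mathcal H$, and the vanishing locus of a bundle map is closed. The one point needing care is that the degree-$m$ test suffices even though $W$ only lives in degree $2$. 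This holds because $\I_Z$ is $m$-regular, so $H^0(\I_Z(m))$ determines $\I_Z$. Moreover, $W\subset H^0(\I_Z(2))$ holds as sheaves if and only if $W\cdot S_{m-2}\subset H^0(\I_Z(m))$: multiplication by all forms of degree $m-2$ detects sections of $\OO_Z(2)$, since $\OO_Z(m-2)$ is globally generated for $m\gg0$. I expect this uniform-regularity step to be the only real obstacle, and it is standard.

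For the Chow version, the plan is to use the incidence $\{(W,\gamma):\Supp\gamma\subset V(W)\}$, which is closed in $G\times\mathcal C$. This can be checked pointwise using the universal support $\mathcal Y\subset\mathcal C\times\PP^4$, which is closed. The condition is that the closed set $\mathrm{pr}_{G\times\mathcal C}\bigl((G\times\mathcal Y)\setminus V(\mathscr I)\bigr)$ be avoided. The map $\mathrm{pr}$ is not proper on an open set, so instead we argue via a universally closed formulation. Namely, $\{(W,\gamma)\}$ is the set where every point of $\mathcal Y_\gamma$ lies in $V(\mathscr I_W)$. This is the complement of the image of the open set $(G\times\mathcal Y)\cap\{\mathscr I\neq0\}$. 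That image is open, since flat projections are open, and $\mathcal Y\to\mathcal C$ can be replaced by a flattening stratification. Alternatively, and more simply, for each component of $\mathcal Y$ we apply the Hilbert argument to the reduced supports after a flattening stratification of $\mathcal Y\to\mathcal C$. Each stratum yields a closed condition on its closure in the Hilbert scheme, and the total locus is a finite union of proper images. Hence it is closed.
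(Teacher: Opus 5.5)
Your Hilbert-scheme half is correct and follows the paper's argument: a closed incidence in $G\times\mathcal H$, then a proper projection. The paper only asserts that the incidence is closed. Your bundle map $\mathscr S\otimes H^0(\OO_{\PP^4}(m-2))\to\pi_*\OO_{\mathcal Z}(m)$, with $m$ chosen so that $H^1(\OO_Z(m))=0$ uniformly on $\mathcal H$, is a valid way to prove this. A small correction to your justification: the equivalence $W\subset H^0(\I_Z(2))\iff W\cdot H^0(\OO_{\PP^4}(m-2))\subset H^0(\I_Z(m))$ holds for every $m\geq2$, because $x_i^{m-2}$ is a local generator on $D_+(x_i)$. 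Regularity is needed only for base change, not to ``determine $\I_Z$''.

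The cycle half has a genuine gap. The set $\mathrm{pr}\bigl((G\times\mathcal Y)\setminus V(\mathscr I)\bigr)$ is the image of an open set, so what you need is that it is \emph{open}. But $\mathcal Y\to\mathcal C$ is not flat, so ``flat projections are open'' does not apply. After a flattening stratification $\mathcal C=\bigsqcup_i\mathcal C_i$, you only learn that the incidence meets each $G\times\mathcal C_i$ in a relatively closed subset. That proves constructibility, not closedness: a set closed in every stratum need not be closed, and the whole issue lies where one stratum specializes into another. Your alternative, via closures of the strata in the Hilbert scheme, has the same hole. A boundary point $Z$ of $\overline{\phi_i(\mathcal C_i)}$ is a flat limit of supports $\Supp\gamma_t$, and closedness of $\mathcal Y$ only gives $Z\subset\Supp\gamma_0$ for the Chow limit $\gamma_0$. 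That is the wrong inclusion, so the finite union of closed loci you produce could be strictly larger than the locus in the statement.

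The missing ingredient is continuity of supports in a Chow family. Suppose $\gamma_t\to\gamma_0$ along a curve in $\mathcal C$. Then the point $\gamma_0$ is the specialization of the cycles $\gamma_t$, namely the positive combination of the fundamental cycles of the flat limits of their components. Hence $\Supp\gamma_0$ is exactly the set-theoretic limit of the $\Supp\gamma_t$; in other words, specializations lift along $\mathcal Y\to\mathcal C$.

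Granting this, each $x\in\Supp\gamma_0$ is a limit of points $x_t\in\Supp\gamma_t\subset V(W_t)$. So $x$ lies in $V(W_0)$, because $\{(W,x):x\in V(W)\}$ is closed. Your constructible incidence is therefore stable under specialization, hence closed, and the proper projection finishes the argument as in the Hilbert case. Equivalently, this makes the image of your open set open, which repairs your first formulation.
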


\begin{proof}
The incidence of pairs $(W,Z)$ satisfying $Z\subset\Bs(W)$ is closed in
$G\times\mathcal H$. Its projection to $G$ is closed since $\mathcal H$ is
projective. The proof for cycles is identical.
\end{proof}

\begin{Proposition}\label{prop:seven-smooth-pencil-components}
The locus $\mathscr U_1$ has exactly seven irreducible components. Their
general base schemes, multidegrees, and dimensions are

\begin{center}
\begin{footnotesize}
\begin{tabular}{c|c|c}
multidegree & general base scheme & dimension \\
\hline
$(2,4,7)$ & $L+Z_7+q$ & $24$ \\
$(2,4,6)$ & $R_{-2}+q_1+q_2+q_3$ & $26$ \\
$(2,4,6)$ & $L_1\sqcup L_2+p^{\PP^2}+q$ & $24$ \\
$(2,4,5)$ & $T+p^{\PP^2}$ & $24$ \\
$(2,4,5)$ & $C\sqcup L+q_1+q_2$ & $25$ \\
$(2,4,4)$ & $R_4+q$ & $25$ \\
$(2,4,3)$ & $E_5$ & $25$
\end{tabular}
\end{footnotesize}
\end{center}

\end{Proposition}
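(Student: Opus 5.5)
The plan is to stratify $\mathscr U_1$ by the $W(D_5)$-orbit of the movable class $D$ produced by Lemma~\ref{lem:del-pezzo-enumeration}. Each stratum will be shown to be irreducible with the stated general member and dimension. Finally, the seven closures will be shown not to contain one another.

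\emph{Step 1: stratification.} Consider the incidence $\Omega_1=\{(W,A)\in\Omega : W\in\mathscr U_1\}$. Over a point $(W,A)$ with $X=V(A)$, the class $D=2H-C$ of the moving part of the restricted net is well defined up to the monodromy action of $W(D_5)$ on $\mathrm{Pic}(X)$. By Lemma~\ref{lem:del-pezzo-enumeration} it lies in one of seven orbits $\mathcal O_1,\dots,\mathcal O_7$. Let $\Omega_1^{(k)}$ be the corresponding subsets; since the degrees of the fixed curve and the multiset $(m_j)$ are constant on each, these are constructible.

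\emph{Step 2: irreducibility of each stratum.} Fix $k$. Over the open set of smooth $(2,2)$ surfaces $X$, consider the relative Picard lattice; the orbit $\mathcal O_k$ is a finite étale cover that is irreducible, because monodromy on the family of degree-four del Pezzo surfaces is all of $W(D_5)$. Given $(X,D)$, the data determining $W$ are the residual cluster, i.e.\ points of $X$ with the multiplicities $(m_j)$. By Lemma~\ref{lem:del-pezzo-enumeration}, these points are general in the sense that the net $|D-\sum m_jp_j|$ is a homaloidal net of the listed proper type. This is an open subset of an irreducible product of copies of $X$, with infinitely near points arising only in the closure. Then $W$ is the preimage of this net under restriction, which is surjective with kernel $A$. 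Hence $\Omega_1^{(k)}$ is dominated by an irreducible variety, and so is its image $\mathscr U_1^{(k)}$ in $G$. The general member is the corresponding family of Propositions~\ref{prop:curve-families}, \ref{prop:ribbon-family} and \ref{prop:dejonquieres-family}, whose dimensions were computed there. For instance, for the $E_5$ row one has $26+0-$ six pencils, i.e.\ $\dim\Omega_1^{(k)}-6$ with fibres of $\rho$ open in $\Gr(2,5)$. The images are then $24,26,24,24,25,25,25$.

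\emph{Step 3: no containments.} This is the main obstacle. A closure $\overline{\mathscr U_1^{(k)}}$ might contain $\mathscr U_1^{(k')}$ when $\dim\mathscr U_1^{(k)}>\dim\mathscr U_1^{(k')}$. We use three invariants. First, multidegree is constant on each stratum and upper semicontinuous along degenerations, since $d_3$ is the degree of $\vf^{-1}$ and that can only drop in the limit. Second, in any family the degree of the one-dimensional part of the base scheme can only increase, by Lemma~\ref{lem:persistence-components} applied to Hilbert/Chow families of curves. Third, the number and arithmetic genus of the curve components behave similarly.

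These invariants handle most pairs. The $(2,4,6)$ pair, $R_{-2}$ (dimension $26$) versus $L_1\sqcup L_2$ (dimension $24$), needs a separate argument. A flat limit of two skew lines is at most a ribbon of genus $-1$; likewise a ribbon of genus $-2$ cannot degenerate to two disjoint lines, since the Hilbert polynomials $2t+3$ and $2t+2$ differ. So the base curves of the two families lie in disjoint closed loci of Hilbert schemes of fixed polynomial. Because the open stratum $\mathscr U_1^{(k')}$ consists of general members whose base curve is exactly of the given type, Lemma~\ref{lem:persistence-components} shows it is not in the closure of the other.

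For $(2,4,5)$, the twisted cubic $T$ has dimension $24$ and $C\sqcup L$ has dimension $25$. A flat limit of $C\sqcup L$ with the same Hilbert polynomial $3t+2$ is a degenerate curve, never a smooth twisted cubic, which has polynomial $3t+1$; so the smaller component is not contained in the larger. The remaining multidegrees each carry a single component. Combining Steps 2 and 3, the closures of the seven strata are exactly the irreducible components of $\mathscr U_1$.
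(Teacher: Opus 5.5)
Your architecture matches the paper's: stratify $\mathscr U_1$ by the seven $W(D_5)$-orbits, show each stratum lies in one irreducible closure, then exclude containments. However, Step~2 does not establish the covering.

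First, $W$ is not determined by $(X,D)$ and the residual cluster. The fixed divisor $C$ moves in $|2H-D|$: the twisted cubics $H-e_5$ form a $\PP^2$, and the elliptic quintics $5\ell-2\sum e_i$ form a $\PP^5$. This is why your sample count $26+0-6=20$ for the $E_5$ row misses the correct value $25$ by exactly $\dim|2H-D|=5$.

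Second, and more seriously, a row of $\mathscr U_1$ contains systems whose residual clusters have colliding or infinitely near points, e.g.\ two of the $q_i$ in the ribbon or conic-plus-line rows. These systems are not in the image of an open subset of $X^s$. Asserting that they arise ``only in the closure'' is exactly what has to be proved, since a priori the net could jump, and then such a system need not be a limit of general ones. The paper closes this gap as follows. It parametrizes clusters by the irreducible space of successive universal blow-ups. For every system in $\mathscr U_1$ the resolved net is $g^*\OO_{\PP^2}(1)$ with $g$ birational, so $g_*\OO_Y=\OO_{\PP^2}$ gives $h^0(Y,M)=3$. Finally, $h^0(Y,M)=3$ is an open condition on that space, and this open set contains the dense locus of general proper clusters.

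Step~3 leaves several containments untreated and argues the others incorrectly. Under specialization $d_3$ can only drop and the curve degree can only rise. So these invariants exclude nothing in the direction that matters for the $26$-dimensional ribbon family ($d_3=6$, curve degree $2$). The families $C\sqcup L+q_1+q_2$, $R_4+q$, $E_5$ and $T+p^{\PP^2}$ all have smaller $d_3$ and larger curve degree, and you never rule them out of its closure. Your third invariant (number and genus of curve components) is not monotone in any usable sense. The fact that the other multidegrees carry one family each is beside the point, since containments across multidegrees genuinely occur: the paper shows $\mathcal F_{Q+q}\subset\overline{\mathcal F}_{R_4+q}$.

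The two pairs you do discuss rely on Hilbert polynomials, and this fails. A special base only \emph{contains} a flat limit of the general base curves, possibly together with isolated points. Thus $L_1\sqcup L_2\sqcup\{q\}$ has Hilbert polynomial $2t+3$, and $T\sqcup\{p\}$ has $3t+2$. Your remark on limits of two skew lines concerns a containment already impossible for dimension reasons.

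The missing ingredient is the Chow-theoretic persistence used in the paper:
\begin{enumerate}
\item every base in the closure of the ribbon family contains a cycle $2[L]$, which no other general base in the table has;
\item every base in the closure of the $C\sqcup L$ family contains a line and a plane conic scheme, which the general base $T+p^{\PP^2}$ does not.
\end{enumerate}
For the $R_4$ and $E_5$ closures, your $d_3$ argument already suffices.
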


\begin{proof}
Fix one row of Lemma~\ref{lem:del-pezzo-enumeration}. Marked smooth del
Pezzo surfaces of degree four form an irreducible family. On such a surface the fixed divisor varies in the projective space
$|2H-D|$, after removing the forced $(-1)$-components, and an ordered cluster
of residual proper or infinitely near points varies in the irreducible space
obtained by successive universal blow-ups. The dimensions of these spaces of
fixed divisors are constant in each row by Riemann--Roch and the standard
vanishing obtained after removing the forced $(-1)$-components. In particular,
in the fourth row $C=H-e_5$ moves in a projective plane; it is not rigid. Let $\pi:Y\to X$ resolve the cluster. The resolved net of
a system $W\in\mathscr U_1$ is $M=g^*\OO_{\PP^2}(1)$ for a birational
morphism $g:Y\to\PP^2$. Since $g_*\OO_Y=\OO_{\PP^2}$, the projection
formula gives $h^0(Y,M)=3$. Therefore, the net induced by $W/A$ is the complete
linear system $|M|$.

For the general cluster in each row the Hudson test gives a complete
homaloidal net, hence again $h^0(Y,M)=3$. The condition $h^0(Y,M)=3$ is open
on the irreducible cluster space by upper semicontinuity, and the preceding
projection-formula argument shows that every cluster arising from a system in
$\mathscr U_1$ lies in this open locus. The locus of general proper clusters
is dense in the same irreducible successive-blow-up space. Therefore every
system in a fixed row lies in the closure of the general family of that row. It follows that $\mathscr U_1$ is covered by the
closures of the seven irreducible families in the statement. Their
irreducibility and dimensions were proved in
Propositions~\ref{prop:curve-families}, \ref{prop:ribbon-family}, and
\ref{prop:dejonquieres-family}. We prove that none is contained in the closure
of another.

The ribbon family has the persistent cycle $2[L]$. No other general base in
the table contains such a cycle; hence no other family is contained in its
closure. The ribbon family has the largest dimension, so it is not contained
in any other closure.

The three $25$-dimensional families have distinct general values of $d_3$,
and the same holds for the three $24$-dimensional families. An inclusion
between the closures of two irreducible families of the same dimension would
force equality of the closures. Both families would then contain a non-empty
open subset of the same irreducible variety, contradicting the different
general values of $d_3$. Therefore no containment occurs among families of
the same dimension. To conclude, it is enough to exclude the containment of a
$24$-dimensional family in a $25$-dimensional one. Let
$\mathcal H_{2m+1}^{\rm pl}$ be the projective Hilbert family of plane conics
in $\PP^4$ and let $\mathcal H_{m+1}$ be the Grassmannian of lines. The
incidence of triples $(W,C,L)$ with
$C\in\mathcal H_{2m+1}^{\rm pl}$, $L\in\mathcal H_{m+1}$, and
$C\cup L\subset\Bs(W)$ is closed and has proper projection to the
Grassmannian of systems. Hence every specialization of the conic-plus-line
family still contains a plane conic scheme and a line. A general de
Jonqui\`eres base has only one reduced curve component; a general twisted
cubic is irreducible and contains no line; and two skew lines contain no
plane degree-two subscheme, since any degree-two subscheme with both generic
points has non-planar support. Thus none of the three $24$-dimensional
families lies in that closure.

Likewise, the projective Chow incidence shows that a specialization of the
rational quartic or elliptic quintic family contains an effective curve cycle
of degree four or five. The general bases of the three $24$-dimensional
families have total curve degree at most three. Lemma~\ref{lem:persistence-components}
therefore excludes all remaining containments. Hence the seven closures are
precisely the irreducible components of $\mathscr U_1$.
\end{proof}

\begin{Lemma}\label{lem:closed-rank-one-locus}
The locus
$\mathscr R_{\leq1}=\{W\in G\mid r_p(W)\leq1\text{ for some }p\in\Bs(W)\}$
is closed, and $\mathscr U\cap\mathscr R_{\leq1}=\varnothing$.
\end{Lemma}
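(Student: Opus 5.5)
The proof splits into two parts: closedness of $\mathscr R_{\leq1}$, and its disjointness from $\mathscr U$.

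\emph{Closedness.} Work on $G\times\PP^4$. Let $\mathscr S$ be the tautological rank-five bundle and consider the incidence
$$\mathcal I=\{(W,p)\mid p\in\Bs(W),\ \rk j^1_p(W)\le1\}\subset G\times\PP^4.$$
The condition $p\in\Bs(W)$ says that the evaluation map $\mathscr S\to\OO_{\PP^4}(2)|_p$ vanishes, which is closed. On this closed set the first-jet map
$$j^1_p\colon\mathscr S\to\Omega_{\PP^4}(2)|_p$$
is a morphism of vector bundles. Indeed, once the value at $p$ vanishes, the differential is intrinsically defined; equivalently, it is the restriction of the principal-parts map $\mathscr S\to P^1(\OO(2))$ to the locus where its composition with the projection to $\OO(2)$ vanishes. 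The condition that this map has rank at most one is given by the vanishing of its $2\times2$ minors, so $\mathcal I$ is closed. Since $\PP^4$ is projective, the projection $G\times\PP^4\to G$ is proper. Hence $\mathscr R_{\le1}$, the image of $\mathcal I$, is closed.

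\emph{Disjointness.} Let $W\in\mathscr U$. Then there is a pencil $A\subset W$ with $X=V(A)$ a smooth complete intersection of two quadrics. Let $p$ be any base point of $W$. Since $A\subset W$, $p$ lies on $X$. Smoothness of the codimension-two complete intersection $X$ at $p$ means that the differentials of the two generators of $A$ are linearly independent in $\mathfrak m_p/\mathfrak m_p^2$. This gives
$$r_p(W)\ge\rk j^1_p(A)=2.$$
So no base point of $W$ has rank at most one, i.e.\ $W\notin\mathscr R_{\le1}$.

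The only point requiring care is the relative first-jet map in the closedness step, specifically that the rank condition is algebraic in the pair $(W,p)$. This is handled either by the principal-parts construction above or locally by an affine chart computation: in a chart $x_0=1$, write the partial derivatives of a basis of $W$ and impose vanishing of the $2\times2$ minors of the Jacobian restricted to the base locus. Everything else is immediate.
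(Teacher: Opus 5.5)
Your proof is correct and follows the paper's own argument. Closedness comes from a determinantal incidence in $\PP^4\times G$ projected properly to $G$, and disjointness from $\mathscr U$ comes from the independence of the two differentials of a smooth pencil at every base point. Your principal-parts description simply makes explicit the ``universal first-jet map'' that the paper invokes without further comment.
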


\begin{proof}
On $\PP^4\times G$, impose the vanishing of the universal system and the
rank condition on its universal first-jet map. This is a closed determinantal
incidence, and its projection to $G$ is closed since $\PP^4$ is proper. If
$W\in\mathscr U$ and $A\subset W$ defines a smooth complete intersection,
the two differentials coming from $A$ are independent at every base point.
Therefore, $r_p(W)\geq2$ for all $p\in\Bs(W)$.
\end{proof}

Let
$
\mathscr H_{\mathrm{II}},\mathscr H_{\mathrm{III}}
\subset\Gr(4,H^0(\PP^3,\OO_{\PP^3}(2)))
$
be the irreducible components of the locally closed quadratic Cremona locus whose general members have
base schemes, respectively, a line and three points, and four points with a
prescribed tangent plane at one of them. By
\cite[Proposition~2.4.1]{PanRongaVust01}, they are distinct irreducible
components of dimensions $13$ and $14$. Set $\mathcal F_C$ to be the family in Proposition~\ref{prop:curve-families}\textup{(ii)}.

\begin{Lemma}\label{lem:conic-rank-one-reduction}
For a general $W\in\mathcal F_C$, the point $p$ occurring in
$C+p_C^{\PP^3}+q_1+q_2+q_3$ is the unique point of $\Bs(W)$ with
$r_p(W)=1$. The system obtained from $W$ by
Proposition~\ref{prop:geometric-reduction-p3} is a general point of
$\mathscr H_{\mathrm{II}}$.
\end{Lemma}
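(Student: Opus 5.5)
We compute first-jet ranks at every base point of a general $W\in\mathcal F_C$, then identify the projected $\PP^3$ system. Since rank is lower semicontinuous and the conclusions are open, we may work with the explicit witness of Proposition~\ref{prop:curve-families}(ii),
$$
W=\langle x_0x_2-x_1^2,\ x_3(x_1-x_2),\ x_4(x_1-x_2),\ x_3(x_3-x_2),\ x_4(x_4-x_2)\rangle,
$$
with $p=[1:0:0:0:0]$, and deduce the general case by openness on the irreducible family $\mathcal F_C$.

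\textbf{Step 1: ranks at the base points.} At $p$, in the chart $x_0=1$, only the first generator has a non-zero linear part, namely $x_2$. Hence $r_p(W)=1$, consistent with the marked tangent hyperplane $\{x_2=0\}\supset T_pC$. At each of the three isolated points the base scheme is reduced (as shown in the witness computation), so the rank there is $4$. At a point $c\in C\setminus\{p\}$, the linear parts include the differential of $x_0x_2-x_1^2$ together with those of $x_3(x_1-x_2)$ and $x_4(x_1-x_2)$, and the latter two are $(x_1-x_2)(c)\,dx_3$ and $(x_1-x_2)(c)\,dx_4$. At the three points $c$ where $x_1=x_2$, one uses $x_3(x_3-x_2)$ and $x_4(x_4-x_2)$ instead, which contribute $-x_2\,dx_3$ and $-x_2\,dx_4$, non-zero there. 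In all cases the rank is $3$, the conormal rank of a curve, so $p$ is the unique rank-one point. Both the statement ``$r\ge 2$ away from one marked point'' and ``exactly one point of rank one'' are open conditions on the incidence over $\mathcal F_C$, using the closedness of Lemma~\ref{lem:closed-rank-one-locus} in the relative form. They therefore transfer to the general member. For the general member one could instead argue intrinsically: at a point of a smooth base curve imposing only the curve the rank is $3$, and at $q_i$ it is $4$.

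\textbf{Step 2: the projected system.} By Proposition~\ref{prop:geometric-reduction-p3}, $V=\langle x_3(x_1-x_2),x_4(x_1-x_2),x_3(x_3-x_2),x_4(x_4-x_2)\rangle$ on $\PP^3_{x_1..x_4}$ is a Cremona system. As computed in Proposition~\ref{prop:curve-families}(ii), its base consists of the line $V(x_3,x_4)$ (the projection of $C$ from $p\in C$) together with three reduced points. Projection from $p$ sends the general data $(C,p,H,q_1,q_2,q_3)$ to a line $\ell=\pi_p(C)$ together with three general points $\pi_p(q_i)$. This assignment is dominant onto the parameter space of (line, three points), which has dimension $4+9=13=\dim\mathscr H_{\mathrm{II}}$.

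\textbf{Step 3: identification of the component.} One checks that $V=H^0(\I_{\ell\cup\{a_1,a_2,a_3\}}(2))$. Quadrics in $\PP^3$ through a line form a $7$-dimensional space, and three general points cut this down to $4$. So $V$ is the complete system, which is the general member of $\mathscr H_{\mathrm{II}}$ by Pan--Ronga--Vust. The general $W$ maps to a general pair (line, three points), since the $q_i$ are general and $\ell$ is a general line, namely the projection of a general conic from a point on it. Hence the reduction is a general point of $\mathscr H_{\mathrm{II}}$.

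The main obstacle is Step 2 for the general member rather than the witness: the projection of the base ideal must be exactly the line plus three reduced points, with no extra embedded structure coming from the $p_C^{\PP^3}$ condition. I would handle this by a dimension and openness argument, comparing $h^0$ on both sides, rather than by tracking the local ideal at $p$.
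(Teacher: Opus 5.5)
Your proposal is correct and follows essentially the paper's proof: ranks come from the local structure of the base ($1$ at $p$ via the prescribed tangent hyperplane, $3$ along $C\setminus\{p\}$, $4$ at the $q_i$), then $V\subset H^0(\I_{\ell\cup\{a_1,a_2,a_3\}}(2))$ with equality by the count $4=4$, and genericity follows by lifting a general line and three general points.

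One caveat concerns Step~1. ``Exactly one point of rank one'' is not automatically an open condition, since further rank-$\le1$ points could collide into $p$ at the witness. So Step~1 should rest on the intrinsic rank argument you also give, which is the one the paper uses. A minor correction in the witness computation: only one point of $C\setminus\{p\}$ has $x_1=x_2$, namely $[1:1:1:0:0]$, not three.
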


\begin{proof}
The base scheme of a general member of $\mathcal F_C$ is exactly
$C+p_C^{\PP^3}+q_1+q_2+q_3$. Its tangent space at $p$ is the prescribed
hyperplane, so $r_p(W)=1$. At a point of $C\setminus\{p\}$ the base is the
smooth curve $C$, hence the first jets generate its three-dimensional
conormal space, while the $q_i$ are reduced base points. Therefore, $p$ is the
unique rank-one point.

Write $W$ as in \eqref{eq:rank-one-normal-form-geometric} and set
$V=\langle q_1,q_2,q_3,q_4\rangle$. Projection from $p$ maps $C$ onto a line
$L\subset\PP^3$ and the points $q_i$ onto three general points $a_i$. Hence
$V\subset H^0(\I_{L\cup\{a_1,a_2,a_3\}}(2))$. A line and three general
points impose six independent conditions on quadrics in $\PP^3$, so the
space on the right has dimension four. Therefore equality holds. Conversely, a general line and three general
points in $\PP^3$ can be lifted by choosing $p\in\PP^4$, a smooth conic
through $p$ projecting isomorphically to the line, and three general lifted
points. Hence the map from the incidence defining $\mathcal F_C$ to
$\mathscr H_{\mathrm{II}}$ is dominant, and $V$ is a general point of
$\mathscr H_{\mathrm{II}}$.
\end{proof}

\begin{Lemma}\label{lem:conic-not-punctual-component}
The family $\mathcal F_C$ is not contained in the punctual component
$\mathcal P=\overline{\mathcal C^0_{248}}$.
\end{Lemma}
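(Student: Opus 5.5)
The plan is to argue by contradiction and transport the problem to $\PP^3$ via the projection of Proposition~\ref{prop:geometric-reduction-p3}, where the Pan--Ronga--Vust classification separates the two families. Suppose $\mathcal F_C\subset\mathcal P$. A general $W\in\mathcal F_C$ then lies in the closure of the open set $\mathcal C^0_{248}$, and it would be a limit of general punctual systems.

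\emph{Step 1: a punctual curve through $W$.} By curve selection, choose a pointed curve $(T,0)$ and a family $W_t$ with $W_0=W$ and $W_t$ a general member of $\mathcal F_{248}$ for $t\neq0$. For such $t$, Proposition~\ref{prop:geometric-rank-one-point} and the description of Proposition~\ref{prop:248-family} give a unique rank-one base point $p_t$, namely the support of the $(1,3,2)$ block.

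\emph{Step 2: the rank-one points converge to $p$.} After a finite base change, $p_t\to p_0\in\Bs(W)$. Since the rank condition on the universal first-jet map is closed (as in Lemma~\ref{lem:closed-rank-one-locus}), we get $r_{p_0}(W)\le1$. Lemma~\ref{lem:conic-rank-one-reduction} then forces $p_0=p$.

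\emph{Step 3: pass to $\PP^3$.} Choose coordinates $g_t\in\PGL_5$, depending algebraically on $t$ with $g_t\to g_0$, sending $p_t$ to $[1:0:0:0:0]$. Put each $W_t$ in the normal form \eqref{eq:rank-one-normal-form-geometric}. Along the family the first-jet line $x_1$ varies continuously, and the four-dimensional kernel of the first-jet map has constant dimension on the rank-one locus. So the projected systems $V_t$ form a family in $\Gr(4,H^0(\PP^3,\OO(2)))$ with $V_0=V$.

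By Proposition~\ref{prop:geometric-reduction-p3}, $V_t\in\mathscr H_{\mathrm{III}}$ for $t\ne0$, because $\Bs(W_t)$ is finite, and Lemma~\ref{lem:prv-finite-component} applies. Hence $V_0\in\overline{\mathscr H_{\mathrm{III}}}$.

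\emph{Step 4: contradiction.} By Lemma~\ref{lem:conic-rank-one-reduction}, $V_0$ is a general point of $\mathscr H_{\mathrm{II}}$. Because $W$ is general in $\mathcal F_C$ and the reduction map is dominant onto $\mathscr H_{\mathrm{II}}$, this would give $\mathscr H_{\mathrm{II}}\subset\overline{\mathscr H_{\mathrm{III}}}$. That contradicts the fact from \cite[Proposition~2.4.1]{PanRongaVust01} that these are distinct irreducible components.

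As a cheaper cross-check, also usable as a fallback, dimensions alone do not decide the question ($26<27$). The multidegree does, though: $d_3$ is lower semicontinuous in families, since it is $8-s_3$ with $s_3$ upper semicontinuous. This gives $d_3(W)=6<8$, which is consistent with specialization and so yields no contradiction. The $\PP^3$ argument is therefore the real one.

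The main obstacle is Step 3. I must check that the kernel of the first-jet map at $p_t$ forms a vector bundle over $T$ near $0$. This holds because the rank is exactly one at $t=0$ as well, so by lower semicontinuity it is constant. I must also check that the normalization of the $x_0x_1$-generator and the resulting projected subspaces glue to a morphism $T\to\Gr(4,\cdot)$ whose value at $0$ is exactly $V$. Both should follow from constancy of the rank. The remaining care is making the choice of $g_t$ algebraic after base change.
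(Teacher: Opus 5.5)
Your argument is essentially the paper's proof. It runs through the same chain: curve selection into the punctual locus, convergence of the unique rank-one points $p_t$ to $p$ via the closed rank incidence, the constant-rank kernels $V_t=W_t\cap H^0(\I_{p_t}^2(2))$ lying in $\mathscr H_{\mathrm{III}}$, and a contradiction with $V_0$ being a general point of $\mathscr H_{\mathrm{II}}$. The paper closes via closedness of $\mathscr H_{\mathrm{III}}$ relative to the Cremona locus of $\PP^3$, which is also the fact that turns your inclusion $\mathscr H_{\mathrm{II}}\subset\overline{\mathscr H_{\mathrm{III}}}$ into a contradiction. Delete the ``cheaper cross-check'' paragraph, since it contradicts itself and provides no fallback: it claims the multidegree decides the question, then correctly notes that $d_3=6<8$ is compatible with specialization.
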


\begin{proof}
Assume the contrary and choose a general $W_0\in\mathcal F_C$. Since the
zero-dimensional locus is dense in $\mathcal P$, there is, after a finite base
change, a family $W_t\in\mathcal P$ over a smooth pointed curve such that
$W_0$ is the special fiber and $W_t$ is a general punctual transformation for
$t\neq0$. The latter has a unique rank-one point $p_t$. The closure of the
corresponding section in the proper incidence of pairs $(W,p)$ with
$r_p(W)\leq1$ gives a point $p_0\in\Bs(W_0)$ with $r_{p_0}(W_0)\leq1$.
Lemma~\ref{lem:first-jet-normal-form} excludes rank zero, and
Lemma~\ref{lem:conic-rank-one-reduction} gives $p_0=p$.

After shrinking the curve and trivializing the relevant bundles, the kernels
$V_t=W_t\cap H^0(\I_{p_t}^2(2))$ form a family of four-dimensional quadratic Cremona systems of $\PP^3$. For
$t\neq0$, the base of $V_t$ is zero-dimensional, so
Lemma~\ref{lem:prv-finite-component} gives
$V_t\in\mathscr H_{\mathrm{III}}$. After deleting finitely many points of the punctured curve, every
$V_t$, including $V_0$, lies in the locally closed quadratic Cremona locus of
$\PP^3$. The subset $\mathscr H_{\mathrm{III}}$ is an irreducible component
and is therefore closed relative to that locus. Since $V_t\in
\mathscr H_{\mathrm{III}}$ for $t\neq0$, relative closedness gives
$V_0\in\mathscr H_{\mathrm{III}}$. On the other hand,
Lemma~\ref{lem:conic-rank-one-reduction} says that $V_0$ is a general point
of $\mathscr H_{\mathrm{II}}$, contradicting the fact that these are distinct
irreducible components.
\end{proof}

Let $\overline{\mathcal H}_{-2}$ be the closure in
$\Hilb^{2m+3}(\PP^4)$ of the family of ribbons occurring in
Proposition~\ref{prop:ribbon-family}, and set
$$
\mathscr B_{-2}=
\{W\in G\mid R\subset\Bs(W)\text{ for some }
R\in\overline{\mathcal H}_{-2}\}.
$$

\begin{Lemma}\label{lem:conic-not-ribbon-locus}
The locus $\mathscr B_{-2}$ is closed, and a general member of
$\mathcal F_C$ does not belong to it.
\end{Lemma}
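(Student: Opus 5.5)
Closedness is an instance of Lemma~\ref{lem:persistence-components}. The closure $\overline{\mathcal H}_{-2}$ is a closed subscheme of the projective scheme $\Hilb^{2m+3}(\PP^4)$, hence projective. The incidence $\{(W,R)\mid R\subset\Bs(W)\}\subset G\times\overline{\mathcal H}_{-2}$ is closed, and its projection to $G$ is $\mathscr B_{-2}$. So $\mathscr B_{-2}$ is closed.

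For the second assertion, take a general $W\in\mathcal F_C$. By Proposition~\ref{prop:curve-families}(ii), its base scheme $B$ is supported on $C\cup\{q_1,q_2,q_3\}$, and along $C\setminus\{p\}$ it coincides with the reduced smooth conic. At $p$ its generated ideal is the one with reduction $J$ computed there. Suppose $R\subset B$ for some $R\in\overline{\mathcal H}_{-2}$. Every member of $\overline{\mathcal H}_{-2}$ has Hilbert polynomial $2t+3$, so it is a one-dimensional scheme of degree two. I would first control its one-dimensional cycle. Since $R$ is a flat limit of ribbons $2[L_t]$, Chow persistence shows that its cycle is a limit of cycles $2[L_t]$, hence of the form $2[L']$ for a line $L'$. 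Its support is then contained in $\Supp B$. The only curve in $\Supp B$ is the irreducible conic $C$, and $C$ contains no line. This is already a contradiction.

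To be safe, I would also record a scheme-theoretic version that avoids the cycle argument. The one-dimensional part of $R$ has degree two and lies in $B$. At the generic point, $B$ is reduced along $C$, so the generic multiplicity of $B$ along $C$ is one. Hence any pure one-dimensional subscheme of $B$ has cycle $[C]$ at most. A degree-two one-dimensional subscheme of $B$ would therefore have cycle $[C]$, which is not a double line, since the cycle of a limit of double lines is supported on a line. Either way $W\notin\mathscr B_{-2}$.

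The main obstacle I expect is justifying that every member of $\overline{\mathcal H}_{-2}$ has cycle twice a line. I would argue this through the Hilbert--Chow morphism. On the family, the cycle map to the Chow variety of degree-two curves is continuous, the locus $\{2[L]\}$ is closed (it is the image of the Grassmannian of lines), and it contains the cycles of the open dense ribbon family. An alternative is the observation that the reduced support of a flat limit of schemes supported on lines is a limit of lines, hence a line, possibly with embedded points. Once this is in place, the rest is immediate from the explicit base scheme of $\mathcal F_C$.
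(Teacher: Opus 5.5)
Your proposal is correct and follows the paper's proof essentially step for step: closedness comes from Lemma~\ref{lem:persistence-components} applied to the projective family $\overline{\mathcal H}_{-2}$, and the Hilbert--Chow argument shows that every member of $\overline{\mathcal H}_{-2}$ has cycle $2[L']$ for a line $L'$, which cannot lie in a base whose only curve is the smooth conic $C$. Your supplementary ``scheme-theoretic'' variant is harmless but not really independent, since its last step again uses that the limit cycle is supported on a line; if you want to avoid the Chow variety, the cleaner route is your other remark, that the support of the special fiber of a flat family lies in the flat limit of the lines $L_t$.
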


\begin{proof}
Closedness follows from Lemma~\ref{lem:persistence-components}. Under the
Hilbert--Chow morphism every ribbon in the family maps to $2[L]$ for its
supporting line $L$. The locus of cycles of the form $2[L]$ is closed, so
every member of $\overline{\mathcal H}_{-2}$ is supported on a line. The
positive-dimensional support of the base of a general member of
$\mathcal F_C$ is a smooth conic, which contains no line. Hence such a base
cannot contain a member of $\overline{\mathcal H}_{-2}$.
\end{proof}

\begin{Proposition}\label{prop:nine-distinct-components}
The space $\Bir_2(\PP^4)$ has at least nine irreducible components.
\end{Proposition}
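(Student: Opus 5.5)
We need nine irreducible closed subsets of $\Bir_2(\PP^4)$, none contained in another, which are not contained in any strictly larger irreducible subset. The candidates are:
\begin{itemize}
\item[(a)] the punctual component $\mathcal P=\overline{\mathcal C^0_{248}}$;
\item[(b)] the closures of the seven families of Proposition~\ref{prop:seven-smooth-pencil-components};
\item[(c)] the closure $\overline{\mathcal F}_C$ of the conic family.
\end{itemize}
The approach is to exhibit nine distinct irreducible components by showing that a general member of each candidate family lies in no other candidate family's closure, and that it also lies in no irreducible component of larger dimension. To handle the second point cleanly, I will show that each general member is not in the closure of any irreducible family distinct from its own. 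This reduces to showing that the general member of each candidate has an open neighbourhood in $\Bir_2(\PP^4)$ contained in the candidate.

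\textbf{The seven smooth-pencil families.} By Lemma~\ref{lem:smooth-pencil-open}, $\mathscr U$ is open in $\Bir_2(\PP^4)$. The condition $\dim\Bs(W)\le 1$ is open by semicontinuity. Within $\mathscr U$, the condition $\dim\Bs(W)\ge1$ is closed, and the zero-dimensional part of $\mathscr U$ is empty by Lemma~\ref{lem:closed-rank-one-locus} and Proposition~\ref{prop:geometric-rank-one-point}. Hence $\mathscr U_1=\mathscr U$ minus the locus of systems with a surface in the base, so $\mathscr U_1$ is open in $\Bir_2(\PP^4)$. The irreducible components of an open subset are open pieces of irreducible components of the whole space. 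Proposition~\ref{prop:seven-smooth-pencil-components} therefore gives seven distinct components of $\Bir_2(\PP^4)$, namely the closures of the seven families.

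\textbf{The punctual component.} The locus $\mathcal C^0_{248}$ is open in $\Bir_2(\PP^4)$, since $\dim\Bs\ge1$ is a closed condition. By Theorem~\ref{thm:zero-dimensional-unique-component-geometric} it is irreducible, so its closure $\mathcal P$ is an eighth component. It is disjoint from the seven above because their general members have one-dimensional base loci, and a component meeting an open set in a dense subset is determined by it.

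\textbf{The conic family.} This is the step I expect to be hardest, because $\mathcal F_C$ lies neither in $\mathscr U$ nor in $\mathcal C^0_{248}$, so no openness argument is available. Let $\mathcal K$ be an irreducible component containing $\overline{\mathcal F}_C$. I claim $\mathcal K$ cannot be one of the eight already found.
\begin{itemize}
\item It is not $\mathcal P$, by Lemma~\ref{lem:conic-not-punctual-component}.
\item It is not one of the seven smooth-pencil components $\overline{\mathcal U}_i$, argued as follows. The general member of $\mathcal F_C$ has a rank-one point (Lemma~\ref{lem:conic-rank-one-reduction}). Since $\mathscr R_{\le1}$ is closed, if $\mathcal F_C\subset\overline{\mathcal U}_i$ then $\mathcal F_C\subset\overline{\mathcal U}_i\cap\mathscr R_{\le1}$, a proper closed subset of dimension at most $\dim\mathcal U_i-1\le25$. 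This is fine dimensionally only when $\dim\mathcal U_i=26$, i.e.\ for the ribbon component, which is excluded by Lemma~\ref{lem:conic-not-ribbon-locus}. For the components of dimension at most $25$, we get $\dim(\overline{\mathcal U}_i\cap\mathscr R_{\le1})\le 24<26=\dim\mathcal F_C$, a contradiction.
\end{itemize}
Thus $\mathcal K$ is a ninth component, distinct from the other eight, and $\Bir_2(\PP^4)$ has at least nine irreducible components. The only real care needed is to confirm that $\dim\mathcal F_C=26$ as an image in the Grassmannian, which Proposition~\ref{prop:curve-families} provides, and to record that the punctual component has dimension $27$.
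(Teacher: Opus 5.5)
Your proposal is correct. Its skeleton is the paper's: $\mathcal P$ comes from openness and irreducibility of the zero-dimensional locus, the seven smooth-pencil components come from Proposition~\ref{prop:seven-smooth-pencil-components}, and the ninth is a component $\mathcal K\supset\mathcal F_C$, separated from $\mathcal P$ by Lemma~\ref{lem:conic-not-punctual-component}.

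Two steps are organized differently.

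First, for the seven components you prove directly that $\mathscr U_1$ is open in $\Bir_2(\PP^4)$. You use $\mathscr U\cap\mathscr R_{\le1}=\varnothing$ together with Proposition~\ref{prop:geometric-rank-one-point} to exclude zero-dimensional bases. You then invoke the bijection between components of an open subset and the ambient components meeting it. The paper instead takes a component $\mathcal K_i\supset\mathcal D_i$ and shows its generic base dimension is one, using that $\mathcal P$ is the only component meeting the zero-dimensional locus. It then deduces $\mathcal K_i\cap\mathscr U_1=\mathcal D_i$. The two arguments are equivalent; yours is more direct.

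Second, for the ninth component you separate $\mathcal F_C$ from the seven by the closed rank-one locus. You have $\mathcal F_C\subset\mathscr R_{\le1}$ while $\mathcal U_i\cap\mathscr R_{\le1}=\varnothing$, so $\overline{\mathcal U}_i\cap\mathscr R_{\le1}$ is a proper closed subset of dimension at most $\dim\mathcal U_i-1$. The paper uses bare dimension for the six components of dimension at most $25$. For the $26$-dimensional ribbon component it uses the persistence locus $\mathscr B_{-2}$ of Lemma~\ref{lem:conic-not-ribbon-locus}.

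Your dimension-drop argument already disposes of the ribbon component, since $\dim(\overline{\mathcal U}_i\cap\mathscr R_{\le1})\le25<26=\dim\mathcal F_C$. Your remark that the bound is ``fine dimensionally only when $\dim\mathcal U_i=26$'' is therefore mistaken, and the appeal to Lemma~\ref{lem:conic-not-ribbon-locus} is superfluous. Had it been needed, you would also have to say why the ribbon component lies in $\mathscr B_{-2}$: the ribbon family does, and $\mathscr B_{-2}$ is closed. Your route thus removes the Hilbert--Chow/ribbon lemma from this proof entirely.

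Two small fixes:
\begin{enumerate}
\item $\mathcal P$ is \emph{distinct} from the seven components, not disjoint from them.
\item Openness of $\mathcal C^0_{248}$ also uses that a base-point-free quadratic system has degree $16$. This is what gives $\{\dim\Bs\le0\}\cap\Bir_2(\PP^4)=\mathcal C^0_{248}$.
\end{enumerate}
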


\begin{proof}
The fiber dimension of the universal base scheme is upper semicontinuous. A
quadratic birational map cannot be base-point-free, since a morphism
$f:\PP^4\to\PP^4$ with $f^*\OO_{\PP^4}(1)=\OO_{\PP^4}(2)$ has degree $16$.
Hence the locus $\mathcal C^0$ of maps with zero-dimensional base scheme is
open in $\Bir_2(\PP^4)$. By
Theorem~\ref{thm:zero-dimensional-unique-component-geometric}, it is
irreducible; its closure $\mathcal P$ is therefore an irreducible component of
$\Bir_2(\PP^4)$. Proposition~\ref{prop:geometric-rank-one-point} and
Lemma~\ref{lem:closed-rank-one-locus} give
$\mathcal P\subset\mathscr R_{\leq1}$.

Let $\mathcal D_1,\ldots,\mathcal D_7$ be the components of $\mathscr U_1$
from Proposition~\ref{prop:seven-smooth-pencil-components}, and let
$\mathcal K_i$ be an irreducible component of $\Bir_2(\PP^4)$ containing
$\mathcal D_i$. Since $\mathcal D_i\cap\mathscr R_{\leq1}=\varnothing$, one
has $\mathcal K_i\neq\mathcal P$.

The general base dimension on $\mathcal K_i$ is at most one. It cannot be
zero, since $\mathcal P$ is the unique component meeting the irreducible open
zero-dimensional locus. Hence the general base dimension on $\mathcal K_i$
is one. The closed locus where the base dimension is at least one contains the
generic point of $\mathcal K_i$ and therefore all of $\mathcal K_i$. Therefore, $\mathcal K_i\cap\mathscr U_1$ is a non-empty open irreducible subset of
$\mathcal K_i$. It contains $\mathcal D_i$; since $\mathcal D_i$ is a
maximal irreducible subset of $\mathscr U_1$, one has
$\mathcal K_i\cap\mathscr U_1=\mathcal D_i$. Therefore the $\mathcal K_i$
are pairwise distinct and $\dim\mathcal K_i=\dim\mathcal D_i$.

Let $\mathcal K_C$ be an irreducible component containing the irreducible
$26$-dimensional family $\mathcal F_C$. By
Lemma~\ref{lem:conic-not-punctual-component}, one has
$\mathcal K_C\neq\mathcal P$. Six of the components $\mathcal K_i$ have
dimension at most $25$, so they cannot contain $\mathcal F_C$. The remaining
one is the $26$-dimensional ribbon component. Its intersection with
$\mathscr U_1$ is the ribbon family, which is dense in it; hence it is
contained in the closed locus $\mathscr B_{-2}$. By
Lemma~\ref{lem:conic-not-ribbon-locus}, a general member of $\mathcal F_C$
does not belong to $\mathscr B_{-2}$. Therefore, $\mathcal K_C$ is distinct from
all the preceding eight components, proving the claim.
\end{proof}

\section{Two-dimensional base schemes}\label{sec:surfaces}

In this section we consider the cases with $d_2<4$, that is precisely the cases in which the base scheme contains a surface component.

\begin{Lemma}\label{lem:surface-segre-classes}
Let $h=c_1(\mathcal O_{\PP^4}(1))$. The following Segre classes hold:
\begin{enumerate}
\item \textit{if $P\subset\PP^4$ is a plane, then $s(P,\PP^4)=h^2-2h^3+3h^4$;}
\item \textit{if $Q\subset\PP^4$ is a smooth quadric surface spanning a hyperplane and $q$ is a point outside $Q$, then
$
s(Q\cup q,\PP^4)=2h^2-6h^3+15h^4;
$}
\item \textit{if $P\subset\PP^4$ is a plane, $L$ is a line meeting $P$ in one point and not contained in $P$, and $q_1,q_2$ are general points, then
$
s(P\cup L\cup q_1\cup q_2,\PP^4)=h^2-h^3-h^4;
$}
\item \textit{if $P\subset\PP^4$ is a plane and $L_1,L_2$ are skew lines, both meeting $P$ in one point, then
$
s(P\cup L_1\cup L_2,\PP^4)=h^2-9h^4.
$}
\end{enumerate}
\end{Lemma}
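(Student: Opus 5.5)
For (i) and (ii) I would use the formula $s(S,\PP^4)=c(N_{S/\PP^4})^{-1}\cap[S]$ for a smooth surface $S$. This is the same computation as in Lemma~\ref{lem:curve-segre}, one dimension up.

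\begin{itemize}
\item For a plane, $N_{P/\PP^4}\cong\OO_P(1)^{\oplus2}$ and $[P]=h^2$. Inverting $(1+h)^2$ through degree two gives $1-2h+3h^2$, hence $h^2-2h^3+3h^4$.
\item For the quadric, $N_{Q/\PP^4}\cong\OO_Q(1)\oplus\OO_Q(2)$ and $[Q]=2h^2$. Inverting $(1+h)(1+2h)$ gives $1-3h+7h^2$, hence $s(Q,\PP^4)=2h^2-6h^3+14h^4$.
\item The point $q\notin Q$ is disjoint from $Q$, so Remark~\ref{rem:disjoint} adds $h^4$ and gives (ii).
\end{itemize}

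For (iii) and (iv), all the work is a local correction at each point where a line meets the plane. The reduced points contribute $h^4$ each by additivity, and the line contributes $s(L,\PP^4)=h^3-3h^4$ by Lemma~\ref{lem:curve-segre}. Comparing with the disjoint sum, it suffices to prove that each transversal intersection point $L\cap P$ changes the Segre class by $-3h^4$. Indeed,
\[
s(P)+s(L)+2h^4+(-3h^4)=h^2-h^3-h^4,
\qquad
s(P)+2s(L)+2(-3h^4)=h^2-9h^4.
\]
As a sanity check, substituting these classes into Proposition~\ref{prop:segre-formula} gives the multidegrees $(2,3,3)$ and $(2,3,2)$ with $d_4=1$.

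To obtain the correction I would mimic Lemma~\ref{lem:curve-embedded-point}.

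\begin{enumerate}
\item Choose completed local coordinates $(a,b,c,d)$ at the intersection point with $I_P=(c,d)$ and $I_L=(a,b,c)$. Then $I_{P\cup L}=I_P\cap I_L=(c,ad,bd)$.
\item Blow up the plane, $\pi_1:X_1=\Bl_P\PP^4\to\PP^4$, with exceptional divisor $E$. On the $d$-chart, where $c=dC$, the pulled-back ideal is $d\,(C,a,b)$. On the $c$-chart it is principal. Hence
\[
I_{P\cup L}\OO_{X_1}=\OO_{X_1}(-E)\,I_{\widetilde L},
\]
where $\widetilde L$ is the strict transform of $L$, a smooth curve meeting $E$ transversally once, so $E\cdot\widetilde L=1$.
\item Blow up $\widetilde L$, which has codimension three in $X_1$, with exceptional divisor $F$. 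The principalizing divisor of $P\cup L$ is $\pi_2^*E+F$, while that of $P$ is $\pi_2^*E$.
\item Expand $\pi_{2*}\bigl(\tfrac{A+F}{1+A+F}-\tfrac{A}{1+A}\bigr)$ with $A=\pi_2^*E$, using the codimension-three push-forward identities \eqref{eq:codim-three-blowup-push}. Exactly as in Lemma~\ref{lem:curve-embedded-point}, this yields
\[
s(P\cup L)-s(P)=[L]-\bigl(\deg c_1(N_{\widetilde L/X_1})+4\,E\cdot\widetilde L\bigr)h^4 .
\]
\item By the elementary-transform rule already used in Lemma~\ref{lem:triple-line-conic-segre}, blowing up a codimension-two center met transversally lowers the normal degree by one. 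So $\deg N_{\widetilde L/X_1}=3-1=2$.
\item Therefore $s(P\cup L)-s(P)=h^3-6h^4$. This equals $s(L)-3h^4$, which is the claimed correction.
\end{enumerate}

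In (iv) the two lines are skew, so they meet $P$ at distinct points. The construction above can then be carried out at both points simultaneously: blow up $P$, then the two disjoint strict transforms. The corrections therefore add.

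The main obstacle is step 4. One must check that the sign and the factor $4$ in the cross term $A\cdot F^3$ are as in Lemma~\ref{lem:curve-embedded-point}, and that $A|_{\widetilde L}$ has degree $+1$ (from $E\cdot\widetilde L=1$), unlike the $-1$ for the exceptional line in that lemma. I would double-check the resulting $-3h^4$ against the required $d_4=1$ in the independent consistency check above.
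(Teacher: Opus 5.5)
Your proposal is correct and is essentially the paper's proof. It uses normal-bundle inversion for (i)--(ii), and for (iii)--(iv) the two-step blow-up of $P$ and then of $\widetilde L$, comparing $\pi_2^*E+F$ with $\pi_2^*E$ through the codimension-three push-forward identities; $E\cdot\widetilde L=1$ and $\deg c_1(N_{\widetilde L/\Bl_P\PP^4})=2$ then give the correction $h^3-6h^4$ at each incidence point, and these corrections add for two skew lines. Your local computation $I_{P\cup L}=(c,ad,bd)$, which pulls back to $d\,(C,a,b)$, and the cross-term check $\pi_{2*}(4AF^3)=4E\cdot[\widetilde L]$ (by the projection formula, with the sign you expected) only spell out details the paper asserts without writing them.
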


\begin{proof}
For a plane, $N_{P/\PP^4}\simeq\mathcal O_P(1)^{\oplus2}$, so
$
 s(P,\PP^4)=(1+h)^{-2}[P]=h^2-2h^3+3h^4.
$
For a smooth quadric $Q\subset H\simeq\PP^3$,
$
N_{Q/\PP^4}\simeq\mathcal O_Q(2)\oplus\mathcal O_Q(1).
$
Since $[Q]=2h^2$, inversion of its Chern class gives
$
 s(Q,\PP^4)=2h^2-6h^3+14h^4.
$
A disjoint reduced point adds $h^4$, proving (ii).

We compute the incident-line correction explicitly. Blow up $P$ and let $E$
be the exceptional divisor. If $L$ meets $P$ transversely at $p$, its strict
transform $\widetilde L$ is smooth and
$
I_{P\cup L}\mathcal O_{\Bl_P\PP^4}
 =\mathcal O(-E)I_{\widetilde L}.
$
Blow up $\widetilde L$, with exceptional divisor $F$, and compare the
divisors $E+F$ and $E$. The codimension-three push-forward formulas
\eqref{eq:codim-three-blowup-push} give
\stepcounter{thm}\begin{equation}\label{eq:plane-line-segre-difference}
 s(P\cup L,\PP^4)-s(P,\PP^4)
 =[L]-\bigl(\deg c_1(N_{\widetilde L/\Bl_P\PP^4})
              +4E\cdot\widetilde L\bigr)h^4.
\end{equation}
Here $E\cdot\widetilde L=1$. Since
$N_{L/\PP^4}\simeq\mathcal O_L(1)^{\oplus3}$ has degree $3$ and blowing up
the transverse codimension-two center $P$ decreases the normal degree by one,
$
\deg c_1(N_{\widetilde L/\Bl_P\PP^4})=2.
$
Thus the correction is $h^3-6h^4$, and
$
 s(P\cup L)=h^2-h^3-3h^4.
$
Adding two disjoint points proves (iii). If two skew lines meet $P$ at
distinct points, the two local corrections are supported at disjoint points
and add. Hence
$
 s(P\cup L_1\cup L_2)=s(P)+2(h^3-6h^4)=h^2-9h^4,
$
which proves (iv).
\end{proof}

\begin{Proposition}\label{prop:234}
Let $P\subset\PP^4$ be a plane and let $q_1,\ldots,q_4$ be general points outside $P$. Then $H^0(\I_{P\cup q_1\cup\cdots\cup q_4}(2))$ has dimension $5$ and defines a quadratic Cremona transformation of multidegree $(1,2,3,4,1)$. The corresponding family has dimension $22$.
\end{Proposition}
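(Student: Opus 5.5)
Quadrics containing a plane $P=V(x_0,x_1)$ are exactly $x_0\ell_0+x_1\ell_1$ with $\ell_0,\ell_1$ linear forms, so $h^0(\I_P(2))=15-6=9$. I will first show that four general points outside $P$ impose independent conditions on this nine-dimensional space; then $\dim H^0(\I_{P\cup q_1\cup\cdots\cup q_4}(2))=5$. This is an open condition, so one explicit example suffices. Take for instance $q_1,\ldots,q_4$ with $(x_0,x_1)$-coordinates nonzero and in general position, and check that the $4\times 9$ evaluation matrix has rank four. Geometrically, projecting from $P$ sends the quadrics through $P$ to sections of a rank-two bundle twisted on $\PP^1$. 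Alternatively, I can restrict to the $\PP^3$ spanned by any three of the points and argue inductively. I would simply exhibit a rational witness, in the style of Proposition~\ref{prop:curve-families}(vi).

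Next I identify the base scheme. The five quadrics must have no base points beyond $P$ and the $q_i$, the points $q_i$ must be reduced, and the system must have no additional embedded structure along $P$. All three properties are open, so I verify them on the same witness by saturating the generated ideal and comparing with $I_P\cap I_{q_1}\cap\cdots\cap I_{q_4}$. Because $P$ has ideal generated by linear forms and the system cuts $P$ generically with multiplicity one, the generated ideal should equal $I_P$ away from finitely many points. Any discrepancy at finitely many points of $P$ would be a reduction issue. I would handle it as in the paper, noting that a reduction does not change the Segre class, or else check that the discrepancy does not occur in the witness.

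Given this, Lemma~\ref{lem:surface-segre-classes}(i) and Remark~\ref{rem:disjoint} give
\[
s(Z,\PP^4)=h^2-2h^3+3h^4+4h^4=h^2-2h^3+7h^4 .
\]
Substituting into \eqref{eq:segre-degrees}:
\begin{align*}
d_2&=4-s_2=3,\\
d_3&=8-3\cdot 2\,s_2-s_3=8-6+2=4,\\
d_4&=16-6\cdot4\,s_2-4\cdot2\,s_3-s_4=16-24+16-7=1.
\end{align*}
By Remark~\ref{rem:segre-birationality-criterion}, the map is birational with multidegree $(1,2,3,4,1)$. It has exact degree two, since five independent quadrics cannot share a linear factor with only a plane in common: quadrics through $P$ sharing a linear factor would contain a hyperplane.

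For the dimension count, the Grassmannian of planes $\Gr(3,5)$ has dimension $6$, and the four points contribute $16$, giving $22$. The map to $\Gr(5,V)$ is generically finite because the base scheme recovers $P$ and the unordered points. So the family has dimension $22$.

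The main obstacle is the second step. One must ensure that the generated ideal has no extra embedded or isolated base points on $P$ (for example, where the residual quadrics' linear parts degenerate) that would alter $s_3$ or $s_4$. I would settle this by explicit saturation on a rational witness.
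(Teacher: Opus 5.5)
Your proposal is correct and follows essentially the paper's route: count the $6+4=10$ conditions, certify on an explicit rational witness that the base ideal is exactly $I_P\cap I_{q_1}\cap\cdots\cap I_{q_4}$ (the paper does this along $P$ by checking that the $2\times2$ minors of the normal coefficient matrix generate the irrelevant ideal of $\kk[x_0,x_1,x_2]$), extend by openness, and then use $s=h^2-2h^3+7h^4$, Remark~\ref{rem:segre-birationality-criterion}, and $\dim\Gr(3,5)+16=22$. The only thing to drop is your fallback that a discrepancy at finitely many points of $P$ would be a harmless ``reduction issue'': a local ideal $\mathfrak m_pI_P\subset\mathfrak m_p^2$ is not a reduction of $I_P$, so its Segre class need not agree with that of $P$, and the explicit saturation check on the witness, which you ultimately commit to, is genuinely needed.
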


\begin{proof}
A plane imposes $6$ independent conditions on quadrics, since $h^0(\mathcal O_P(2))=6$. The four points impose four further independent conditions for general choices. Hence
$
h^0(\I_{P\cup q_1\cup\cdots\cup q_4}(2))=15-10=5.
$
For an exact coordinate witness, set $P=V(x_3,x_4)$ and take
$
q_1=e_3,\quad q_2=e_4,\quad q_3=[1:0:0:1:1],\quad
q_4=[0:1:1:1:-1].
$
The complete system is
\begin{align*}
\langle &x_0(x_3-x_4),\ x_1(x_3+x_4),\ x_3(x_1-x_2),\\
        &x_1x_3+x_2x_4,\ x_3(x_0-x_1-x_4)\rangle.
\end{align*}
On $x_3\neq0$, putting $r=x_4/x_3$ gives precisely the three solutions
$r=0,1,-1$, namely $q_1,q_3,q_4$; on $x_3=0$, $x_4\neq0$, the only solution
is $q_2$. Along $P$, the $2\times2$ minors of the normal coefficient matrix
generate the irrelevant ideal in $\kk[x_0,x_1,x_2]$, so $P$ is reduced and
there is no embedded component. Thus the saturated base ideal is exactly the
stated union. The same holds for general data by openness.
The parameter space is irreducible, and its dimension is
$
\dim\Gr(3,5)+4\cdot4=6+16=22.
$

The Segre class is
$
s(P\cup q_1\cup\cdots\cup q_4,\PP^4)=h^2-2h^3+7h^4.
$
Substituting this class in \eqref{eq:segre-degrees} gives $d_1=2$, $d_2=3$, $d_3=4$ and $d_4=1$. Remark~\ref{rem:segre-birationality-criterion} shows that the rational map is dominant and birational. Therefore the multidegree is $(1,2,3,4,1)$.
\end{proof}

\begin{Proposition}\label{prop:233}
Let $P\subset\PP^4$ be a plane, let $L$ be a line meeting $P$ in one point and not contained in $P$, and let $q_1,q_2$ be general points. Then $H^0(\I_{P\cup L\cup q_1\cup q_2}(2))$ has dimension $5$ and defines a quadratic Cremona transformation of multidegree $(1,2,3,3,1)$. The corresponding family has dimension $19$.
\end{Proposition}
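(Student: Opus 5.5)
The proof will run parallel to that of Proposition~\ref{prop:234}. There are three steps: count conditions, exhibit an exact witness, and read off the multidegree from the Segre class in Lemma~\ref{lem:surface-segre-classes}(iii).

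\emph{Conditions.} The plane $P$ imposes $6$ conditions on quadrics. A quadric containing $P$ meets $L$ in the point $L\cap P$ and in at most one further point unless it contains $L$. Hence containing $L$ imposes only $2$ further conditions: vanishing at two additional points of $L$. For general $q_1,q_2$ the two points impose $2$ more, so $h^0(\I_{P\cup L\cup q_1\cup q_2}(2))=15-10=5$.

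Independence is an open condition, and so is the absence of extra base points or embedded components. I would check both on an explicit coordinate model. Take $P=V(x_3,x_4)$ and $L=V(x_1,x_2,x_4)$, which meets $P$ at $e_0$. The quadrics through $P\cup L$ are then spanned by the eight products $x_3x_1,x_3x_2,x_4x_0,\dots,x_4x_4$ (products of an element of $(x_3,x_4)$ with one of $(x_1,x_2,x_4)$, plus $x_4x_0$ and $x_4x_3$). Impose two rational points, say $[1:1:1:1:1]$ and $[0:1:-1:1:2]$, and write down the five resulting quadrics. Then verify by elimination that the saturated ideal equals $I_P\cap I_L\cap I_{q_1}\cap I_{q_2}$. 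The key local point is $e_0$. There the ideal of $P\cup L$ is $(x_3,x_4)\cap(x_1,x_2,x_4)=(x_4,x_1x_3,x_2x_3)$, and one checks that the five generators produce it without embedded component. This is the step where a careless choice could give an embedded point, and I expect it to be the main computational check. The family dimension is $\dim\Gr(3,5)+\dim\{L\text{ meeting }P\}+8=6+5+8=19$, since lines meeting a fixed plane form a divisor in $\Gr(2,5)$. The base scheme recovers the data up to the permutation of $q_1,q_2$, so the map to the Grassmannian is generically finite.

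\emph{Multidegree.} Once the witness shows that the base scheme is exactly $P\cup L\cup q_1\cup q_2$, Lemma~\ref{lem:surface-segre-classes}(iii) gives $s=h^2-h^3-h^4$, that is $s_2=1$, $s_3=-1$, $s_4=-1$ in the convention of Proposition~\ref{prop:segre-formula}. Substituting into \eqref{eq:segre-degrees} gives
\[
d_2=4-s_2=3,\qquad d_3=8-3\cdot 2\,s_2-3\cdot 2\,s_3\cdots
\]
Rather than expanding by hand I would evaluate \eqref{eq:segre-degrees} using the same sign convention as in Proposition~\ref{prop:234}, which in that case produced $(2,3,4,1)$ from $h^2-2h^3+7h^4$. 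Relative to that class, the two points in place of four change $s_4$ by $-2$. Replacing the line correction $h^3-6h^4$ for the two missing points should shift $d_3$ by exactly $-1$ and leave $d_4=1$. I expect this bookkeeping to give $d_3=3$, $d_4=1$; it is the one arithmetic step to double-check against the computation for $(2,3,4)$. Remark~\ref{rem:segre-birationality-criterion} then gives dominance and birationality, so the multidegree is $(1,2,3,3,1)$.
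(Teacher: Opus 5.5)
Your proposal follows the paper's proof essentially step by step: the same model $P=V(x_3,x_4)$ and $L=V(x_1,x_2,x_4)=\PP\langle e_0,e_3\rangle$, the same condition count $6+2+2$, the Segre class of Lemma~\ref{lem:surface-segre-classes}(iii), and Remark~\ref{rem:segre-birationality-criterion}. Your points $[1:1:1:1:1]$ and $[0:1:-1:1:2]$ do give an exact witness: off $x_4=0$ the base locus is exactly these two reduced points, and at $e_0$ the five quadrics map onto the three minimal generators $x_4,x_1x_3,x_2x_3$ of $I_P\cap I_L$.

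There are two slips to correct. First, the quadrics through $P\cup L$ form a seven-dimensional space, not an eight-dimensional one, since $x_4x_3$ repeats $x_3x_4$ in your list. Second, \eqref{eq:segre-degrees} gives directly $d_3=8-6s_2-s_3=3$ and $d_4=16-24s_2-8s_3-s_4=16-24+8+1=1$, so the hedged comparison with Proposition~\ref{prop:234} is unnecessary.
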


\begin{proof}
The incidence variety of pairs $(P,L)$ with $L\cap P$ a point and $L\not\subset P$ is irreducible. Indeed, one first chooses $P$, then a point of $P$, and then a line through that point not contained in $P$. This gives
$
6+2+3=11
$
parameters. Adding two general points gives dimension $11+8=19$.

The plane imposes $6$ conditions on quadrics. A line meeting $P$ adds only $2$ further conditions, since a quadric containing $P$ already vanishes at the point $L\cap P$. Finally $q_1$ and $q_2$ impose two further independent conditions. Hence the total number of conditions is $10$, and
$
h^0(\I_{P\cup L\cup q_1\cup q_2}(2))=5.
$
Take $P=V(x_3,x_4)$, $L=\PP\langle e_0,e_3\rangle$,
$q_1=e_4$, and $q_2=[1:1:1:1:1]$. The complete system is
\begin{align*}
\langle &-x_0x_4+x_1x_3,\ -x_4(x_0-x_1),
 -x_0x_4+x_2x_3,\\
 &-x_4(x_0-x_2),\ -x_4(x_0-x_3)\rangle.
\end{align*}
On $x_4\neq0$ its affine ideal is
$
(x_1-x_0,x_2-x_0,x_3-x_0,x_0(x_0-1)),
$
which gives $q_1$ and $q_2$. On $x_4=0$ outside $P$, one has $x_3\neq0$ and
the equations give $x_1=x_2=0$, namely $L$. Localization at the two
intersection strata shows that all components are reduced. Thus the
saturated base ideal is exactly $P\cup L\cup q_1\cup q_2$, and the same holds
generally by openness.

By Lemma~\ref{lem:surface-segre-classes},
$
s(P\cup L\cup q_1\cup q_2,\PP^4)=h^2-h^3-h^4.
$
Substitution in \eqref{eq:segre-degrees} gives
$
d_2=4-1=3,\, d_3=8-6+1=3,\, d_4=16-24+8+1=1.
$
Remark~\ref{rem:segre-birationality-criterion} shows that the map is dominant and birational, and its multidegree is $(1,2,3,3,1)$.
\end{proof}

\begin{Proposition}\label{prop:232}
	Let $P\subset\PP^4$ be a plane and let $L_1,L_2$ be skew lines, each meeting $P$ in one point. Then $H^0(\I_{P\cup L_1\cup L_2}(2))$ has dimension $5$ and defines a quadratic Cremona transformation of multidegree $(1,2,3,2,1)$. The corresponding family has dimension $16$.
\end{Proposition}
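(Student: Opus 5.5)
I would follow the template of Propositions~\ref{prop:234} and \ref{prop:233}: a parameter count, the number of conditions imposed on quadrics, an exact coordinate witness to obtain independence and to identify the saturated base ideal, and finally the Segre class of Lemma~\ref{lem:surface-segre-classes}(iv) inserted into \eqref{eq:segre-degrees}.

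\emph{Dimension.} Pairs $(P,L_1,L_2)$ form an irreducible incidence variety. One first chooses the plane $P$ ($6$ parameters), then two distinct points of $P$ ($2+2$), and then a line through each point not contained in $P$ ($3+3$). This gives $6+4+6=16$. Skewness is an open condition, non-empty because two general lines through distinct points of a plane in $\PP^4$ are skew. The base scheme recovers $P$, $L_1$ and $L_2$ up to swapping the two lines, so the map to $\Gr(5,V)$ is generically finite and the image has dimension $16$.

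\emph{Conditions.} As in Proposition~\ref{prop:233}, $P$ imposes $6$ conditions. Each $L_i$ adds only $2$ further conditions, because a quadric containing $P$ already vanishes at $L_i\cap P$. This gives $10$ conditions in total, hence $h^0=5$, provided the conditions are independent. For an exact witness I would take $P=V(x_3,x_4)$, $L_1=\PP\langle e_0,e_3\rangle$ and $L_2=\PP\langle e_1,e_4\rangle$; these are skew and meet $P$ at $e_0$ and $e_1$. Quadrics containing $P$ have the form $x_3\ell_3+x_4\ell_4$. Vanishing on $L_1$ forces the coefficients of $x_0x_3$ and $x_3^2$ to be zero. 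Vanishing on $L_2$ forces the coefficients of $x_1x_4$ and $x_4^2$ to be zero. This leaves
$$
\langle x_1x_3,\ x_2x_3,\ x_3x_4,\ x_0x_4,\ x_2x_4\rangle ,
$$
which is five-dimensional, so the conditions are independent.

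\emph{Base ideal.} I would then check that the base scheme is exactly $P\cup L_1\cup L_2$:
\begin{itemize}
\item On $x_3\neq0$ the equations give $x_1=x_2=x_4=0$, which is $L_1$.
\item On $x_3=0$, $x_4\neq0$ they give $x_0=x_2=0$, which is $L_2$.
\item Along $P$, the normal coefficient matrix has rows $(x_1,x_2,x_4)$ and $(x_0,x_2)$. Its rank is $2$ at a general point of $P$, so $P$ is reduced there.
\end{itemize}
I would verify that the saturated ideal equals $I_P\cap I_{L_1}\cap I_{L_2}$ and has no embedded points, in particular at $e_0$, $e_1$ and along the line $x_2=0$ of $P$ where the rank drops. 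Since this ideal is monomial, the check reduces to comparing monomial ideals. Openness then transfers the statement to general data.

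\emph{Multidegree.} Lemma~\ref{lem:surface-segre-classes}(iv) gives $s=h^2-9h^4$, so $s_1=0$, $s_2=1$, $s_3=0$, $s_4=-9$. Formula \eqref{eq:segre-degrees} then gives
\begin{itemize}
\item $d_2=4-1=3$;
\item $d_3=8-3\cdot2\cdot1=2$;
\item $d_4=16-6\cdot4\cdot1-0+9=1$.
\end{itemize}
Remark~\ref{rem:segre-birationality-criterion} then gives birationality, and the multidegree is $(1,2,3,2,1)$.

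\emph{Main obstacle.} The delicate point is the base-ideal verification at the intersection points $L_i\cap P$ and along the locus of $P$ where the normal rank drops. Any embedded or non-reduced structure there would change the Segre class away from the value used in Lemma~\ref{lem:surface-segre-classes}(iv). I would settle this with a direct primary decomposition of the monomial ideal $(x_1x_3,x_2x_3,x_3x_4,x_0x_4,x_2x_4)$. If that step proved problematic, I would instead compare the result with the Semple--Roth / Pirio--Russo description of this quadro-quadric family.
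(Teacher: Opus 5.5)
Your proposal is correct and is essentially the paper's proof: the same $6+2\cdot5=16$ parameter count, the same witness $P=V(x_3,x_4)$, $L_1=\PP\langle e_0,e_3\rangle$, $L_2=\PP\langle e_1,e_4\rangle$ with the same five monomial generators, and the same substitution of $s=h^2-9h^4$ from Lemma~\ref{lem:surface-segre-classes}(iv) into \eqref{eq:segre-degrees}. Your ``main obstacle'' is immediate: $(x_1x_3,x_2x_3,x_3x_4,x_0x_4,x_2x_4)$ is a squarefree monomial ideal, hence radical, with minimal primes $(x_3,x_4)$, $(x_1,x_2,x_4)$, $(x_0,x_2,x_3)$ and no embedded components; note also that the normal rank along $P$ drops only at $e_0$ and $e_1$, not along the line $x_2=0$.
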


\begin{proof}
	The parameter space is irreducible. We choose the plane $P$, and then each line $L_i$ by choosing its intersection point with $P$ and a direction not tangent to $P$. This gives
	$
	6+2\cdot5=16
	$
	parameters, with the condition that $L_1$ and $L_2$ are skew being open.
	
	The plane imposes $6$ conditions on quadrics. Each line $L_i$ adds $2$ further conditions, since quadrics through $P$ already vanish at the point $L_i\cap P$. Therefore, the total number of conditions is $10$, and
	$
	h^0(\I_{P\cup L_1\cup L_2}(2))=5.
	$
	For $P=V(x_3,x_4)$,
$L_1=\PP\langle e_0,e_3\rangle$, and
$L_2=\PP\langle e_1,e_4\rangle$, the complete system is
$
\langle x_0x_4,x_1x_3,x_2x_3,x_2x_4,x_3x_4\rangle.
$
Its monomial ideal is exactly
$
I_P\cap I_{L_1}\cap I_{L_2}.
$
This gives an exact rational witness, and the assertion for general data
follows by openness.
	
	By Lemma~\ref{lem:surface-segre-classes},
	$
	s(P\cup L_1\cup L_2,\PP^4)=h^2-9h^4.
	$
	Substitution in \eqref{eq:segre-degrees} gives
	$
	d_2=4-1=3,\, d_3=8-6=2,\, d_4=16-24+9=1.
	$
	Remark~\ref{rem:segre-birationality-criterion} shows that the map is dominant and birational, and its multidegree is $(1,2,3,2,1)$.
\end{proof}

\begin{Proposition}\label{prop:222}
Let $Q\subset\PP^4$ be a smooth quadric surface spanning a hyperplane and let $q\in\PP^4$ be a general point. Then $H^0(\I_{Q\cup q}(2))$ has dimension $5$ and defines a quadro-quadric Cremona transformation of multidegree $(1,2,2,2,1)$. The corresponding family has dimension $17$.
\end{Proposition}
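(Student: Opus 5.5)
The argument follows the pattern of Propositions~\ref{prop:234}, \ref{prop:233} and \ref{prop:232}: count conditions, exhibit an explicit saturated witness, then compute the multidegree from the Segre class. Remark~\ref{rem:segre-birationality-criterion} turns $d_4=1$ into birationality.

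\emph{Conditions.} Let $Q\subset H\simeq\PP^3$. Restriction to $H$ gives
$$
0\to H^0(\OO_{\PP^4}(1))\xrightarrow{\cdot h} H^0(\I_Q(2))\to H^0(\I_{Q/H}(2))\to0.
$$
Hence $h^0(\I_Q(2))=5+1=6$, spanned by $h\cdot H^0(\OO(1))$ and one lift $F$ of the equation of $Q$. A general point $q\notin H$ imposes one further condition, so $h^0(\I_{Q\cup q}(2))=5$. Concretely, take $H=V(x_4)$, $Q=V(x_4,x_0x_3-x_1x_2)$, $q=e_4$. The system is then
$$
\langle x_0x_4,\ x_1x_4,\ x_2x_4,\ x_3x_4,\ x_0x_3-x_1x_2\rangle,
$$
since $x_4^2$ does not vanish at $q$ and $F$ may be chosen as $x_0x_3-x_1x_2$ itself. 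Its base locus is $\{x_4=0,\ x_0x_3=x_1x_2\}$ together with $\{x_0=x_1=x_2=x_3=0\}=q$. The ideal is
$$
(x_4)\cap(x_0,x_1,x_2,x_3)+(x_0x_3-x_1x_2),
$$
which is saturated and equals $I_Q\cap I_q$. This holds because near $q$ the four forms $x_ix_4$ give $\mathfrak m_q$, while away from $q$ they give $x_4$. Openness then extends this to general data.

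\emph{Segre class and multidegree.} By Lemma~\ref{lem:surface-segre-classes}(ii), $s(Q\cup q,\PP^4)=2h^2-6h^3+15h^4$. Substituting $s_2=2$, $s_3=-6$, $s_4=15$ into \eqref{eq:segre-degrees}:
\begin{itemize}
\item $d_2=4-2=2$;
\item $d_3=8-3\cdot2\cdot2-(-6)=2$;
\item $d_4=16-6\cdot4\cdot2-4\cdot2\cdot(-6)-15=16-48+48-15=1$.
\end{itemize}
So the multidegree is $(1,2,2,2,1)$, and the map is birational with quadratic inverse. In the witness this is also visible directly: the map is $[x_4x_0:\dots:x_4x_3:x_0x_3-x_1x_2]$, an involution-type map with $y\mapsto$ the same formula.

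\emph{Dimension.} The data consist of a hyperplane ($4$), a smooth quadric in it ($9$) and a point ($4$), which gives $17$. The general system recovers its base scheme, so the map to $\Gr(5,V)$ is generically injective and the family has dimension $17$.

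The only delicate step is checking that the saturated base ideal has no embedded structure along $Q\cap$ (nothing). The explicit monomial check above settles this, so I expect no real obstacle.
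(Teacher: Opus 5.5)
Your proof is correct and follows the paper's argument essentially step for step. You use the same condition count (you obtain $h^0(\I_Q(2))=6$ from the restriction sequence to $H$, where the paper uses $h^0(\OO_Q(2))=9$), the same coordinate witness $\langle x_0x_4,\dots,x_3x_4,x_0x_3-x_1x_2\rangle$ with ideal $I_Q\cap I_q$, the same Segre class $2h^2-6h^3+15h^4$ giving $d_2=d_3=2$ and $d_4=1$, and the same parameter count $4+9+4=17$. Your additional check that the witness map is an involution is correct and gives a direct confirmation of birationality.
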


\begin{proof}
The quadric surface $Q$ is contained in a unique hyperplane $H\simeq\PP^3$. Since $Q\simeq\PP^1\times\PP^1$ and $\mathcal O_Q(1)\cong\mathcal O_{\PP^1\times\PP^1}(1,1)$, we have
$
h^0(\mathcal O_Q(2))=h^0(\mathcal O_{\PP^1\times\PP^1}(2,2))=9.
$
Therefore, $Q$ imposes $9$ independent conditions on quadrics in $\PP^4$. The general point $q$ imposes one further condition, so
$
h^0(\I_{Q\cup q}(2))=15-10=5.
$
Take $H=V(x_4)$,
$Q=V(x_4,x_0x_3-x_1x_2)$, and $q=e_4$. Then
$
H^0(\I_{Q\cup q}(2))=
\langle x_0x_4,x_1x_4,x_2x_4,x_3x_4,x_0x_3-x_1x_2\rangle,
$
and its ideal is exactly $I_Q\cap I_q$. This is an exact rational witness;
the general assertion follows by openness.

The family of smooth quadric surfaces in hyperplanes of $\PP^4$ has dimension $4+9=13$, and the point $q$ contributes $4$ further parameters. Hence the total dimension is $17$.

By Lemma~\ref{lem:surface-segre-classes},
$
s(Q\cup q,\PP^4)=2h^2-6h^3+15h^4.
$
Using \eqref{eq:segre-degrees}, we get
$
d_2=4-2=2,\, d_3=8-12+6=2,\, d_4=16-48+48-15=1.
$
Remark~\ref{rem:segre-birationality-criterion} shows that the map is dominant and birational. Since $d_3=2$, its inverse is also quadratic, and the multidegree is $(1,2,2,2,1)$.
\end{proof}

\section{The upper bound}\label{sec:upper-bound}

In this section we prove that the nine components found in Proposition~\ref{prop:nine-distinct-components} exhaust $\Bir_2(\PP^4)$. We begin by considering systems with a surface in the base scheme and systems with a rank-one base point. Then, we study one-dimensional base schemes whose first-jet rank is at least two at every point.


\begin{Lemma}\label{lem:surface-component-degree-two}
Let $W\in\Bir_2(\PP^4)$ and let $S$ be the unmixed two-dimensional part of
$\Bs(W)$. Then $S$ is contained in a hyperplane and has degree at most two
in that hyperplane.
\end{Lemma}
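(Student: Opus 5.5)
We plan to reduce the statement to a degree count on a general member of the system together with the constraint that the fixed part is not a divisor. Let $S$ be the unmixed two-dimensional part of $\Bs(W)$, with fundamental cycle $[S]=\sum_k n_k[S_k]$. Since $\gcd(W)=1$, two general quadrics $Q_1,Q_2\in W$ meet in a pure two-dimensional complete intersection of degree $4$ containing $S$. We first aim to show $\deg S\le 2$, and then that $S$ lies in a hyperplane.

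For the degree bound we would use birationality through the second projective degree. By Proposition~\ref{prop:segre-formula}, $d_2=4-s_2(Z)$, and $s_2(Z)$ equals $\deg S$ counted with the Samuel multiplicity of the base ideal along each $S_k$, which is at least $n_k\deg S_k$. Proposition~\ref{prop:numerical-list} gives $d_2\ge2$. Hence $\deg S\le s_2(Z)\le 2$.

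It remains to show that $S$ spans at most a hyperplane. If $\deg S=1$, then $S$ is a reduced plane, and we are done. If $\deg S=2$, the cases are the following:
\begin{enumerate}
\item a reduced irreducible quadric surface, which always spans a $\PP^3$, since nondegenerate surfaces in $\PP^4$ have degree at least $3$;
\item a double structure on a plane;
\item two distinct planes.
\end{enumerate}
For two planes, we argue as follows. If they meet along a line, they span a $\PP^3$. If they meet in a single point, the space of quadrics containing both has dimension $15-6-6+1=4<5$, since two planes through a common point impose $11$ conditions. This is impossible because $\dim W=5$. Indeed, the ideal $(x_0,x_1)\cap(x_2,x_3)$ has degree-two part spanned by $x_0x_2,x_0x_3,x_1x_2,x_1x_3$.

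For a double plane, an unmixed double structure on $P=V(x_3,x_4)$ is given by $I_P^2+(\text{a relation } a x_3+bx_4)$ with $a,b$ forms on $P$. The degree of the double structure being $2$ forces the quotient $I_P/I_S$ to be a rank-one sheaf on $P$. We would show, as the main obstacle, that the degree-two part of $I_S$ has dimension $\ge5$ only when the relation is linear of degree zero, i.e. when $I_S\supset(\ell)$ for a linear form $\ell$, so that $S\subset V(\ell)$. The count we would carry out is the following. If the kernel line bundle is $\OO_P(-1-c)$ with $c\ge0$, then the quadrics in $I_S$ are those of $I_P^2$ (three of them) plus the sections of the degree-two twist. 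This gives $3+h^0(\OO_P(1-c))$ quadrics, that is, $6$ for $c=0$, $3+1=4$ for $c=1$, and $3$ for $c\ge2$. So $\dim W=5$ forces $c=0$, which is exactly the case where the double plane is $P^{H}$, the first-order thickening inside a hyperplane $H$. We expect verifying this classification of double planes and its quadric count to be the delicate step. The other cases only need the linear-algebra counts above together with the Segre bound $d_2\ge2$.
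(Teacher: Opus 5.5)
Your proof is correct, but it takes a different route from the paper.

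\textbf{The paper's route.} It never bounds $\deg S$ first. It argues by successive general hyperplane sections that a non-degenerate unmixed surface has $\operatorname{HF}_S(2)\geq 11$, hence $h^0(\I_S(2))\leq 4<\dim W$, so $S$ lies in a hyperplane $H$. Writing $S=V_H(F)$, a form $F$ of degree at least three would make every quadric through $S$ divisible by the equation of $H$, contradicting $\gcd(W)=1$.

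\textbf{Your route.} You use birationality up front: $d_2=4-s_2(Z)\geq 2$, together with $s_2(Z)\geq\deg S$, gives $\deg S\leq 2$. The inequality $s_2(Z)\geq\deg S$ is $e(I)\geq\length(R/I)$ in the two-dimensional regular local rings at the generic points of the $S_k$. You then exclude the non-degenerate degree-two surfaces by a finite classification with explicit quadric counts.

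The step you flagged as delicate does go through. Since $S$ is unmixed with generic multiplicity two along the complete intersection $P$, one has $I_P^2\subset I_S$. The quotient $I_S/I_P^2$ is a saturated rank-one submodule of $R(-1)^{\oplus 2}$, with $R=\kk[x_0,x_1,x_2]$, so it is generated by a primitive vector $(a,b)$. You only need the upper bound $h^0(\I_S(2))\leq 3+h^0(\OO_P(1-c))$, and this follows from left exactness alone.

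\textbf{What each buys.} The paper's argument is more uniform. It uses only $\dim W=5$ and the absence of a fixed divisor, so it applies to any such quadratic system, birational or not. It also avoids classifying non-reduced structures. Yours is more explicit: it identifies exactly which double planes can occur, namely the first-order thickening of a plane inside a hyperplane. The cost is reliance on the Hodge inequality and on the multiplicity interpretation of the top Segre coefficient.
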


\begin{proof}
Since $W\subset H^0(\I_S(2))$, one has $h^0(\I_S(2))\geq5$. Assume that
$S$ is non-degenerate. Let $C$ be a general hyperplane section. Even when $S$ is non-reduced or
not arithmetically Cohen--Macaulay, the hyperplane exact sequence gives
$\operatorname{HF}_S(2)\geq\operatorname{HF}_S(1)+\operatorname{HF}_C(2)$.
The curve $C\subset\PP^3$ is non-degenerate. A further general hyperplane
section has length at least three, and the Hilbert function of three points in
$\PP^2$ in degree two is at least three; together with non-degeneracy this
gives $\operatorname{HF}_C(2)\geq6$. Since
$\operatorname{HF}_S(1)=5$, we obtain
$\operatorname{HF}_S(2)\geq11$. Therefore
$h^0(\I_S(2))\leq4$, a contradiction.

Therefore, $S\subset H\simeq\PP^3$. Since $S$ is unmixed of codimension one in
$H$, it is defined in $H$ by one form of degree $d$. If $d\geq3$, every
quadric containing $S$ vanishes on $H$, so $H$ is a fixed component of $W$.
This contradicts the assumption that the minimal algebraic degree is
exactly two. Hence $d\leq2$.
\end{proof}

We begin with a plane. Let $P\subset\PP^4$ be contained in $\Bs(W)$ and let
$\pi:\Bl_P(\PP^4)\to\PP^1$ be the morphism induced by the pencil of
hyperplanes through $P$. After removing $P$, the restrictions of quadrics to
the fibers of $\pi$ form the bundle
$
\mathcal E=\OO_{\PP^1}(1)^{\oplus3}\oplus\OO_{\PP^1}(2).
$
The system $W$ induces $\alpha_W:W\otimes\OO_{\PP^1}\to\mathcal E$.

\begin{Lemma}\label{lem:torsion-quot-irreducible}
Let $C$ be a smooth irreducible curve and let $\mathcal E$ be a vector bundle
of rank $r$ on $C$. For every $\ell\geq0$, the scheme
$\operatorname{Quot}^{\ell}_C(\mathcal E)$ of length-$\ell$ torsion
quotients is smooth and irreducible of dimension $r\ell$.
\end{Lemma}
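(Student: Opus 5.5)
The plan is to stratify the Quot scheme by the support of the quotient and reduce everything to the local punctual Quot scheme of a free module over a discrete valuation ring.

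Let $Q=\operatorname{Quot}^\ell_C(\mathcal E)$ and consider the Quot-to-Chow morphism $\sigma:Q\to C^{(\ell)}$, which sends a quotient $\mathcal E\to T$ to the divisor $\sum_x\length(T_x)\,x$.

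\emph{Smoothness.} At a point $[\mathcal E\to T]$ with kernel $K$, the tangent space is $\operatorname{Hom}(K,T)$ and obstructions lie in $\operatorname{Ext}^1(K,T)$. Since $C$ is a smooth curve, $K$ is locally free of rank $r$. Since $T$ is torsion, $\operatorname{Ext}^1(K,T)=H^1(K^\vee\otimes T)=0$, so $Q$ is smooth. Its tangent space has dimension
\[
h^0(K^\vee\otimes T)=r\,\length(T)=r\ell ,
\]
so every component of $Q$ has dimension $r\ell$.

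\emph{Irreducibility.} I would show that the open locus $Q^\circ$ where $T$ is supported at $\ell$ distinct points is irreducible of dimension $r\ell$ and dense in $Q$. First, $Q^\circ$ is irreducible. Over the open set of reduced divisors in $C^{(\ell)}$, a quotient is determined by a choice of a one-dimensional quotient of each fibre $\mathcal E_{x_i}$. Hence $Q^\circ$ is a $(\PP^{r-1})^\ell$-bundle over an open subset of $C^{(\ell)}$ (made precise via the étale cover by ordered distinct points followed by a symmetric quotient). It is therefore irreducible of dimension $\ell+\ell(r-1)=r\ell$.

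\emph{Density.} This is the main obstacle. Since $Q$ is smooth of pure dimension $r\ell$, it suffices to show that $Q\setminus Q^\circ$ has dimension $<r\ell$; then no component can be contained in it. Stratify by the multiplicity partition $(\ell_1,\dots,\ell_k)$ of $\sigma$. The stratum has dimension
\[
k+\sum_i\dim\operatorname{Quot}^{\ell_i}_{x}(\mathcal O_x^r)_{\text{punctual}} ,
\]
so it suffices to prove that the punctual Quot scheme of length-$m$ quotients of $\mathcal O^r$ supported at a fixed point of a curve has dimension at most $rm-1$ for $m\geq1$. This local statement depends only on $\kk[[t]]$, since $\mathcal E$ is locally trivial. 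I would try to bound it by induction on $m$: every length-$m$ quotient $T$ has a length-$(m-1)$ quotient $T'$ obtained by killing a socle element, and the fibre of $T\mapsto T'$ is controlled by $\PP(\operatorname{Hom}(\ker,\kk))$, of dimension at most $r-1$. That gives dimension $\leq (r-1)+(m-1)r-\ldots$, which is not quite tight enough. The cleaner fallback is to cite the known dimension $rm-1$ of punctual Quot schemes on a curve, which are irreducible (Ellingsrud–Lehn type results, or via the Grassmannian description of submodules $t^mM_0\subset K\subset M_0$).

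Alternatively, I would avoid dimension counting altogether by a deformation argument. Because $Q$ is smooth, it suffices to deform any point into $Q^\circ$. By the local structure theorem for modules over a discrete valuation ring, $T_x\cong\bigoplus_j\kk[t]/t^{a_j}$. Deform the kernel $K=\bigoplus t^{a_j}\mathcal O\subset\mathcal O^r$ (in a suitable basis) to $\bigoplus_j\prod_{s}(t-\epsilon_{j,s})\,\mathcal O$ with distinct $\epsilon_{j,s}$; this is a flat family of length-$\ell$ quotients whose general member has reduced support. Gluing local deformations at the support points is possible because the quotient is local. Hence every point lies in the closure of $Q^\circ$, so $Q=\overline{Q^\circ}$ is irreducible. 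I expect this explicit deformation, together with checking flatness, to be the cleanest route.
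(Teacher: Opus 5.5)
Your smoothness argument, your description of the distinct-support locus as a $(\PP^{r-1})^\ell$-bundle, and your deformation route are exactly the paper's proof; the deformation route is the Smith normal form at each support point, with $t^{a_j}$ replaced by $\prod_s(t-\epsilon_{j,s})$ for pairwise distinct $\epsilon_{j,s}$, glued at the support points. The dimension-count detour, however, is misstated: on a curve the punctual Quot scheme of length $m$ has dimension $m(r-1)$, whereas $rm-1$ is the surface value. Your own socle induction already gives $m(r-1)$, since each step adds at most a $\PP^{r-1}$. The bound $rm-1$ you aimed for would only give strata of dimension at most $k+\sum_i(r\ell_i-1)=r\ell$, which does not exclude extra components.
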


\begin{proof}
At a quotient
$
0\longrightarrow\mathcal K\longrightarrow\mathcal E
\longrightarrow T\longrightarrow0,
$
the kernel $\mathcal K$ is locally free. The tangent and obstruction spaces
are
$
\operatorname{Hom}(\mathcal K,T),\qquad
\operatorname{Ext}^1(\mathcal K,T)=H^1(\mathcal K^*\otimes T)=0.
$
Thus the Quot scheme is smooth, and the tangent dimension is
$r\ell$. The open locus of quotients supported at $\ell$ distinct points is
irreducible of dimension
$
\ell+\ell(r-1)=r\ell.
$
It remains to prove density. Locally at a support point, trivialize
$\mathcal E$ over the DVR $R=\kk[[t]]$. Smith normal form writes the quotient
as
$
\bigoplus_{i=1}^rR/(t^{\lambda_i}).
$
Choose pairwise distinct constants $c_{ij}$ and replace $t^{\lambda_i}$ by
the monic polynomial
$
\prod_{j=1}^{\lambda_i}(t-c_{ij}\tau).
$
The resulting diagonal quotient over $\kk[[\tau]][[t]]$ is flat; for
$\tau\neq0$ all support points are distinct and each fiber quotient is
one-dimensional. Applying this construction in disjoint formal
neighborhoods and keeping the quotient fixed elsewhere smooths every point of
the Quot scheme into the distinct-support locus. Hence that open locus is
dense, and smoothness implies irreducibility.
\end{proof}

\begin{Proposition}\label{prop:all-plane-systems}
Every quadratic Cremona transformation whose base contains a plane belongs to
the closure of the component whose general base is
$L_1\sqcup L_2+p^{\PP^2}+q$.
\end{Proposition}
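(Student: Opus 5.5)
Every quadratic Cremona transformation with a plane $P$ in its base is described by the morphism of bundles $\alpha_W:W\otimes\OO_{\PP^1}\to\mathcal E=\OO_{\PP^1}(1)^{\oplus3}\oplus\OO_{\PP^1}(2)$ on the pencil of hyperplanes through $P$. The plan is to parametrize all such $W$ by this morphism and reduce irreducibility to Lemma~\ref{lem:torsion-quot-irreducible}. Concretely, I would fix $P=V(x_3,x_4)$ and note that $H^0(\I_P(2))=x_3\cdot H^0(\OO(1))+x_4\cdot H^0(\OO(1))$ has dimension $9$. A five-dimensional $W$ containing $P$ therefore corresponds to a $5\times 9$ choice. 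On a general fibre $\Pi_\lambda\simeq\PP^3$ of $\pi$, the residual system is the restriction of $W$ to quadrics in $\Pi_\lambda$ containing $P$, that is, $P\cdot(\text{linear forms on }\Pi_\lambda)$ with the factor $P$ removed. The fibre of $\mathcal E$ is $4$-dimensional: $\OO(1)^{\oplus 3}$ records the coefficients of $x_0,x_1,x_2$ and $\OO(2)$ records the part along the pencil coordinate.

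The first step is to show that birationality forces $\alpha_W$ to be generically surjective. If it is not, every fibre hyperplane maps through a system of linear forms of rank at most three, so $\vf_W$ contracts the fibres and cannot be birational. Thus $\alpha_W$ has generic rank four, its kernel $\mathcal K_W$ is a line bundle, and its cokernel is a torsion sheaf $T_W$. Comparing degrees gives $\deg\mathcal K_W=-5+\length T_W$; birationality should moreover force $\mathcal K_W\simeq\OO_{\PP^1}(-1)$. The reason is that the induced map $\Bl_P\PP^4\dashrightarrow\PP^4$ restricted to a fibre $\Pi_\lambda$ is linear with image a hyperplane containing the fixed point determined by the kernel, and varying $\lambda$ must sweep out the target exactly once. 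This gives $\length T_W=4$. In the general member $P\cup L_1\cup L_2$, the two lines meet a general fibre in a point each, and the cokernel should come from the embedded data $p^{\PP^2}+q$ together with the points $L_i\cap P$; I would double-check this length-$4$ count against the explicit witness of Proposition~\ref{prop:232}.

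The second step is to parametrize. Given a length-$4$ torsion quotient $\mathcal E\to T$, its kernel $\mathcal E'$ is a rank-$4$ bundle of degree $1$. Systems $W$ with $\operatorname{coker}\alpha_W\simeq T$ then correspond to five-dimensional subspaces $W\subset H^0(\mathcal E')$ that generate $\mathcal E'$ generically, with one-dimensional syzygy bundle $\OO(-1)$. By Lemma~\ref{lem:torsion-quot-irreducible}, $\operatorname{Quot}^4(\mathcal E)$ is irreducible of dimension $16$. Over the open locus of quotients for which $h^0(\mathcal E')$ is minimal, the $W$'s form a Grassmannian bundle, hence an irreducible space. Adding the choice of $P$, the whole plane-containing Cremona locus becomes the image of an irreducible variety, apart from the strata where $h^0(\mathcal E')$ jumps. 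On those strata I would argue by semicontinuity, showing that each $W$ there is still a limit of $W$'s over nearby quotients with distinct support, since the surjection condition on $W$ lifts along the smoothing family constructed in Lemma~\ref{lem:torsion-quot-irreducible}. Since the general point of this irreducible space is the family $L_1\sqcup L_2+p^{\PP^2}+q$, the proposition follows. I would identify the general point by checking dimensions: $6+16+\dim\Gr(5,\cdot)$ should match $24$, and the general quotient with distinct support should reproduce the base scheme of Proposition~\ref{prop:curve-families}(iii) containing $P$ in its boundary. Here the smooth del Pezzo construction degenerates when the pencil acquires $P$ as a component.

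The main obstacle is the second step's semicontinuity. Birational $W$ over special quotients, where $\mathcal E'$ is unbalanced, might form an extra component. I would first try to show directly that birationality forces the image $W\to H^0(\mathcal E')$ to be a full five-dimensional subspace independent of the splitting type. Then incidence in the relative Grassmannian over the smooth Quot scheme is flat enough to deform. If that fails, I would handle the unbalanced splittings case by case, using the explicit normal form with $P=V(x_3,x_4)$.
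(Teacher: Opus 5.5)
Your first half agrees with the paper: $\alpha_W$ has generic rank four, its kernel is $\OO_{\PP^1}(-a)$ with $a+\ell=5$, birationality gives $a=1$, and irreducibility comes from Lemma~\ref{lem:torsion-quot-irreducible}.

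The ``main obstacle'' you flag does not arise. Your $\mathcal E'=\ker(\mathcal E\to T)$ is exactly the image of $\alpha_W$, so you have $0\to\OO_{\PP^1}(-1)\to W\otimes\OO_{\PP^1}\to\mathcal E'\to 0$. Since $H^0(\OO(-1))=H^1(\OO(-1))=0$, this gives $W\cong H^0(\mathcal E')$ and $H^1(\mathcal E')=0$. Moreover $\mathcal E'$ is globally generated of rank four and degree one, hence $\mathcal E'\simeq\OO(1)\oplus\OO^{\oplus 3}$. So $W$ is determined by the quotient, every birational plane system lies over the dense open generic-splitting locus of $\operatorname{Quot}^4_{\PP^1}(\mathcal E)$, and no jumping strata need a semicontinuity argument.

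The genuine gap is the final step. You assert that the general point of this irreducible space is the family $L_1\sqcup L_2+p^{\PP^2}+q$, but that cannot be right. Every point of that space contains $P$, and its general point is $H^0(\I_{P\cup\{q_1,\dots,q_4\}}(2))$, the $22$-dimensional family of multidegree $(2,3,4)$ from Proposition~\ref{prop:234}. Your own count $6+16+\dim\Gr(5,5)=22\neq 24$ detects this. The target is the $24$-dimensional $(2,4,6)$ component of Proposition~\ref{prop:curve-families}(iii), whose general base is one-dimensional and contains no plane. You appear to conflate it with the $16$-dimensional $(2,3,2)$ family $P\cup L_1\cup L_2$ of Proposition~\ref{prop:232}, which is a different boundary family.

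Irreducibility therefore only reduces the problem. You still must prove $\mathcal F_{P+q_1+\cdots+q_4}\subset\overline{\mathcal F}_{L_1\sqcup L_2+p^{\PP^2}+q}$, and that is the substantive part. The paper writes an explicit one-parameter degeneration of data $(L_{1,t},L_{2,t},p_t^{\Lambda_t},q_t)$ depending on an auxiliary parameter $c$. It computes the flat limit of the ten linear conditions through a Smith form over $\QQ(c)[t]$ and checks that the limit base is $P$ together with four reduced points.

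A single such limit is not enough. The four hyperplanes $\langle P,q_i\rangle$ of the pencil carry a cross-ratio, so $\PGL_5$-orbits in the $22$-dimensional family have dimension only $21$. The paper shows, via the binary-quartic invariant $J^2/I^3$, that this cross-ratio varies with $c$. Hence the $\PGL_5$-saturation of the limits is dense in the $P+4$ family. Your remark that the del Pezzo construction ``degenerates when the pencil acquires $P$'' supplies neither a degeneration nor an answer to this modulus, so as written the proposition is not proved.
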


\begin{proof}
The map $\alpha_W$ has generic rank four. Otherwise the restriction of
$\vf_W$ to a general fiber of $\pi$ has image of dimension at most two, and
$\vf_W$ is not dominant. Set $\mathcal F=\operatorname{Im}(\alpha_W)$. There
are exact sequences
$$
0\longrightarrow\OO_{\PP^1}(-a)\longrightarrow
W\otimes\OO_{\PP^1}\longrightarrow\mathcal F\longrightarrow0 \quad\text{  and  }\quad
0\longrightarrow\mathcal F\longrightarrow\mathcal E\longrightarrow T
\longrightarrow0,
$$
where $T$ is a torsion sheaf. If $\ell=\length T$, comparison of degrees gives
$a+\ell=5$.

The kernel defines a morphism $\kappa:\PP^1\to\PP(W)$ with
$\kappa^*\OO_{\PP(W)}(1)=\OO_{\PP^1}(a)$. For a general parameter $t$, the
restriction of $W$ to the corresponding $\PP^3$ is a complete system of
linear forms and maps it isomorphically onto
$\kappa(t)^\perp\subset\PP(W^*)$. Hence the inverse images of a general
$y\in\PP(W^*)$ are indexed by the zeros of
$\langle y,\kappa(t)\rangle$. Therefore $\deg(\vf_W)=a$, and birationality
gives $a=1$, $\ell=4$, and
$\mathcal F\simeq\OO_{\PP^1}(1)\oplus\OO_{\PP^1}^{\oplus3}$.

Consequently every such $W$ is obtained from a quotient
$\mathcal E\twoheadrightarrow T$ of length four. By Lemma~\ref{lem:torsion-quot-irreducible}, the scheme
$\operatorname{Quot}_{\PP^1}^4(\mathcal E)$ is smooth and irreducible of
dimension $16$. For a general quotient
the kernel is globally generated and the corresponding system is
$H^0(\I_{P\cup\{q_1,q_2,q_3,q_4\}}(2))$. Therefore, every system containing $P$
belongs to the closure of the irreducible $22$-dimensional family with base
$P+q_1+q_2+q_3+q_4$.

To conclude, it is enough to place this family in one of the components already constructed. Set
coordinates $[x_0:x_1:x_2:u:v]$ and $P=V(u,v)$. For $c$ in a non-empty open subset of $\mathbb A^1$ and a deformation
parameter $t$, set $L_{1,t}=\langle a_t(c),c_t\rangle$,
$L_{2,t}=\langle b_t,d_t\rangle$, let $p_t^{\Lambda_t}$ be the first order
scheme with $\Lambda_t=\PP\langle p_t,\delta_{1,t},\delta_{2,t}\rangle$, and
let $q_t$ be given by
\begin{footnotesize}
$$
\begin{aligned}
a_t(c)&=e_0+t(-2e_0+e_1+e_2+ce_3-e_4),&
 c_t&=e_2+t(e_0+2e_2+e_3-3e_4),\\
b_t&=e_1+t(e_0-3e_1+3e_2-e_4),&
 d_t&=e_2+t(e_0-2e_1-2e_2+2e_3),\\
p_t&=e_0+e_1+e_2+t(e_0+3e_1+e_2),&
 q_t&=e_0+e_1+e_3+2e_4+t(-3e_0+2e_1+3e_2-3e_3-2e_4),\\
\delta_{1,t}&=e_0-e_2+t(2e_0+3e_1-2e_2-2e_3+2e_4),&
\delta_{2,t}&=e_1-e_2+t(-2e_0+3e_1+e_2+2e_4).
\end{aligned}
$$
\end{footnotesize}
For $t\neq0$ general these data define a member of the
$L_1\sqcup L_2+p^{\PP^2}+q$ family. Let $M_c(t)$ be the $10\times15$ matrix
of their linear conditions on quadrics. The Smith form of $M_c(t)$ over
$\QQ(c)[t]$ gives a polynomial basis of its kernel and therefore its flat
limit at $t=0$. The exceptional values of $c$ form a proper closed subset.
After setting $u=sz$, $v=rz$ and removing $z$, the greatest common
divisors of the maximal minors of the two special matrices are
$$
g_{-3}=(r-3s)(r-2s)(2r-5s)(3r-s), \quad g_{-1}=(r-3s)(r-2s)(r+5s)(2r-s).
$$
The quotient minors have no common zero on either quartic, so the special
base schemes are $P$ together with four reduced points outside $P$. The exact Smith form and minor computations are reproduced by the Magma
file described in Section~\ref{appendix:magma}.

For a binary quartic $as^4+bs^3r+cs^2r^2+dsr^3+er^4$, set
$I=12ae-3bd+c^2$ and
$J=72ace+9bcd-27ad^2-27b^2e-2c^3$. One obtains
$$
\frac{J(g_{-3})^2}{I(g_{-3})^3}=\frac{47628}{79507},
\qquad
\frac{J(g_{-1})^2}{I(g_{-1})^3}=\frac{1244819524}{887503681}.
$$
The formulas defining the family are algebraic in $c$, and the two displayed
specializations $c=-3,-1$ have different values of the absolute binary
quartic invariant $J^2/I^3$. Therefore this invariant, and hence the
cross-ratio of the four residual points, is non-constant as $c$ varies. A
block calculation after fixing $P$ and four general points shows that the connected stabilizer in $\PGL_5$ has dimension three. Hence the orbit has dimension $21$, while the family has dimension $22$. The $\PGL_5$-saturation of the
preceding one-parameter family is therefore dense in it. This proves the claim.
\end{proof}

\begin{Proposition}\label{prop:quadric-surface-boundary-rational-quartic}
The family whose general base is a smooth quadric surface in a hyperplane and
one point is contained in the closure of the $R_4+q$ family.
\end{Proposition}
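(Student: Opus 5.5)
We would prove the statement by degeneration: construct a one-parameter family of $R_4+q$ systems whose flat limit is a $Q+q$ system. Then argue that the $\PGL_5$-saturation of such limits is dense in the $17$-dimensional quadric-surface family.

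The natural mechanism is to let the rational normal quartic degenerate into a hyperplane. Work in the del Pezzo model of Lemma~\ref{lem:del-pezzo-enumeration}, where the $R_4+q$ system on $X=V(A)$ is $|2H-C|$ with $C\in|4\ell-e_1-e_2-2e_3-2e_4-2e_5|$ and one residual simple point. Alternatively, and more concretely, we would write an explicit family. Fix $H=V(x_4)$ and $Q=V(x_4,x_0x_3-x_1x_2)$. Take a family of rational normal quartics $C_t$ that, as $t\to0$, flattens into $H$ and lies on $Q$ as a curve of bidegree $(1,3)$, with limit $C_0\subset Q$. For $t\ne0$, $h^0(\I_{C_t}(2))=6$ by projective normality. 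Adding a general point $q_t$ gives a five-dimensional system $W_t$, which defines a quadratic Cremona map by Proposition~\ref{prop:curve-families}(vii). The flat limit of the quadrics containing $C_t$ should be the $6$-dimensional space
\[
\langle x_0x_4,\dots,x_4^2,\ x_0x_3-x_1x_2\rangle ,
\]
which is exactly $H^0(\I_Q(2))$. This works because a $(1,3)$ curve on $Q$ lies on only one quadric of $H$, namely $Q$, so the limit of quadrics through $C_t$ must pick up all of $x_4\cdot H^0(\OO(1))$. Imposing the point $q_0=\lim q_t$ then gives $\lim W_t=H^0(\I_{Q\cup q_0}(2))$, the general $Q+q$ system of Proposition~\ref{prop:222}.

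The key steps are as follows.
\begin{enumerate}
\item Choose an explicit rational parametrization $C_t:[s:u]\mapsto [s^4:s^3u:su^3:u^4: t\,s^2u^2]$, or a similar one. Here the first four coordinates parametrize a $(1,3)$ curve on $x_0x_3=x_1x_2$ in the appropriate ruling. Check that $C_t$ is a rational normal quartic for $t\neq0$.
\item Compute the six quadrics through $C_t$, for instance as $2\times2$ minors of a $2\times4$ catalecticant-type matrix. Compute their limit via the Smith form of the $9\times15$ condition matrix over $\QQ[t]$, as in Proposition~\ref{prop:all-plane-systems}. Verify that the limit is $H^0(\I_Q(2))$.
\item Add a point $q_t\to q_0\notin H$, so that the conditions stay independent, and conclude $\lim W_t=H^0(\I_{Q\cup q_0}(2))$.
\item Observe that $\PGL_5$ acts transitively on pairs $(Q,q)$ with $Q$ a smooth quadric surface spanning a hyperplane and $q$ off that hyperplane. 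Then the saturated limits fill an open dense subset of the $17$-dimensional family. Finally, take closures, using that the $R_4+q$ locus is $\PGL_5$-stable.
\end{enumerate}

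The main obstacle is step 2: showing that the limit of the six-dimensional space of quadrics really is all of $H^0(\I_Q(2))$, and not some other six-dimensional subspace containing the ideal of $C_0$. The space $H^0(\I_{C_0}(2))$ has dimension $\geq 6$, since $C_0$ is degenerate, so the limit is not forced a priori. We would settle this by the explicit Smith form computation. As a conceptual check we would use that the limit must contain $x_4\cdot(\text{linear})$ modulo terms, because the $t$-divisible coordinate produces these quadrics after dividing by $t$. If the naive family gives a different limit, we would adjust the perturbation term, for example $t\,s^2u^2$ versus a general combination, until the limit equals $H^0(\I_Q(2))$.
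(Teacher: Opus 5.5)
Your proposal is correct and is essentially the paper's argument: degenerate a rational normal quartic $[s_0:s_1:s_2:s_3:t s_4]$ onto a smooth curve $R_0\subset Q$ of class $(1,3)$ (your parametrization is a special case), observe that the quadrics through the fibers form a rank-six bundle, and cut it by evaluation at a point off $H$. The obstacle you flag in step 2 is already removed by your own remark: a $(1,3)$ curve lies on no quadric of $H$ other than $Q$, so $H^0(\I_{R_0}(2))=H^0(\I_Q(2))$ has dimension exactly six, and the six-dimensional limit is therefore forced without any Smith-form computation (the $\PGL_5$-transitivity step is valid but unnecessary if you run the construction directly for an arbitrary pair $(Q,q)$, as the paper does).
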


\begin{proof}
Let $Q\subset H\simeq\PP^3$ be smooth and identify
$Q\simeq\PP^1\times\PP^1$, with $\mathcal O_Q(1)=\mathcal O_Q(1,1)$. A
smooth curve $R_0\in|\mathcal O_Q(1,3)|$ is rational of degree four and spans
$H$. A quadric of $H$ containing $R_0$ contains $Q$, because a divisor of
class $(2,2)$ cannot contain one of class $(1,3)$. Hence
$
H^0(\I_{R_0/\PP^4}(2))=H^0(\I_Q(2))
$
and both spaces have dimension six.

Let $s_0,\ldots,s_3$ be the four sections of
$H^0(\mathcal O_{\PP^1}(4))$ defining the embedding $R_0\subset H$, and
complete them by $s_4$ to a basis. Define
\stepcounter{thm}\begin{equation}\label{eq:quartic-flat-family}
 f_t:\PP^1\longrightarrow\PP^4,\qquad
 [s_0:s_1:s_2:s_3:t s_4].
\end{equation}
Projection onto the first four coordinates is the fixed embedding of
$\PP^1$ as $R_0$. Consequently the relative map
$\PP^1\times\mathbb A^1\to\PP^4\times\mathbb A^1$ is a closed immersion;
its image $\mathcal R$ is isomorphic to $\PP^1\times\mathbb A^1$ and is flat.
For $t\neq0$, \eqref{eq:quartic-flat-family} is the complete linear system
$|\mathcal O_{\PP^1}(4)|$, hence $R_t$ is a rational normal quartic.

For every fiber, the restriction map
$
H^0(\PP^4,\mathcal O(2))\longrightarrow
H^0(\PP^1,\mathcal O(8))
$
is surjective: this is classical for the rational normal quartic, and at
$t=0$ it follows from
$H^0(\I_{R_0}(2))=H^0(\I_Q(2))$ of dimension six. Thus its kernels form a
rank-six vector bundle near $0$. Choose a general point $q$ outside
$\mathcal R$ such that evaluation at $q$ is non-zero on the special kernel.
The kernel of this evaluation is then a rank-five subbundle and
\begin{align*}
\lim_{t\to0}H^0(\I_{R_t\cup\{q\}}(2))
 &=H^0(\I_{R_0\cup\{q\}}(2))\\
 &=H^0(\I_{Q\cup\{q\}}(2)).
\end{align*}
This is the required closure relation.
\end{proof}

\begin{Lemma}\label{lem:line-degree-two-cancellation}
Let $a,b$ be non-zero linear forms on $\PP^1$, let $g$ be quadratic, and let
$c_0,\ldots,c_m$ be scalars, not all zero. The rational map
$
[c_0ab:\cdots:c_mab:g]
$
has degree $2-\deg\gcd(ab,g)$ onto its image, unless it is constant. In
particular, it is birational onto its image if and only if
$\deg\gcd(ab,g)=1$.
\end{Lemma}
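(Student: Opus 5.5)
The plan is to reduce everything to cancelling common factors of binary forms and then reading off the degree of a map given by a pencil. Write $\gamma=\gcd(ab,g)$ and $e=\deg\gamma\in\{0,1,2\}$. Since $c_0,\ldots,c_m$ are not all zero, the first $m+1$ coordinates are all multiples of the single form $ab$. We may therefore change coordinates in the target $\PP^m\times$ (last coordinate), replacing the first block by $c\cdot ab$ for a fixed non-zero vector $c=(c_0,\ldots,c_m)$. The map then factors as
\[
\PP^1\dashrightarrow\PP^1,\quad [x]\mapsto[ab(x):g(x)],
\]
followed by the linear embedding $[u:w]\mapsto[c_0u:\cdots:c_mu:w]$ of $\PP^1$ into $\PP^{m+1}$. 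That embedding is injective because $c\neq0$. So the degree onto the image equals the degree of $\psi=[ab:g]\colon\PP^1\to\PP^1$, and the problem becomes one of binary forms.

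Next we cancel the common factor. Write $ab=\gamma\alpha$ and $g=\gamma\beta$, with $\alpha,\beta$ coprime of degree $2-e$. Then $\psi=[\alpha:\beta]$ as a morphism, since the rational map is unchanged off the zeros of $\gamma$.
\begin{itemize}
\item If $e=2$, then $\alpha,\beta$ are constants and the map is constant. This is the excluded case.
\item If $e<2$, then $[\alpha:\beta]$ is a base-point-free pencil of degree $2-e$ with coprime members. Its general fibre is the zero set of $\beta-\lambda\alpha$, a form of degree $2-e$ with distinct roots for general $\lambda$ (characteristic zero). So $\deg\psi=2-e$.
\end{itemize}
One should also check that $\psi$ is not constant when $e<2$. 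This holds because $\alpha,\beta$ are coprime and not both constant, hence not proportional.

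For the final assertion, birationality onto the image means degree one, i.e.\ $2-e=1$, i.e.\ $e=1$. If $e=2$ the map is constant, hence not birational onto its image (a point). If $e=0$ the degree is two. Conversely, $e=1$ gives degree one.

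There is no serious obstacle here. The only points needing care are two conventions. First, the constant case: $\alpha,\beta$ constant exactly when $ab$ is proportional to $g$. Second, the fact that the linear embedding by $c$ does not alter the degree. Note also that $a,b$ non-zero guarantees $\deg(ab)=2$, so $ab$ is never identically zero.
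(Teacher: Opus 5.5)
Your proof is correct and follows the paper's own argument. Both cancel $\gcd(ab,g)$, note that the image lies on a line (your linear embedding by $c$), and read off the degree $2-e$ from the two remaining coprime binary forms; you just make the factorization and the constant case $e=2$ explicit where the paper leaves them implicit.
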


\begin{proof}
After canceling $d=\gcd(ab,g)$, the image is a line and the map is defined by
two coprime forms $ab/d,g/d$ of degree $2-\deg d$.
\end{proof}

\begin{Proposition}\label{prop:all-quadric-surface-systems}
Every quadratic Cremona transformation whose base has a two-dimensional
quadric component belongs to the closure of the component whose general base
is $R_4+q$.
\end{Proposition}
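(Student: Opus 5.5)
Let $W\in\Bir_2(\PP^4)$ have a two-dimensional quadric component $Q$ in its base. By Lemma~\ref{lem:surface-component-degree-two}, $Q$ spans a hyperplane $H\simeq\PP^3$ and is cut out there by a quadric. So $W\subset H^0(\I_Q(2))$, and this space has dimension six, spanned by $x_4\cdot H^0(\OO(1))$ (with $H=V(x_4)$) together with one lift $F$ of the defining quadric of $Q$. The plan is to mimic the plane case of Proposition~\ref{prop:all-plane-systems}. We describe all such $W$ as hyperplanes in this six-dimensional space and classify them via birationality. We then show the resulting parameter space is irreducible with the general member $H^0(\I_{Q\cup q}(2))$. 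Proposition~\ref{prop:quadric-surface-boundary-rational-quartic} then finishes the argument.

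\textbf{Step 1: parametrize the systems.} A five-dimensional $W\subset H^0(\I_Q(2))$ is the kernel of a linear functional $\lambda$ on $H^0(\I_Q(2))=\langle x_0x_4,\dots,x_4^2,F\rangle$. If $\lambda$ vanishes on $F$, then $W=x_4\cdot H^0(\OO(1))$ has a fixed linear factor, which is excluded. So $W=\langle x_ix_4-\mu_iF\rangle_{i=0}^4$ for some $\mu=(\mu_0,\dots,\mu_4)\in\kk^5$, and the data is an affine space $\mathbb A^5$ for fixed $Q$ (respectively fixed $F$ modulo $x_4$).

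\textbf{Step 2: determine which $\mu$ are birational.} Write $\ell_\mu=\sum\mu_ix_i$.
\begin{itemize}
\item If $\ell_\mu$ is not proportional to $x_4$, the system is $\{Q'\in H^0(\I_Q(2)):Q'(q)=0\}$ for the point $q=[\mu]$. Indeed, the functional $x_ix_4\mapsto\mu_i\mu_4$, $F\mapsto F(\mu)$ is evaluation at $q$ up to the normalization $F(\mu)$, which must be handled in the degenerate subcases.
\item In the generic case this is exactly $Q\cup q$ with $q\notin H$, giving the family of Proposition~\ref{prop:222}.
\item The degenerate cases are $q\in H$ (including $q\in Q$) and the case where $F(\mu)=0$ forces the functional not to be a point evaluation. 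There one must check birationality directly. Restricting to lines through $q$ or using the fibration by the pencil gives a map whose degree is computed by Lemma~\ref{lem:line-degree-two-cancellation}: on a general line the coordinates take the form $[c_0ab:\dots:g]$ with $a,b$ the intersections with $H$ and the residual. This shows which degenerate $\mu$ still give degree one.
\end{itemize}
In every case the birational locus is a constructible subset of an irreducible parameter space (the pairs $(Q,\mu)$ fibered over the $13$-dimensional family of quadric surfaces in hyperplanes, including singular ones as limits). I would show that the birational locus is contained in the closure of the open set $q\notin H$. The only subtlety is that the birational condition is not closed; but the whole $\mathbb A^5$ fiber is irreducible and contains the generic case as a dense open, so every point lies in its closure regardless.

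\textbf{Step 3: the main obstacle.} Singular or non-reduced quadric components (cones, pairs of planes, double planes) must be handled too. Pairs of planes contain a plane and are covered by Proposition~\ref{prop:all-plane-systems}, but that puts them in the other component. So I must check that the phrase ``two-dimensional quadric component'' means an irreducible quadric, or else argue that such limits also lie in the closure of smooth $Q$. Since the $(Q,\mu)$ space is irreducible over the irreducible space of all quadric surfaces in hyperplanes, the closure statement persists for cones and for double planes. I expect this uniformity, and the requirement that the quadric component be exactly the unmixed part so that $W\subset H^0(\I_Q(2))$ for a scheme $Q$ of degree two in $H$, to be the delicate point. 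Finally I apply Proposition~\ref{prop:quadric-surface-boundary-rational-quartic} to place the smooth $Q\cup q$ family in the closure of the $R_4+q$ component, and transitivity of closure concludes.
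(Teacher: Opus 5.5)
\textbf{Verdict: there is a genuine gap, in Step~2 and the density argument that follows it.}

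For a fixed $Q\subset H=V(x_4)$, the hyperplanes $W\subset H^0(\I_Q(2))$ form a five-dimensional family. The systems $H^0(\I_{Q\cup q}(2))$ with $q\notin H$ form only a four-dimensional family; globally this is $13+5=18$ against the $17$ of Proposition~\ref{prop:222}. In your chart, the functional $x_ix_4\mapsto\mu_i$, $F\mapsto1$ is proportional to evaluation at $[\mu]$ only when $F(\mu)=\mu_4$. That is an honest equation, not a normalization to be handled in degenerate subcases. Equivalently, $W=\langle x_4U,F+x_4x\rangle$, where $U$ is the hyperplane of linear forms vanishing at $q=[\mu]$ and $g:=(F+x_4x)$ satisfies $g(q)\propto F(\mu)-\mu_4$. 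So for general $\mu$, $q$ is not a base point. On a general line $L$ through $q$ the coordinates restrict to $[c_0ab:\dots:c_3ab:g]$ with $g(q)\neq0$ and $g(L\cap H)=F(L\cap H)\neq0$. By Lemma~\ref{lem:line-degree-two-cancellation} the map has degree two on $L$. Since the first four coordinates recover $L$ by projection from $q$, $\varphi_W$ has degree two. Hence the generic point of your $\mathbb A^5$ is not in $\Bir_2(\PP^4)$. Your conclusion that every point lies in the closure of the generic case therefore only places $W$ in the closure of non-birational systems, and says nothing about the $Q+q$ family. The same objection applies to your uniformity argument over singular quadrics in Step~3. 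Step~1 is also off: the fixed-factor case is $\lambda|_{x_4H^0(\OO(1))}=0$, i.e.\ $\mu=0$. The case $\lambda(F)=0$ is the legitimate case $F\in W$, which your chart omits.

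\textbf{What is missing.} You need to use birationality to cut down to this codimension-one locus, and then show that the locus is the closure of its part with $q\notin H$. In the normal form $W=\langle x_4U,F+x_4x\rangle$, the line argument gives the following.
\begin{enumerate}
\item If $q\notin H$, degree one forces $a\mid g$, because $b\mid g$ on all lines through $q$ would force $F|_H=0$. Thus $g(q)=0$ and $W=H^0(\I_{Q\cup q}(2))$.
\item If $q\in H$, then $a=b$ and degree one forces $\operatorname{ord}_q(g)=1$, so $q\in Q$ together with a first-order condition.
\end{enumerate}
These boundary systems must then be exhibited as limits of systems with $q_t\notin H$. The paper does this with an explicit arc $q_t$ and a compensating linear form $x_t$.

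A cleaner way to complete your approach is as follows. Take the lift $F=F_0(x_0,\dots,x_3)$. In dual coordinates $(\lambda_0,\dots,\lambda_4,\lambda_F)$, both conditions above put $\lambda$ on the quadric $F_0(\lambda_0,\dots,\lambda_3)=\lambda_4\lambda_F$ of $\PP^5$. This quadric has rank at least three, so it is irreducible. Its open part $\lambda_4\neq0$ is exactly the image of $q\mapsto\mathrm{ev}_q$ for $q\notin H$, hence dense. Relativizing over the irreducible space of pairs $(H,F_0)$, a flat family of irreducible quadrics, handles cones, plane pairs and double planes at once. Proposition~\ref{prop:quadric-surface-boundary-rational-quartic} then concludes.
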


\begin{proof}
Let $Q=V_H(F)$, where $H=V(h)\simeq\PP^3$ and $F$ has degree two. Then
$H^0(\I_Q(2))=hH^0(\OO_{\PP^4}(1))\oplus\langle F\rangle$. Since $W$ has
dimension five and has no fixed hyperplane,
$W\cap hH^0(\OO_{\PP^4}(1))=hU$, where $U$ is a hyperplane of linear forms.
Let $q$ be the base point of $U$. After changing the fifth generator,
$W=\langle hU,F+hx\rangle$.

Let $L$ be a general line through $q$. Write $a=0$ for $q$ on $L$ and
$b=h|_L$. The restrictions of the first four generators are
$c_0ab,\ldots,c_3ab$, while the last one is $g=(F+hx)|_L$. After cancelling
the common factor $h$, the ratios of the first four coordinates are the
linear projection from $q$. Hence they recover the general line $L$, and the
generic fiber of $\varphi_W$ is contained in such a line. Its degree is
therefore the degree of the restriction to $L$. By
Lemma~\ref{lem:line-degree-two-cancellation}, birationality is equivalent to
$\deg\gcd(ab,g)=1$.

Assume first $q\notin H$, so $a,b$ are independent. The condition $a\mid g$
is the single global condition $g(q)=0$ and therefore does not depend on the
chosen line. If it fails, birationality forces $b\mid g$ on every general
line through $q$. At the point $L\cap H$ this says $F=0$; as $L$ varies,
these points fill a dense open subset of $H$, contradicting the fact that
$F$ is a non-zero quadratic equation on $H$. Thus $a\mid g$, so
$q\in\Bs(W)$ and, by dimension,
$W=H^0(\I_{Q\cup\{q\}}(2))$.

Assume $q\in H$. Then $a=b$ and the same lemma gives
$\operatorname{ord}_q(g)=1$. Therefore, $q\in Q$ and the residual condition is the
first-order condition determined by the non-zero class of $g$ in
$\mathfrak m_q/\mathfrak m_q^2$. In affine coordinates centered at $q$, let
$h$ be the normal coordinate. If the linear part of $F$ is non-zero, choose
an arc $q_t$ with $h(q_t)=t$ and
$F(q_t)+h(q_t)x(q_t)=O(t^2)$. If it vanishes, after the base change $t=s^2$
choose $q_s=q+sv+s^2n$, with $v\in T_qH$, $h(n)=1$, and the quadratic term
of $F$ at $v$ equal to $-x(q)$. In both cases a linear form $x_t\to x$ can
be chosen so that $F(q_t)+h(q_t)x_t(q_t)=0$. Deforming $U$ to the hyperplanes
of linear forms vanishing at $q_t$ shows that $W$ is a limit of systems
$H^0(\I_{Q\cup\{q_t\}}(2))$ with $q_t\notin H$. Singular, reducible,
and double quadrics are limits of smooth quadrics in their hyperplanes, and
the point deforms simultaneously. Therefore, every system is in
the closure of the family $Q+q$. Proposition~\ref{prop:quadric-surface-boundary-rational-quartic}
gives $\mathcal F_{Q+q}\subset\overline{\mathcal F}_{R_4+q}$.
\end{proof}

Lemmas~\ref{lem:surface-component-degree-two} and
Propositions~\ref{prop:all-plane-systems} and
\ref{prop:all-quadric-surface-systems} show that a component not among the
nine already found cannot have a two-dimensional general base.

Let $\mathscr H_{\rm I},\mathscr H_{\rm II},\mathscr H_{\rm III}$ be the
three irreducible components of the locally closed quadratic Cremona locus of $\PP^3$ in
\cite{PanRongaVust01}. They are closed relative to that locus. Their dimensions are $11,13,14$, and their general
bases are, respectively, a smooth conic and one point, a line and three
points, and a finite scheme of type $a_1+a_2+a_3+b^{\PP^2}$.

\begin{Proposition}\label{prop:all-rank-one-systems}
If $W\in\Bir_2(\PP^4)$ has a point $p\in\Bs(W)$ with $r_p(W)=1$, then $W$
belongs to the punctual component, to the component whose general base is
$C+p_C^{\PP^3}+q_1+q_2+q_3$, or to the closure of the component whose general
base is $R_4+q$.
\end{Proposition}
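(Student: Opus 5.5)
Our plan is to reduce to $\PP^3$ by projecting from the rank-one point and then use the Pan--Ronga--Vust trichotomy. By Proposition~\ref{prop:geometric-reduction-p3} we write
$W=\langle x_0x_1+q_0,V\rangle$ with $V=\langle q_1,\dots,q_4\rangle$ a quadratic Cremona system of $\PP^3$. Hence $V$ lies in $\mathscr H_{\rm I}$, $\mathscr H_{\rm II}$ or $\mathscr H_{\rm III}$. The lift is governed by the irreducible parameter space $\mathscr H_{\bullet}\times H^0(\PP^3,\OO_{\PP^3}(2))$ of pairs $(V',q_0')$, together with the map $(V',q_0')\mapsto\langle x_0x_1+q_0',V'\rangle$. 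This map lands in $\Gr(5,V)$ and, by the projection diagram, sends Cremona systems to Cremona systems. Since each $\mathscr H_\bullet$ is irreducible, $W$ lies in the closure of the image of the general pair in the relevant $\mathscr H_\bullet$. Exactly as in the proof of Proposition~\ref{prop:geometric-lift-prv}, we use curve selection to produce a family $(V_t,q_0(t))$ with general member general in $\mathscr H_\bullet$, and with $W_t\to W$. It then suffices to identify the component containing the general lift in each of the three cases.

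\emph{Case $\mathscr H_{\rm III}$.} The general lift has zero-dimensional base $a_1+a_2+a_3+b^{\Pi}+p^{H}$. This is precisely the computation made in Proposition~\ref{prop:geometric-lift-prv}, so $W$ lies in the punctual component $\overline{\mathcal F}_{248}$.

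\emph{Case $\mathscr H_{\rm II}$.} The general base is a line $L_0\subset\PP^3$ plus three points. Over $L_0$, the equation $x_0x_1+q_0=0$ cuts the plane cone over $L_0$ with vertex $p$ in a conic $C$ through $p$. The tangent line $T_pC$ is contained in $\{x_1=0\}$ exactly when $L_0\not\subset\{x_1=0\}$, which holds generally. The three points lift to three reduced points. The base then contains $C+p_C^{\PP^3}+q_1+q_2+q_3$, and since both systems are $5$-dimensional, $W_t=H^0$ of the corresponding integral-closure model. This is the converse direction of Lemma~\ref{lem:conic-rank-one-reduction}, and dimension counting ($13+4+\dots$) together with generic finiteness shows that the general lift is the general point of $\mathcal F_C$.

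\emph{Case $\mathscr H_{\rm I}$.} The general base is a smooth conic $C_0$ plus one point $a$. The cone over $C_0$ is a quadric cone surface $K$ in the $\PP^3$ spanned by $p$ and the plane of $C_0$. The lift cuts $K$ in a curve of degree four: the residual intersection of $K$ with $x_0x_1+q_0$ away from the vertex. Our guess is that this curve is a rational quartic passing through $p$, or a degenerate one, possibly with $p$ in it. Together with the lifted point $a$, this should put the general lift in the closure of the $R_4+q$ family.

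The main obstacle is this last case. We must identify the base precisely and then exhibit an explicit degeneration of $R_4+q$ onto it. The first attempt will mimic Proposition~\ref{prop:quadric-surface-boundary-rational-quartic}. A rational normal quartic $R_t$ specializes flatly to a degree-four rational curve $R_0$ in the hyperplane $\langle p,C_0\rangle$, and $R_0$ lies on the cone $K$ with $p\in R_0$. One checks that $H^0(\I_{R_0}(2))$ stays $6$-dimensional, so that the kernels form a rank-six bundle. One then adds the point $q$ and takes the limit. If $R_0$ fails to impose the right number of conditions, we fall back on showing that the lifted system has a quadric surface component or its limit, and invoke Proposition~\ref{prop:all-quadric-surface-systems}. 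Finally, components being closed in $\Bir_2$ transfers membership from $W_t$ to $W$.
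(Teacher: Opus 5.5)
\textbf{The gap is type I.} Your reduction to $\PP^3$, the irreducibility and curve-selection step, and your treatment of types $\mathrm{III}$ and $\mathrm{II}$ agree with the paper. Type $\mathrm{I}$, however, is left as a guess, and the guess misses the structure that makes it work.

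The quadric $V(x_0x_1+q_0)$ passes through the vertex $p$ of the cone $K$. Hence $K\cap V(x_0x_1+q_0)$ is a rational quartic with a \emph{node} at $p$. Its two branches are tangent to the lines $\overline{pc_i}$, where $c_i\in C_0\cap\{x_1=0\}$. This curve is a complete intersection of two quadrics in $\langle p,C_0\rangle$, with Hilbert polynomial $4m$.

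As a reduced curve it lies on $5+2=7$ independent quadrics of $\PP^4$. So your check that $H^0(\I_{R_0}(2))$ stays $6$-dimensional fails for it. What saves the count is that $r_p(W)=1$ forces a three-dimensional tangent space at $p$. The base therefore carries an embedded point at the node, pointing out of $\langle p,C_0\rangle$. In the paper's special fiber, with ideal $(ZX_1,ZX_2,ZX_3,Z^2,X_1X_3-X_2^2,X_0X_2-X_1^2-X_3^2)$, this embedded point brings the Hilbert polynomial to $4m+1$ and the number of quadrics to six.

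For the same reason, imitating Proposition~\ref{prop:quadric-surface-boundary-rational-quartic} breaks down. Projection to the first four coordinates is no longer a closed immersion, since it glues the two preimages of the node. The flat limit therefore has to be computed, as the paper does by saturating the family $[s^4+r^4:s^3r:s^2r^2:sr^3:t(s^4-r^4)]$.

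Your fallback is not available either. The general type $\mathrm{I}$ lift has one-dimensional base, and having a surface in the base is a closed condition. So this lift is neither a quadric-surface system nor a limit of such systems.

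\textbf{From one degeneration to all type I lifts.} Even with the correct limit scheme, one explicit degeneration places only one type $\mathrm{I}$ lift in $\overline{\mathcal F}_{R_4+q}$. You need one of two further steps.
\begin{enumerate}
\item Run the degeneration for a general lift. Choose the family so that the embedded direction at $p$ gives the prescribed tangent hyperplane $\{x_1=0\}$, and check that the lift equals the complete quadratic system through the limit curve and the lifted point.
\item Use the paper's device. Its explicit system $W_0$, the complete system through the special fiber and $q=[1:1:1:2:3]$, has only scalar infinitesimal stabilizer. Its $\PGL_5$-orbit therefore has dimension $24=4+3+11+6$, which is the dimension of the irreducible type $\mathrm{I}$ lift incidence. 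The orbit is dense in the image, and $\PGL_5$-invariance of $\overline{\mathcal F}_{R_4+q}$ then gives every type $\mathrm{I}$ lift.
\end{enumerate}
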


\begin{proof}
By Proposition~\ref{prop:geometric-reduction-p3} one has
$
W=\langle x_0\ell+q_0,q_1,q_2,q_3,q_4\rangle,
$
where $V=\langle q_1,q_2,q_3,q_4\rangle$ defines a quadratic Cremona
transformation of $\PP^3$. For each $\alpha\in\{\rm I,II,III\}$, the
incidence of lifts of $\mathscr H_\alpha$ is irreducible. Its dimension is
$4+3+\dim\mathscr H_\alpha+6$. The first terms choose $p$ and
$[\ell]\in\PP(T_p^*\PP^4)$. Once the coefficient of $x_0\ell$ is normalized,
$q_0$ is an element of the affine space
$H^0(\PP^3,\OO_{\PP^3}(2))/V$, which has dimension six. Hence the three
incidences have dimensions $24,26,27$. For a general lift, $p$ is the unique
rank-one point, so the maps to the Grassmannian are generically finite.

For type III the general lift has finite base and belongs to the punctual
component by Theorem~\ref{thm:zero-dimensional-unique-component-geometric}.
For type II the general lift has base
$C+p_C^{\PP^3}+q_1+q_2+q_3$ by
Lemma~\ref{lem:conic-rank-one-reduction}; its closure is the ninth component.
Finally, we consider type I.

Set coordinates $[X_0:X_1:X_2:X_3:Z]$. Let $\mathcal C$ be the saturation
with respect to $t$ of the family generated by
\begin{footnotesize}
$$
\begin{aligned}
&(tX_0+Z)X_2-2tX_1^2,
\, (tX_0+Z)X_3-2tX_1X_2,\\
&t^2X_0^2-Z^2-4t^2X_1X_3,
\, X_1X_3-X_2^2,\\
&X_1(tX_0-Z)-2tX_2X_3,
\, X_2(tX_0-Z)-2tX_3^2,\\
&X_0X_2-X_1^2-X_3^2.
\end{aligned}
$$
\end{footnotesize}
For $t\neq0$ its fiber is the rational normal quartic parametrized by
$
[s:r]\mapsto[s^4+r^4:s^3r:s^2r^2:sr^3:t(s^4-r^4)].
$
The special ideal is
$
I_{C_0}=(ZX_1,ZX_2,ZX_3,Z^2,X_1X_3-X_2^2,
X_0X_2-X_1^2-X_3^2).
$
Its reduction in $Z=0$ is a complete intersection of two quadrics with
Hilbert polynomial $4m$, while the nilpotent class of $Z$ contributes
$\kk[X_0](-1)$. Therefore, $P_{C_0}(m)=4m+1$, and the family is flat.

Set $q=[1:1:1:2:3]$ and
$W_0=H^0(\I_{C_0\cup\{q\}}(2))$. At
$p=[1:0:0:0:0]$ the first-jet rank is one. The subsystem independent of
$X_0$ is the complete system through the conic
$V(Z,X_1X_3-X_2^2)$ and the projection $[1:1:2:3]$ of $q$, hence it is a
type I system. Keeping $q$ fixed gives a flat family from $W_0$ to systems
with base $R_4+q$.

The Lie algebra of the stabilizer of $W_0$ is the kernel of
$
\mathfrak{gl}_5\rightarrow
\operatorname{Hom}(W_0,\operatorname{Sym}^2((\kk^5)^*)/W_0).
$
In the monomial basis the corresponding rational matrix has rank $24$, so its
kernel consists of the scalar matrices. The calculation is reproduced by
\texttt{TypeIStabilizerCertificate} in Section~\ref{appendix:magma}. Therefore, the
stabilizer of $W_0$ in $\PGL_5$ is finite and its orbit has dimension $24$,
equal to the dimension of the irreducible type I lift incidence. The orbit is
therefore dense in its image. Since
$\overline{\mathcal{F}}_{R_4+q}$ is $\PGL_5$-invariant and contains $W_0$, it
contains every type I lift.
\end{proof}

We may therefore assume from now on that
$
\dim\Bs(W)=1,\, r_p(W)\geq2\text{ for every }p\in\Bs(W).
$ Choose symmetric matrices $A_0,\ldots,A_4$ for a basis of $W$ and set
$
A_W(y)=\sum_{i=0}^4y_iA_i,
\, F_W(y)=\det A_W(y).
$

\begin{Lemma}\label{lem:reduced-discriminant-smooth-pencil}
If $F_W\neq0$ is reduced, a general pencil in $W$ cuts out a smooth complete
intersection of two quadrics.
\end{Lemma}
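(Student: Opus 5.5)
Our plan is to use the classical description of pencils of quadrics through the binary discriminant. Take a general pencil $A=\langle Q_0,Q_1\rangle\subset W$ and restrict $F_W$ to the corresponding line $\ell_A\subset\PP(W)$. Since $F_W$ is a non-zero reduced quintic, a general line meets the hypersurface $V(F_W)$ transversally. Hence $\det(\lambda A_0+\mu A_1)$ is a binary quintic with five distinct roots; in particular it is not identically zero. We will then apply the classical criterion (Segre's theorem, see e.g.\ Reid's thesis or Dolgachev's treatment): for a pencil of quadrics in $\PP^n$ with $n=4$, the base locus $V(Q_0,Q_1)$ is a smooth complete intersection of codimension two if and only if $\det(\lambda A_0+\mu A_1)$ has $n+1=5$ distinct roots. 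Granting this criterion, the lemma follows at once.

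Since we would rather not quote the criterion blindly, we will sketch its relevant direction. Suppose $x\in V(Q_0,Q_1)$ is a singular point. Then $A_0x$ and $A_1x$ are linearly dependent, so some member $Q_{\lambda}=\lambda Q_0+\mu Q_1$ satisfies $A_\lambda x=0$. Thus $Q_\lambda$ is singular at a base point $x$ of the pencil. Diagonalize simultaneously where possible, or argue directly as follows. Near the root $[\lambda:\mu]$, write $A(s)=A_\lambda+sB$ with $B$ another member. Then
\[
\det A(s)=s\cdot\bigl(\,{}^t\!x\,B\,x\bigr)\cdot c+O(s^2),
\]
where $c$ is the product of the non-zero eigenvalues when $\operatorname{rk}A_\lambda=4$. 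Since $x$ lies on every quadric of the pencil, ${}^t\!xBx=0$, so the root has multiplicity at least two. If instead $\operatorname{rk}A_\lambda\le3$, the root again has multiplicity at least two, because the determinant vanishes to order at least the corank. Either way, a multiple root would be forced, which is a contradiction. Consequently $X_A$ is smooth. It has codimension two: if $Q_0$ and $Q_1$ shared a component, the determinant of the pencil would still have only simple roots, but $W$ would have a fixed component or a surface, and in the latter case we argue by smoothness. A complete intersection of two ample divisors that is smooth is connected, hence irreducible.

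The main obstacle is the codimension-two check together with the multiplicity estimate in the corank-one case; the latter is the computation displayed above, via the adjugate formula $\frac{d}{ds}\det A(s)|_{0}=\operatorname{tr}(\operatorname{adj}(A_\lambda)B)$. For the former, we note that $Q_0,Q_1$ general in $W$ with $\gcd(W)=1$ have no common factor: a common factor shared by general pairs would divide all of $W$. So $V(A)$ is a complete intersection of dimension two. Under the assumption of this section, namely $\dim\Bs(W)=1$, this also sits inside the smooth-pencil locus $\mathscr U$.
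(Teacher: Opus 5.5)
Your argument is essentially the paper's proof: a general line in $\PP(W)$ meets the reduced quintic $V(F_W)$ transversally, and a singular point $p$ of $V(Q_0,Q_1)$ would produce a member $S$ of the pencil with vertex $p$ and $d(\det)_S(T)=c\,T(p)=0$ along the pencil, i.e.\ a multiple root. Your corank bound covers members of rank $\le 3$, which the paper avoids by noting that the transversal intersection points have rank four. Your separate codimension-two paragraph is unnecessary, and its first claim is false, since a common linear factor would make the determinant of the pencil vanish identically. Independence of $A_0x,A_1x$ at every $x\in V(Q_0,Q_1)$ already forces that scheme to be smooth of pure dimension two.
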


\begin{proof}
A general line in $\PP(W)$ meets $V(F_W)$ transversally at rank-four
quadrics. If $V(Q,R)$ were singular at $p$, some $S\in\langle Q,R\rangle$
would have vertex $p$. For an independent $T$ one has
$d(\det)_S(T)=cT(p)$ with $c\neq0$. Since $p\in V(Q,R)$, this vanishes, so
the line is tangent to $V(F_W)$, a contradiction.
\end{proof}

\begin{Proposition}\label{prop:four-residual-branches}
Assume that $W$ contains no pencil cutting out a smooth complete intersection.
Then one of the following occurs:
\begin{enumerate}
\item[(H1)] \textit{$F_W$ has a multiple linear factor whose rank-four vertex curve is
a line;}
\item[(H2)] \textit{$F_W$ has a multiple linear factor whose rank-four vertex curve is
a smooth conic;}
\item[(Q)] \textit{$F_W$ has a multiple irreducible quadratic factor;}
\item[(R)] \textit{$F_W\equiv0$, or the general member of a multiple factor has rank
at most three.}
\end{enumerate}
\end{Proposition}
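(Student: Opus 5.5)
By Lemma~\ref{lem:reduced-discriminant-smooth-pencil}, the hypothesis that no pencil in $W$ cuts out a smooth complete intersection forces $F_W$ either to vanish identically or to be a non-reduced quintic form on $\PP(W)\simeq\PP^4$. If $F_W\equiv0$ we are in (R), so assume $F_W\neq0$ and write $F_W=\prod G_j^{e_j}$ with some $e_j\geq2$. Since $\deg F_W=5$, a multiple factor $G$ with $e\ge2$ has $2\deg G\le5$, so $\deg G\in\{1,2\}$. If the general member of $V(G)$ has rank at most three we land in (R). It remains to analyse a multiple factor whose general member has rank exactly four, i.e. is a cone over a smooth quadric of $\PP^3$ with a single vertex.

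\emph{Quadratic factor.} If $\deg G=2$ with $G$ irreducible and $e\ge2$, this is (Q); a reducible quadratic factor splits into linear factors, so it falls under the linear case below.

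\emph{Linear factor.} Let $G$ be linear with $e\ge2$, and let $\Lambda=V(G)\simeq\PP^3\subset\PP(W)$ be the web of quadrics. The general member has rank four, so sending $S\mapsto\operatorname{vertex}(S)$ gives a rational map $v:\Lambda\dashrightarrow\PP^4$. Its image is the vertex locus $\Gamma$. The plan is to prove that $\Gamma$ is a curve, and then that it is either a line or a smooth conic.

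First I would show $\dim\Gamma\le1$. For $S\in\Lambda$ general with vertex $p$ and $T\in W$, the identity $d(\det)_S(T)=c\,T(p)$ with $c\neq0$ (used in Lemma~\ref{lem:reduced-discriminant-smooth-pencil}) shows that multiplicity $\ge2$ of $\Lambda$ in $V(F_W)$ is the statement that $T(p)=0$ for \emph{every} $T\in W$. Hence $p\in\Bs(W)$, so $\Gamma\subset\Bs(W)$ and $\dim\Gamma\le1$. Next I would rule out $\Gamma$ being a point: in that case all members of a $3$-dimensional subspace of $W$ would be singular at a fixed $p$, giving $r_p(W)\le2$. The standing assumption $r_p(W)\ge2$ then forces $r_p(W)=2$, with the four-dimensional $\Lambda$ inside the kernel of the first-jet map, whose dimension is $5-2=3$. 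That is impossible, so $\Gamma$ is a curve $\Gamma\subset\Bs(W)$.

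To bound $\deg\Gamma$, note that the vertex of $S=\sum y_iA_i$ is given by the adjugate, so $v$ is defined by forms of degree $4$ on $\Lambda$. Its fibres are linear: for $p\in\Gamma$, the set $\Lambda_p=\{S\in\Lambda: Sp=0\}$ is cut out by linear equations, and since $\Gamma$ is a curve it is a plane $\PP^2$. This yields a one-dimensional family of planes in $\PP^3$, with a general point of $\Lambda$ lying on exactly one of them. Conversely, $\Lambda_p$ is the kernel of $\Lambda\to\kk^5$, $S\mapsto Sp$, which is linear in $p$. The planes $\Lambda_p$ therefore correspond to the dual curve of $\Gamma$ under a linear map, and a one-parameter family of planes covering $\PP^3$ once is a pencil, forced by the linearity in $p$ in low degree. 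I would use this to conclude that $\Gamma$ has degree at most two and is non-degenerate in its span: a line, a smooth conic, or a degenerate conic (line pair or double line). For a line pair, the two lines would give two disjoint families of vertices, hence two multiple linear factors; together with the rank-four hypothesis this should be excluded or reduced to (H1). A double line would be absorbed into (H1) after taking supports. This sorts the linear case into (H1) and (H2).

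\emph{Main obstacle.} The delicate step is the degree bound on the vertex curve, that is, showing that the covering family of planes $\{\Lambda_p\}_{p\in\Gamma}$ forces $\deg\Gamma\le2$ with $\Gamma$ smooth. My first attempt would use the explicit linear-algebra description of the incidence $\{(S,p):Sp=0\}\subset\Lambda\times\PP^4$ and its projection degrees. If that stalls, I would fall back on the Segre class constraint: a base curve $\Gamma\subset\Bs(W)$ with $d_3=8-s_3\ge2$ together with Lemma~\ref{lem:curve-segre} bounds its degree.
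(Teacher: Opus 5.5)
Your reduction follows the paper's proof up to the last step:
\begin{itemize}
\item non-reduced discriminant, with a multiple factor of degree at most two;
\item the vertex lies in $\Bs(W)$ by $d(\det)_S(T)=cT(p)$;
\item the fibres of the vertex map are planes inside $\PP(K_p)$, with $r_p(W)=2$;
\item a one-parameter family of planes with exactly one member through a general point of $\Lambda$ is a pencil, since its dual curve has degree one.
\end{itemize}
The genuine gap is the step you flag yourself. From the pencil you never extract why the vertex curve $\Gamma$ is a line or a smooth conic, and neither fallback closes this. The Segre-class route gives only $\deg\Gamma\le s_3=8-d_3\le 6$. No numerical argument of that kind can do better, because twisted cubics, rational normal quartics and elliptic quintics really occur as base curves of quadratic Cremona transformations. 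Linearity of $p\mapsto\Lambda_p$ only says that $\Gamma$ maps birationally onto a line under a linear projection, and that does not bound the degree.

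The missing idea is to use the axis of the pencil. All planes $\Lambda_p$ with $p\in\Gamma$ contain a fixed line of $\Lambda$. This means there are two independent quadrics $S_1,S_2\in W$ singular at every point of $\Gamma$. Hence $\Gamma\subset\operatorname{Sing}S_1\cap\operatorname{Sing}S_2$. This is a linear space of dimension at most two, because two quadrics singular along the same hyperplane are both multiples of the square of its equation. So $\Gamma$ is planar. Since $\Gamma\subset\Bs(W)$, if $\deg\Gamma\ge 3$ then every quadric of $W$ restricts to the plane as a form containing a plane curve of degree at least three. It therefore vanishes on the whole plane, so the plane lies in $\Bs(W)$, contradicting $\dim\Bs(W)=1$.

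Finally, $\Gamma$ is the image of the irreducible web $\Lambda$ under the vertex map, so it is irreducible, and degree two forces a smooth conic. Your discussion of line pairs and double lines is therefore unnecessary. Your claim that a line pair would produce two multiple linear factors is also unjustified.

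A minor slip: when you exclude $\Gamma$ being a point, a four-dimensional subspace of $W$ singular at $p$ gives $r_p(W)\le 1$ directly, not $r_p(W)\le 2$.
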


\begin{proof}
If $F_W\neq0$ is not reduced, write $F_W=g^mh$ with $m\geq2$, $g\nmid h$.
Since $\deg F_W=5$, one has $\deg g\leq2$. Suppose that the general member of
$D=V(g)$ has rank four and let $p$ be its vertex. Since $D$ is multiple,
$dF_W=0$ at its general point. The identity
$d(\det)_Q(T)=cT(p)$ gives $p\in\Bs(W)$.

Let $B$ be the reduced image of the vertex map and set
$K_p=\ker(W\to\mathfrak m_p/\mathfrak m_p^2)$. Non-reduced structures on
this image do not affect the following incidence. The incidence of pairs
$(p,Q)$ with $Q\in D$ and $p=\operatorname{Vert}(Q)$ has dimension three and
maps to the one-dimensional base. Its general fiber is contained in
$\PP(K_p)$, which has dimension $4-r_p(W)\leq2$. Hence $r_p(W)=2$ and
$
D=\overline{\bigcup_p\PP(K_p)}.
$ Assume first that $D$ is a hyperplane. The planes $\PP(K_p)$ form a curve in
the dual $\PP^3$. A general point of $D$ lies on one of them, since a
rank-four quadric has a unique vertex. The degree of the dual curve is
therefore one, so these planes form a pencil. Their common line contains two
independent quadrics singular at every point of the vertex curve $B$. Therefore, $B$ is contained in the intersection of their singular linear spaces. This
intersection has projective dimension at most two, since two independent
quadrics singular along the same $\PP^3$ are proportional. Hence $B$ is
planar. If $\deg B\geq3$, the restriction to its plane of every quadric in
$W$ vanishes, and the plane is contained in the base. Therefore $B$ is a line
or a smooth conic. Indeed, $B$ is the irreducible image of the parameter
curve; a degree-two image which is not a line is a smooth conic.

Assume that $D$ is an irreducible quadric. Since it is covered by planes, its
rank is three or four, and the planes $\PP(K_p)$ form a ruling. This gives
(Q). All remaining cases give (R).
\end{proof}

If $W$ contains a smooth pencil, it belongs to one of the seven components of
Proposition~\ref{prop:seven-smooth-pencil-components}. Now, we treat the four
branches of Proposition~\ref{prop:four-residual-branches}. We begin with (H1). Set $L=V(u,v,w)$.

\begin{Lemma}\label{lem:H1-normal-forms}
The generic multiplicity of the one-dimensional base along $L$ is at most
three. If it is three, after linear changes one has one of the two forms
$$
\begin{aligned}
W_4&=\langle ux+vw,uy-v^2,uw,uv,xv+yw+w^2+\lambda u^2\rangle,\\
W_5(\mu)&=\langle u^2,uv,ux+vw,uy-v^2,xv+yw+\mu w^2\rangle.
\end{aligned}
$$
If the multiplicity is two, the unmixed part is the genus $-1$ ribbon on
$L$ with ideal $(w,u^2,uv,v^2,yu-xv)$ after coordinates.
\end{Lemma}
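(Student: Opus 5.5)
We work in the setting of branch (H1) of Proposition~\ref{prop:four-residual-branches}. Thus $D=V(g)$ is a multiple hyperplane of $\PP(W)$. Its general member has rank four with vertex on the line $L=V(u,v,w)$. At a general $p\in L$ the kernel $K_p\subset D$ is a plane, and these planes form a pencil with a common pencil $\Lambda\subset W$ of quadrics singular along all of $L$.

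\textbf{Step 1: normal form for $D$.} A quadric singular along $L$ involves only $u,v,w$, so $\Lambda\subset\kk[u,v,w]_2$. We extend $\Lambda$ by a quadric to get a basis of $D$, and note that every $Q\in D$ is singular at some point of $L$. Writing $L$ with coordinates $x,y$, the quadrics of $D$ then have the form $\ell_1(x,y)\,m_1+\dots+q(u,v,w)$, where each $x$- or $y$-linear term is a linear form in $u,v,w$. The vertex condition along $L$ forces the $(x,y)$-part to define a rank-one family of linear maps $\langle x,y\rangle\to\langle u,v,w\rangle$ parametrized by $L$.

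\textbf{Step 2: the multiplicity along $L$.} The base ideal restricted to the generic point of $L$ is the ideal in $\kk(x/y)[[u,v,w]]$ generated by the linear parts of $W$, which have rank $r_p=2$ (by Proposition~\ref{prop:four-residual-branches}), together with the quadratic parts. After eliminating the two linear equations, one transverse parameter remains, say $w$ modulo them. The generic multiplicity is then the order of the first nonvanishing restricted quadratic or higher term. This order is at least two, since all quadrics in $\Lambda$ are singular along $L$.

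We then bound the order by three. If it were at least four, every element of $W$ would vanish to order at least four along the generic point of a curvilinear transversal germ. Counting conditions on $15$-dimensional quadrics then gives a contradiction with $\dim W=5$, or yields a surface in the base. The latter is excluded by Lemma~\ref{lem:surface-component-degree-two} and the rank assumption.

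\textbf{Step 3: normal forms.} In the multiplicity-two case, we rewrite the ideal as a ribbon $(w,u^2,uv,v^2,yu-xv)$. This is a degree-two structure on $L$ with square-zero ideal $\OO_L(-1)$... (more precisely, with arithmetic genus $-1$, checked via its Hilbert polynomial $2t+2$). In the multiplicity-three case, we normalize the pencil $\Lambda$ by $\PGL$ acting on $(u,v,w)$. The possibilities are $\langle uw,uv\rangle$ and $\langle u^2,uv\rangle$, since two conics of the pencil must share the factor giving a rank-four pencil in $D$. The remaining generators $ux+vw$, $uy-v^2$ come from the rank-one $(x,y)$-dependence in Step 1. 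Finally, the fifth quadric is normalized modulo $D$ by eliminating coefficients with unipotent changes $x\mapsto x+\dots$, $y\mapsto y+\dots$. This leaves the parameters $\lambda$ and $\mu$.

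\textbf{Main obstacle.} The hardest part is the case analysis of Step 3: showing that the coefficient eliminations exhaust all possibilities, and that birationality together with the absence of a plane in the base excludes the degenerate subcases. For birationality we would use the Segre computation of Proposition~\ref{prop:segre-formula}, checking whether $d_4=1$.
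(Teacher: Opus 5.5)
There is a genuine gap: your outline never reaches the normal form on which every assertion of the lemma rests, and the bound in Step~2 is not proved.

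\textbf{The missing normal form.} Each member of $D$ is singular at a point of $L$. So its cross terms $x\ell_x+y\ell_y$ give a map $\kk^2\to\langle u,v,w\rangle$ of rank at most one. Since the vertex moves, these maps have a common image line (Lemma~\ref{lem:rank-one-matrix-spaces}), which you normalize to $u$. Then $r_p(W)=2$ along all of $L$ gives $Q_0=xu+a$, $Q_1=yu+b$, $Q_2=c$, $Q_3=d$, $Q_4=xv+yw+e$ with $a,\dots,e\in\kk[u,v,w]_2$. Note that $D$ is four-dimensional, so $\Lambda=\langle c,d\rangle$ needs two further generators, not one.

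At $p=[s:t:0:0:0]$ the first-jet ideal is $(u,sv+tw)$ and the transverse parameter is $\zeta$ with $(v,w)=(t\zeta,-s\zeta)$. Multiplicity at least three means precisely that the $\zeta^2$-coefficients of $tQ_0-sQ_1$, $Q_2$, $Q_3$ vanish for all $[s:t]$. That is, $va+wb\in(u)$ and $c,d\in(u)$, so $a=wk+ua_1$, $b=-vk+ub_1$, $c=uc_1$, $d=ud_1$. If $k\in\langle u\rangle$, a surface in $V(u)$ lies in the base, so one may take $k=v$.

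This is where $ux+vw$ and $uy-v^2$ come from. Your ``rank-one $(x,y)$-dependence'' only gives $xu,yu$, not the corrections $vw,-v^2$. The factor $u$ in $\Lambda$ is forced by the multiplicity condition, not by the shared-factor reason you give. The two normal forms correspond to $c_1,d_1$ having rank two or one modulo $u$; rank zero contradicts the independence of $Q_2,Q_3$.

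The multiplicity-two ribbon must likewise be read off from the non-vanishing $\zeta^2$-coefficients, not just ``rewritten''. Its square-zero ideal is $\OO_L$, not $\OO_L(-1)$; the latter would give Hilbert polynomial $2t+1$.

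\textbf{The bound in Step~2.} Counting conditions cannot by itself contradict $\dim W=5$. For a ruling $L$ of a smooth quadric surface $S\subset\PP^3\subset\PP^4$, the curvilinear structure $4L\subset S$ lies on the six independent quadrics through $S$. Everything therefore rests on your alternative ``or yields a surface'', which you do not prove.

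With the normal form the bound is a short computation. On the transversal, $Q_1$ gives $u=\beta(s)v^2+O(v^3)$, with $\beta(s)\neq0$ for general $s$ because $k\notin\langle u\rangle$. Then $Q_2=uc_1$ and $Q_3=ud_1$ have leading terms $\beta(s)c_1(0,1,-s)v^3$ and $\beta(s)d_1(0,1,-s)v^3$. Multiplicity four would force $c_1,d_1\in\langle u\rangle$, contradicting the independence of $Q_2,Q_3$.

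\textbf{Birationality is not needed.} Birationality and Segre classes play no role in this lemma. The parameters $\lambda$ and $\mu$ remain free here; they are killed only in Proposition~\ref{prop:H1-absorbed}, by counting preimages. The only exclusions needed are a surface in the base and dependence of $Q_2,Q_3$.
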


\begin{proof}
The pencil of planes in the multiple hyperplane and its vertex line can be
simultaneously normalized. Solving the first-jet equations gives
$
Q_0=xu+a,\, Q_1=yu+b,\, Q_2=c,\, Q_3=d,
\, Q_4=xv+yw+e,
$
where $a,b,c,d,e\in\kk[u,v,w]_2$. At $p=[s:t:0:0:0]\in L$, the first-jet
ideal is $(u,sv+tw)$. Set $(v,w)=(t\zeta,-s\zeta)$ modulo this ideal. The
quadratic terms of $tQ_0-sQ_1,Q_2,Q_3$ vanish for every $[s:t]$ precisely
when
$
va+wb\in(u),\, c,d\in(u).
$
Hence
$
a=wk+ua_1,\, b=-vk+ub_1,\, c=uc_1,\, d=ud_1
$
for linear forms $k,a_1,b_1,c_1,d_1$. If $k\in\langle u\rangle$, the plane
$u=0$ contains a surface component of the base. Therefore, $u,k$ are independent;
set $k=v$ and absorb $a_1,b_1$ into $x,y$. This gives
$
W=\langle ux+vw,uy-v^2,uc_1,ud_1,xv+yw+e\rangle.
$
If the classes of $c_1,d_1$ modulo $u$ have rank two, set them equal to
$w,v$. Additions of the first four generators and linear changes of $x,y$
remove all terms of $e$ except $w^2+\lambda u^2$; the coefficient of $w^2$
can be set equal to one by replacing $y$ with $y+w$, which changes $Q_1$ by
$uw$. This gives $W_4$. If the rank is one, set $c_1=u$, $d_1=v$ and remove all terms of $e$ except $\mu w^2$, obtaining $W_5(\mu)$.
Rank zero contradicts the independence
of $Q_2,Q_3$.

If the generic multiplicity is two, the same coefficient comparison gives,
after row operations,
\[
I_Y=(w,u^2,uv,v^2,yu-xv).
\]
This is scheme-theoretic: on the chart $x\neq0$, putting $\tau=y/x$ gives
\[
I_Y=(w,v-\tau u,u^2),
\]
so $\mathcal O_Y\simeq\kk[\tau,u]/(u^2)$ and the support is the reduced line
$L$. The analogous computation on $y\neq0$ glues to the stated genus $-1$
ribbon.

We are left with the bound on the multiplicity. On the chart $t=1$, set
$z=sv+w$. After eliminating the two first jets, write
$k=\alpha u+\gamma z+\beta(s)v$, where $\beta(s)\neq0$ for general $s$.
The equation $Q_1$ gives $u=\beta(s)v^2+O(v^3)$. The first non-zero terms of
$Q_2,Q_3$ are
$\beta(s)c_1(0,1,-s)v^3$ and
$\beta(s)d_1(0,1,-s)v^3$. Multiplicity at least four would make both vanish
for general $s$, so $c_1,d_1\in\langle u\rangle$, a contradiction. This also proves that the multiplicity-two unmixed structure is exactly the displayed ribbon.
\end{proof}

\begin{Proposition}\label{prop:H1-absorbed}
Every system in $\text{(H1)}$ belongs to the closure of one of the components with
general base $L_1\sqcup L_2+p^{\PP^2}+q$, $R_4+q$, or $E_5$.
\end{Proposition}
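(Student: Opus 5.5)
By Lemma~\ref{lem:H1-normal-forms}, a system in (H1) has one-dimensional base whose unmixed part along the vertex line $L=V(u,v,w)$ has generic multiplicity two or three. I would treat the three normal forms separately: multiplicity two (the genus $-1$ ribbon), $W_4$, and $W_5(\mu)$. In each case I would exhibit an explicit one-parameter flat degeneration from the general member of one of the three named components. Then I would close up under $\PGL_5$ by comparing orbit dimensions, as in the proofs of Propositions~\ref{prop:all-plane-systems} and \ref{prop:all-rank-one-systems}.

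\emph{Multiplicity two.} Here the unmixed base contains the ribbon $Y$ with ideal $(w,u^2,uv,v^2,yu-xv)$. This is a degree-two structure on $L$ with $P_Y(t)=2t+2$, so it has the numerics of two skew lines degenerating onto a single line. I would deform $Y$ to $L_{1,t}\sqcup L_{2,t}$ with $L_{2,t}\to L$, for instance $L_{1,t}=V(w,u,v)$ and $L_{2,t}=V(w,u-t x',v-t y')$ for suitable linear forms, chosen so that the flat limit of the pair is $Y$. In parallel I would deform the residual zero-dimensional data to $p_t^{\PP^2}+q_t$. The residual data are read off from the normal form obtained in the proof of Lemma~\ref{lem:H1-normal-forms} with $e$ kept general. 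Next I would check that the ten conditions stay independent along the family, via a Smith form of the $10\times15$ condition matrix over $\QQ[t]$, as in Proposition~\ref{prop:all-plane-systems}. This shows that the kernel is a rank-five bundle whose fiber at $t=0$ is the given system. Finally, a stabilizer computation (the rank of $\mathfrak{gl}_5\to\operatorname{Hom}(W,\operatorname{Sym}^2/W)$) together with a count of the moduli of the residual data would show that the $\PGL_5$-saturation of these limits fills the whole multiplicity-two stratum.

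\emph{Multiplicity three, $W_4$ and $W_5(\mu)$.} Here the curve part has $s_3=4$, like the triple line $L^H$ (Remark~\ref{rem:triple-line-samuel}). So $d_3\le 4$, and the relevant smoothings are to the $R_4+q$ component ($d_3=4$) or to the $E_5$ component ($d_3=3$). I would compute $d_3$ and the isolated residual part for each normal form directly from the explicit generators. For $W_5(\mu)$, which contains $u^2,uv$ and has no residual point, I expect the base to be a degenerate quintic. Then I would realize it as the Pfaffians of a skew $5\times5$ matrix of linear forms, by writing the five generators as Pfaffians (Buchsbaum--Eisenbud), and perturb the matrix to a general one. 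This puts $W_5(\mu)$ in $\overline{\mathcal F}_{E_5}$. For $W_4$, with parameter $\lambda$, I expect a degree-four curve structure plus a point. I would write down a flat family of rational normal quartics $R_t$ specializing to the curve part of $W_4$, in the style of the family $\mathcal C$ in Proposition~\ref{prop:all-rank-one-systems}. I would keep the extra base point fixed, and verify that the restriction to quadrics stays surjective, so that the kernels form a bundle. Whenever the computed $d_3$ of a specific parameter value differs from the generic one, that value is handled by the other of these two targets. Any leftover special values of $\lambda,\mu$ are absorbed by closedness of components.

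The main obstacle is the last step, making sure the whole stratum is covered and not just one orbit. For each normal form I would compute the Lie algebra of the stabilizer by the rank calculation above and compare the orbit dimension plus the number of free parameters ($\lambda$, $\mu$, or the residual data) with the dimension of the locus of such systems. If the orbit of a single witness is not dense, I would instead give the degeneration with the parameter kept symbolic, as was done with $c$ in Proposition~\ref{prop:all-plane-systems}, and verify flatness over $\QQ(\lambda)[t]$.
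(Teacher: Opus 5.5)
Your proposal has a genuine gap: it never uses birationality to cut down the normal forms. In the multiplicity-three case this leads the plan to attempt something false, because the parameters $\lambda$ and $\mu$ of Lemma~\ref{lem:H1-normal-forms} are not moduli of (H1) systems.

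For $W_5(\mu)$, work on $u\neq0$ and put $r=v/u$, $z=w/u$, $A=x/u+rz$, $B=y/u-r^2$. The last coordinate becomes $rA+Bz+\mu z^2$. So for $\mu\neq0$ a general target point has two values of $z$, each determining one source point. For $W_4$, solving the first four coordinates on $u\neq0$ reduces the fiber to $(y_0y_3+y_1y_2+y_2^2)r^2-y_4r+\lambda=0$, which has two distinct non-zero roots when $\lambda\neq0$.

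Hence $\lambda\neq0$ or $\mu\neq0$ gives a map of degree two. Such systems are not in $\Bir_2(\PP^4)$, and only $W_4$ with $\lambda=0$ and $W_5(0)$ have to be absorbed. For these two the $2\times4$ determinantal and Pfaffian presentations you anticipated do work.

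Your plan instead tries to place every $W_4(\lambda)$ and $W_5(\mu)$ in $\overline{\mathcal F}_{R_4+q}$ or $\overline{\mathcal F}_{E_5}$, and that cannot succeed. The flat limit of the graph cycles contains the graph of the limit map, so the topological degree cannot increase under specialization. A degree-two system therefore lies in the closure of no birational family. Concretely:
\begin{itemize}
\item $W_5(\mu)$ with $\mu\neq0$ is not a height-three Pfaffian system, since all of those are limits of elliptic quintic systems.
\item $W_4$ with $\lambda\neq0$ has no isolated base point at all. Its fifth generator takes the value $\lambda$ at $u=1$, $x=y=v=w=0$, and its base is supported on $V(u,v,w)\cup V(u,v,y+w)$. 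So there is no ``extra base point'' to keep fixed.
\end{itemize}
Your closing remark that leftover special values are absorbed by closedness is backwards: $\lambda=\mu=0$ are the only values that survive.

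In the multiplicity-two case your smoothing of $Y$ is fine. The flat limit of $V(w,u,v)\sqcup V(w,u-tx,v-ty)$ is $I_Y$, which is essentially the paper's symmetric family. The gap is that you give no reason why the residual data are $p^{\PP^2}+q$. The proof of Lemma~\ref{lem:H1-normal-forms} contains no normal form for $W$ in multiplicity two from which to ``read them off''; it gives only the unmixed part $Y$. $W$ is some five-dimensional subspace of the nine-dimensional $H^0(\I_Y(2))$. Other residual configurations imposing four conditions, such as four reduced points, are excluded only by birationality.

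The missing step is the paper's argument:
\begin{itemize}
\item The first-jet image bundle along $L$ is $\OO_L(1)\oplus\OO_L$, so a general pencil cuts out a $(2,2)$ surface $X$ with a single $A_1$ point on $L$, at which $Y$ is Cartier.
\item Then $Y^2=-2$, and $D=2H-Y$ has $D^2=6$ and $K_X\cdot D=-6$.
\item The Noether equalities $\sum m_i^2=5$, $\sum m_i=3$ then force the multiplicities $(2,1)$.
\end{itemize}
Once each system is known to be $H^0(\I_{Y\cup p^{\PP^2}\cup q}(2))$, the fixed smoothing of $Y$, with the residual points prolonged as disjoint sections, places each system individually in the closure. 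Your $\PGL_5$ orbit-dimension count is then unnecessary. Without that identification it also could not be set up, since you have shown neither that the stratum is irreducible nor what its dimension is.
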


\begin{proof}
Assume first that the multiplicity along $L$ is two and let $Y$ be the
ribbon. The first-jet image bundle along $L$ is
$\mathcal O_L(1)\oplus\mathcal O_L$: in the coordinates used in
Lemma~\ref{lem:H1-normal-forms} it is generated by the $u$-direction and by
$xv+yw$. Hence the determinant of the first jets of a general pencil is a
general section of $\mathcal O_L(1)$. It has one simple zero $p$ and is
non-zero elsewhere, so the complete intersection $X$ cut out by the pencil
is smooth along $L\setminus\{p\}$.

Work on the chart $x=1$, write $\tau=y/x$, and put $z=v-\tau u$. Locally
$I_Y=(w,z,u^2)$. At the simple zero of the determinant, row operations on the
pencil and elimination of the equation having non-zero first jet reduce the
other equation to
$$
\tau z+a u^2+\text{terms of order at least three},
$$
where $a\neq0$ for a general pencil. Indeed, the local base ideal modulo the
first equation is $(z,u^2)$, so the vanishing of $a$ is a proper closed
condition on the pencil. The quadratic form $\tau z+a u^2$ is
non-degenerate. The formal Morse lemma, followed by rescaling, therefore
gives
$$
\widehat{\mathcal O}_{X,p}\simeq
\kk[[\tau,z,u]]/(\tau z+u^2).
$$
Inside this ring the ideal $(z,u^2)$ equals $(z)$, because
$u^2=-\tau z$. Thus $Y$ is Cartier on $X$ and
$$
I_{Y/X}=(z),\qquad
\widehat{\mathcal O}_{Y,p}\simeq\kk[[\tau,u]]/(u^2).
$$
Consequently a general pencil cuts out a normal $(2,2)$ surface with exactly
one $A_1$ singularity along $Y$.
Since $H\cdot Y=2$ and $p_a(Y)=-1$, adjunction gives $Y^2=-2$. The residual
net has class $D=2H-Y$, hence $D^2=6$ and $K_X\cdot D=-6$. The minimal
resolution of the $A_1$ point is crepant. After resolving the residual base
points, the Noether equalities are
$
\sum m_i^2=5,\qquad \sum m_i=3.
$
Their only positive integral solution is $(2,1)$. For a general system these
are distinct proper points, so the residual scheme is
$p^{T_pX}\cup\{q\}=p^{\PP^2}+q$.

The required smoothing is explicit. With parameter $\tau$, set
\begin{align*}
\mathcal I_\tau=(w,\ yu-xv,\ u^2-\tau^2x^2,
 uv-\tau^2xy,\ v^2-\tau^2y^2).
\end{align*}
In the polynomial ring $\kk[\tau,x,y,u,v,w]$ one has the exact equality
$$
\mathcal I_\tau=
(w,u-\tau x,v-\tau y)\cap(w,u+\tau x,v+\tau y).
$$
At $\tau=0$ this is $I_Y$, while for $\tau\neq0$ the two ideals define skew
lines. Moreover, for the lexicographic order
$u>v>w>x>y>\tau$ the five displayed generators are a Gr\"obner basis with
initial ideal
$$
(u^2,uv,uy,v^2,w),
$$
which is independent of $\tau$. Every graded piece of the quotient is
therefore a free $\kk[\tau]$-module, and the family is flat with Hilbert
polynomial $2m+2$. The double and simple residual points may be prolonged
as disjoint sections. Since all fibers impose ten independent conditions on
quadrics on a non-empty open set, cohomology and base change gives the
closure relation with the family
$L_1\sqcup L_2+p^{\PP^2}+q$. Collisions and infinitely near residual points
lie in the same closure.

Consider $W_4$. Set $q_i=\rho y_i$ and work on $u\neq0$. If
$r=\rho/u^2$, the coordinates $uw,uv,ux+vw,uy-v^2$ give
$$
\frac wu=ry_2,\qquad \frac vu=ry_3,\qquad
\frac xu=ry_0-r^2y_2y_3,\qquad
\frac yu=ry_1+r^2y_3^2.
$$
The last coordinate becomes
$
(y_0y_3+y_1y_2+y_2^2)r^2-y_4r+\lambda=0.
$
If $\lambda\neq0$, this equation has two distinct non-zero roots on a dense
open subset of the target, and each root gives one source point. Therefore, the map
has degree two. Hence $\lambda=0$. For $\lambda=0$, the six quadrics containing the curve $C$ are the minors of
$$
\begin{pmatrix}
u&-v&-y-w&x\\0&u&v&w\end{pmatrix}.
$$
Its radical is $(u,v,w)\cap(u,v,y+w)$, so the ideal has height
three and the Eagon--Northcott resolution is
$$
0\longrightarrow\OO_{\PP^4}(-4)^{\oplus3}\longrightarrow
\OO_{\PP^4}(-3)^{\oplus8}\longrightarrow
\OO_{\PP^4}(-2)^{\oplus6}\longrightarrow\I_C\longrightarrow0.
$$
Therefore, $P_C(m)=4m+1$ and $h^0(\I_C(2))=6$. The missing minor $u^2$ does not
vanish at $q=[0:0:1:0:0]$, while the five generators of $W_4$ do. Therefore
$W_4=H^0(\I_{C\cup\{q\}}(2))$. A general deformation of the matrix has
height three and defines a rational normal quartic. The relative
Eagon--Northcott complex gives flatness, so $W_4$ belongs to the closure of
the $R_4+q$ component.

For $W_5(\mu)$ work on $u\neq0$ and set $r=v/u$, $z=w/u$. After
dividing the three remaining coordinates by $u^2$, set
$A=x/u+rz$, $B=y/u-r^2$, and let $E$ denote the last coordinate. Then
$E=rA+Bz+\mu z^2$. If $\mu\neq0$, a general target has two distinct
values of $z$, each determining $x/u,y/u$ uniquely. Therefore, birationality gives
$\mu=0$. Finally, $W_5=W_5(0)$ is generated by the principal Pfaffians of
$$
\begin{pmatrix}
0&0&0&u&v\\
0&0&u&-w&x\\
0&-u&0&v&y\\
-u&w&-v&0&0\\
-v&-x&-y&0&0
\end{pmatrix}.
$$
Its radical is $(u,v,w)\cap(u,v,y)$, so the Pfaffian ideal has
height three. The
Buchsbaum--Eisenbud complex is therefore exact and gives
$$
0\longrightarrow\OO_{\PP^4}(-5)\longrightarrow
\OO_{\PP^4}(-3)^{\oplus5}\longrightarrow
\OO_{\PP^4}(-2)^{\oplus5}\longrightarrow\I\longrightarrow0.
$$
Hence its Hilbert polynomial is $5m$ and its quadratic part is exactly $W_5$.
For a general alternating deformation the Pfaffian ideal still has height
three; the same relative complex gives a flat family whose general fiber is a
smooth elliptic normal quintic. Therefore, $W_5$ belongs to the closure of the
$E_5$ component.
\end{proof}

Now, consider (H2). Set $C=V(x_0x_2-x_1^2,u,v)$.

\begin{Lemma}\label{lem:H2-normal-form}
After linear changes every system in $\text{(H2)}$ has the form
$$
\begin{aligned}
Q_0&=x_0u+x_1v+a(u,v),&
Q_1&=x_1u+x_2v+b(u,v),\\
Q_2&=c(u,v),&Q_3&=d(u,v),\\
Q_4&=x_0x_2-x_1^2+x_0n_0+x_1n_1+x_2n_2+e,
\end{aligned}
$$
where $a,b,c,d,e$ are quadratic and the $n_i$ are linear in $u,v$.
\end{Lemma}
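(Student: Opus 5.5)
I would model the argument on the proof of Lemma~\ref{lem:H1-normal-forms}, replacing the line by the conic. Start from the output of Proposition~\ref{prop:four-residual-branches} in case (H2). There is a hyperplane $D\subset\PP(W)$ occurring with multiplicity at least two in $F_W$. Its general member has rank four and its vertex lies on a smooth conic $B\subset\Bs(W)$. For $p\in B$ one has $r_p(W)=2$, and the planes $\PP(K_p)$ form a pencil inside $D$. Let $\langle Q_2,Q_3\rangle$ be the axis of this pencil. These two quadrics are singular at every point of $B$, so their singular loci contain the plane $\langle B\rangle$. Choose coordinates with $\langle B\rangle=V(u,v)$ and $B=V(u,v,x_0x_2-x_1^2)$. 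Then $Q_2,Q_3$ are cones with vertex containing the plane $u=v=0$, hence lie in $\kk[u,v]_2$. This gives $Q_2=c(u,v)$ and $Q_3=d(u,v)$.

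Next I treat the first jets along $B$. Every $Q\in W$ vanishes on $B$. Its linear part in $u,v$ along the plane is therefore a map $\OO_B\to N^*$, and restricted to $\langle B\rangle$ it has the form $u\,\alpha(x)+v\,\beta(x)$ with $\alpha,\beta$ linear in $x_0,x_1,x_2$. The first-jet image along $B$ has rank two and contains the conormal direction $d(x_0x_2-x_1^2)$ of $B$ inside the plane. Modulo $W\cap\kk[u,v]_2$ and the fixed conic equation, the normal first jets therefore give a map from a two-dimensional space of quadrics to $H^0(\OO_B(1))^{\oplus2}$, which is compatible with the rank-one condition at each point.

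The key step is to show that one can choose $Q_0,Q_1$ whose normal jets are $(x_0,x_1)$ and $(x_1,x_2)$, the standard $2\times2$ Hankel pattern on the conic. Here I would use that at $p=[s^2:st:t^2]\in B$ the kernel $K_p$ is the plane $\langle Q_2,Q_3,Q_p\rangle$ with $Q_p$ moving linearly in $[s:t]$. This holds because the planes $\PP(K_p)$ form a pencil, so $p\mapsto\PP(K_p)$ is a degree-one map $B\cong\PP^1\to\PP^1$. Consequently, the map $W\to\mathfrak m_p/\mathfrak m_p^2$ restricted to the $u,v$ directions has a kernel that varies linearly in $s,t$, and the jets of $Q_0,Q_1$ must form a $2\times2$ matrix of linear forms in $s^2,st,t^2$ whose determinant vanishes at the appropriate order. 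Normalizing by $\mathrm{GL}_2$ on $\langle u,v\rangle$, by row operations, and by the automorphisms of the conic, I expect this matrix to be forced into the form $\begin{pmatrix}x_0&x_1\\x_1&x_2\end{pmatrix}$, which gives $Q_0=x_0u+x_1v+\cdots$ and $Q_1=x_1u+x_2v+\cdots$. The fifth generator $Q_4$ is the one whose restriction to the plane cuts out $B$. After scaling, its restriction to the plane is $x_0x_2-x_1^2$.

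Finally, I remove the mixed terms. The remaining terms are bilinear $x_iu$ and $x_iv$ terms in $Q_0,Q_1$, together with the pure $u,v$ quadratic terms. Subtracting multiples of $Q_2,Q_3$ and changing $x_i\mapsto x_i+(\text{linear in }u,v)$ absorbs the extra bilinear terms of $Q_0,Q_1$. This leaves $a,b$ quadratic in $u,v$, while $Q_4$ acquires the general tail $x_0n_0+x_1n_1+x_2n_2+e$. I expect the main obstacle to be the Hankel normalization. It has to exclude the degenerate alternatives in which the jet matrix has constant rank, since those would force a plane in the base or a fixed component, as in the case $k\in\langle u\rangle$ of Lemma~\ref{lem:H1-normal-forms}. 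The first thing I would try is to show that in those alternatives the plane $V(u,v)$ lies in $\Bs(W)$, contradicting $\dim\Bs(W)=1$.
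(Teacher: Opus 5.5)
Your outline has the same architecture as the paper's proof. The axis $\langle Q_2,Q_3\rangle$ consists of binary quadrics in $u,v$. The first jets along $B$ are sections of $N^*_{B/\PP^4}(2)\simeq\OO\oplus\OO(2)^{\oplus2}$. The jets of the rest of $D$ form a rank-one $2\times2$ matrix of binary quadrics, to be normalized to $\bigl(\begin{smallmatrix}s^2&st\\ st&t^2\end{smallmatrix}\bigr)$, and $Q_4$ carries the conic equation.

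\textbf{The isolation step rests on a false claim.} You assert that the jet image along $B$ contains $d(x_0x_2-x_1^2)$, and it need not. Take $W=\langle x_0u+x_1v,\,x_1u+x_2v,\,u^2,\,uv,\,x_0x_2-x_1^2+x_0v\rangle$. It lies in (H2): its base is $B$ plus the reduced point $[0:0:0:0:1]$, all jet ranks are $\ge2$, and its discriminant is divisible by $y_4^2$. It is birational: on $u=1$, after eliminating $v,x_0,x_2$, the unknown $x_1$ satisfies a linear equation. Yet its jet image at $[1:0:0:0:0]$ is $\langle du,\,dx_2+dv\rangle$, which does not contain $dx_2$.

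What is true, and sufficient, is the following. Write $Q=\lambda(Q)(x_0x_2-x_1^2)+u\alpha_Q(x)+v\beta_Q(x)+(\text{binary quadric})$ for $Q\in H^0(\I_B(2))$; the constant $\lambda(Q)$ is the $\OO$-component of the jets of $Q$. Since $Q_p$ has zero jet at $p$, $\lambda$ vanishes on every $K_p$, hence on $D$. On the other hand $\lambda\not\equiv0$ on $W$, since otherwise $V(u,v)\subset\Bs(W)$. So $\ker\lambda=D$, and $D/\langle Q_2,Q_3\rangle$ is the two-dimensional space you want.

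\textbf{The decisive normalization is only announced} (\emph{I expect\dots}), and it closes by a different mechanism from the one you anticipate.
\begin{enumerate}
\item Degree one of $p\mapsto\PP(K_p)$ needs more than \emph{the planes form a pencil}. A general member of $D$ has rank four, hence a unique vertex, so a general plane of the pencil equals $\PP(K_p)$ for only one $p$.
\item Choose a basis $Q_0,Q_1$ of $D/\langle Q_2,Q_3\rangle$ with $K_p/\langle Q_2,Q_3\rangle=\langle tQ_0-sQ_1\rangle$. The rows $R_0,R_1$ of $(u,v)$-jets then satisfy $tR_0=sR_1$. Hence $R_0=s\ell$ and $R_1=t\ell$ with $\ell=(\ell_u,\ell_v)$ linear in $s,t$. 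This rules out factor degrees $(0,2)$ and $(2,0)$; the paper phrases it as both factors having degree one.
\item The remaining degeneration is a common zero of $\ell_u,\ell_v$. It is excluded by the standing hypothesis $r_p(W)\ge2$, not by a plane in the base: at such a zero all of $D$ would lie in $K_p$, forcing $r_p(W)\le1$.
\item A change of basis of $\langle u,v\rangle$ gives $\ell=(s,t)$. Linear forms in $x_0,x_1,x_2$ restrict isomorphically to binary quadrics on $B$, so the bilinear part is determined by the jets. This gives exactly $Q_0=x_0u+x_1v+a$ and $Q_1=x_1u+x_2v+b$.
\end{enumerate}
For the same reason your final step is superfluous: there are no extra bilinear terms to absorb. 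Any $Q_4$ with $\lambda(Q_4)=1$ already has the displayed form, since it lies in $H^0(\I_B(2))$.
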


\begin{proof}
Parametrize the vertex conic by
$
[s:t]\mapsto[s^2:st:t^2:0:0].
$
Its first-jet bundle for quadrics is
\begin{align*}
N^*_{C/\PP^4}(2)
&\simeq
\bigl(\mathcal O_{\PP^1}(-4)\oplus
      \mathcal O_{\PP^1}(-2)^{\oplus2}\bigr)
      \otimes\mathcal O_{\PP^1}(4)\\
&\simeq\mathcal O\oplus\mathcal O(2)^{\oplus2}.
\end{align*}
The repeated-hyperplane incidence says that the component in the two normal
directions to the plane of $C$ has rank one at every $[s:t]$. Thus its
$2\times2$ matrix of binary quadrics has rank one and no common zero. Factoring
its rows and columns, the two factors must both have degree one; after changes
of bases the matrix is
$
\begin{pmatrix}s^2&st\\ st&t^2\end{pmatrix}.
$
Equivalently, two generators have first jets
$
s(su+tv),\qquad t(su+tv),
$
which lift to $x_0u+x_1v$ and $x_1u+x_2v$. The remaining independent first jet must have non-zero component in the
trivial summand $\mathcal O$. Indeed, a quotient of positive degree of
$\mathcal O(2)^{\oplus2}$ has a zero for every section, so without the
trivial component the total first-jet rank would drop to one somewhere.
Normalize this nowhere-zero component to the equation
$x_0x_2-x_1^2$ of the conic. The other two
generators have zero first jets and are therefore binary quadrics in $u,v$.
Finally, adding the first four generators to the fifth and translating the
normal coordinates removes all terms except the displayed
$x_i n_i$ and $e$. This gives the asserted normal form.
\end{proof}

The first jets define a double structure $Y$ on $C$ with
$$
0\longrightarrow\OO_{\PP^1}(-1)\longrightarrow\OO_Y
\longrightarrow\OO_C\longrightarrow0
$$
and
$
I_Y=(x_0x_2-x_1^2,x_0u+x_1v,x_1u+x_2v,u^2,uv,v^2).
$
The graded parametrization
$x_0=s^2$, $x_1=st$, $x_2=t^2$, $u=\epsilon t$, $v=-\epsilon s$,
$\epsilon^2=0$, identifies the degree $m$ part of the coordinate ring with
$H^0(\OO_{\PP^1}(2m))\oplus\epsilon H^0(\OO_{\PP^1}(2m-1))$. Hence its
Hilbert function is $4m+1$ for $m\geq0$ and the restriction maps from
$H^0(\OO_{\PP^4}(m))$ are surjective. Therefore, $Y$ is ACM. By the main theorem
of \cite{MartinDeschampsPiene97}, the ACM curves with Hilbert polynomial
$4m+1$ form an irreducible locus containing the rational normal quartics.

\begin{Proposition}\label{prop:H2-absorbed}
Every system in (H2) belongs to the closure of the component with general base
$R_4+q$ or $E_5$.
\end{Proposition}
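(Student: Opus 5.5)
Starting from the normal form of Lemma~\ref{lem:H2-normal-form}, I would first restrict the free data using birationality and the absence of a fixed surface, and then split according to the pair of binary quadrics $c,d$ in $u,v$. The first four generators vanish on the double structure $Y$, since $Q_2,Q_3\in(u,v)^2$ and $Q_0,Q_1\in I_Y$. The residual behaviour is governed by $Q_4$ together with the pencil $\langle c,d\rangle\subset\kk[u,v]_2$.

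Since $\dim\langle Q_2,Q_3\rangle=2$, this pencil is one of three types:
\begin{itemize}
\item[(a)] two distinct reduced common roots are excluded, because $\langle c,d\rangle$ then spans a net inside $\kk[u,v]_2$ of dimension two;
\item[(b)] generically, $c,d$ have no common root, so $\langle c,d\rangle$ is a general pencil of binary quadrics;
\item[(c)] $c,d$ share a linear factor, say $c=u\ell_1$ and $d=u\ell_2$.
\end{itemize}
In case (c) the hyperplane-type plane $u=0$ through $C$ meets the base in extra structure. I would use the unique-fiber description as in the proof of Proposition~\ref{prop:H1-absorbed}: on $u\neq0$ put $r=v/u$, recover the source from the four coordinates $Q_0,\dots,Q_3$, and read $Q_4$ as an equation in the remaining variable. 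Requiring degree one should kill the analogue of $\lambda$ or $\mu$ and leave finitely many normal forms.

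For each surviving normal form I would identify the saturated base ideal and realize it determinantally. In case (c) I expect the six quadrics through the curve to be the $2\times2$ minors of a $2\times4$ matrix of linear forms, giving an ACM curve with Hilbert polynomial $4m+1$ through $Y$ plus one point. Then $W=H^0(\I_{C'\cup\{q\}}(2))$ exactly as for $W_4$. In case (b), or wherever the curve part has degree five, I expect a $5\times5$ skew matrix whose Pfaffians give Hilbert polynomial $5m$, as for $W_5$.

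The closure statement would then follow by deforming the matrix. A general deformation of the $2\times4$ matrix is a rational normal quartic, with flatness from the relative Eagon--Northcott complex. Alternatively, one can invoke the irreducibility of the ACM locus with Hilbert polynomial $4m+1$ from \cite{MartinDeschampsPiene97}, which contains $Y\cup$(residual) and the rational normal quartics. I would keep the point $q$ fixed as a section and use cohomology and base change, since the ten conditions stay independent on an open set. For the Pfaffian case, the relative Buchsbaum--Eisenbud complex gives a flat family to elliptic normal quintics, so $W$ lies in $\overline{\mathcal F}_{E_5}$.

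I expect the main obstacle to be the case analysis of the higher-order terms. One must show, after using the translations already performed, that $a,b,e,n_i$ can be normalized away up to finitely many parameters, and that each non-birational parameter value is excluded by a two-root degree count. One must also check that each surviving form really is the quadratic part of a minor or Pfaffian ideal of the correct height, by computing the radical as in $W_4$ and $W_5$. I would first handle the generic stratum with a computer-checkable Gr\"obner basis, then treat the degenerations by semicontinuity inside the same determinantal families.
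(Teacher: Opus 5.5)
There is a genuine gap: your case (b) is mis-assigned. You treat coprime $c,d$ as the generic stratum to be absorbed into $\overline{\mathcal F}_{E_5}$. In fact this case never occurs for a birational $W$, and this is exactly where a degree count is needed, rather than on $\lambda,\mu$-type parameters.

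\emph{Why case (b) is impossible.} If $c,d$ are coprime, then $[u:v]\mapsto[c:d]$ is a degree-two morphism $\PP^1\to\PP^1$, so a general target point determines two values of $[u:v]$. Fix one of them. On the linear space of source points with that $[u:v]$, the conditions coming from $Q_0,Q_1$ are linear in $(x_0,x_1,x_2)$. They cut out an affine line with direction $(v^2,-uv,u^2)$. This direction is isotropic for $x_0x_2-x_1^2$, so $Q_4$ is affine-linear on that line and generically selects exactly one point. Hence $\vf_W$ has degree two.

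Your Pfaffian expectation in (b) is also false. There $V(c,d)$ is the plane of $C$. At a general point of $C$, eliminating $x_2$ via $Q_4$ and $u$ via $Q_0$ leaves the local ideal $(v^2)$, because $c,d$ have no common root. So the base is a double structure on $C$ of degree four, and it cannot have Hilbert polynomial $5m$.

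Both target components actually come from your case (c), through a dichotomy you do not identify. Normalize $c=u^2$, $d=uv$. On the hyperplane $u=0$ one has $Q_0=v(x_1+a_{vv}v)$ and $Q_1=v(x_2+b_{vv}v)$. So away from the plane of $C$, the base lies on a line $R$ meeting $C$ at one point $p$.
\begin{itemize}
\item If $Q_4|_R\neq0$, its divisor is $p+q$ and the base is $Y\cup\{q\}$. Here $Y$ itself (not $Y$ union a residual piece) is ACM with Hilbert polynomial $4m+1$. Irreducibility of that ACM locus, a disjoint section for $q$, and cohomology and base change then give $\overline{\mathcal F}_{R_4+q}$.
\item If $Q_4|_R=0$, the base is $Y\cup R$. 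Its ideal $(u^2,uv,x_0u+x_1v,x_1u+x_2v,x_0x_2-x_1^2)$ is the principal Pfaffian ideal of a $5\times5$ skew matrix of linear forms, with Hilbert polynomial $5m$. This, not case (b), is the $E_5$ case.
\end{itemize}
No normalization of $a,b,e$ and no further parameter elimination is needed.

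Your first guess in (c) also fails for $Y$: its quadrics are not the $2\times2$ minors of a $2\times4$ linear matrix. Restrict such a matrix to the plane $u=v=0$; some minor is a multiple of the conic, so two columns can be normalized to $\begin{pmatrix}x_0&x_1\\x_1&x_2\end{pmatrix}$ and the other two made to have entries in $\langle u,v\rangle$. Comparing the bidegree-$(1,1)$ parts of the remaining minors with $\langle x_0u+x_1v,x_1u+x_2v\rangle$ then forces the last two columns to be proportional. So the ACM-irreducibility argument you list as an alternative is the one that works.

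Finally, your stated reason for excluding (a) does not make sense. The correct reason is simply that a binary quadric is determined up to scalar by its roots, so two independent ones cannot share two distinct roots.
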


\begin{proof}
If $c,d$ are coprime, $[u:v]\mapsto[c:d]$ is a morphism of degree two.
For a fixed value of $[u:v]$, the first two equations leave an affine line of
solutions in $(x_0,x_1,x_2)$ with direction
$
(v^2,-uv,u^2).
$
The quadratic form $x_0x_2-x_1^2$ vanishes on this direction, because
$v^2u^2-(-uv)^2=0$. Hence $Q_4$ restricts linearly to that affine line and,
on a dense open set, selects a unique source point. A general target has two
preimages under $[u:v]\mapsto[c:d]$, and each gives one source point.
Therefore the map has degree two.

Hence $c,d$ have a common linear factor. Since they are independent, after
changes one may set $c=u^2$, $d=uv$. On $u=0$ the residual base is contained
in a line $R$ meeting $C$ at one point $p$. If $Q_4|_R\neq0$, its divisor is $p+q$ and the base is
$Y\cup\{q\}$. The locus of ACM curves with Hilbert polynomial $4m+1$ is
irreducible by \cite{MartinDeschampsPiene97}; its smooth locus is non-empty,
since it contains rational normal quartics, and is therefore dense. Thus
there is a flat family deforming $Y$ to a rational normal quartic. After a
finite base change, the isolated point $q$ prolongs to a section disjoint from
the curve. The restriction maps on quadrics are surjective for both the
special ACM curve and a rational normal quartic, so cohomology and base change
identifies the limiting five-dimensional complete systems. This proves the
closure relation with $R_4+q$.

If $Q_4|_R=0$, coordinate changes give
$
I_{Y\cup R}=(u^2,uv,x_0u+x_1v,x_1u+x_2v,x_0x_2-x_1^2).
$
These are the principal Pfaffians of
$$
\begin{pmatrix}
0&0&x_1&-x_0&v\\
0&0&x_2&-x_1&-u\\
-x_1&-x_2&0&u&0\\
x_0&x_1&-u&0&0\\
-v&u&0&0&0
\end{pmatrix}.
$$
The ideal has height three. The Buchsbaum--Eisenbud resolution gives Hilbert
polynomial $5m$ and shows that the five displayed quadrics are the complete
quadratic system. A general alternating deformation remains of height three,
and the relative resolution gives a flat family with general fiber a smooth
elliptic normal quintic.
\end{proof}


\begin{Lemma}\label{lem:Q-ruling-normal-forms}
In branch \textup{(Q)}, the reduced vertex curve is a line. After source and
target changes, with that line equal to $B=V(u,v,z)$, the following first-jet
normal forms hold.
\begin{enumerate}
\item If the parameter quadric has rank four, then
\begin{align*}
Q_0&=x_1u+a,&Q_1&=-x_0u+b,\\
Q_2&=x_1v+c,&Q_3&=-x_0v+d,&Q_4&=e,
\end{align*}
where $a,b,c,d,e\in\kk[u,v,z]_2$.
\item If the parameter quadric has rank three, then
\begin{align*}
Q_0&=x_1u+a,&Q_1&=-x_0u+x_1v+b,&Q_2&=-x_0v+c,\\
Q_3&=d,&Q_4&=e,
\end{align*}
where $a,b,c,d,e\in\kk[u,v,z]_2$.
\end{enumerate}
\end{Lemma}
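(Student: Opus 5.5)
We follow the (H1)/(H2) analysis, starting from the set-up of Proposition~\ref{prop:four-residual-branches}. Let $D=V(g)\subset\PP(W)$ be the multiple irreducible quadric. Its general member $Q$ has rank four, with vertex $p=\operatorname{Vert}(Q)\in\Bs(W)$ and $r_p(W)=2$. The planes $\PP(K_p)$, with $K_p=\ker(W\to\mathfrak m_p/\mathfrak m_p^2)$, form a one-parameter family ruling $D$, indexed by the reduced vertex curve $B$. Since $D$ has rank three or four, the ruling by planes of a quadric in $\PP^4=\PP(W)$ is a single conic's worth (rank three: a cone over a smooth conic with vertex a line, planes through the vertex line) or a $\PP^1$-family (rank four: cone over $\PP^1\times\PP^1$, planes spanned by the vertex point and lines of one ruling). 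In both cases the parameter curve is $\PP^1$ and $B$ is its image.

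\textbf{Step 1: $B$ is a line.} In the rank-four case, two distinct planes of the ruling meet exactly in the vertex point $v\in\PP(W)$. So the single quadric $Q_v$ is singular at every $p\in B$, and $B$ lies in the singular locus of $Q_v$, a linear space. In the rank-three case, all planes contain the vertex line $\ell\subset\PP(W)$. Then a pencil of quadrics is singular along $B$, and $B$ lies in the common singular space, which has dimension at most two, as in the (H1) argument. If $B$ were a plane curve of degree at least two, restricting $W$ to its plane would produce a fixed plane or violate $r_p\ge2$, exactly as in Proposition~\ref{prop:four-residual-branches}. I would therefore argue that a non-linear $B$ spans a plane $\Pi$ on which all members of $W$ vanish (they vanish on $B$ and, using the ruling structure, their restrictions to $\Pi$ have enough vanishing), forcing a surface in the base, which was excluded. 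Hence $B=V(u,v,z)$ with coordinates $x_0,x_1$ on $B$.

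\textbf{Step 2: first-jet normal form.} Parametrize $B$ by $[s:t]\mapsto[s:t:0:0:0]$. The first-jet map restricted along $B$ is a map $W\otimes\OO_{B}\to N^*_{B/\PP^4}(2)\simeq\OO_{\PP^1}(1)^{\oplus3}$ of constant rank two, whose kernel line-bundle at $p$ is the plane $\PP(K_p)$. Its image is a rank-two subbundle. Rank four versus rank three of $D$ corresponds to how the kernel planes move. In the rank-four case the kernel contains the fixed vector $v$ (giving $Q_4=e$ with zero jets along $B$), and the remaining $4\times$ map, after quotienting, is the $2\times2$ ``$\PP^1\times\PP^1$'' matrix $\begin{pmatrix}x_1u&x_1v\\-x_0u&-x_0v\end{pmatrix}$, i.e.\ the Segre-type factorization (linear form in $x$) $\otimes$ (normal direction), with the jets confined to $\langle u,v\rangle$. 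Any $z$-component is removed by a source change, since the image subbundle has degree two and is trivializable as $\OO(1)^{\oplus2}\subset\OO(1)^{\oplus3}$. In the rank-three case the kernel contains the fixed two-dimensional space $\ell$ (giving $Q_3=d$, $Q_4=e$), and the remaining three generators give a rank-two jet map whose kernel moves along a conic. Solving, as in Lemma~\ref{lem:H2-normal-form}, by factoring the binary-form matrix yields the displayed $3$-term chain $x_1u,\,-x_0u+x_1v,\,-x_0v$. Finally, the higher terms are absorbed into quadratic forms $a,\dots,e\in\kk[u,v,z]_2$ by adding multiples of $x_iu$, $x_iv$ through translations of the normal coordinates.

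The main obstacle is Step 1: excluding a conic vertex curve in the rank-three case. I would first try the singular-space intersection argument, and if the two quadrics of a general pencil through $\ell$ have too small a common singular locus, I would instead use the jet-bundle degree count. The image of $\OO(?)$ would then have to be a quotient of $N^*_{B}(2)=\OO(2)\oplus\OO(4)^{\oplus2}$ realizing a conic ruling, and the proof would aim to show this forces a plane in $\Bs(W)$.
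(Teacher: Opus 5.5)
\textbf{Verdict.} There is a genuine gap: Step 1 (that $B$ is a line) is not proved. Your Step 2 agrees with the second half of the paper's proof. Once $B$ is a line, the jet image $\mathcal Q=(W\otimes\OO_{\PP^1})/\mathcal K$ is a degree-two, rank-two subbundle of $N^*_{B/\PP^4}(2)\simeq\OO(1)^{\oplus3}$. It is therefore $\OO(1)^{\oplus2}$ and constant after twisting, and a constant change of normal coordinates gives the two forms.

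\textbf{Where Step 1 fails.} In the rank-four case you only get $B\subset\operatorname{Sing}(Q_v)$ for the single quadric $Q_v$ at the vertex of $D$. If $Q_v$ is a double hyperplane, that singular locus is a $\PP^3$, so planarity of $B$ is not established.

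In the rank-three case planarity is fine, since two independent quadrics are singular along $B$. But the argument of Proposition~\ref{prop:four-residual-branches} only removes plane curves of degree at least three, by B\'ezout. It cannot exclude a conic: conics genuinely occur as vertex curves in (H2). So ``exactly as in'' fails exactly where you need it. The appeal to restrictions to $\Pi$ having ``enough vanishing'' is not an argument, since a quadric can contain a plane conic without containing its plane. What separates (Q) from (H2) is the ruling, and your Step 1 never uses it quantitatively.

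\textbf{The paper's route.} The paper handles Step 1 by a degree comparison along the normalization $\nu\colon\PP^1\to B$, with no planarity step. The tautological bundle of either ruling is $\OO(-1)^{\oplus2}\oplus\OO$ or $\OO(-2)\oplus\OO^{\oplus2}$, so the kernel planes have Pl\"ucker degree $2$. This is compared with the $2\times2$ minors of the first-jet matrix with columns $A_ix(t)$. Those minors have degree $2d$ and no common zero because $r_p(W)=2$ along $B$, which gives $d=1$. These minors also carry the Pl\"ucker coordinates of the jet image, so to run the comparison tightly you must control that image.

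\textbf{The conic case.} In both ruling types $\mathcal Q\simeq\OO(1)^{\oplus2}$, not merely of degree two. It must sit as a rank-two \emph{subbundle} (not a quotient) of $N^*_{C/\PP^4}(2)\simeq\OO\oplus\OO(2)^{\oplus2}$, the splitting in Lemma~\ref{lem:H2-normal-form}; your $\OO(2)\oplus\OO(4)^{\oplus2}$ is incorrect. Since $\operatorname{Hom}(\OO(1),\OO)=0$, the map lands in $\OO(2)^{\oplus2}$. It is then a $2\times2$ matrix of linear forms on $\PP^1$, whose determinant vanishes somewhere, contradicting constant rank two.

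\textbf{Planarity in the rank-four case.} This also comes from the ruling. Two distinct ruling planes span $\PP(W)$, so $K_{\nu(a)}+K_{\nu(s)}=W$ for $a\neq s$. Hence the whole jet image at $\nu(s)$ comes from quadrics singular at $\nu(a)$, and so it annihilates $\nu(a)$. Letting $a$ vary, the jet image annihilates the span of $B$, so $B$ is planar. If $B$ spanned a plane, every member of $W$ would restrict to a plane conic singular along $B$, putting that plane in the base.
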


\begin{proof}
Let $\nu:\PP^1\to B$ be the normalization and put
$d=\deg\nu^*\mathcal O_{\PP^4}(1)$. For a rank-four parameter quadric, a
ruling plane has tautological bundle
$
\mathcal K\simeq\mathcal O(-1)^{\oplus2}\oplus\mathcal O;
$
for rank three it has
$
\mathcal K\simeq\mathcal O(-2)\oplus\mathcal O^{\oplus2}.
$
In both cases the Pl\"ucker degree is $2$. If $A_i$ are symmetric matrices for
a basis of $W$ and $x(t)$ represents $\nu(t)$, the first-jet matrix has
columns $A_i x(t)$, whose entries have degree $d$. Its $2\times2$ minors have
degree $2d$ and no common zero, because $r_p(W)=2$ along the vertex curve.
They are the Pl\"ucker coordinates of the kernel plane. Hence $2d=2$, so
$d=1$ and $B$ is a line.

The quotient bundle
$
\mathcal Q=(W\otimes\mathcal O_{\PP^1})/\mathcal K
$
has rank two and degree two. The first-jet map embeds it in
$
N^*_{B/\PP^4}(2)\simeq\mathcal O(1)^{\oplus3}.
$
Since $\mathcal Q$ is globally generated, its two summands have non-negative
degree; the embedding forces each of them to have degree at most one.
Consequently $\mathcal Q\simeq\mathcal O(1)^{\oplus2}$, and, after twisting
by $\mathcal O(-1)$, its image is a constant two-dimensional subspace of
$\mathcal O^{\oplus3}$. A constant change of the normal coordinates and a
change of basis of $W$ therefore put the first jets in the forms
\begin{align*}
&(x_1u,-x_0u,x_1v,-x_0v,0) &&\text{in rank four},\\
&(x_1u,-x_0u+x_1v,-x_0v,0,0) &&\text{in rank three}.
\end{align*}
A quadric with zero first jet along $B$ is a quadratic form in $u,v,z$.
This gives the two displayed normal forms.
\end{proof}

\begin{Proposition}\label{prop:Q-absorbed}
Every system in \textup{(Q)} belongs to the closure of one of the two
components whose general bases are
$
C\sqcup L+q_1+q_2
$
and
$
R_4+q.
$
\end{Proposition}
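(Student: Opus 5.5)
The strategy mirrors the proofs of Propositions~\ref{prop:H1-absorbed} and \ref{prop:H2-absorbed}. First I use the normal forms of Lemma~\ref{lem:Q-ruling-normal-forms} to pin down the generically finite, degree-one systems. Then I identify their base schemes and exhibit explicit flat degenerations from the general members of the two target components.

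\textbf{Step 1: the degree computation.} I treat the rank-four normal form first. On the chart $x_0\neq0$ set $s=x_1/x_0$. The four jets combine into
\[
Q_0+sQ_1 = a+sb,\qquad Q_2+sQ_3=c+sd,
\]
both of which are independent of $x_0$ and $x_1$ up to the quadratic parts. Likewise $Q_4=e(u,v,z)$ depends only on $(u,v,z)$. So the fifth coordinate together with the two combinations depends only on $(s,u,v,z)$. Reading the map as a fibration over the directions $[u:v:z]$, I will show the following. If $a,\dots,e$ are generic, the equations $Q_1=y_1Q_4/y_4$, \dots\ on a fiber yield a quadratic equation in $s$, and hence degree two. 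Exactly as with $\lambda,\mu$ in the (H1) case, birationality then forces a degeneration: $e$ must share a factor with the relevant combinations. Concretely, I expect to show that $e$ must be divisible by a linear form $z'$ in $\langle u,v,z\rangle$, which can be normalized to $u$ or to $z$. In the rank-three form I run the same computation with $s$ parametrizing the conic ruling.

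\textbf{Step 2: identifying the base schemes.} Once the normal form is reduced, I compute the base scheme of each surviving case directly. I expect two outcomes.
\begin{itemize}
\item In the first, the base consists of the double structure on $B$ cut out by the jets (two ruled components), a conic and two points. This should be a degenerate $C\sqcup L+q_1+q_2$: the line $L$ collides with the plane of $C$, or becomes a double line.
\item In the second, the base is an ACM curve with Hilbert polynomial $4m+1$ supported on $B$ and a further component, together with one point. This is the rank-three, rational-normal-quartic-like case.
\end{itemize}
For the second outcome I will write the ideal as the $2\times2$ minors of a $2\times4$ matrix of linear forms, as in the $W_4$ case. Then, exactly as there, Eagon--Northcott and a general deformation of the matrix give a flat family to $R_4$. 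Adding the point as a section and using cohomology and base change puts the system in $\overline{\mathcal F}_{R_4+q}$. For the first outcome I will give an explicit one-parameter family of ideals, analogous to $\mathcal I_\tau$ in the proof of Proposition~\ref{prop:H1-absorbed}, of the form $I_{C_\tau}\cap I_{L_\tau}$, with $L_\tau$ skew to $\langle C_\tau\rangle$ for $\tau\neq0$. I will check flatness by a $\tau$-independent Gröbner initial ideal. The two points are then prolonged as disjoint sections, which gives containment in $\overline{\mathcal F}_{C\sqcup L+q_1+q_2}$.

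\textbf{Main obstacle.} The hardest part will be Step 1: classifying the quadratic parts $a,b,c,d,e$ up to the available coordinate changes, namely translations of $x_0,x_1$ by linear forms in $u,v,z$, the $\mathrm{GL}_2$ acting on $(x_0,x_1)$ compatibly with the ruling, and row operations. The classification must reduce to finitely many normal forms, possibly with parameters that the degree computation kills. My first attempt would be to use translations of $x_0,x_1$ to remove the $u$- and $v$-divisible parts of $a,b,c,d$, reducing everything to terms in $z^2$ and $z\cdot(u,v)$. After that I would impose the degree-one condition coefficientwise. Any leftover case in which the map is birational but the base scheme is not one of the two types above would have to be shown to have a surface in its base, or a rank-one point, and then be handled by Propositions~\ref{prop:all-plane-systems} and \ref{prop:all-rank-one-systems}.
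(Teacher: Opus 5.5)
Your overall plan (normal forms, then a birationality constraint, then explicit degenerations into the two components) matches the paper's architecture. However, both steps as planned have genuine gaps.

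\textbf{Step 1.} The degree is not detected by an equation in $s=x_1/x_0$. On the plane spanned by $B$ and a normal direction $[u:v:z]$, all five quadrics are divisible by the coordinate cutting out $B$. The residual forms are \emph{linear} in the plane coordinates, so each such plane carries at most one preimage and the degree counts planes. For generic $a,\ldots,e$ it is much larger than two.

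The missing idea is the paper's elimination. Put $F=va-uc$ and $G=vb-ud$; then a general fiber is cut out by $(y_0v-y_2u)e-y_4F=0$ and $(y_1v-y_3u)e-y_4G=0$. Birationality therefore forces $\Psi=[ue:ve:F:G]$ to be birational onto a plane. The resulting linear relation lets you normalize $F=0$, and one is left with the plane Cremona map $[ue:ve:G]$, analysed through $h=\deg\gcd(e,G)$:
\begin{itemize}
\item $h=0$ puts the plane $V(u,v)$ in the base;
\item $h=2$ yields $W_1=\langle x_1u,-x_0u+vz,x_1v,-x_0v-z^2,uz+v^2\rangle$.
\end{itemize}
In $W_1$ the form $e=uz+v^2$ is irreducible. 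So your expected conclusion that $e$ acquires a linear factor is false, and it fails precisely for the system that must be placed in $\overline{\mathcal F}_{R_4+q}$.

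Your proposed simplification also cannot work. Translations of $x_0,x_1$ only change $(a,c)$ by $(uR,vR)$ and $(b,d)$ by $(-uS,-vS)$, so they do not remove the $(u,v)$-divisible parts of $a,b,c,d$. The rank-three form needs its own argument, which you do not give. The paper restricts to the conics $V(e-\tau d)$ and uses the moving series $F,dv^2,-duv,du^2$, with $F=v^2a-uvb+u^2c$ there.

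\textbf{Step 2.} The base schemes you predict are not the ones that occur, and prolonging the points as disjoint sections fails exactly in the hard cases.
\begin{itemize}
\item The base of $W_1$ is supported on $B$ together with a twisted cubic meeting it in one point, and there is no isolated point. $W_1$ is a hyperplane in the six-dimensional space of quadrics through this line-plus-cubic curve, cut out by a condition supported on the curve.
\item The rank-three terminal forms $W_{\mathbf r}$ go to $C\sqcup L+q_1+q_2$, not to $R_4+q$. For $r_0r_2\neq0$ their base is supported exactly on the skew lines $B$ and $M=V(x_0,x_1,z)$, with the plane double line $V(u,v,z^2)$ on $B$. So it is the conic that degenerates in its own plane, while $L=M$ stays skew, and both residual points have fallen onto $B$.
\end{itemize}
The paper handles these cases by letting the points run into the curve. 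It uses $q_\tau=[2:1:\tau:\tau^2:\tau^3]$ against a Rees degeneration of the quartic, and points $p_{i,t}$ whose evaluation functionals are rescaled by $t^{-2}$. It then checks directly that the limit of the five-dimensional kernels is the given system. Only in the case $h=1$ are the two points isolated; there your section argument does work, once the conic is smoothed in its plane by $z^2-\tau x_0x_1$. Elsewhere, without such a limit-of-conditions computation, cohomology and base change with disjoint sections cannot reach $W_1$ or $W_{\mathbf r}$.
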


\begin{proof}
By Lemma~\ref{lem:Q-ruling-normal-forms}, the vertex curve is a line. Assume
first that the parameter quadric has rank four. Write
\begin{align*}
Q_0&=x_1u+a,&Q_1&=-x_0u+b,\\
Q_2&=x_1v+c,&Q_3&=-x_0v+d,&Q_4&=e,
\end{align*}
with $a,b,c,d,e\in S_2$, where $S=\kk[u,v,z]$, and set
$
F=va-uc,\qquad G=vb-ud.
$
For a general target point $y=[y_0:\dots:y_4]$, a normal direction
$[u:v:z]$ with $e\neq0$ gives a point of the inverse image precisely when
\stepcounter{thm}\begin{equation}\label{eq:Q-rank-four-inverse-equations}
\begin{aligned}
(y_0v-y_2u)e-y_4F&=0,\\
(y_1v-y_3u)e-y_4G&=0.
\end{aligned}
\end{equation}
Once $[u:v:z]$ is known and $(u,v)\neq(0,0)$, the four original linear
equations determine $x_0,x_1$ uniquely.

Consider the rational map
\stepcounter{thm}\begin{equation}\label{eq:Q-plane-map-four-coordinates}
\Psi:\PP^2\dashrightarrow\PP^3,
\qquad [u:v:z]\longmapsto[ue:ve:F:G].
\end{equation}
The two equations in \eqref{eq:Q-rank-four-inverse-equations} say that
$\Psi([u:v:z])$ lies on the line
\begin{align*}
y_0X_1-y_2X_0-y_4X_2&=0,\\
y_1X_1-y_3X_0-y_4X_3&=0.
\end{align*}
As $y$ varies with $y_4\neq0$, these are exactly the lines in the dense open
chart of $\Gr(1,3)$ which are graphs over the $(X_0,X_1)$-plane. A general
such line avoids the indeterminacy locus of $\Psi$ and the fixed line
$X_0=X_1=0$. Therefore, the number of points of a general inverse image is
$
\deg(\Psi)\deg\overline{\Psi(\PP^2)}.
$
Birationality of the original map forces both factors to be one. Hence
$\Psi$ is birational onto a plane.

There is consequently a relation
$
\lambda_0ue+\lambda_1ve+\lambda_2F+\lambda_3G=0,
$
with $(\lambda_2,\lambda_3)\neq(0,0)$. A simultaneous linear change of the
coordinates $x_0,x_1$ and of the two pairs $(Q_0,Q_2)$, $(Q_1,Q_3)$ makes the
last two coefficients $(1,0)$. Adding constant multiples of $Q_4$ to
$Q_0,Q_2$ then removes the term $(\lambda_0u+\lambda_1v)e$. Thus we may
assume $F=0$. Since $va=uc$ and $u,v$ are coprime, there is a linear form
$R$ with $a=uR$ and $c=vR$; replacing $x_1$ by $x_1+R$ gives
\stepcounter{thm}\begin{equation}\label{eq:Q-rank-four-reduced-form}
W=\langle x_1u,-x_0u+b,x_1v,-x_0v+d,e\rangle.
\end{equation}
The map \eqref{eq:Q-plane-map-four-coordinates} is now the plane map
\stepcounter{thm}\begin{equation}\label{eq:Q-plane-cremona-map}
\chi=[ue:ve:G]:\PP^2\dashrightarrow\PP^2.
\end{equation}
It is birational.

Let $H=\gcd(e,G)$, write $e=He_0$, $G=Hg_0$, and put $h=\deg H$.
After cancelling $H$, the first two coordinates of \eqref{eq:Q-plane-cremona-map}
still have ratio $u:v$. On the chart $u\neq0$, with
$r=v/u$ and coefficient field $K=\kk(r)$, birationality is equivalent to
\stepcounter{thm}\begin{equation}\label{eq:Q-rational-function-degree-one}
K(z)=K\bigl(g_0(1,r,z)/e_0(1,r,z)\bigr).
\end{equation}
Since $e_0$ and $g_0$ are coprime, the rational function on the right has
degree one. In particular, both numerator and denominator have degree at
most one in $z$.

Suppose first that $h=0$. Then the coefficients of $z^2$ in both $e$ and
$G$ vanish. If $b_2,d_2$ are the coefficients of $z^2$ in $b,d$, the
coefficient of $z^2$ in $G=vb-ud$ is $vb_2-ud_2$, so $b_2=d_2=0$.
Consequently every generator in \eqref{eq:Q-rank-four-reduced-form} vanishes
on the plane $V(u,v)$, contrary to $\dim\Bs(W)=1$. Hence $h\geq1$.

Assume that $h=1$. Write
$
H\in S_1,\quad e=Hl,\quad G=Hq,
$
where $l\in S_1$ and $q\in S_2$. By
\eqref{eq:Q-rational-function-degree-one}, $q$ has no $z^2$ term, hence
$q\in(u,v)$. Choose linear forms $b_1,d_1$ such that
$
q=vb_1-ud_1.
$
The equality
$
v(b-Hb_1)-u(d-Hd_1)=0
$
and exactness of the Koszul complex of $u,v$ give a linear form $R$ with
$
b=Hb_1+uR$, $d=Hd_1+vR$. Replacing $x_0$ by $x_0-R$ yields
\stepcounter{thm}\begin{equation}\label{eq:Q-rank-four-h-one}
W=\langle x_1u,-x_0u+Hb_1,x_1v,-x_0v+Hd_1,Hl\rangle,
\qquad q=vb_1-ud_1.
\end{equation}
After cancelling $H$, the associated plane map is the quadratic Cremona map
$[ul:vl:q]$.

We first treat the dense open set on which $H(o)l(o)\neq0$ for
$o=V(u,v)$, the scheme $Z=V(l,q)\subset\PP^2$ consists of two distinct
points, and it is disjoint from $V(H)\cup\{o\}$. In these coordinates set
\begin{align*}
Y&=V(u,v,Hl)=V(u,v,z^2),\\
M&=V(x_0,x_1,H).
\end{align*}
Thus $Y$ is a plane double line, with Hilbert polynomial $2m+1$, and $M$ is
a line skew to its plane. The two points of $Z$ lift uniquely to points
$\widetilde Z\subset V(x_1)$ by
$
 x_0u=Hb_1,\ x_0v=Hd_1.
$
Every quadric in \eqref{eq:Q-rank-four-h-one} vanishes on
$Y\cup M\cup\widetilde Z$. After setting $H=z$, the quadrics through
$Y\cup M$ are
\stepcounter{thm}\begin{equation}\label{eq:Q-seven-quadrics}
\langle x_0u,x_1u,x_0v,x_1v,zu,zv,z^2\rangle.
\end{equation}
On the stated dense open set the two reduced points impose two independent
conditions on this seven-dimensional space. Hence
\stepcounter{thm}\begin{equation}\label{eq:Q-complete-h-one}
W=H^0\bigl(\mathcal I_{Y\cup M\cup\widetilde Z}(2)\bigr).
\end{equation}
The conic $Y$ is smoothed in its plane by
$
z^2-\tau x_0x_1=0,
$
and the length-two scheme $\widetilde Z$ is smoothable. The incidence of a
smooth plane conic, a line skew to its plane, and two exterior points is
irreducible, and its general complete quadratic system belongs to the
component with base $C\sqcup M+q_1+q_2$. Cohomology and base change applied to
\eqref{eq:Q-complete-h-one} therefore place $W$ in its closure. The
generators in
\eqref{eq:Q-rank-four-h-one} depend polynomially on
$H,l,b_1,d_1$, and the preceding open conditions are dense. Every special
system with $h=1$ is consequently a specialization of systems in the same
closed component.

It remains to consider $h=2$. Then $G=eL$ for a linear form $L$, and the
cancelled map is $[u:v:L]$. It is an automorphism, so $u,v,L$ are independent.
Adding suitable multiples of $e$ to $b,d$ changes $L$ by an arbitrary element
of $\langle u,v\rangle$; hence we may set $L=z$. Since $ze=G\in(u,v)$ and
$(u,v)$ is prime, $e\in(u,v)$. Write
$
e=z\ell(u,v)+q(u,v).
$
If $\ell=0$, the coefficient of $z^2$ in $G$ is zero, hence the coefficients
of $z^2$ in $b,d$ vanish; the plane $V(u,v)$ would again be contained in the
base. Thus $\ell\neq0$. A change of $u,v$, followed by replacing $z$ by
$z+\alpha u+\beta v$, gives
$
e=uz+cv^2
$
with $c=0$ or $1$. The identity $vb-ud=z(uz+cv^2)$ and the Koszul complex
then allow a final change of $x_0$ which gives
\stepcounter{thm}\begin{equation}\label{eq:Q-rank-four-h-two}
W_c=\langle x_1u,-x_0u+cvz,x_1v,-x_0v-z^2,uz+cv^2\rangle.
\end{equation}
The case $c=0$ is a specialization of $c=1$.

We now exhibit $W_1$ as an explicit limit of complete systems through a
rational normal quartic and one point. Put
\stepcounter{thm}\begin{equation}\label{eq:Q-Y-coordinates}
Y_0=x_1,\qquad Y_1=-x_0,\qquad Y_2=-z,\qquad
Y_3=v,\qquad Y_4=u.
\end{equation}
For a parameter $\tau$, let $C_\tau$ be defined by the six quadrics
\begin{align*}
A_\tau&=Y_0Y_2-\tau Y_1^2,&
B_\tau&=Y_0Y_3-\tau Y_1Y_2,&
C_\tau&=Y_0Y_4-\tau Y_1Y_3,\\
D&=Y_1Y_3-Y_2^2,&
E&=Y_1Y_4-Y_2Y_3,&
F&=Y_2Y_4-Y_3^2.
\end{align*}
This is the Rees degeneration of the rational normal quartic ideal for the
weight assigning weight one to $Y_0$ and weight zero to the other variables,
so it is flat. Equivalently, for $\tau\neq0$ the substitution
$Y_0=\tau Y_0'$ gives the six $2\times2$ minors of
$
\begin{pmatrix}Y_0'&Y_1&Y_2&Y_3\\Y_1&Y_2&Y_3&Y_4\end{pmatrix},
$
whereas at $\tau=0$ the fiber is the union of the line
$V(Y_2,Y_3,Y_4)$ and the twisted cubic
$V(Y_0,D,E,F)$, meeting in one point; both fibers have Hilbert polynomial
$4m+1$.

Set
$
q_\tau=[2:1:\tau:\tau^2:\tau^3].
$
For $\tau\neq0$, this point is not on $C_\tau$, and
\begin{align*}
A_\tau(q_\tau)&=\tau,&B_\tau(q_\tau)&=\tau^2,&
C_\tau(q_\tau)&=\tau^3,&D(q_\tau)&=E(q_\tau)=F(q_\tau)=0.
\end{align*}
Therefore
\stepcounter{thm}\begin{equation}\label{eq:Q-rational-quartic-limit-system}
H^0\bigl(\mathcal I_{C_\tau\cup\{q_\tau\}}(2)\bigr)
=\langle B_\tau-\tau A_\tau,
C_\tau-\tau^2A_\tau,D,E,F\rangle.
\end{equation}
At $\tau=0$, the right-hand side becomes
$\langle B_0,C_0,D,E,F\rangle$, which is exactly $W_1$ under
\eqref{eq:Q-Y-coordinates}. For $\tau\neq0$ the pair
$(C_\tau,q_\tau)$ belongs to the irreducible incidence of a rational normal
quartic and an exterior point; its general complete quadratic system
belongs to the $R_4+q$ component. Hence $W_1$, and therefore every system with $h=2$, lies
in the closure of that component.

Assume now that the parameter quadric has rank three. By
Lemma~\ref{lem:Q-ruling-normal-forms},
$
Q_0=x_1u+a,\quad Q_1=-x_0u+x_1v+b,\quad Q_2=-x_0v+c,
\quad Q_3=d,\quad Q_4=e,
$
with $a,b,c,d,e\in\kk[u,v,z]_2$. Set
$F=v^2a-uvb+u^2c$. On the conic $C_\tau=V(e-\tau d)$ the inverse problem is
the moving linear series generated by
$
F,\, dv^2,\,-duv,\, du^2.
$
Once a point of $C_\tau$ is fixed and $[u:v]\neq[0:0]$, the first three
quadrics determine $x_0,x_1$ uniquely. Hence the topological degree is the
degree of this moving series. If it were one, its image would be a line. Its
projection to the last three coordinates lies in the Veronese conic
$[v^2:-uv:u^2]$, so this projection would be constant. Therefore, $[u:v]$ is
constant on every moving irreducible component of $C_\tau$.

If $d,e$ are coprime and the general $C_\tau$ is irreducible, this
contradicts the preceding constancy. If it is reducible, constancy forces both
components to be lines through $o=V(u,v)$. Hence $d,e$ are binary quadrics and
$[u:v]\mapsto[d:e]$ has degree two. The moving degree is again not one.
Therefore $d,e$ have a common linear factor. Write $d=kd_1$, $e=ke_1$ and set
$r=V(d_1,e_1)$. The moving component of $C_\tau$ is the line
$V(e_1-\tau d_1)$. Constancy of $[u:v]$ on all members forces this pencil to
be centered at $o=V(u,v)$. After changes $d=ku$, $e=kv$.

On $v=\tau u$, write $F=u^2H_\tau(u,z)$. After removing $u^2$, the moving
pencil is generated by $H_\tau$ and $uk(u,\tau u,z)$. Degree one forces a
common linear factor. If the factor is $u$ for general $\tau$, the
coefficient of $z^2$ in $H_\tau$ vanishes identically, so
$a,b,c\in(u,v)$ and the plane $V(u,v)$ is in the base. Therefore, $k\notin\langle u,v\rangle$; set $k=z$. The common factor is then $z$, so
$z\mid F$.

The syzygy module of $(v^2,-uv,u^2)$ is generated by $(u,v,0)$ and
$(0,-u,-v)$. Replacing $x_0,x_1$ and adding $zu,zv$, one obtains
$
W_{\mathbf r}=\langle x_1u+r_2z^2,-x_0u+x_1v-r_1z^2,
-x_0v+r_0z^2,zu,zv\rangle.
$
The base contains $Y=V(u,v,z^2)$ and $M=V(x_0,x_1,z)$. Their quadratic
ideal is the seven-dimensional space
\stepcounter{thm}\begin{equation}\label{eq:Q-rank-three-seven-space}
V_0=\langle x_0u,x_1u,x_0v,x_1v,zu,zv,z^2\rangle.
\end{equation}
We now realize the two conditions cutting out $W_{\mathbf r}$ from $V_0$
as limits of evaluations at two reduced points. It is enough to treat the
dense open set on which $r_0r_2\neq0$ and
\stepcounter{thm}\begin{equation}\label{eq:Q-rank-three-root-equation}
\alpha^2+r_1\alpha+r_0r_2=0
\end{equation}
has two distinct roots $\alpha_1,\alpha_2$; all other values of
$\mathbf r$ are specializations.

For a parameter $t$, let
\stepcounter{thm}\begin{equation}\label{eq:Q-rank-three-conic-family}
C_t=V\bigl(u,v,z^2-t^3x_0x_1\bigr),\qquad
M=V(x_0,x_1,z),
\end{equation}
and, after the harmless finite base change which orders the two roots, set
\stepcounter{thm}\begin{equation}\label{eq:Q-rank-three-point-sections}
p_{i,t}=
[\alpha_i:-r_2:t^2:(r_0/\alpha_i)t^2:t]
\qquad (i=1,2).
\end{equation}
For $t\neq0$, $C_t$ is a smooth conic, $M$ is skew to its plane, and the two
points are distinct and disjoint from $C_t\cup M$. The quadrics through
$C_t\cup M$ form the free rank-seven module
\stepcounter{thm}\begin{equation}\label{eq:Q-rank-three-seven-family}
V_t=\langle x_0u,x_1u,x_0v,x_1v,zu,zv,
 z^2-t^3x_0x_1\rangle.
\end{equation}
Divide evaluation at $p_{i,t}$ by $t^2$. In the ordered basis displayed in
\eqref{eq:Q-rank-three-seven-family}, its limit is
\stepcounter{thm}\begin{equation}\label{eq:Q-rank-three-limit-functional}
\ell_i=(\alpha_i,-r_2,r_0,-r_0r_2/\alpha_i,0,0,1).
\end{equation}
Equation~\eqref{eq:Q-rank-three-root-equation} shows directly that every
generator of $W_{\mathbf r}$ is annihilated by both $\ell_1$ and $\ell_2$.
The two functionals are independent, and both their common kernel and
$W_{\mathbf r}$ have dimension five; hence
\stepcounter{thm}\begin{equation}\label{eq:Q-rank-three-kernel-limit}
W_{\mathbf r}=\ker(\ell_1)\cap\ker(\ell_2)
 =\lim_{t\to0}H^0\bigl(\mathcal I_{C_t\cup M\cup
 \{p_{1,t},p_{2,t}\}}(2)\bigr).
\end{equation}
For $t\neq0$ these are complete systems attached to the irreducible
incidence of a smooth conic, a line skew to its plane, and two exterior
points; its general complete quadratic system belongs to the
$C\sqcup L+q_1+q_2$ component. Thus the general $W_{\mathbf r}$, and by
specialization every $W_{\mathbf r}$, lies in its closure.
\end{proof}

\begin{Lemma}\label{lem:rank-one-matrix-spaces}
Let $\mathcal T\subset\operatorname{Hom}(A,B)$ be a linear space whose
elements have rank at most one. Then the non-zero maps in $\mathcal T$ have
a common kernel hyperplane or a common image line.
\end{Lemma}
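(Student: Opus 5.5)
This is the classical fact that a linear space of rank-at-most-one maps is "compression" type. The plan is to show that any two non-zero members share either their kernel or their image, and then propagate this to the whole space.

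\textbf{Two members.} Take two non-zero $\phi,\psi\in\mathcal T$, and write $\phi=a\otimes\beta$ and $\psi=a'\otimes\beta'$ with $a,a'\in B$ and $\beta,\beta'\in A^*$. Assume $a,a'$ are independent and $\beta,\beta'$ are independent. Then $\phi+\psi$ has rank two, because its image contains both $a$ and $a'$ (evaluate on vectors separating $\beta$ and $\beta'$). This contradicts the hypothesis on $\mathcal T$. So for every pair of non-zero members, either the images coincide or the kernel hyperplanes $\ker\beta=\ker\beta'$ coincide.

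\textbf{Propagation.} Suppose not all images are equal and not all kernels are equal. Then there are $\phi_1,\phi_2$ with different images, so they have a common kernel $\beta$. There are also $\psi_1,\psi_2$ with different kernels, so they have a common image line $a$.
\begin{enumerate}
\item Any member with a kernel different from $\beta$ must share its image with both $\phi_1$ and $\phi_2$. Since those images differ, this is impossible. Hence every member has kernel $\beta$.
\item The members $\psi_1,\psi_2$ have different kernels, so at least one of them has kernel different from $\beta$. This contradicts the previous step.
\end{enumerate}

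\textbf{Concluding.} The two bad configurations are therefore incompatible. Either all non-zero members have the same image line, or all have the same kernel hyperplane, which is the statement.

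I expect no real obstacle. The only point needing care is the rank-two claim for $\phi+\psi$. Choose $x,y\in A$ with $\beta(x)=1$, $\beta'(x)=0$, $\beta(y)=0$, $\beta'(y)=1$; then $(\phi+\psi)(x)=a$ and $(\phi+\psi)(y)=a'$, so the image contains two independent vectors.
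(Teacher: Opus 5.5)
Your proof is correct and is essentially the paper's argument: the same pairwise rank-two test, followed by the observation that two members with distinct images, which therefore share a kernel, force every other non-zero member to have that kernel. Your step (i) already gives the conclusion directly, so the contradiction framing and step (ii) are unnecessary but harmless.
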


\begin{proof}
Fix a non-zero map $T_0=b_0\otimes\alpha_0$. Every non-zero
$T=b\otimes\alpha$ in $\mathcal T$ must satisfy either
$b\in\langle b_0\rangle$ or $\alpha\in\langle\alpha_0\rangle$, since
otherwise $T_0+T$ has rank two. If all maps satisfy the first alternative,
they have the common image line $\langle b_0\rangle$. Otherwise choose
$T_1=b_1\otimes\alpha_0$ with $b_1$ independent of $b_0$. For any further
$T=b\otimes\alpha$, applying the same rank-two test to both $T_0+T$ and
$T_1+T$ forces $\alpha\in\langle\alpha_0\rangle$. Hence all maps have the
common kernel hyperplane $\ker\alpha_0$.
\end{proof}

\begin{Lemma}\label{lem:symmetric-rank-one-space}
Let $\mathcal N\subset\operatorname{Sym}^2(E^*)$ be a linear subspace such
that every element has rank at most one. Then $\dim\mathcal N\leq1$; in
particular, all non-zero elements are multiples of one fixed square.
\end{Lemma}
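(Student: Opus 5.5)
The plan is to reduce to rank-one linear maps and apply Lemma~\ref{lem:rank-one-matrix-spaces}. Identify $\operatorname{Sym}^2(E^*)$ with the space of symmetric bilinear forms, that is, self-adjoint maps $E\to E^*$, and regard $\mathcal N$ as a linear subspace of $\operatorname{Hom}(E,E^*)$. Every element of $\mathcal N$ has rank at most one, so Lemma~\ref{lem:rank-one-matrix-spaces} applies with $A=E$ and $B=E^*$. Either all non-zero elements of $\mathcal N$ have a common kernel hyperplane $K\subset E$, or they have a common image line $\langle\beta\rangle\subset E^*$.

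The next step is to exploit symmetry so that the two alternatives become the same. A non-zero symmetric form of rank one is $\lambda\,\alpha\otimes\alpha$ with $\alpha\in E^*$ non-zero; over our algebraically closed field we may absorb $\lambda$ and write it as the square $\alpha^2$. Its kernel is $\ker\alpha$ and its image is $\langle\alpha\rangle$. Each alternative therefore fixes the line $\langle\alpha\rangle$ independently of the element: in the first case $\alpha$ is determined up to scalar by $K=\ker\alpha$, and in the second $\langle\alpha\rangle=\langle\beta\rangle$. Hence every non-zero element of $\mathcal N$ is a scalar multiple of one fixed square $\alpha_0^2$. It follows that $\mathcal N\subset\langle\alpha_0^2\rangle$, so $\dim\mathcal N\le1$. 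If $\mathcal N=0$ the statement is trivial.

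There is no real obstacle. The only care needed is in matching the conventions: Lemma~\ref{lem:rank-one-matrix-spaces} is stated for maps $A\to B$, and we must check that the kernel and image of $\alpha\otimes\alpha$, viewed as a map $E\to E^*$, are $\ker\alpha$ and $\langle\alpha\rangle$. One could avoid the earlier lemma entirely. If $\alpha^2$ and $\beta^2$ lie in $\mathcal N$ with $\alpha,\beta$ independent, then $\alpha^2+\beta^2$ has rank two (in characteristic $\neq2$), which is a contradiction. This direct argument can serve as the fallback.
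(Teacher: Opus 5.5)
Your proof is correct. Your fallback argument is exactly the paper's proof: a non-zero rank-one symmetric form is a square $\ell^2$, and if $\ell^2,m^2\in\mathcal N$ were independent, then $\ell,m$ would be independent and $\ell^2+m^2$ would have rank two.

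Your primary route through Lemma~\ref{lem:rank-one-matrix-spaces} is also valid. The key step is identifying the kernel and image of $\alpha^2$ as $\ker\alpha$ and $\langle\alpha\rangle$. This makes the common-kernel and common-image alternatives collapse to the same fixed line $\langle\alpha_0\rangle$. However, the detour gains nothing. That lemma is itself proved by the same rank-two test on sums $T_0+T$, only in the non-symmetric setting, where both alternatives genuinely occur. The direct argument is shorter and avoids the convention bookkeeping you flag.

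Both versions implicitly use characteristic $\neq2$: to identify $\operatorname{Sym}^2(E^*)$ with symmetric bilinear forms, and to get rank two for $\ell^2+m^2$. This is harmless over $\CC$.
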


\begin{proof}
Every non-zero rank-one symmetric form is a square $\ell^2$. If
$\ell^2,m^2\in\mathcal N$ are independent, then $\ell,m$ are independent and
$\ell^2+m^2$ has rank two, a contradiction.
\end{proof}

\begin{Lemma}\label{lem:bounded-rank-symmetric}
Let $U\subset\operatorname{Sym}^2(V_0^*)$, with $\dim V_0=5$, and assume that
every element of $U$ has rank at most three. Then either all forms have a
common kernel of dimension at least two or there is a subspace $P\subset V_0$
of dimension at least three on which every form vanishes.
\end{Lemma}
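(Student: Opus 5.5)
We argue by a first- and second-order perturbation analysis around a form of maximal rank. Let $r\leq 3$ be the maximal rank attained in $U$, and fix a general $Q_0\in U$ of rank $r$, with kernel $K_0\subset V_0$ of dimension $5-r\geq 2$. Choose a complement $N$ of $K_0$, so that $Q_0|_N$ is non-degenerate. For any $Q\in U$, the pencil $Q_0+tQ$ has rank at most $r$ for every $t$. Writing $Q$ in block form with respect to $V_0=N\oplus K_0$, the Schur complement gives
$$
\rk(Q_0+tQ)=r+\rk\Bigl(tQ_{KK}-t^2 B_Q^{T}(Q_0|_N+tQ_{NN})^{-1}B_Q\Bigr),
$$
where $B_Q:K_0\to N^*$ is the mixed block of $Q$. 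Hence the matrix $tQ_{KK}-t^2B_Q^TQ_0|_N^{-1}B_Q+O(t^3)$ must vanish identically. Comparing the coefficients of $t$ and $t^2$ gives
$$
Q|_{K_0\times K_0}=0
\qquad\text{and}\qquad
B_Q^TQ_0|_N^{-1}B_Q=0
\qquad\text{for every }Q\in U.
$$

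If $r\leq 2$, then $\dim K_0\geq3$ and the first identity already shows that $P=K_0$ is a common isotropic subspace of dimension at least three. So we may assume $r=3$, so that $\dim K_0=2$ and $\dim N=3$. Polarizing the second identity over $U$, we get that the span $\Lambda=\sum_{Q\in U}B_Q(K_0)\subset N^*$ is totally isotropic for the non-degenerate ternary quadratic form $Q_0|_N^{-1}$ on $N^*$. A non-degenerate form in three variables has no totally isotropic plane, so $\dim\Lambda\leq1$.

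If $\Lambda=0$, every $Q\in U$ has $B_Q=0$ and $Q_{KK}=0$, so $K_0$ lies in the kernel of every form. This is the common kernel of dimension two.

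If $\Lambda=\langle\lambda\rangle$, set $w=Q_0|_N^{-1}\lambda\in N$ and $P=K_0\oplus\langle w\rangle$, which has dimension three. We must show that every $Q'\in U$ vanishes on $P$; the vanishing on $K_0\times K_0$ is already known. To get the rest, we use that the kernel $K(t)$ of $Q_0+tQ$ varies with first derivative $k'=-Q_0|_N^{-1}B_Qk\in\langle w\rangle$ for $k\in K_0$. Since $Q_0$ is general, the first identity applied at the nearby general form $Q_0+tQ$ gives $Q'(K(t),K(t))=0$ for all $Q'\in U$ and small $t$. Differentiating once yields $Q'(K_0,w)=0$ whenever some $B_Q$ is non-zero, which is the case here.

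The remaining point, and the step I expect to be the main obstacle, is the vanishing $Q'(w,w)=0$. I would try to obtain it in two ways:
\begin{enumerate}
\item from the second derivative of $Q'(K(t),K(t))=0$, using that the second-order term of $K(t)$ is again controlled by $\Lambda$;
\item from the $t^3$ coefficient of the Schur complement, where $Q_{NN}$ enters through $B_Q^TQ_0|_N^{-1}Q_{NN}Q_0|_N^{-1}B_Q=\lambda$-terms that reduce to multiples of $Q(w,w)$.
\end{enumerate}
If this direct route fails, the fallback is to use that $\Lambda$ is determined by the general $Q_0$. In that case the isotropic lines $\langle w\rangle$ and the planes $K_0$ move in a family, and an incidence argument should produce a fixed three-dimensional isotropic subspace, or else force the kernels to be constant, which falls back into the common-kernel alternative.
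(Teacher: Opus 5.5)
Your setup is the paper's own: you take the Schur complement of $Q_0+tQ$ at a form of maximal rank. The $t$- and $t^2$-coefficients give $Q|_{K_0\times K_0}=0$ and an isotropic span $\Lambda$ of dimension at most one. Your $P=K_0\oplus\langle w\rangle$ is exactly the paper's $K\oplus\langle u\rangle$.

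The proof is not finished, however. When $r=3$ and $\Lambda\neq0$, the vanishing $Q(w,w)=0$ for every $Q\in U$ is the only non-formal point of the isotropic alternative. You only announce it. Without it you have no three-dimensional subspace on which all forms vanish, and the incidence ``fallback'' is not an argument.

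Your option (ii) is what the paper does, and it closes the gap in a few lines. Put $M=(Q_0|_N)^{-1}$. Since $B_Q(K_0)\subset\langle\lambda\rangle$, you can write $B_Qk=\mu_Q(k)\lambda$ with $\mu_Q\in K_0^*$, so $MB_Qk=\mu_Q(k)w$. Expand $(Q_0|_N+tQ_{NN})^{-1}=M-tMQ_{NN}M+O(t^2)$ and use your two identities; the Schur complement becomes
\[
t^3\,B_Q^{T}MQ_{NN}MB_Q+O(t^4)=t^3\,Q(w,w)\,\mu_Q\otimes\mu_Q+O(t^4).
\]
Hence $Q(w,w)=0$ whenever $B_Q\neq0$. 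For an arbitrary $Q\in U$, choose $Q_1\in U$ with $B_{Q_1}\neq0$, which exists because $\Lambda\neq0$. Then $B_{Q+sQ_1}\neq0$ for all but at most one value of $s$. So the linear function $s\mapsto Q(w,w)+sQ_1(w,w)$ has infinitely many zeros, and $Q(w,w)=0$. Option (i) can also be pushed through: the second derivative of $Q'(k(t),k(t))$ gives $\mu_Q(k)^2Q'(w,w)+2\mu_Q(k)\mu_{Q'}(k)Q(w,w)=0$. It is longer, though.

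Two smaller points.
\begin{enumerate}
\item The kernel-derivative argument for $Q'(K_0,w)=0$ is unnecessary. Directly, $Q'(k,w)=(B_{Q'}k)(w)=\mu_{Q'}(k)\,\lambda(M\lambda)=0$, because $\lambda$ is $M$-isotropic.
\item Polarizing $B_Q^{T}MB_Q=0$ in $Q$ only shows that $(k,k')\mapsto M(B_Qk,B_{Q'}k')$ is skew. You first need that each $B_Q$ has rank at most one, since its image is isotropic for a non-degenerate ternary form. Only then does skew-symmetry force the images of $B_Q$ and $B_{Q'}$ to be orthogonal. This plays the role of the paper's rank-one matrix space lemma.
\end{enumerate}
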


\begin{proof}
Choose a form of maximal rank $r\leq3$ and write, with $V_0=E\oplus K$,
$$
A=\begin{pmatrix}A_E&0\\0&0\end{pmatrix},
\qquad
B=\begin{pmatrix}C_B&D_B\\D_B^t&E_B\end{pmatrix},
$$
where $A_E$ is non-degenerate and $\dim K=5-r\geq2$. The coefficients of the
Schur complement of $A+tB$ give $E_B=0$ and
$D_B^tA_E^{-1}D_B=0$. Therefore, the linear space of maps
$T_B=A_E^{-1}D_B:K\to E$ consists of maps of rank at most one, and the span
of the images of any two maps is totally isotropic. Lemma~\ref{lem:rank-one-matrix-spaces}
gives a common kernel or a common image. If all $T_B$ vanish, $K$ is a common
kernel. In the common-kernel case the maps factor through a one-dimensional
quotient of $K$; their images span a totally isotropic subspace of $E$, hence
a line. Therefore, unless all maps vanish, they have a common isotropic image
$\langle u\rangle$. The next coefficient of the Schur complement gives
$C_B(u,u)=0$. Since $D_B=A_Eu\otimes\lambda_B$ and $A_E(u,u)=0$, every $B$
vanishes on $K\oplus\langle u\rangle$.
\end{proof}

\begin{Lemma}\label{lem:rank-two-pencils}
Let $q_{s,t}$ and $r_{s,t}$ be two pencils of quadratic forms on a
four-dimensional vector space. Assume that, for general $[s:t]$, they have a
common varying two-dimensional kernel. Then the four forms have a common
linear factor.
\end{Lemma}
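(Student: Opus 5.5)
The plan is to classify the two pencils separately using only the rank bound, and then compare them through the varying kernel $K(s,t)$. Write $q_{s,t}=sq_0+tq_1$ and $r_{s,t}=sr_0+tr_1$ on $V_0\simeq\kk^4$. The hypothesis says that for general $[s:t]$ both forms vanish identically on $K(s,t)\times V_0$, where $K(s,t)$ is a two-dimensional subspace. Hence each of them has rank at most two. Since $K(s,t)$ is not constant, the span $U$ of all the $K(s,t)$ has dimension at least three. The key observation is that every form with a two-dimensional kernel containing $K(s,t)$ is a binary quadratic form on $V_0/K(s,t)$. Over $\kk=\CC$ it is therefore a product $\ell\cdot m$ of two linear forms, both vanishing on $K(s,t)$.

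\emph{Step 1: lines in the rank $\leq2$ locus.} I would first show the following. If a linear space of quadratic forms (of dimension at most two) consists of forms of rank at most two, then either
\begin{enumerate}
\item all its forms are binary forms in two fixed linear forms, so the common kernel is a constant plane; or
\item all its forms share a fixed linear factor $\ell$, that is, $q_{s,t}=\ell\, m_{s,t}$.
\end{enumerate}
I would prove this with the Schur complement argument of Lemma~\ref{lem:bounded-rank-symmetric}. Pick a form of maximal rank $r\le2$ and decompose $V_0=E\oplus K$. The expansion of $A+tB$ forces $E_B=0$ and makes the off-diagonal maps $T_B=A_E^{-1}D_B\colon K\to E$ have rank at most one with isotropic images. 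Lemma~\ref{lem:rank-one-matrix-spaces} then produces either a common kernel, which gives alternative (i), or a common isotropic image line $\langle u\rangle$. In the latter case all forms vanish on the hyperplane $K\oplus\langle u\rangle$ (or on a larger space). A quadric of rank at most two vanishing on a hyperplane $\ker\ell$ is divisible by $\ell$, which gives (ii). If the maximal rank is one, Lemma~\ref{lem:symmetric-rank-one-space} shows the pencil spans at most the line of a fixed square $\ell^2$, which falls under (ii); if the pencil is identically zero it has every linear factor, and that case is trivial.

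\emph{Step 2: excluding the constant-kernel case.} In alternative (i) for the pencil $q$, suppose $q_{s,t}$ has rank exactly two for general $[s:t]$. Then its kernel is a fixed plane, and it would have to equal $K(s,t)$, contradicting the variation. The only remaining sub-cases are those where the general $q_{s,t}$ has rank at most one or vanishes. These are covered by Lemma~\ref{lem:symmetric-rank-one-space}: the pencil is contained in $\langle\ell^2\rangle$, hence again has the linear factor $\ell$. Thus $q_{s,t}=\ell\,m_{s,t}$, and symmetrically $r_{s,t}=\ell'\,m'_{s,t}$.

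\emph{Step 3: comparing the factors.} Whenever $q_{s,t}$ is nonzero, its kernel contains $K(s,t)$. For a product $\ell\, m_{s,t}$ of independent linear forms the kernel is $\ker\ell\cap\ker m_{s,t}$, so $K(s,t)\subset\ker\ell$. In the square case the kernel is $\ker\ell$ itself, and the same inclusion holds. Likewise $K(s,t)\subset\ker\ell'$. If $\ell$ and $\ell'$ were independent, $K(s,t)$ would be contained in the fixed plane $\ker\ell\cap\ker\ell'$, hence equal to it, contradicting the variation. So $\ell'$ is proportional to $\ell$, and the four forms $q_0,q_1,r_0,r_1$ are all divisible by $\ell$.

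A pencil that vanishes identically imposes nothing, and its forms are divisible by the factor found for the other pencil. If both pencils vanish, the statement is trivial.

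I expect Step 1 to be the main obstacle. Specifically, the delicate point is checking that the common-image alternative really yields vanishing on a hyperplane and hence a linear factor, including the degenerate sub-cases where the maximal rank drops. I would handle it by adapting the proof of Lemma~\ref{lem:bounded-rank-symmetric} with $r\le2$, so that $\dim K\ge3$ and $K\oplus\langle u\rangle$ is a hyperplane.
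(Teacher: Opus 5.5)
Your proof is correct. Its first half coincides with the paper's: both rest on the dichotomy that a pencil of forms of rank at most two either consists of binary forms in two fixed linear forms, or has a common linear factor. In the first case the pencil has, in generic rank two, a fixed kernel, which the hypothesis excludes. The paper obtains the dichotomy from the $3\times3$ minors of $x_0x_1+\tau q$. You obtain it from the Schur-complement argument of Lemma~\ref{lem:bounded-rank-symmetric}, which is essentially the same computation.

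The routes differ in how the second pencil is handled. The paper classifies only $q_{s,t}=\lambda(s\mu_0+t\mu_1)$. It then uses just the fact that $r_{s,t}$ is singular along $V(\lambda,s\mu_0+t\mu_1)$: modulo $\lambda$, $r_{s,t}$ is a multiple of $(s\mu_0+t\mu_1)^2$. Comparing the $\mu_0\mu_1$-coefficients at $[1:0],[0:1],[1:1]$ then forces $r\equiv 0$ modulo $\lambda$. You instead apply the dichotomy to both pencils, obtaining factors $\ell$ and $\ell'$. You then note that $K(s,t)\subset\ker\ell\cap\ker\ell'$, which would be a fixed plane if $\ell,\ell'$ were independent.

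Your version is symmetric and explicitly covers the degenerate cases the paper passes over when it ``chooses a rank-two member'': pencils of generic rank at most one, or pencils that are identically zero. The paper's version is shorter, since it never needs the rank bound or the classification for the second pencil.

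Two slips should be fixed. First, with $\dim V_0=4$ and maximal rank $2$ one has $\dim K=2$, not $\dim K\geq3$. Your conclusion that $K\oplus\langle u\rangle$ is a hyperplane is nevertheless right. Second, in Step~1 the correct split is $T_B=0$, giving (i), versus $T_B\neq0$, giving (ii). A nonzero $T_B$ has rank one with isotropic image, so (ii) follows. Such a $T_B$ also has a kernel hyperplane in $K$, so ``common kernel gives (i)'' is false as written; for example, $A=x_0x_1$, $B=x_0x_2$ falls under (ii). Lemma~\ref{lem:bounded-rank-symmetric} itself states the split correctly (``unless all maps vanish, they have a common isotropic image''), so following it faithfully repairs this.
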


\begin{proof}
First consider the pencil $q_{s,t}$. Choose a rank-two member and put it in
the form $x_0x_1$. Writing the second member as a symmetric matrix and
setting the $3\times3$ minors of $x_0x_1+\tau q$ equal to zero gives two
possibilities: either the kernel $V(x_0,x_1)$ is fixed, or
$$
q_{s,t}=\lambda(s\mu_0+t\mu_1)
$$
for fixed independent linear forms $\lambda,\mu_0,\mu_1$. The first
possibility is excluded by the hypothesis, and the common kernel is therefore
$$
V(\lambda,s\mu_0+t\mu_1).
$$
Modulo $\lambda$, a quadratic form singular along this plane is a multiple
of $(s\mu_0+t\mu_1)^2$. Write
$r_{1,0}\equiv a\mu_0^2$ and
$r_{0,1}\equiv b\mu_1^2$ modulo $\lambda$. Since
$r_{1,1}=r_{1,0}+r_{0,1}$ must be a multiple of
$(\mu_0+\mu_1)^2$, comparison of the coefficient of
$\mu_0\mu_1$ gives that this multiple is zero, and then $a=b=0$.
Consequently both members of the second pencil are divisible by $\lambda$.
The four forms have the common linear factor $\lambda$.
\end{proof}

\begin{Lemma}\label{lem:R-one-point-normal-forms}
Assume that an irreducible quadratic component $D$ of the discriminant has
general rank three and that the general singular line contains one base
point. Then $W$ is not dominant.
\end{Lemma}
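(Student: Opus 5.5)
The plan is to use the structure of $D$ to write a normal form for $W$, and then to show that the restriction of $\vf_W$ to a general singular line collapses it, so that the image has dimension at most three.

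\emph{Setting up.} A general point $Q\in D$ is a rank-three quadric. Its singular locus is a line $\Lambda_Q\subset\PP^4$, and by hypothesis $\Lambda_Q$ contains exactly one base point $p=p(Q)$. As $Q$ runs over the threefold $D$, I first examine the family of lines $\Lambda_Q$. The multiplicity of $D$ together with the identity for $d(\det)$ used in Proposition~\ref{prop:four-residual-branches} shows that the adjugate-type first-order term of $F_W$ along $D$ is supported on $\Lambda_Q$. So the base point $p(Q)$ carries the first-jet information, and the kernel
\[
K_p=\ker\bigl(W\to\mathfrak m_p/\mathfrak m_p^2\bigr)
\]
contains every member of $D$ singular along a line through $p$. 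Since $r_p(W)\geq 2$, we have $\dim\PP(K_p)\leq 2$. An incidence count like the one in Proposition~\ref{prop:four-residual-branches} then gives two possibilities: either $p$ varies on a curve $B$ and the planes $\PP(K_p)$ rule $D$, or $p$ is fixed. The second case is excluded because then $D\subset\PP(K_p)$ would have dimension at most two. So we reach a ruling of the quadric $D$ by planes, and inside each plane $\PP(K_p)$ the general member has rank three with singular line through $p$.

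\emph{Normal form on a ruling plane.} Fix a general $p$ and put it at $e_0$. The forms in $K_p$ are quadrics in $x_1,\dots,x_4$, that is, cones with vertex $p$, and they form a net in which the general member has rank three. By Lemma~\ref{lem:bounded-rank-symmetric}, applied on the quotient $\kk^4$, either the net has a common kernel of dimension at least two modulo $p$, or all its members vanish on a common $\PP^2$. In the second case every member of $K_p$ contains a $3$-space through $p$. Moving $p$ along $B$ and using that $W$ has no fixed surface, I expect this to contradict $\dim\Bs(W)=1$, or else to force a plane into the base, which is excluded because such systems were already treated in Proposition~\ref{prop:all-plane-systems}. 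So the common-kernel case holds. Varying the pencils inside the planes of the ruling and applying Lemma~\ref{lem:rank-two-pencils} to the induced pencils gives a fixed singular line $\Lambda_p$ through $p$, shared by the whole plane $\PP(K_p)$. The rank-three condition then says that the singular lines genuinely move with $p$.

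\emph{Non-dominance.} Take a general $Q\in D$ with singular line $\Lambda=\Lambda_p$. Every member of $K_p$ is singular along $\Lambda$, so it vanishes to order two on $\Lambda$. The two complementary generators of $W$ have linear first jets at $p$ of rank two. Restricted to $\Lambda$, the five coordinates of $\vf_W$ then consist of three identically zero coordinates and two quadrics on $\Lambda\simeq\PP^1$ which share the root at $p$. Hence $\vf_W(\Lambda)$ is a single point, so $\vf_W$ contracts each such line. These lines $\Lambda_p$ sweep out a subvariety of $\PP^4$, and I must show it has dimension four, that is, that it is dense. The lines are parametrized by the threefold $D$ modulo the two-dimensional ruling planes, which seems to give only a one-parameter family. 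So the real argument has to be finer: it must use that the singular line of a general rank-three member of $D$ varies within the plane, which gives a two-parameter family of lines for each $p$, hence a threefold of lines in total, covering $\PP^4$. Since $\vf_W$ contracts a family of lines covering $\PP^4$, it is not generically finite, and therefore not dominant onto $\PP^4$.

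\emph{Main obstacle.} The step I expect to be hardest is showing that the singular lines vary within the plane $\PP(K_p)$ and cover $\PP^4$, while keeping exactly one base point on each line. This has to be reconciled with Lemma~\ref{lem:rank-two-pencils}, which instead tends to fix the kernel. My first attempt would be a direct matrix computation in the normal form $x_0$-free net of rank-three cones with vertex lines through $e_0$, using Lemma~\ref{lem:rank-one-matrix-spaces} to pin down the shape of the net.
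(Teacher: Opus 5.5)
Your proposal has a genuine gap at its core. The mechanism ``$\vf_W$ contracts a family of lines covering $\PP^4$'' fails on two independent counts.

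\emph{The contraction claim is false.} Two binary quadrics with a common root are not proportional in general; after cancelling the common factor they give a degree-one map onto a line. Where this lemma is used (Proposition~\ref{prop:R-empty}), ``one base point on the general singular line'' is exactly the case $\dim U_Q=2$. There the restrictions of $W$ to $\Lambda_Q$ form the whole two-dimensional space of binary quadrics vanishing at $p$. So your two surviving coordinates are independent, and $\vf_W$ maps $\Lambda_Q$ isomorphically onto a line.

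\emph{Contracted singular lines could never cover $\PP^4$.} For a base point $p$, the forms in $K_p$ induce on $\kk^5/\langle p\rangle$ forms of rank at most three. Lemma~\ref{lem:bounded-rank-symmetric} is stated for $\dim V_0=5$, and you misread its output on the quotient as a $3$-space through $p$. Redoing the Schur-complement argument in dimension four gives two alternatives. Either there is a common kernel, i.e.\ a fixed singular line, or a common two-dimensional isotropic subspace, in which case all singular lines through $p$ lie in one plane and form at most a pencil. Since $p$ moves on a curve, the singular lines form at most a two-parameter family and sweep at most a threefold. In the paper's terminal form $\langle x_1u,-x_0u,x_1v,-x_0v,w^2\rangle$ they fill exactly the hyperplane $w=0$. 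Contracting a divisor is compatible with birationality, so no argument of this type proves non-dominance, and the tension you flag under ``Main obstacle'' cannot be resolved.

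What is missing is an argument that controls the whole web rather than its singular lines. The paper first uses the Pl\"ucker-degree argument of Lemma~\ref{lem:Q-ruling-normal-forms} to show that the base curve is a line. This yields explicit first-jet normal forms for $D$ of rank four and of rank three. It then imposes that every member of every ruling plane has rank at most three.

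\emph{$D$ of rank four.} The ruling members are $(sx_1-tx_0)\ell+N$ with $\ell\in\langle u,v\rangle$ and $N\in\kk[u,v,w]_2$ a normal part. The rank condition becomes the adjugate identity $\ell^{\mathsf t}\operatorname{adj}(N)\ell=0$, which forces every normal part to have rank at most one. By Lemma~\ref{lem:symmetric-rank-one-space} the normal parts then span at most one square $w^2$. So either all quadrics miss a normal coordinate, or $W=\langle x_1u,-x_0u,x_1v,-x_0v,w^2\rangle$, whose image lies in the quadric $y_0y_3=y_1y_2$.

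\emph{$D$ of rank three.} The same determinant comparison on $Q_{s,t}+R$ gives $d,e\in\kk[u,v]_2$. It also removes the $w$-dependence of $a,b,c$ after one fixed change of $x_0,x_1$, so no quadric involves $w$.

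In both cases non-dominance is read off from an explicit image equation or a missing variable, not from contracted curves.
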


\begin{proof}
The associated maximal planes form one ruling of $D$. The same Pl\"ucker
degree argument used in Lemma~\ref{lem:Q-ruling-normal-forms} shows that the base
point curve is a line. If $D$ has rank four we have
$
Q_0=x_1u+a,\, Q_1=-x_0u+b,\, Q_2=x_1v+c,
\, Q_3=-x_0v+d,\, Q_4=e,
$
where the last terms are quadratic in $u,v,w$. Let $(m_{ij})$ be the symmetric
matrix of a general normal part. The condition that every member of a ruling
plane have rank at most three gives
$
\lambda^2(m_{23}^2-m_{22}m_{33})
+2\lambda\mu(m_{12}m_{33}-m_{13}m_{23})
+\mu^2(m_{13}^2-m_{11}m_{33})=0.
$
These identities hold for every element of the linear space of normal
parts. If a diagonal entry is non-zero, they express all entries as the
corresponding rank-one outer product; changing the distinguished normal
coordinate gives all the $2\times2$ minors. If every diagonal entry vanishes,
polarization forces the whole normal part to vanish. Thus every normal part
has rank at most one. By Lemma~\ref{lem:symmetric-rank-one-space}, the normal
parts span at most one fixed square. If they vanish, all quadrics miss one
normal coordinate and the map is not generically finite. Otherwise choose
that square to be $w^2$ and use row operations to remove it from the first
four generators. The system becomes
$
\langle x_1u,-x_0u,x_1v,-x_0v,w^2\rangle,
$
whose image satisfies $y_0y_3-y_1y_2=0$. It is therefore not dominant.

Assume that $D$ has rank three. In the normal form
$
Q_0=x_1u+a,\, Q_1=-x_0u+x_1v+b,\, Q_2=-x_0v+c,
\, Q_3=d,\, Q_4=e,
$
the maximal plane indexed by $[s:t]$ is generated by
$Q_{s,t}=s^2Q_0+stQ_1+t^2Q_2,Q_3,Q_4$. Its mixed part is
$(sx_1-tx_0)(su+tv)$. Set $l=su+tv$. For every
$R\in\langle d,e\rangle$, the coefficient of the square of $R$ in the
$4\times4$ determinant of $Q_{s,t}+R$ is
$\det(R|_{l=0})$; hence it vanishes for every $[s:t]$. Writing a symmetric
matrix in the basis $u,v,w$, the equations
$l^{\mathsf t}\operatorname{adj}(R)l=0$ for all $l\in\langle u,v\rangle$ say that
$R$ either has rank one or has zero last row and column. The Veronese surface
of rank-one forms contains no line. Since $d,e$ are independent, every form
in their pencil has zero last row and column, so $d,e\in\kk[u,v]_2$.

Restrict now $Q_{s,t}+\alpha d+\beta e$ to $l=0$. The determinant of this
binary form vanishes for all $\alpha,\beta$. Since the restrictions of
$d,e$ span a non-zero line for general $[s:t]$, comparison of the coefficient
of this line shows that the $w^2$ and $mw$ coefficients of the restriction of
the normal part of $Q_{s,t}$ vanish, where $m$ complements $l$ in
$\langle u,v\rangle$. Therefore, the $w^2$ coefficients of $a,b,c$ vanish and
their terms linear in $w$ have the form
$(\alpha s+\beta t)l w$. One fixed change of $x_0,x_1$ removes these terms.
All five quadrics then miss $w$, so the map is not generically finite.
\end{proof}

\begin{Lemma}\label{lem:R-same-plane}
Assume that the general singular line contains two base points determining
the same maximal plane of $D$. Then all quadrics contain a plane or have a
common singular point.
\end{Lemma}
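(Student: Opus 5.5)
We work in the setting of branch (R), where $D$ is a multiple irreducible quadratic component of the discriminant whose general member has rank three. A rank-three quadric on $\PP^4$ has a singular line. The maximal planes of $D$ are the analogues of the ruling planes used in Lemma~\ref{lem:R-one-point-normal-forms}. The hypothesis says two things: the general singular line $\Lambda_Q=\operatorname{Sing}(Q)$, for $Q\in D$, contains two base points $p,p'$, and the kernels $\PP(K_p)$ and $\PP(K_{p'})$ coincide with one maximal plane $\Sigma\subset D$.

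\textbf{Step 1: a rank-two kernel.} As in the proof of Proposition~\ref{prop:four-residual-branches}, a general member of $D$ is singular at base points, and $\dim\PP(K_p)\le 2$. So $r_p(W)=r_{p'}(W)=2$ and $K_p=K_{p'}=:K$ has dimension three. Every quadric in $K$ is then singular at both $p$ and $p'$, hence singular along the whole line $\overline{pp'}$. In coordinates with $\overline{pp'}=V(x_2,x_3,x_4)$, the members of $K$ are quadrics in $x_2,x_3,x_4$ only. This is a net of conics on $\PP^2$.

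\textbf{Step 2: jets at the two points.} At $p$ and $p'$ the first-jet image of $W$ has rank two, and it is generated by the two complementary generators $Q_0,Q_1$. The key input is that $\overline{pp'}\subset\Bs(W)$. This holds because $\Lambda_Q$ varies with $Q$ and the base points on it sweep the one-dimensional base, and on a base line the conditions at two points force containment. I expect that $Q_0,Q_1$ vanish on $\overline{pp'}$ and that their first jets along it are normal forms $x_0\ell_0+x_1\ell_1$, exactly as in the proof of Lemma~\ref{lem:Q-ruling-normal-forms}.

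\textbf{Step 3: the dichotomy.} Since $\Sigma$ consists of rank-three forms singular along lines through $\overline{pp'}$, the net $K$ restricted to the plane $\PP(\langle x_2,x_3,x_4\rangle)$ has every member of rank at most two there. I would then apply the bounded-rank analysis of Lemma~\ref{lem:bounded-rank-symmetric} and Lemma~\ref{lem:rank-two-pencils}, in one dimension lower. Either all forms in $K$ share a common linear factor, or they have a common kernel.
\begin{enumerate}
\item If they share a common linear factor $\lambda$, the net is $\lambda\cdot\langle x_2,x_3,x_4\rangle$. Together with Step 2, the quadrics $Q_0,Q_1$ are forced to vanish on the plane $V(\lambda)\cap\langle x_0,x_1,\ast\rangle$, so every quadric contains a plane.
\item If they have a common kernel, the whole net is singular at a fixed point. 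Adding the singular condition coming from $Q_0,Q_1$ at that point gives a common singular point of all quadrics.
\end{enumerate}

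\textbf{Main obstacle.} The hardest step is the rigidity argument in Step 3. One must show that the rank condition on every member of the plane $\Sigma$, combined with the two-point incidence, really forces one of the two alternatives and not a varying configuration. I would first try to copy the Schur-complement expansion from Lemma~\ref{lem:bounded-rank-symmetric}, applied to a general member of $\Sigma$ plus multiples of $Q_0,Q_1$.
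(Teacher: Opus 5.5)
Your Step~1 is sound and matches the paper's starting point: $K_p=K_{p'}$ means every form in the common maximal plane is singular along $\overline{pp'}$. After that the argument has two genuine gaps.

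\emph{Step~2 is circular.} You justify $\overline{pp'}\subset\Bs(W)$ by saying that ``on a base line the conditions at two points force containment'', which already assumes the line is a base line. The two quadrics complementary to $K$ vanish at $p$ and $p'$. Their restrictions to $\overline{pp'}$ are binary quadrics with two prescribed zeros, and nothing you have established forces them to vanish. Moreover, if the lines $\overline{pp'}$ move with the plane, their containment in the base would already put a surface into $\Bs(W)$. So this step is essentially the conclusion.

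\emph{The rank claim in Step~3 is false.} The maximal planes sweep out $D$, so a general member of $\Sigma$ is a general member of $D$. It therefore has rank three, and its kernel is exactly the two-dimensional space underlying $\overline{pp'}$. As ternary forms in $x_2,x_3,x_4$, the members of $K$ have general rank three, not at most two. Neither alternative of your dichotomy follows: the net $\langle x_2^2,x_3^2,x_4^2\rangle$ has no common factor and no common kernel. A single maximal plane and a single line cannot force the conclusion.

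The missing idea is to use the whole one-parameter family of lines $L_t$ together with the shape of the ruling of $D$. The ruling forces fixed members in every maximal plane. If $D$ has rank four, the ruling can be written $\langle sQ_0+tQ_2,\,sQ_1+tQ_3,\,Q_4\rangle$. If $D$ is a rank-three cone, it is $\langle s^2Q_0+stQ_1+t^2Q_2,\,Q_3,\,Q_4\rangle$. The fixed forms ($Q_4$, respectively $Q_3,Q_4$) kill every $L_t$, hence the span $S$ of all the lines. The argument then splits by $\dim S$.
\begin{enumerate}
\item If the lines share a point, that point is singular for every form in every plane. These planes span $W$, so it is a common singular point.
\item Otherwise $\dim S\geq3$. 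Since $Q_4\neq0$, one has $\dim S\leq4$. In the cone case the independence of $Q_3,Q_4$ gives $\dim S=3$, because the forms killing a four-dimensional subspace form a line.
\item When $\dim S=3$, the moving part of each plane restricts to $S$ with rank at most one and moving kernel $L_t$. It therefore vanishes by Lemma~\ref{lem:symmetric-rank-one-space}. In the cone case, a non-zero restriction $(s\lambda_0+t\lambda_1)^2$ would make all $L_t$ pass through a fixed point. Hence every quadric contains $\PP(S)$.
\item When $\dim S=4$, one of two things happens. Either one pencil has a fixed kernel of dimension at least three, on which the other pencil must vanish by the same rank-one argument. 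Or the two pencils share the moving two-dimensional kernel $L_t$ on $S$, and Lemma~\ref{lem:rank-two-pencils} gives a common linear factor $\lambda$. In the latter case the plane $\PP(V(\lambda))\subset\PP(S)$ lies in the base.
\end{enumerate}
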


\begin{proof}
Let $L_t\subset\PP^4$ be the line joining the two base points and let
$\Pi_t\subset D$ be their common maximal plane. Every quadratic form in
$\Pi_t$ has matrix kernel containing the two-dimensional vector space
underlying $L_t$. The planes $\Pi_t$ span $W$.

Suppose first that the parameter quadric $D$ has rank four. Choose a basis
$Q_0,\ldots,Q_4$ of $W$ so that one ruling is
$$
\Pi_{[s:t]}=
\langle sQ_0+tQ_2,\ sQ_1+tQ_3,\ Q_4\rangle.
$$
The fixed form $Q_4$ kills every $L_t$. Let $S$ be the vector span of all
$L_t$. If the lines have a common point $p$, then every $\Pi_t$, and hence
every form in $W$, is singular at $p$. Assume that they have no common point.
Then $3\leq\dim S\leq4$, since $Q_4\neq0$ kills $S$.

If $\dim S=3$, the restrictions to $S$ of each of the pencils
$sQ_0+tQ_2$ and $sQ_1+tQ_3$ have rank at most one and kernel containing
$L_t$. A linear space of symmetric forms of rank at most one has dimension at
most one by Lemma~\ref{lem:symmetric-rank-one-space}; a non-zero such pencil
would have fixed kernel. Since $L_t$ varies, both restrictions vanish
identically. The restriction of $Q_4$ also vanishes, so all five quadrics
contain the plane $\PP(S)$.

Let $\dim S=4$. If one of the two pencils has a fixed kernel of vector
dimension at least three, restrict the other pencil to that kernel. Its
members have rank at most one with varying kernel, so the preceding argument
shows that it vanishes there; again all forms contain a plane. Otherwise the
two pencils have the common varying two-dimensional kernel $L_t$ on $S$.
Lemma~\ref{lem:rank-two-pencils} gives a common linear factor
$\lambda\in S^*$. Thus $Q_0,Q_1,Q_2,Q_3$ vanish on the plane
$\PP(V(\lambda))\subset\PP(S)$, and $Q_4$ vanishes on all of $\PP(S)$.
This is a common base plane.

Suppose now that $D$ has rank three. Normalize its ruling as
$$
\Pi_{[s:t]}=
\langle s^2Q_0+stQ_1+t^2Q_2,\ Q_3,\ Q_4\rangle.
$$
Both $Q_3$ and $Q_4$ kill the span $S$ of the lines $L_t$. As before, a
common point of the lines is a common singular point of all forms. If there
is no common point, then $\dim S\geq3$. One cannot have $\dim S\geq4$:
the space of symmetric forms killing a fixed four-dimensional subspace is
one-dimensional, whereas $Q_3,Q_4$ are independent. Hence $\dim S=3$.
The restriction
$$
(s^2Q_0+stQ_1+t^2Q_2)|_S
$$
has rank at most one and kernel $L_t$. If it is non-zero, it is the square of
a linear form $s\lambda_0+t\lambda_1$; its kernels are then the pencil of
lines through the fixed point $V(\lambda_0,\lambda_1)$, contrary to the
assumption. Therefore it vanishes identically. Together with $Q_3|_S=Q_4|_S=0$
this shows that every quadric contains the plane $\PP(S)$.
\end{proof}

\begin{Proposition}\label{prop:R-empty}
Branch (R) contains no system satisfying
$\dim\Bs(W)=1$ and $r_p(W)\geq2$ at every base point.
\end{Proposition}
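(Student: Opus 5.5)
Assume $W$ lies in branch (R), $\dim\Bs(W)=1$, and $r_p(W)\ge2$ at every base point. The plan is to reach a contradiction by showing that one of the following must occur: a base plane, a common singular point, non-dominance, or a rank-one point.

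\textbf{Case $F_W\equiv0$.} Every member of $W$ is singular. By Bertini the singular points of general members lie in $\Bs(W)$. The dimension count from the proof of Proposition~\ref{prop:four-residual-branches} then applies. Consider the incidence of pairs $(p,Q)$ with $Q$ singular at $p$. It has dimension at least four and maps to the one-dimensional base curve, with fibres $\PP(K_p)$ of dimension $4-r_p(W)\le2$. This gives total dimension at most three, a contradiction.

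\textbf{Case of a multiple factor $D=V(g)$ with general rank at most three.} A rank-three member has a singular line, and a rank-two member has a singular plane. The same differential argument, now applied to the second-order behaviour of $\det$ along $D$, shows that the singular locus of a general member of $D$ meets $\Bs(W)$. For rank at most two, a whole plane of singular points would be forced to meet the base curve. I would check this directly using Lemma~\ref{lem:bounded-rank-symmetric} applied to $U=\langle D\rangle$. If the forms have a common kernel of dimension at least two, then all quadrics in $D$, and hence (by spanning) the relevant quadrics, are singular along a fixed line. Otherwise there is an isotropic $\PP^2$, which gives a base plane. Both outcomes are excluded, since a base plane contradicts $\dim\Bs(W)=1$ and a common singular point has rank zero.

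Next assume general rank exactly three, so $\deg g\le2$ and $D$ is a hyperplane or a quadric. Consider the incidence of pairs (member of $D$, base point on its singular line). By the fibre-dimension bound $\dim\PP(K_p)\le2$, the general singular line contains finitely many base points, and at least one. If it contains exactly one, I split by the type of $D$. When $D$ is an irreducible quadric, Lemma~\ref{lem:R-one-point-normal-forms} gives non-dominance. When $D$ is a hyperplane, I expect the analogous argument to run, with the pencil of planes $\PP(K_p)$ as in the (H1)/(H2) analysis: the first-jet normal form forces the normal parts to lie in a rank-at-most-one space. Lemma~\ref{lem:symmetric-rank-one-space} then gives a fixed square $w^2$, and the map factors through omitting a variable or lands in a quadric. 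If the line contains at least two base points, then since $\dim\PP(K_p)\le2$ and three-dimensional families are needed, both points determine the same maximal plane. Lemma~\ref{lem:R-same-plane} then yields a base plane or a common singular point, and both are excluded as above. Finally, a singular line contained entirely in the base, for every member of the family, would sweep out a surface in the base, which is impossible.

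\textbf{Main obstacle.} The delicate step is the hyperplane case with general rank three and a single base point on the singular line. There, the pencil of planes $\PP(K_p)$ and the normal forms must be redone without the ruling structure. I would first try to reduce it to the rank-three computation of Lemma~\ref{lem:R-one-point-normal-forms}, viewing the hyperplane as a degenerate ruling.
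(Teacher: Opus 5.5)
Your $F_W\equiv0$ case is correct, and more uniform than the paper's. Bertini in characteristic zero puts the singular points of a general member in $\Bs(W)$. The incidence of pairs $(p,Q)$ then has dimension at least four but at most $1+2$, whatever the general rank.

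The multiple-factor part has genuine gaps.

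\textbf{Applying Lemma~\ref{lem:bounded-rank-symmetric} to $U$.} For a multiple linear factor, the lemma applied to the four-dimensional $U=\langle D\rangle$ constrains only the four forms of $U$, not the fifth generator $Q_5$. For a quadratic factor, $\langle D\rangle=W$ contains forms of rank five, so the lemma does not apply at all.
\begin{itemize}
\item A common kernel of $U$ gives (at least) a line $\Lambda$ along which $U$ is singular, but $Q_5$ need not be singular there, so there is no rank-zero point. The correct conclusion is that a zero $p$ of $Q_5|_\Lambda$ is a base point with $r_p(W)\le1$.
\item More seriously, a common isotropic plane $\PP(P)$ of $U$ is not a base plane of $W$, since $Q_5$ cuts it in a conic.
\end{itemize}
This is exactly the case you leave open as your ``main obstacle'' (rank three, $D$ a hyperplane). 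There is no ruling of $D$ there with which to invoke Lemma~\ref{lem:R-one-point-normal-forms}. The paper handles it directly. Write $V=\langle x_0,x_1\rangle\oplus P$; the cross terms of $U$ form a space of $2\times3$ matrices of rank at most one.
\begin{itemize}
\item A common kernel returns you to the previous case.
\item A common image of dimension $k=2$ gives $U=\langle x_0z_0,x_0z_1,q_0,q_1\rangle$. Then $r=[0:0:0:0:1]$ is either a base point of rank at most one, or a general line through $r$ is mapped with degree two.
\item For $k=3$ one gets $U=\langle x_0z_0,x_0z_1,x_0z_2,q\rangle$. Its fibres are conics on which $Q_5$ induces a function of degree at least two, unless $\PP(P)\subset\Bs(W)$.
\end{itemize}

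\textbf{Two base points on the singular line.} In the quadratic-factor case, the dimension count does not force the two points to determine the same maximal plane. When $D$ has rank four, the planes can lie in opposite rulings. The system $W=\langle x_0x_2,x_0x_3,x_1x_2,x_1x_3,z^2\rangle$ refutes this step. Take $Q=ax_0x_2+bx_0x_3+cx_1x_2+dx_1x_3+ez^2$.
\begin{itemize}
\item $F_W$ is a constant multiple of $e(ad-bc)^2$.
\item The base is supported on two skew lines, with $r_p(W)=2$ at every base point.
\item The two base points on each singular line give planes of $V(ad-bc)$ in opposite rulings.
\item There is neither a base plane nor a common singular point, so Lemma~\ref{lem:R-same-plane} cannot conclude.
\end{itemize}
This configuration is excluded only by non-dominance: the image lies in $y_0y_3=y_1y_2$. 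The paper reaches this normal form by a separate global argument. The rank-three form descends to $\mathcal Q\to\mathcal Q^*\otimes\mathcal O(1,1)$ on $\PP^1\times\PP^1$. Its determinant is a non-zero section of $\mathcal O(3-2d,3-2e)$, so the two source curves are lines, and they must be skew.

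\textbf{One or two base points.} You also only gesture at why the general singular line carries one or two base points. This needs the identity $\det(R|_{K_Q})=c_Q\bigl(dg_Q(R)\bigr)^2$. It forces the image $U_Q$ of $W$ in binary forms on $K_Q$ to have dimension one or two.
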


\begin{proof}
Assume first $F_W\equiv0$. If the general quadric has rank four, its vertex
belongs to the base, since the differential of the determinant vanishes on
$W$. The map from $\PP(W)$ to the base has fibers of dimension at least three,
while the fiber over $p$ is contained in $\PP(K_p)$ of dimension at most two.
This is impossible. If the general rank is at most three,
Lemma~\ref{lem:bounded-rank-symmetric} gives a common singular line or a
common plane in the base, again a contradiction.

Let a multiple linear factor have general rank three and let $U\subset W$ be
the corresponding four-dimensional space. Lemma~\ref{lem:bounded-rank-symmetric}
applies to $U$. In the common-kernel case the fifth quadric has a zero on the
common line; at this point the first-jet rank of $W$ is at most one. In the
common-plane case, write $V=\langle x_0,x_1\rangle\oplus P$, with
$P=\langle z_0,z_1,z_2\rangle$. The cross-term matrices of the forms in $U$ are $2\times3$ matrices
of rank at most one. A linear space of such matrices has a common kernel or a common image.
The first alternative gives the preceding common-kernel case. Assume a common
image and let $k$ be the dimension of the cross-term space. If $k\leq1$, its
common kernel in $P$ has vector dimension at least two, again giving the
preceding case. If $k=2$, coordinates give
$
U=\langle x_0z_0,x_0z_1,q_0(x_0,x_1),q_1(x_0,x_1)\rangle.
$
Set $r=[0:0:0:0:1]$. Every member of $U$ and every first jet of a member of
$U$ vanish at $r$. If the fifth quadric $Q$ also vanishes at $r$, then
$r\in\Bs(W)$ and $r_r(W)\leq1$, contrary to the standing assumption. If
$Q(r)\neq0$, restrict the five quadrics to a general line through $r$. The
four members of $U$ become constant multiples of $s^2$, whereas
$
Q|_L=as^2+bst+ct^2,\qquad c=Q(r)\neq0.
$
The resulting map of the line is given by two coprime forms of degree two and
has degree two onto its image. A general line through $r$ meets the open set
on which a birational map is an isomorphism in a dense subset, so this is
impossible. Hence $k=3$, and after
coordinates
$
U=\langle x_0z_0,x_0z_1,x_0z_2,q(x_0,x_1)\rangle.
$
For a general point of the target of the map defined by $U$, the fiber is the
conic parametrized by
$
[s:t]\longmapsto[s^2:st:a_0q(s,t):a_1q(s,t):a_2q(s,t)].
$
The four coordinates of $U$ have the common factor $s^2q(s,t)$. If the fifth
quadric does not vanish on $P$, its restriction to the conic is non-zero at
$s=0$, so the induced rational function has a pole of order two and degree at
least two. If it vanishes on $P$, then $P$ is contained in the base. Therefore, no
birational system occurs.

Finally, consider $F_W=g^2\ell$, where $g$ is irreducible and the
general $Q\in D=V(g)$ has rank three. Let $K_Q=\ker Q$. In a decomposition
$V=E\oplus K_Q$, the degree-two term of $\det(Q+tR)$ gives
$
\det(R|_{K_Q})=c_Q\bigl(dg_Q(R)\bigr)^2
$
for every $R\in W$ and some $c_Q\neq0$. The linear form $dg_Q$ factors
through the image $U_Q$ of $W\to\operatorname{Sym}^2(K_Q^*)$. Since the
determinant is non-degenerate on the three-dimensional space of binary
quadrics, $U_Q$ has dimension one or two. In dimension two its projective
line is tangent to the discriminant conic and its common zero is one reduced
point. In dimension one it is generated by a non-degenerate binary form and
has two distinct zeros. These are the only possibilities on a dense open
subset of $D$.

Let $\mathcal I\to D$ be the incidence of these zeros. After restricting to a
dense open subset it is finite flat of degree one or two. Its image in the
base is a curve. For a general image point $p$, the fiber is contained in
$\PP(K_p)$. Dimension counting gives $r_p(W)=2$ and identifies this fiber
with the maximal plane $\PP(K_p)\subset D$.

If $\mathcal I\to D$ has degree one,
Lemma~\ref{lem:R-one-point-normal-forms} applies. If it has degree two and
the two points determine the same maximal plane,
Lemma~\ref{lem:R-same-plane} applies. Assume that they determine distinct maximal planes. Then $D$ has rank four
and the planes belong to its two rulings. Neither source-point map is
constant: a constant point $p$ would make a positive-dimensional family of
maximal planes lie in $\PP(K_p)$, contradicting $r_p(W)=2$. Parametrize the
two rulings by $\PP^1\times\PP^1$, and let the two source curves have degrees
$d,e\geq1$. Their tautological line bundles give
\stepcounter{thm}\begin{equation}\label{eq:two-rulings-quotient}
0\longrightarrow\mathcal O(-d,0)\oplus\mathcal O(0,-e)
\longrightarrow\mathcal O^{\oplus5}
\longrightarrow\mathcal Q\longrightarrow0.
\end{equation}
The point of $D$ obtained by intersecting the two maximal planes depends
bilinearly on the ruling parameters. Its quadratic form therefore has values
in $\mathcal O(1,1)$, kills the two tautological source lines, and descends to
$
\mathcal Q\longrightarrow\mathcal Q^*\otimes\mathcal O(1,1).
$
It is generically non-degenerate because the general member has rank three.
Since $\det\mathcal Q=\mathcal O(d,e)$, its determinant is a non-zero section
of
$
(\det\mathcal Q)^{-2}\otimes\mathcal O(3,3)
=\mathcal O(3-2d,3-2e).
$
Thus $d,e\leq1$, and non-constancy gives $d=e=1$.

The two source curves are therefore lines. They are skew: if they met at
$p$, the two distinct maximal planes corresponding to the intersection
parameters would both lie in $\PP(K_p)$; since $r_p(W)=2$, this is one plane,
a contradiction. Write the source vector space as
$U\oplus U'\oplus\langle z\rangle$. For $a\in\PP(U)$ and
$b\in\PP(U')$, let $X_a$ and $Y_b$ be linear forms vanishing at the
corresponding points. The universal form has bidegree $(1,1)$ in $(a,b)$.
Terms $X_a^2$ or $Y_b^2$ would have bidegrees $(2,0)$ or $(0,2)$ and hence
cannot occur. Therefore the most general form with both source points in its
kernel is
$
cX_aY_b+z\lambda(b)X_a+z\mu(a)Y_b+\beta(a,b)z^2,
$
where $\lambda,\mu$ are linear and $\beta$ is bilinear. If $c=0$, the induced
form on the three-dimensional quotient has rank at most two, so $c\neq0$.
Constant translations of $U$ and $U'$ by the $z$-direction remove the two
middle terms. Finally, changing the parameter coordinate complementary to
$D$ removes $\beta$. Thus
$
W=\langle x_0x_2,x_0x_3,x_1x_2,x_1x_3,z^2\rangle.
$
Its image satisfies $y_0y_3-y_1y_2=0$, so it is not dominant.

Collisions of the two zeros occur over a proper closed subset of $D$ and do
not give another generic case. If the whole singular line were in the base,
$U_Q=0$, contradicting $dg_Q\neq0$.
\end{proof}

\begin{thm}\label{thm:nine-components}
The space $\Bir_2(\PP^4)$ has exactly nine irreducible components.
\end{thm}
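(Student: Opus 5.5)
The lower bound is Proposition~\ref{prop:nine-distinct-components}. It remains to show that every $W\in\Bir_2(\PP^4)$ lies in the closure of one of the nine components $\mathcal P$, $\mathcal K_1,\dots,\mathcal K_7$, $\mathcal K_C$ constructed there. Then $\Bir_2(\PP^4)$ is the union of these nine closed irreducible subsets. None is contained in another by that proposition, so they are exactly its irreducible components.

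We stratify by the base scheme and the first-jet ranks, and absorb each stratum into one of the nine closures.

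\emph{Zero-dimensional base.} Here $\dim\Bs(W)=0$. By Theorem~\ref{thm:zero-dimensional-unique-component-geometric}, $W\in\mathcal P$.

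\emph{Base of dimension two.} By Lemma~\ref{lem:surface-component-degree-two}, the surface part is a plane or a quadric in a hyperplane. Proposition~\ref{prop:all-plane-systems} places $W$ in the closure of the $L_1\sqcup L_2+p^{\PP^2}+q$ component. Proposition~\ref{prop:all-quadric-surface-systems} places it in the closure of the $R_4+q$ component. Three-dimensional base components are excluded because a fixed hypersurface component would contradict $\gcd(W)=1$. Base-point-freeness is excluded by the degree-$16$ argument.

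\emph{One-dimensional base with a rank-one point.} If some $p\in\Bs(W)$ has $r_p(W)=1$, Proposition~\ref{prop:all-rank-one-systems} puts $W$ in $\mathcal P$, in $\mathcal K_C$, or in the closure of the $R_4+q$ component. Rank zero is excluded by Lemma~\ref{lem:first-jet-normal-form}.

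\emph{One-dimensional base with $r_p(W)\ge2$ everywhere.} Consider the quintic discriminant $F_W$.
\begin{enumerate}
\item If $W$ contains a pencil cutting out a smooth complete intersection, for instance when $F_W\ne0$ is reduced (Lemma~\ref{lem:reduced-discriminant-smooth-pencil}), then $W\in\mathscr U_1$. Proposition~\ref{prop:seven-smooth-pencil-components} then places $W$ in the closure of one of $\mathcal D_1,\dots,\mathcal D_7$, hence in some $\mathcal K_i$.
\item Otherwise Proposition~\ref{prop:four-residual-branches} applies and $W$ falls into one of the branches (H1), (H2), (Q), (R).
\end{enumerate}
The branches are handled as follows. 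Proposition~\ref{prop:H1-absorbed} absorbs (H1). Proposition~\ref{prop:H2-absorbed} absorbs (H2). Proposition~\ref{prop:Q-absorbed} absorbs (Q). Proposition~\ref{prop:R-empty} shows that (R) is empty under the standing hypotheses.

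The main care point is the completeness of this case tree. We must check that the standing assumptions at each stage ($\dim\Bs(W)=1$, all ranks at least two, no smooth pencil) are exactly the hypotheses of the next proposition, so that every boundary system is caught. We must also check that the components named as targets of the absorptions are among the nine: the closures of the $R_4+q$, $E_5$, $C\sqcup L+q_1+q_2$ and $L_1\sqcup L_2+p^{\PP^2}+q$ families are the corresponding $\mathcal K_i$. This holds because $\mathcal K_i$ is the closure of $\mathcal D_i$, by the argument in Proposition~\ref{prop:nine-distinct-components} showing that $\mathcal D_i$ is dense in $\mathcal K_i$.
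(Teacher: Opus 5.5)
Your proposal is correct and follows the paper's proof essentially step for step. It uses the same case tree (zero-dimensional base, a surface in the base, a rank-one base point, then a smooth pencil versus the discriminant branches (H1), (H2), (Q), (R)), with the same propositions doing the absorption. The only difference is that you cover every point of $\Bir_2(\PP^4)$ by the nine closed sets, whereas the paper runs the same tree on the general point of an arbitrary component; since each cited proposition is stated for all systems, the two formulations are equivalent.
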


\begin{proof}
Proposition~\ref{prop:nine-distinct-components} gives nine distinct
components. Let $\mathcal K$ be any irreducible component and let $W$ be its
general point. By the definition of $\Bir_2(\PP^4)$, the quadrics in $W$ have no
common divisor. Hence the base has dimension zero, one, or two. In
dimension zero, Theorem~\ref{thm:zero-dimensional-unique-component-geometric}
gives the punctual component. Dimension two is treated by
Propositions~\ref{prop:all-plane-systems} and
\ref{prop:all-quadric-surface-systems}. If the base has dimension one and a
rank-one point, Proposition~\ref{prop:all-rank-one-systems} applies. Otherwise
all first-jet ranks are at least two. A smooth pencil places $W$ in one of the
seven components of Proposition~\ref{prop:seven-smooth-pencil-components}; if
there is no smooth pencil, Proposition~\ref{prop:four-residual-branches} and
Propositions~\ref{prop:H1-absorbed}, \ref{prop:H2-absorbed},
\ref{prop:Q-absorbed}, and \ref{prop:R-empty} apply. Therefore, $\mathcal K$ is one
of the nine components already constructed.
\end{proof}

\section{Magma implementation}\label{appendix:magma}

In this section we describe the computational supplement, which consists of
the files
\path{P4_cremona_all_types.m} and \path{Cremona_P4_checks.m}. The source
archive distributed with this manuscript contains the versions used here;
the public repository is
\begin{center}
\url{https://github.com/msslxa/Cremona_P4}.
\end{center}
The first script uses Magma \cite{BosmaCannonPlayoust97}, works over a large finite field,
and computes the projective degrees of the explicit systems by residual
intersections. For the $k$-th projective degree it intersects $k$ general
linear combinations of the five quadrics with $4-k$ general hyperplanes,
saturates by the base ideal, and computes the degree of the residual
zero-dimensional scheme. It also computes the saturated base ideals. These calculations provide independent finite-field checks of the examples and their multidegrees; they are not used as substitutes for the geometric arguments in the text. \texttt{Cremona\_P4\_checks.m} contains exact calculations over $\QQ$ used in the upper bound. The procedure
\texttt{PlaneBoundaryCertificate} constructs the ten conditions defining the
family in Proposition~\ref{prop:all-plane-systems}, computes their Smith form
over $\QQ[t]$, specializes the saturated kernel, and verifies the two residual
binary quartics, the ranks at their roots, and the two values of $J^2/I^3$.
The procedures \texttt{TypeIFlatLimitCertificate} and
\texttt{TypeIStabilizerCertificate} check, respectively, the Hilbert
polynomial of the type I limit in Proposition~\ref{prop:all-rank-one-systems}
and that the infinitesimal stabilizer of the displayed system consists only
of scalar matrices. The procedure
\texttt{DeterminantalPfaffianCertificates} checks the minors, principal
Pfaffians, and Hilbert polynomials used in
Propositions~\ref{prop:H1-absorbed} and \ref{prop:H2-absorbed}.
Finally, \texttt{QuadraticBranchCertificates} verifies an auxiliary
resultant identity and the associated coefficient comparisons, the syzygies
of $(v^2,-uv,u^2)$, and the Hilbert polynomials of representative residual
schemes in the quadratic branch. These calculations are supplementary: the
exhaustive plane-Cremona reduction \eqref{eq:Q-plane-cremona-map}, the
rational-quartic degeneration
\eqref{eq:Q-rational-quartic-limit-system}, and the rank-three conic
specialization \eqref{eq:Q-rank-three-kernel-limit} are proved directly in
the text and do not rely on the script. The procedure
\texttt{LowRankNormalFormCertificates} verifies the image relation in the
last normal form of Proposition~\ref{prop:R-empty}. Calling
\texttt{RunChecks()} runs all these certificates.

We stress that the scripts only verify the finite calculations listed above. The reductions to the corresponding normal forms, the flatness arguments, and the geometric classification are proved in the text.

\bibliographystyle{amsalpha}
\bibliography{Biblio}
\end{document}